\sloppy\pagestyle{plain}
\newtheorem{theorem}{Theorem}[section]
\newtheorem{lemma}[theorem]{Lemma}
\newtheorem{corollary}[theorem]{Corollary}
\newtheorem*{question*}{Question}
\newtheorem*{conjecture*}{Conjecture}
\theoremstyle{definition}
\newtheorem{example}[theorem]{Example}
\newtheorem{definition}[theorem]{Definition}
\theoremstyle{remark}
\newtheorem{remark}[theorem]{Remark}
\makeatletter\@addtoreset{equation}{section} \makeatother
\author{Ivan Cheltsov and Jesus Martinez-Garcia}
\title{Unstable polarized del Pezzo surfaces}
\date{18/01/2018}
\keywords{$K$-stability, cscK metric, del Pezzo surface, slope stability, flop.}
\thanks{This article was written while the authors were visiting the Max Planck Institute for Mathematics.
We would like to thank the institute for the excellent working conditions. Jesus Martinez-Garcia is supported by the Simons Foundation under the Simons Collaboration on Special Holonomy in Geometry, Analysis and Physics (grant \#488631, Johannes Nordstr\"om).}
\begin{document}

\begin{abstract}

We provide new examples of $K$-unstable polarized smooth del Pezzo surfaces
using a flopped version first used by Cheltsov and Rubinstein of the test configurations introduced by Ross and Thomas.
As an application, we provide new obstructions for the existence of constant scalar curvature K\"ahler metrics
on polarized smooth del Pezzo surfaces.\end{abstract}

\sloppy

\maketitle

All varieties are assumed to be algebraic, projective and defined over $\mathbb{C}$.

\section{Introduction}
\label{section:intro}

$K$-stability is an algebraic notion of polarized varieties which has been of great importance
in the study of the existence of canonical metrics on complex varieties.
This is mainly because of the following

\begin{conjecture*}[Yau--Tian--Donaldson]
\label{conjecture:YTD}
Let $X$ be a smooth variety, and let $L$ be an ample line bundle on it.
Then $X$ admits a constant scalar curvature K\"ahler (cscK) metric in $c_1(L)$ if and only if the pair $(X,L)$ is $K$-polystable.
\end{conjecture*}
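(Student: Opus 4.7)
The plan is to attack the two implications separately, as is standard for Yau--Tian--Donaldson-type results. The \emph{easy} direction, that $(X,L)$ admits a cscK metric in $c_1(L)$ only if it is $K$-polystable, is the one one should reasonably hope to write down directly. First I would fix a test configuration $(\mathcal{X},\mathcal{L})$ for $(X,L)$ and build from it a weak geodesic ray $\{\varphi_t\}_{t\ge 0}$ in the space of K\"ahler potentials, as the $S^1$-invariant solution of the homogeneous complex Monge--Amp\`ere equation on $X\times\Delta^*$ associated to the configuration, following Phong--Sturm. Next, expand the Mabuchi K-energy $\mathcal{M}(\varphi_t)$ asymptotically in $t$; the leading term is, up to a positive constant, the Donaldson--Futaki invariant $\mathrm{DF}(\mathcal{X},\mathcal{L})$, by the intersection-theoretic reformulation of Wang and Odaka. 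Since cscK metrics minimize $\mathcal{M}$ and $\mathcal{M}$ is convex along $C^{1,1}$ geodesics (Berman--Berndtsson, Chen--Li--P\u{a}un), the expansion forces $\mathrm{DF}\ge 0$, with equality only when the ray is trivial modulo $\mathrm{Aut}^{0}(X,L)$, giving $K$-polystability.

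The \emph{hard} direction, that $K$-polystability implies existence of a cscK metric, is the main obstacle and is in fact still open in this generality; the Fano case $L=-K_X$ was settled by Chen--Donaldson--Sun and Tian. My plan would follow the variational / non-Archimedean strategy of Berman--Boucksom--Jonsson, combined with the a priori estimates of Chen--Cheng. Step one: show that $(X,L)$ is uniformly $K$-stable modulo automorphisms if and only if $\mathcal{M}$ is coercive on the space $\mathcal{E}^1(X,L)$ of finite-energy potentials modulo $\mathrm{Aut}^{0}(X,L)$. This requires the non-Archimedean Mabuchi functional $\mathcal{M}^{\mathrm{NA}}$ for a \emph{general} polarization, and the slope formula identifying the asymptotic slope of $\mathcal{M}$ along any geodesic ray with $\mathcal{M}^{\mathrm{NA}}$ of its non-Archimedean limit on the Berkovich analytification $X^{\mathrm{an}}$. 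Step two: invoke (a version for general polarizations of) the Chen--Cheng theorem, which says that coercivity of $\mathcal{M}$ produces a smooth cscK metric via their fourth-order \emph{a priori} estimates.

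The main obstacles are concentrated in the hard direction. First, $K$-polystability is a priori weaker than uniform $K$-stability for general polarized pairs, and bridging this gap (the uniform YTD conjecture) is not known without further hypotheses; one would have either to strengthen the hypothesis to uniform $K$-polystability, or to prove polystability suffices by a compactness argument reducing to equivariant / special test configurations. Second, Chen--Cheng's PDE estimates must be adapted from their usual K\"ahler-class formulation to arbitrary polarized pairs, and the non-Archimedean slope formula for $\mathcal{M}$ requires a global pluripotential theory for $c_1(L)$ on $X^{\mathrm{an}}$ which is only partially developed outside the Fano setting. These two steps are where I would expect the bulk of the work to lie, and why, at the time of writing, the statement above remains a conjecture rather than a theorem in the polarized del Pezzo context that motivates this paper.
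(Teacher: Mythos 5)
The statement you were asked about is not proven in the paper, and indeed cannot be: it is the Yau--Tian--Donaldson conjecture itself, which the authors state purely as motivation and explicitly label a conjecture. What the paper actually uses is only the \emph{necessity} direction in the weaker form ``no cscK metric if $(X,L)$ is not $K$-semistable,'' citing Berman--Darvas--Lu and Darvas--Rubinstein, together with the Chen--Donaldson--Sun resolution of the Fano anticanonical case; no proof of the full biconditional is attempted or claimed anywhere in the text. Your proposal is therefore in an odd position: it is not a proof, and you say so yourself, correctly noting that the sufficiency direction remains open for general polarizations.

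Judged on its own terms, your outline of the necessity direction (geodesic ray from a test configuration, asymptotic slope of the Mabuchi functional equal to the Donaldson--Futaki invariant via the Wang--Odaka intersection formula, convexity along $C^{1,1}$ geodesics) is the standard and correct route, and it is consistent with the results the paper invokes; one caveat is that extracting genuine \emph{polystability}, rather than semistability, from the equality case requires the finer analysis of Berman--Darvas--Lu, not just convexity plus the slope formula. Your honest assessment of the hard direction --- that one would need a uniform-stability or non-Archimedean variational bridge plus Chen--Cheng-type estimates, and that the gap between $K$-polystability and uniform stability is unresolved --- matches the state of the art. So there is no hidden argument in the paper that you missed; the correct answer to the exercise is simply that this statement is a conjecture, used by the authors only through its known partial cases, and your proposal rightly stops short of claiming otherwise.
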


It is known in different degrees of generality that $K$-polystability is a necessary condition for the existence of a cscK metric, with the most general result due to Berman, Darvas and Lu \cite{BDL16} following work of Darvas and Rubinstein \cite{DR17}.
For smooth Fano varieties polarized by anticanonical line bundles, Conjecture \ref{conjecture:YTD} was recently proved by Chen, Donaldson and Sun in \cite{CDS}.

In spite of the above (conjectural) characterizations, deciding whether a given polarized variety is $K$-stable is a problem of considerable difficulty.
In this paper, we study this problem for del Pezzo surfaces polarized by ample $\mathbb{Q}$-divisors.
Using $\mathbb{Q}$-divisors does not affect the original problem, since $K$-stability is preserved when we scale the polarization positively.

Let $S$ be a smooth del Pezzo surface, and let $L$ be an ample $\mathbb{Q}$-divisor on it.
Recall that $S$ is toric if and only if $K_S^2\geqslant 6$.
In this case, the problem we plan to consider is completely solved.
In the non-toric case, few results in this direction are known.
For instance, if $S$ is not toric, then it admits a K\"ahler--Einstein metric by Tian's theorem \cite{Tian1990}, so that $(S,-K_S)$ is $K$-stable.
Moreover, a result of LeBrun and Simanca \cite{LeBrun-Simanca} implies that the same holds for every divisor $L$ that is \emph{close enough} to $-K_S$.
On the other hand, many $K$-unstable pairs $(S,L)$ have been constructed by Ross and Thomas in \cite{RT1,RT2}.

In the prequel to this article \cite{Cheltsov-Jesus}, we gave a simple condition on $L$ that guarantees that $(S,L)$ is $K$-stable.
The goal of this article is to obtain new simple conditions on $L$ that guarantee that $(S,L)$ is $K$-unstable. In addition our technique recovers all previous obstructions to $K$-stability on polarized del Pezzo surfaces $(S,L)$ found in the literature. 

To present our results, it is convenient to split ample $\mathbb{Q}$-divisors on $S$ into three major types:
$\mathbb{P}^2$-type, $\mathbb{F}_1$-type, and $\mathbb{P}^1\times\mathbb{P}^1$-type.
To be precise, up to a positive scaling of $L$, one always has
$$
L\sim_{\mathbb{Q}} -K_S+bB +\sum_{i=1}^{r} a_iF_i,
$$
where $F_1,\ldots F_{r}$ are disjoint $(-1)$-curves, $B$ is a smooth rational curve such that $B^2=0$, and $b,a_1,\ldots,a_{r}$ are some non-negative rational numbers such that $1>a_{r}\geqslant\cdots\geqslant a_1\geqslant 0$.
Moreover, if $b\ne 0$, then the curve $B$ is disjoint from the $(-1)$-curves $F_1,\ldots,F_{r}$.
Since the $(-1)$-curves are disjoint, their contraction gives a birational morphism $\phi\colon S\to\widehat{S}$.
We say that $L$ is of $\mathbb{P}^2$-type, $\mathbb{F}_1$-type or $\mathbb{P}^1\times\mathbb{P}^1$-type
in the case when $\widehat{S}=\mathbb{P}^2$, $\widehat{S}=\mathbb{F}_1$ or $\widehat{S}=\mathbb{P}^1\times\mathbb{P}^1$, respectively.
In particular, if $L$ is of $\mathbb{P}^2$-type, then $r=9-K_{S}^2$.
Similarly, if $L$ is $\mathbb{F}_1$-type or $\mathbb{P}^1\times\mathbb{P}^1$-type, then $r=8-K_S^2$.
It is easy to see that every ample $\mathbb{Q}$-divisor on $S$ is of one of these three types.
To make the types mutually exclusive, we also require  $b>0$ and $a_1>0$ in the $\mathbb{F}_1$-case,
and we ask that $b>0$ or $a_1>0$ in the $\mathbb{P}^1\times\mathbb{P}^1$-case.

We believe that our newly introduced language may shed a new light on this problem.
The indication of this can be seen from the $K$-polystability criterion in the case $K_S^2=6$.
Translating it into our language we recover a result originally due to Donaldson and Wang--Zhou:

\begin{theorem}[{\cite{SD-toric-surfaces,toric}}]
\label{theorem:toric-popular}
Suppose that $K_S^2=6$.
Then $(S,L)$ is $K$-polystable and it accepts a cscK metric in $c_1(L)$ if and only if either $L$ is of $\mathbb{P}^2$-type and $a_1=a_2=a_3$,
or $L$ is of $\mathbb{P}^1\times\mathbb{P}^1$-type and $a_1=a_2$.
\end{theorem}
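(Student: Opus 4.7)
\emph{Proof proposal.} Since $K_S^2=6$, the surface $S$ is the unique smooth del Pezzo surface of degree $6$, which is toric -- realised, for example, as the blow-up of $\mathbb{P}^2$ at three torus-fixed points, or equivalently of $\mathbb{P}^1\times\mathbb{P}^1$ at two torus-fixed points. Because the theorem is attributed to Donaldson \cite{SD-toric-surfaces} and Wang--Zhou \cite{toric}, the plan is purely to translate their toric K-polystability criterion into the trichotomy introduced above.

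The first step is to describe the moment polytope $P_L\subset\mathbb{R}^2$ of $(S,L)$ in each of the three cases. The anticanonical polytope $P_{-K_S}$ is a regular hexagon; adding the contribution $a_iF_i$ truncates the corner of $P_{-K_S}$ dual to the $(-1)$-curve $F_i$ by an amount proportional to $a_i$, while the contribution $bB$ shortens a pair of opposite edges by an amount proportional to $b$. Consequently, for $L$ of $\mathbb{P}^2$-type $P_L$ is a hexagon with three alternating truncated corners; for $L$ of $\mathbb{P}^1\times\mathbb{P}^1$-type it is a rectangle with two opposite truncated corners; and for $L$ of $\mathbb{F}_1$-type it is an asymmetric hexagon with a truncated corner adjacent to a shortened edge.

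The second step is to invoke Donaldson's toric criterion \cite{SD-toric-surfaces}: $(S,L)$ is K-polystable and admits a cscK metric in $c_1(L)$ if and only if
\[
\mathcal{L}_L(f) := \int_{\partial P_L} f\, d\sigma \;-\; a_L \int_{P_L} f\, d\mu \;\geqslant\; 0
\]
for every convex rational piecewise linear function $f$ on $P_L$, with equality only on the subspace of affine functions, where $a_L$ is the unique constant making $\mathcal{L}_L$ annihilate affine $f$. In the cases $a_1=a_2=a_3$ (for $\mathbb{P}^2$-type) and $a_1=a_2$ (for $\mathbb{P}^1\times\mathbb{P}^1$-type) the polytope $P_L$ carries a non-trivial finite symmetry fixing its barycentre; this forces $\mathcal{L}_L$ to annihilate all affine functions automatically, and an elementary direct computation then confirms the inequality on the orthogonal complement.

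The principal obstacle is the converse: producing an explicit destabiliser whenever the stated symmetry fails, including every polarisation of $\mathbb{F}_1$-type. A natural candidate is the ``roof'' function over a line separating two truncated corners of $P_L$ of unequal sizes; substituting it into $\mathcal{L}_L$ yields a polynomial in the parameters $a_i,b$ that is strictly negative exactly when any of the required equalities fails. Since this is essentially the computation carried out by Wang--Zhou \cite{toric}, one may alternatively invoke their classification of K-polystable polarised toric Fano surfaces directly and read off the conclusion once Step~1 has identified $P_L$.
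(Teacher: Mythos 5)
Your proposal takes a genuinely different route from the paper, and it also has a gap worth flagging.

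\textbf{Different route.} The paper does not attempt to re-derive Donaldson's criterion from moment polytopes. Instead, in Example~\ref{example:d-6} it cites the Wang--Zhou characterisation verbatim in the form ``$(S,L)$ is $K$-polystable if and only if $\varepsilon_1=\varepsilon_2=\varepsilon_3$ or $\varepsilon=\varepsilon_1+\varepsilon_2+\varepsilon_3$'' in the decomposition \eqref{equation:non-canonical}, and then simply computes the intersection numbers $\varepsilon_i=L\cdot E_i$ and $\varepsilon=L\cdot\gamma^*(l)$ in each of the three types; the equalities translate directly into $a_1=a_2=a_3$ (for $\mathbb{P}^2$-type), never (for $\mathbb{F}_1$-type), and $a_1=a_2$ (for $\mathbb{P}^1\times\mathbb{P}^1$-type). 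Crucially, Example~\ref{example:d-6} then goes on to give an \emph{independent} proof of the unstable (``only if'') half of the theorem using flop slope test configurations: for each type the authors choose a $(-1)$-curve $Z$, two disjoint $(-1)$-curves $C_1,C_2$ meeting it, contract to $\mathbb{F}_1$, and verify $\widehat{\mathfrak{DF}}(\sigma(\overline{S},\overline{L},\overline{Z}))<0$ via explicit polynomials $h_1,h_2,h_3$. This is the paper's actual contribution here (recovering a toric result by a non-toric method, and the hypothesis for Theorem~\ref{theorem:degree-6} via Corollary~\ref{corollary:blow-up}); your proposal does not touch it at all.

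\textbf{Gap.} Your Step~3 asserts that the residual symmetry of $P_L$ ``forces $\mathcal{L}_L$ to annihilate all affine functions automatically, and an elementary direct computation then confirms the inequality on the orthogonal complement.'' The first clause conflates normalisation with content: $\mathcal{L}_L$ annihilates constants by the choice of $a_L$, and annihilation of linear functions is exactly the vanishing of the Futaki invariant, which the symmetry does deliver. But the second clause is not elementary. Establishing $\mathcal{L}_L(f)\geqslant 0$ for \emph{all} rational convex piecewise-linear $f$, with equality only for affine $f$, is the entire content of \cite{SD-toric-surfaces} and \cite{toric} for these polytopes; there is no shortcut via symmetry alone, and the degree-$6$ hexagon is one of the cases Donaldson's paper is devoted to. If you intend Step~3 to be a citation rather than a derivation, you should say so; as written it presents a major theorem as routine. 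Similarly, the roof-function destabiliser in Step~4 is plausible but would require the same kind of integral estimates, which is why the paper instead produces explicit destabilisers on the flopped total space and verifies negativity by a direct intersection-theoretic computation.
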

The above result was originally proven using properties of toric geometry. In Example~\ref{example:d-6}, we used our technique of \emph{flop slope test configurations}, which have a non-toric nature, to prove K-instability.

In particular, if $K_S^2=6$ and $L$ is of $\mathbb{F}_1$-type, then $(S,L)$ is always $K$-unstable.
By the aforementioned result of LeBrun--Simanca \cite{LeBrun-Simanca}, this is no longer true in the non-toric case.
Nevertheless, in this case we prove a somewhat similar result:

\begin{theorem}
\label{theorem:nice-inequality}
Suppose that $K_S^2\leqslant 5$, the divisor $L$ is of $\mathbb{F}_1$-type or $\mathbb{P}^1\times\mathbb{P}^1$-type, and
$$
a_1^2+6-K_S^2<\sum_{i=2}^{r}a_i^2.
$$
Then $(S,L)$ is $K$-unstable for $b\gg 0$ and $S$ does not accept a cscK metric in $c_1(L)$.
\end{theorem}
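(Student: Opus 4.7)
My plan is to exhibit an explicit destabilizing test configuration of the \emph{flop slope} type advertised in the abstract. The geometric setup is as follows: since $B^2=0$ and $K_S\cdot B=-2$, the curve $B$ is a fiber of a conic bundle $\pi\colon S\to\mathbb{P}^1$. The condition $B\cdot F_i=0$ for every $i$ forces each $(-1)$-curve $F_i$ to be a component of a reducible fiber of $\pi$, and contracting the $F_i$'s recovers the natural ruling on $\widehat S$. On the product $S\times\mathbb{A}^1$ the curve $B\times\{0\}$ has normal bundle $\mathcal{O}_B\oplus\mathcal{O}_B$, which places us exactly in the Atiyah flop situation. I would blow up $B\times\{0\}$ and perform the Atiyah flop to obtain a smooth 3-fold $\mathcal X\to\mathbb{A}^1$ carrying a $\mathbb{G}_m$-action with general fiber $S$, together with a one-parameter family of natural polarizations $\mathcal L_x$ indexed by a slope parameter $x>0$.

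The next step is to verify that $\mathcal L_x$ is ample for $x$ in an explicit open range depending on $b$ and the $a_i$ via Nakai--Moishezon on the 3-fold, and then to compute the Donaldson--Futaki invariant $\mathrm{DF}(\mathcal X,\mathcal L_x)$ as a rational function of $x,b,a_1,\ldots,a_r$. The relevant numerical inputs are $L^2=K_S^2+4b+2\sum a_i-\sum a_i^2$, $L\cdot B=2$, $K_S\cdot B=-2$, $F_i\cdot B=0$ and $F_i\cdot F_j=-\delta_{ij}$. I expect the distinguished curve $F_1$ (singled out by its minimal coefficient $a_1$) to contribute a positive term proportional to $a_1^2$, each of the remaining $F_i$ to contribute a negative term proportional to $-a_i^2$ coming from its interaction with the flopped $\mathbb{P}^1$ inside the corresponding reducible fiber of $\pi$, and the constant $6-K_S^2=r-2$ to appear as a combinatorial normalization counting those reducible fibers. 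After optimizing over $x$ and letting $b\to\infty$, the leading asymptotics of $\mathrm{DF}$ should be a positive multiple of $a_1^2+(6-K_S^2)-\sum_{i=2}^r a_i^2$, which is negative precisely under the hypothesis of the theorem.

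The main technical obstacle is the intersection-theoretic bookkeeping on the flopped 3-fold: the Atiyah flop alters intersection numbers along the exceptional locus, and one must carefully track how each $F_i\times\mathbb{A}^1$ meets the flopped $\mathbb{P}^1$ sitting inside its reducible fiber of $\pi$, distinguishing the role of $F_1$ (which produces the $+a_1^2$ contribution via a standard Ross--Thomas slope calculation) from that of $F_2,\ldots,F_r$ (which enter only through the flop). A secondary concern is pinning down the admissible range of $x$ and verifying Nakai--Moishezon uniformly in $b\gg 0$, so that a single choice of $x=x(b)$ exhibits a genuine negativity rather than a degenerate zero. Once $K$-instability for $b\gg 0$ is established, the non-existence of a cscK metric in $c_1(L)$ follows at once from the Berman--Darvas--Lu theorem \cite{BDL16} quoted in the introduction.
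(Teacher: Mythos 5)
Your geometric starting point is broken. You propose to center the slope construction at the fiber $B$, blow up $B\times\{0\}$ in $S\times\mathbb{A}^1$, and then perform Atiyah flops. But the normal bundle of $B\times\{0\}$ is $\mathcal{O}_{\mathbb{P}^1}\oplus\mathcal{O}_{\mathbb{P}^1}$ (since $B^2=0$ and the second factor is the trivial normal bundle of the fibre $S\times\{0\}$), which is \emph{not} the Atiyah flop normal bundle $\mathcal{O}_{\mathbb{P}^1}(-1)\oplus\mathcal{O}_{\mathbb{P}^1}(-1)$; there is nothing to flop after that blow-up. Worse, Ross--Panov already rules out the plain slope test configuration centred at any $Z$ with $Z^2\geqslant 0$ (the paper quotes this explicitly: if $\lambda<\sigma(S,L,Z)$ and $Z^2\geqslant 0$ then $\mathrm{DF}\geqslant 0$), so blowing up $B$ alone can never destabilize, and you have not supplied a mechanism that escapes this obstruction. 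Finally, the curves you would like to flop — the $\mathbb{P}^1$'s over the reducible fibres, that is the proper transforms of $F_2,\ldots,F_r$ — have normal bundle $\mathcal{O}(-1)\oplus\mathcal{O}$ in $S\times\mathbb{A}^1$ and remain so after blowing up $B\times\{0\}$, because $B$ is disjoint from every $F_i$; the blow-up only twists the normal bundle of curves that actually meet the centre.

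The paper's construction is quite different. It takes $Z=E_1=F_1$, the $(-1)$-curve with the smallest coefficient, so that $Z^2=-1$ and the Ross--Thomas polynomial genuinely has negative leading term. The curves $C_1,\ldots,C_k$ to be flopped are \emph{not} the $F_i$: they are $(-1)$-curves \emph{meeting} $Z$ (for example $L_{1j}$ and, depending on the combinatorics, $C_{1234r}$), determined case by case by the Zariski decomposition of $L-\mu Z$ in Lemmas~\ref{lemma:lambda-E1-F1-type} and \ref{lemma:lambda-E1-P1-P1-type-d-5}--\ref{lemma:lambda-E1-P1-P1-type-d-1-2}. Because each $C_i$ meets $Z$ transversally at one point, blowing up $Z\times\{0\}$ twists the normal bundle of $\mathcal{C}_i=C_i\times\{0\}$ from $\mathcal{O}(-1)\oplus\mathcal{O}$ to $\mathcal{O}(-1)\oplus\mathcal{O}(-1)$, and the Atiyah flop becomes possible. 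The Donaldson--Futaki invariant is then the polynomial $\widehat{\mathfrak{DF}}$ of \eqref{equation:DF-flope-polynomial}, and the normalised quantity $\mathfrak{D}=\frac{3}{2}L^2\widehat{\mathfrak{DF}}(\mu)$ is quadratic in $b$ with leading coefficient $\mathfrak{A}=3\bigl(a_1^2+(6-K_S^2)-\sum_{i=2}^{r}a_i^2\bigr)$, from which the theorem follows immediately. Your guess of the form of the leading term is therefore accurate, and your instinct that $F_1$ plays a distinguished positive role is also right, but the role is as the centre $Z$ of the whole test configuration, not as a subsidiary contributor. The conclusion about cscK metrics via Berman--Darvas--Lu is fine and matches the paper, but it rests on a destabilisation that your construction does not actually produce.
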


In their seminal works \cite{RT1,RT2}, Ross and Thomas introduced the notion of \emph{slope stability} as an obstruction to $K$-stability.
In particular, their \cite[Example~5.30]{RT1} implies

\begin{theorem}
\label{theorem:Ross-Thomas}
Suppose that $K_S^2\leqslant 6$. If $L$ is of $\mathbb{P}^2$-type and $a_2\gg a_1$, then $(S,L)$ is $K$-unstable and $S$ does not accept a cscK metric in $c_1(L)$.
Similarly, if $L$ is of $\mathbb{F}_1$-type and $a_1\gg 0$, then $(S,L)$ is $K$-unstable and $S$ does not accept a metric in $c_1(L)$.
\end{theorem}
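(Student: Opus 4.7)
My plan is to invoke the slope $K$-stability machinery of Ross and Thomas, which attaches to each smooth irreducible curve $Z\subset S$ a one-parameter family of test configurations (the deformation to the normal cone of $Z$, truncated at level $c$), whose Donaldson-Futaki invariants are explicit polynomials in $c$ and the intersection numbers $L^2$, $L\cdot Z$, $Z^2$, $K_S\cdot L$, $K_S\cdot Z$. Negativity of one such invariant for some $c\in(0,\epsilon(L,Z)]$, where $\epsilon(L,Z)=\sup\{x>0:L-xZ\text{ is nef}\}$, forces $K$-instability, and the non-existence of a cscK metric then follows from the Berman-Darvas-Lu theorem. The strategy in both cases is to pick the natural $Z$, reduce the Donaldson-Futaki polynomial to an expression in the coefficients $a_i,b$, and read off the sign in the asserted asymptotic regime.

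For the $\mathbb{P}^2$-type statement, the natural test curve is $Z=F_2$, the $(-1)$-curve whose coefficient $a_2$ is made to dominate. Since the $F_i$ are pairwise disjoint and (when $b\ne 0$) disjoint from $B$, substituting $L=-K_S+bB+\sum_i a_iF_i$ collapses the intersection numbers to
$$L\cdot F_2=1-a_2,\qquad F_2^2=-1,\qquad -K_S\cdot F_2=1,\qquad K_S\cdot L=-K_S^2+\sum_i a_i,$$
with $L^2$ a transparent quadratic polynomial in the $a_i$. The Seshadri constant $\epsilon(L,F_2)$ can be bounded below by a positive constant uniform in the $a_i$ in the range we care about. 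A direct expansion then shows the Donaldson-Futaki polynomial, evaluated at a suitable small $c$, is governed at leading order by a negative multiple of $(1-a_2)$, and becomes negative as soon as $a_2/a_1$ is sufficiently large.

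For the $\mathbb{F}_1$-type statement I would take $Z=B$, the $(0)$-curve, so that $Z^2=0$, $-K_S\cdot Z=2$, and $Z\cdot F_i=0$ (using $b>0$). Then $L\cdot B=2$, and $(L-xB)^2=L^2-4x$ is linear in $x$, which makes the Donaldson-Futaki polynomial a rational function in $c$ of controllable shape. As $a_1$ approaches its allowed upper bound (with the remaining parameters held fixed), the $a_i$-contribution to $L^2$ dominates the balance, and one verifies that the Donaldson-Futaki polynomial becomes negative at an admissible $c$, exhibiting the instability.

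The main obstacle is the bookkeeping needed to align our normalized decomposition $L=-K_S+bB+\sum_i a_iF_i$ with the data entering the Ross-Thomas formulas, and to verify that the value of $c$ at which the invariant turns negative lies inside the admissible Seshadri window throughout the entire asymptotic regime. This requires a standard but careful continuity argument keeping $\epsilon(L,Z)$ bounded away from zero, but no new conceptual input is needed beyond a direct translation of \cite[Example~5.30]{RT1}.
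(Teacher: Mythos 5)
Both halves of your proposal choose the wrong destabilizing curve $Z$, and this is not a bookkeeping issue but a genuine obstruction.

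For the $\mathbb{F}_1$-type case the problem is decisive: you take $Z=B$ with $B^2=0$, but the paper records (just after Lemma~\ref{lemma:DF-slope}, citing \cite[Theorem~1.3]{RP}) that a slope test configuration centred on a curve $Z$ with $Z^2\geqslant 0$ always satisfies $\mathrm{DF}(\mathcal{X},\mathcal{L}_\lambda,p)\geqslant 0$ for $\lambda<\sigma(S,L,Z)$. So the Ross--Thomas deformation to the normal cone of $B$ can never detect instability, no matter how $a_1$ and $b$ behave. The curve one must use instead is a $(-1)$-curve; in the paper's setup it is the proper transform of the exceptional section $e$ of $\widehat{S}\cong\mathbb{F}_1$, i.e.\ the curve $E_r$, not $B$ and not any of the $F_i$.

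For the $\mathbb{P}^2$-type case your choice $Z=F_2$ also fails, because the Seshadri constant is too small. Computing with $L\sim_{\mathbb{Q}}-K_S+\sum a_iF_i$ one gets $L\cdot F_2=1-a_2$ and $\sigma(S,L,F_2)=1+a_1+a_2<3$. Writing $\nu=\nu(L)$, formula~\eqref{equation:DF-polynomial} gives
$$
\mathfrak{DF}(\lambda)=2\lambda(1-a_2)+\bigl(2-2\nu(1-a_2)\bigr)\lambda^{2}-\tfrac{2}{3}\nu\lambda^{3},
$$
which is \emph{positive}, not negative, near $\lambda=0$; and in the asymptotic regime $a_2\to 1$ (hence $a_3,\dots,a_r\to 1$) the $(1-a_2)$-terms vanish, leaving $\mathfrak{DF}(\lambda)\to 2\lambda^{2}-\tfrac{2}{3}\nu\lambda^{3}$. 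The unique positive zero of the latter is $3/\nu$, and a direct check shows $3/\nu>2+a_1=\lim\sigma(S,L,F_2)$ for all $a_1\in[0,1)$, so $\mathfrak{DF}$ is still positive at the Seshadri constant and the slope test is inconclusive. This is precisely the ``Seshadri constant too small'' phenomenon the paper highlights. Your heuristic that $\mathfrak{DF}$ is ``governed at leading order by a negative multiple of $(1-a_2)$'' has the sign backwards: the $(1-a_2)$-terms tend to \emph{zero}, not to a dominant negative quantity.

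The curve that does work is $Z=F_1$, the one with the \emph{small} coefficient. The paper's proof is indirect: Example~\ref{example:K-unstable-F1} verifies slope instability on $\mathbb{F}_1$ centred at the $(-1)$-curve $e$ (with coefficient $a$), and Corollary~\ref{corollary:RT-blow-up} shows that a destabilizing slope test configuration persists after blowing up a point off $Z$ and perturbing $L$ by $-\varepsilon G$. Iterating this passes from $\mathbb{F}_1$ to any smooth del Pezzo surface of degree $\leqslant 6$ with $L\sim_{\mathbb{Q}}-K_S+a_1F_1+\sum_{i\ge 2}(1-\varepsilon_i)F_i$ and $Z=F_1$, which is exactly the $a_2\gg a_1$ regime. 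If you want a direct computation instead of the blow-up argument, you must centre the test configuration on $F_1$ and evaluate $\mathfrak{DF}$ at $\lambda$ near $\sigma(S,L,F_1)=1+a_1+a_2$; the extra terms $-2\nu\lambda^{2}(1-a_1)+2\lambda(1-a_1)=2\lambda(1-a_1)(1-\nu\lambda)$ relative to the $F_2$-polynomial are then negative (since $\nu\lambda>1$), and this is what tips $\mathfrak{DF}$ below zero.
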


In \cite{CheRu}, Cheltsov and Rubinstein considered a modification of Ross and Thomas construction using flops.
In this article, we adapt their method to obtain several criteria for $K$-instability of the pair $(S,L)$.
In particular, we prove that $(S,L)$ is $K$-unstable for every ample $\mathbb{Q}$-divisor $L$ in the case when $K_S^2=7$,
and we give a short proof of the \emph{only if} part of Theorem~\ref{theorem:toric-popular}.
Moreover, we improve Ross--Thomas result by obtaining the following

\begin{theorem}
\label{theorem:degree-6}
Suppose that $K_S^2\leqslant 5$.
If $L$ is of $\mathbb{P}^2$-type and $a_4\gg a_3>a_1$, then $(S,L)$ is $K$-unstable and $S$ does not accept a metric in $c_1(L)$.
Similarly, if $L$ is of $\mathbb{P}^1\times\mathbb{P}^1$-type and $a_3\gg a_2>a_1$, then $(S,L)$ is $K$-unstable and $S$ does not accept a metric in $c_1(L)$.
Finally, if $L$ is of $\mathbb{F}_1$-type and $a_3\gg a_2$, then $(S,L)$ is $K$-unstable and $S$ does not accept a metric in $c_1(L)$.
\end{theorem}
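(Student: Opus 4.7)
The plan is to apply the flop slope test configuration machinery developed earlier in the paper, a flopped variant of the Ross--Thomas slope test following Cheltsov--Rubinstein. For each of the three cases I would select a birational model of $S$, single out a specific $(-1)$-curve on that model as the destabilising subscheme, and show that the associated Donaldson--Futaki invariant becomes negative for some admissible value of the slope parameter $c$. Non-existence of a cscK metric in $c_1(L)$ then follows immediately from the Berman--Darvas--Lu theorem recalled after Conjecture \ref{conjecture:YTD}.

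For the $\mathbb{P}^2$-type case, contracting the disjoint $(-1)$-curves $F_1,\ldots,F_r$ (with $r = 9 - K_S^2 \geq 4$) yields points $p_1,\ldots,p_r \in \mathbb{P}^2$. The hypothesis $a_4 \gg a_3 > a_1$ suggests applying the standard Cremona transformation of $\mathbb{P}^2$ centred at three of the $p_i$, chosen so that the remaining indices include $4$ and a lower index witnessing $a_3 > a_1$; this exchanges each exceptional divisor lying over a centre with the strict transform of the line through the other two centres, giving a second presentation of $S$ as a blow-up of $\mathbb{P}^2$. On this flopped model I would run the slope test on a $(-1)$-curve whose intersection with $L$ is dominated by $a_4$. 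The $\mathbb{F}_1$-type and $\mathbb{P}^1 \times \mathbb{P}^1$-type cases are to be handled analogously, substituting the Cremona flop by the corresponding elementary birational transformation of $\mathbb{F}_1$ (through one of the $p_i$) or of $\mathbb{P}^1 \times \mathbb{P}^1$ (based on a $(1,1)$-curve through two of the $p_i$), producing in each case a new $(-1)$-curve on which the leading coefficient in $L$ is $a_3$.

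The main obstacle is the combinatorial bookkeeping: after the flop one must recompute intersection numbers on the new model, express the DF invariant as a polynomial in $c$, the $a_i$ and $b$, and isolate an admissible window of $c$ where the leading term in $a_4$ (resp.\ $a_3$) forces this polynomial to be negative. The asymptotic hypotheses $a_4 \gg a_3$ and $a_3 \gg a_2$ are precisely what makes this leading term dominate the rest, while the strict inequalities $a_3 > a_1$ and $a_2 > a_1$ should serve to ensure that the chosen $(-1)$-curve remains irreducible and the flopped model is admissible for the slope test.
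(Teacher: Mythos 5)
The paper's proof of this theorem is a two-line reduction: Example~\ref{example:d-6} establishes $K$-instability for the degree-$6$ toric del Pezzo surface via an explicit flopped slope computation (with $Z = L_{12}$, $C_1 = E_1$, $C_2 = E_2$ and $\overline{S}\cong\mathbb{F}_1$ in the $\mathbb{P}^2$-type case), and Corollary~\ref{corollary:blow-up} then propagates the negativity of the Donaldson--Futaki invariant to further blow-ups: if DF is negative for $(S,L)$ via the flopped slope test, it stays negative for $(S',L')$ with $L' = g^*(L) - \varepsilon G$ for all sufficiently small $\varepsilon>0$. Since $a_4\gg a_3$ (resp.\ $a_3\gg a_2$) means the extra blow-ups have coefficient close to $1$, i.e.\ small $\varepsilon$, the theorem follows. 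You instead propose a direct DF computation on $S$ itself, organised around a Cremona transformation of $\mathbb{P}^2$ to present the destabilising $(-1)$-curve. That is a genuinely different strategy, and it is not clear it would go through without the considerable casework of Section~\ref{section:technical} (note the paper \emph{does} do direct computations for Theorem~\ref{theorem:main}, but always with $Z=E_1$, and those computations do not exploit $a_4\gg a_3$); the inductive route via Corollary~\ref{corollary:blow-up} is what makes this theorem short.

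Two more concrete problems. First, the role you assign to the strict inequalities $a_3>a_1$ and $a_2>a_1$ is wrong: they are not about irreducibility of the chosen curve or ``admissibility of the flopped model''. They are essential because at the boundary $a_1=a_2=a_3$ (resp.\ $a_1=a_2$) the relevant DF invariant vanishes --- the polynomial $h_1$ in Example~\ref{example:d-6} satisfies $h_1(a,a,a)=0$ --- and, as the paper notes after Theorem~\ref{theorem:main}, in that boundary case $(S,L)$ in fact admits a cscK metric for $1>a_4\gg a_3$ (resp.\ $1>a_3\gg a_2$) by the lifting results of Arezzo--Pacard--Rollin--Singer, so no destabilising test configuration exists. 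Second, the Cremona transformation is a birational self-map of $\mathbb{P}^2$ that merely relabels the $(-1)$-curves $E_i\leftrightarrow L_{jk}$; it is not the flop of the test configuration threefold, and it plays no role in the paper's argument. In particular, the sketched asymptotic dominance argument is never actually carried out, whereas the paper's reduction requires no asymptotic analysis at all once Example~\ref{example:d-6} and Corollary~\ref{corollary:blow-up} are in hand.
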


We use the same approach to prove the following quantitative result.

\begin{theorem}
\label{theorem:main}
Suppose that $K_S^2\leqslant 5$.
If $L$ is of $\mathbb{P}^2$-type and
\begin{equation}
\label{equation:P2-a2-a1}
a_2-a_1\geqslant\left\{%
\aligned
&0.8717\ \text{if}\ K_S^2=1,\\
&0.8469\ \text{if}\ K_S^2=2,\\
&0.8099\ \text{if}\ K_S^2=3,\\
&0.7488\ \text{if}\ K_S^2=4,\\
&0.6248\ \text{if}\ K_S^2=5,\\
\endaligned\right.\\
\end{equation}
then $(S,L)$ is $K$-unstable and $S$ does not accept a metric in $c_1(L)$. Similarly, if $L$ is of $\mathbb{P}^2$-type and
\begin{equation}
\label{equation:P2-a3-a1}
a_3-a_1\geqslant\left\{%
\aligned
&0.9347\ \text{if}\ K_S^2=1,\\
&0.9206\ \text{if}\ K_S^2=2,\\
&0.8985\ \text{if}\ K_S^2=3,\\
&0.8595\ \text{if}\ K_S^2=4,\\
&0.6798\ \text{if}\ K_S^2=5,\\
\endaligned\right.\\
\end{equation}
then $(S,L)$ is $K$-unstable and $S$ does not accept a metric in $c_1(L)$. Likewise, if $L$ is of $\mathbb{P}^1\times\mathbb{P}^1$-type and
\begin{equation}
\label{equation:P1xP1-a2-a1}
a_2-a_1\geqslant\left\{%
\aligned
&0.9305\ \text{if}\ K_S^2=1,\\
&0.9150\ \text{if}\ K_S^2=2,\\
&0.8911\ \text{if}\ K_S^2=3,\\
&0.8480\ \text{if}\ K_S^2=4,\\
&0.7452\ \text{if}\ K_S^2=5,\\
\endaligned\right.\\
\end{equation}
then $(S,L)$ is $K$-unstable and $S$ does not accept a metric in $c_1(L)$. Finally, $(S,L)$ is $K$-unstable and $S$ does not accept a metric in $c_1(L)$ if $L$ is of $\mathbb{F}_1$-type and
\begin{equation}
\label{equation:F1-a2-a1}
a_2-a_1\geqslant\left\{%
\aligned
&0.9347\ \text{if}\ K_S^2=1,\\
&0.9206\ \text{if}\ K_S^2=2,\\
&0.8985\ \text{if}\ K_S^2=3,\\
&0.8595\ \text{if}\ K_S^2=4,\\
&0.7701\ \text{if}\ K_S^2=5.\\
\endaligned\right.\\
\end{equation}
\end{theorem}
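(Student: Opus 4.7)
The plan is to destabilize $(S,L)$ in each case by exhibiting an explicit flop slope test configuration with negative Donaldson--Futaki invariant, following the Cheltsov--Rubinstein modification of the Ross--Thomas construction that the authors use for Theorems~\ref{theorem:toric-popular} and~\ref{theorem:degree-6}. For each polarization type I would choose the center of the test configuration to be a rational curve on $S$ adapted to a pair of $(-1)$-curves with the smallest coefficients among the $a_i$: in the $\mathbb{P}^2$-type these are the strict transforms of lines in $\mathbb{P}^2=\widehat{S}$ through two (or three) of the blown-up points, while in the $\mathbb{F}_1$- and $\mathbb{P}^1\times\mathbb{P}^1$-type cases the analogous curves are fibers or rulings meeting a single $(-1)$-curve. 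Although these curves are not $(-1)$-curves on $S$, after the reverse contraction and an appropriate flop of the relevant $F_i$ they become the exceptional locus on a new birational model, on which the Ross--Thomas slope criterion is applied.

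Once the center $Z$ is fixed, the Donaldson--Futaki invariant of the associated flop slope test configuration is an explicit polynomial $\mathrm{DF}(c)$ in the Ross--Thomas parameter $c\in[0,\epsilon(Z,L))$, whose coefficients are polynomials in $a_1,\ldots,a_r$ and $b$ (and depend on $K_S^2$). The computation reduces to standard intersection theory on the blow-up of the flopped model times $\mathbb{A}^1$ along $Z\times\{0\}$, with extra bookkeeping for the change that the flop induces in the pairings of the proper transforms of $B$ and the $F_i$ with the polarization. Destabilization of $(S,L)$ then amounts to showing that, under the hypothesis on $a_2-a_1$ or $a_3-a_1$, there exists some $c$ in the Seshadri-type interval for which $\mathrm{DF}(c)<0$.

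The numerical thresholds arise from optimizing this inequality. For each of the lines in \eqref{equation:P2-a2-a1}, \eqref{equation:P2-a3-a1}, \eqref{equation:P1xP1-a2-a1} and \eqref{equation:F1-a2-a1}, one obtains a polynomial inequality in $a_2-a_1$ (respectively $a_3-a_1$) and the remaining parameters; fixing $K_S^2$ and minimizing over the free $a_j$, $b$ and $c$ reduces the problem to a one-variable algebraic equation whose real root, computed numerically to four decimal places, gives the stated bound. I expect the main technical obstacle to be verifying that the test configuration remains admissible across the entire parameter range---in particular, that $L-cE$ stays ample on the relevant model so the Seshadri interval does not collapse before $\mathrm{DF}(c)$ becomes negative---together with the case-by-case bookkeeping forced by the four types and five values of $K_S^2$. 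The non-existence of a cscK metric in $c_1(L)$ then follows from $K$-instability by the Berman--Darvas--Lu theorem recalled in the introduction.
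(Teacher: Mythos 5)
Your outline captures the high-level strategy correctly---the proof does indeed produce destabilizing \emph{flop slope test configurations} and then verify negativity of $\widehat{\mathfrak{DF}}$ by symbolic algebra---but the concrete geometric set-up you describe is off in ways that matter, and the central technical ingredient is missing.

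First, the choice of center. The paper takes $Z=E_1$, a $(-1)$-curve on $S$, uniformly in all four cases. You instead propose centering on strict transforms of lines $L_{ij}$ (respectively fibers/rulings), and you assert that these ``are not $(-1)$-curves on $S$.'' For a smooth del Pezzo surface the points are in general position, so the proper transform of a line through two of them \emph{is} a $(-1)$-curve; and the geometric picture you draw (``after the reverse contraction and an appropriate flop of the relevant $F_i$ they become the exceptional locus'') has the roles inverted. In the construction of Section~\ref{section:DF-invariant}, the whole point of the flop $\rho$ is the opposite: one starts with a $(-1)$-curve $Z$ on $S$ and a birational contraction $\pi\colon S\to\overline{S}$ that contracts the $(-1)$-curves $C_1,\ldots,C_k$ meeting $Z$, so that $\overline{Z}=\pi(Z)$ has $\overline{Z}^2=k-1\geqslant 0$ and hence a strictly \emph{larger} Seshadri constant $\sigma(\overline{S},\overline{L},\overline{Z})$ than $\sigma(S,L,Z)$. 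The flop then allows $\lambda$ to range up to this larger constant rather than collapsing at $\sigma(S,L,Z)$. Nothing becomes an exceptional $(-1)$-curve after the flop; rather $\overline Z$ is pushed to positive self-intersection.

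Second, and more consequentially, your sketch says nothing about how to determine the data $(\pi,\mu,C_1,\ldots,C_k)$. This is the technical heart of the argument, and it is handled in Section~\ref{section:technical} by explicitly computing Zariski decompositions of $L-\mu Z$ for $\mu=\tau(S,L,Z)$ (Lemmas~\ref{lemma:lambda-E1-P2-type}--\ref{lemma:lambda-E1-P1-P1-type-d-1-2}). These lemmas identify, in each case, the unique contraction realizing the negative part of $L-\mu Z$, verify that the image $\overline{Z}$ is smooth (a real hypothesis of Theorem~\ref{theorem:flop-slope} that fails generically when $K_S^2\leqslant 2$ and must be checked), and prescribe exactly which $(-1)$-curves are flopped. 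Your ``Seshadri-type interval does not collapse'' worry is the right instinct, but Zariski decomposition is the tool that resolves it, and it dictates the case split (for instance several of Lemmas~\ref{lemma:lambda-E1-P1-P1-type-d-3} and~\ref{lemma:lambda-E1-P1-P1-type-d-1-2}) that then propagates to the many polynomials $\mathfrak{D}$ in Section~\ref{section:unstable-del-pezzo}. You also miss the wrinkle that when $K_S^2\leqslant 2$ and $L$ is of $\mathbb{P}^1\times\mathbb{P}^1$-type, the parameter $\mu$ used is strictly smaller than $\tau(S,L,Z)$: a curve $Z_{17}$ with $Z\cdot Z_{17}=2$ intrudes and the argument must stop at $\sigma(\overline{S},\overline{L},\overline{Z})$ rather than the pseudo-effective threshold (Lemma~\ref{lemma:lambda-E1-P1-P1-type-d-1-2}). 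Finally, once $\mathfrak{D}(\mu)$ is written down, the numerical thresholds are not produced by ``minimizing over the free parameters'' in the naive sense; each is the positive root of a specific univariate polynomial obtained by specializing all other differences to zero, and one then proves negativity by a change of variables that exhibits every coefficient as non-positive (Appendix~\ref{section:Maple}). Without these ingredients the proposal is a plausible plan, not a proof.
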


Observe that only the differences $a_3-a_1$ and $a_2-a_1$ are considered in Theorem~\ref{theorem:main}.
There is a good reason for this.
For example, if $L$ is of $\mathbb{P}^2$-type, $K_S^2\leqslant 5$ and $a_1=a_2=a_3$,
then $S$ admits a cscK metric in $c_1(L)$ for $1>a_4\gg a_3$, so that $(S,L)$ is $K$-stable.
Similarly, if $L$ is of $\mathbb{P}^1\times\mathbb{P}^1$-type, $K_S^2\leqslant 5$ and $a_1=a_2$,
then $(S,L)$ is $K$-stable for $1>a_3\gg a_2$.
This follows from the \emph{lifting} results of Arezzo, Pacard, Rollin and Singer (see \cite{AP1,AP2,APS,RS}). A related non-exhaustive list of lifting results for cscK metrics in non-del Pezzo situations includes LeBrun--Singer \cite{LeBrun-Singer}, Kim--Pontecorvo \cite{Kim-Pontecorvo}, and Kim--LeBrun--Pontecorvo \cite{Kim-LeBrun-Pontecorvo}.

At this point, it is probably a good idea to have some \emph{critical} commentary of our results, reflecting on our contribution to the problem and the  relation of our work to that of the aforementioned authors. Ideally, a complete solution to the problem of the existence of cscK on a del Pezzo surface $S$ would consist of having some parametrization of the ample cone $\mathrm{Amp}(S)$, so that one could tell for each ample line bundle $L\in \mathrm{Amp}(S)$ if a cscK metric exists in $c_1(L)$ or not. This viewpoint is informed by the case of toric del Pezzo surfaces (i.e. when $K_S^2\geqslant 6$), where the problem is completely solved (see theorems \ref{theorem:toric-popular} and \ref{theorem:Ross-Thomas}). Aside of the toric case, the aforementioned existence results \cite{AP1,AP2,APS,RS} are \emph{qualititative}, and follow the following lifting strategy: it is assumed that a cscK metric $\omega_{S,L}$ is known to exist in $c_1(L)$, where $L$ is and ample line bundle of a del Pezzo surface $S$. Another del Pezzo surface $S'$ is obtained as the blow-up $\pi\colon S'\rightarrow S$ of $S$ and the metric $\omega_{S,L}$ is lifted to a cscK metric $\omega_{S',L'}\in c_1(L')$ where $L'$ is an ample line bundle of $S'$. However, it does not seem to be possible to say much about where $L'$ lies with respect to $\pi^*(L)$ even assuming that a parametrization of $\mathrm{Amp}(S')$ is available. In \cite{Cheltsov-Jesus}, we gave existence results where, if desired, coordinates for $L$ in $\mathrm{Amp}(S)$ would be easy to obtain. In the current article, we have taken a complementary approach, providing results on \emph{non-existence} of cscK metrics in $c_1(L)$ for $L\in \mathrm{Amp}(S)$.

The first task we undertake is to give an effective parametrization of the ample cone $\mathrm{Amp}(S)$. Then, we construct test configurations that  obstruct the K-stability of $(S,L)$ for $L$  in a precise region of $\mathrm{Amp}(S)$, described using the parametrization. The test configurations constructed generalize slope test configurations introduced by Ross--Thomas and Ross--Panov \cite{RT1, RT2, RP} using deformations to the normal cone by performing flops on the slope test configurations. Ross--Thomas's test configurations are limiting, in the sense that they do not detect K-instability for many ample line bundles, even in the toric case, for instance in the setting in Theorem \ref{theorem:toric-popular}. The approach we follow was first pioneered by the first author and Rubinstein in \cite{CheRu} in a very special case, where $L\sim -K_S-(1-\beta)C$ for $C$ a curve and $\beta$ a small parameter. Their motivation was studying asymptotical K-stability of asymptotically log Fano pairs $(S,C)$. We generalize their work to any polarization $L$. Once it has been established that a pair $(S,L)$ is not K-semistable, one can use the fact that K-stability is an obstruction to the existence of cscK metrics \cite{DR17,BDL16} to determine that there is no cscK metric in $c_1(L)$.

While a complete characterization of the ample cone according to K-stability is the main motivation for this article and \cite{Cheltsov-Jesus}, it is probably an unattainable goal. This is due to two reasons: firstly, the ample cone becomes very complicated when $K_S^2$ is small and detecting obstructions to K-stability using our method eventually boils down to describing the negative locus of many polynomials of degree $4$ in $9-K_S^2$ variables. As a result, a simple description of such loci is not expected. The second complication has to do with the intrinsic limitations of the method. As it happened with the slope test configurations of Ross--Thomas, it is expected that some polarizations are not obstructed by our \emph{flop slope test configurations}. While we have not found any method in the literature which obstructs a polarization that we fail to obstruct, this is no reason to believe better methods will be develop in the future to obstruct polarizations that we are unable to tackle. Nevertheless, our article recovers all currently known obstructions to the existence of cscK metrics on polarized del Pezzo surfaces, including Theorem \ref{theorem:toric-popular} and Ross--Thomas's obstructions, and provides many new ones (see theorems \ref{theorem:nice-inequality}, \ref{theorem:degree-6} and \ref{theorem:main}).

Let us outline the structure of this article.
In~sections~\ref{section:stability} and \ref{section:DF-invariant}, we recall the $K$-stability setting,
including the flopped version of slope stability.
In~Section~\ref{section:del-Pezzo}, we study ample divisors on del Pezzo surfaces, giving coordinates to elements of the ample cone,
and prove Theorem~\ref{theorem:degree-6}.
In~Section~\ref{section:technical}, we prove technical results on ample divisors on del Pezzo surfaces.
In~Section~\ref{section:unstable-del-pezzo}, we prove Theorems~\ref{theorem:nice-inequality}~and~\ref{theorem:main}.
The proof of Theorem~\ref{theorem:main} relies on symbolic computations presented in Appendix~\ref{section:Maple}.

\section{What is $K$-polystability?}
\label{section:stability}

Let $X$ be an $n$-dimensional smooth projective variety, and let $L$ be an ample line bundle on it.
In this section we will remind the reader of the notion of $K$-stability of the pair $(X,L)$,
which was originally defined by Tian in \cite{Tian1997}.
A more refined, algebro-geometric definition was introduced by Donaldson in \cite{Donaldson}, which eventually led to Conjecture~\ref{conjecture:YTD}.

First we need to define the notion of \emph{test configuration}.
We will always assume that the total space of the test configuration is normal (see \cite{LiXu} for an explanation).

\begin{definition}
\label{definition:TC}
A \emph{test configuration} of $(X,L)$ consists of
\begin{itemize}
\item a normal variety $\mathcal{U}$ with a $\mathbb{G}_m$-action,

\item a flat $\mathbb{G}_m$-equivariant map ${p}_{\mathcal{U}}\colon \mathcal{U}\to\mathbb{A}^1$, where $\mathbb{G}_m$ acts on $\mathbb{A}^1$ naturally,

\item a $\mathbb{G}_m$-equivariant ${p}_{\mathcal{U}}$-ample line bundle $\mathcal{L}_{\mathcal{U}}\to\mathcal{U}$
such that there exists a positive integer $r$, called \emph{exponent},
and a $\mathbb{G}_m$-equivariant isomorphism
\begin{equation}
\label{equation:isomorphism}
\Big(\mathcal{U}\setminus{p}_{\mathcal{U}}^{-1}(0),\mathcal{L}_{\mathcal{U}}\Big\vert_{\mathcal{U}\setminus{p}_{\mathcal{U}}^{-1}(0)}\Big)\cong\Big(X\times\big(\mathbb{A}^1\setminus\{0\}\big), p_1^*(L^{\otimes r})\Big),
\end{equation}
with the natural action of the group $\mathbb{G}_m$ on $\mathbb{A}^1\setminus\{0\}$ and the trivial action on $X$,
where $p_1\colon X\times(\mathbb{A}^1\setminus\{0\})\to X$ is the projection to the first factor.
\end{itemize}
We also say that $(\mathcal{U},\mathcal{L}_\mathcal{U},{p}_{\mathcal{U}})$ a \emph{product test configuration}
if $\mathcal{U}\cong X\times\mathbb{A}^1$ and $\mathcal{L}_{\mathcal{U}}=p_1^*(L^{\otimes r})$.
A product test configuration is \emph{trivial} if $\mathbb{G}_m$ acts trivially on the left factor of $X\times\mathbb{A}^1$.
\end{definition}

Given an arbitrary test configuration $(\mathcal{U},\mathcal{L}_{\mathcal{U}},{p}_{\mathcal{U}})$ of the pair $(X,L)$ with exponent $r$,
one can naturally compactify it by gluing $(\mathcal{U},\mathcal{L}_{\mathcal{U}})$ with $(X\times(\mathbb{P}^1\setminus \{0\}),p_1^*(L^{\otimes r}))$ as follows.
In the $\mathbb{G}_m$-equivariant isomorphism \eqref{equation:isomorphism},
each $t\in\mathbb{G}_m$ acts on its right hand side by
$$
t\circ\big(\{p\}\times\{a\},s\big)=\big(\{p\}\times\{ta\},s\big)
$$
for any $p\in X$, $a\in \mathbb{A}^1$ and $s\in(\mathcal{L}_{\mathcal{U}})_p$.
So, we can define the gluing map using the diagram
$$
\xymatrix{
\big(\mathcal{U}, \mathcal{L}_{\mathcal{U}}\big)&&\big(X\times\mathbb{P}^1\setminus\{0\},p_1^*(L^{\otimes r})\big)\\
\Big(\mathcal{U}\setminus{p}_{\mathcal{U}}^{-1}(0),\mathcal{L}_{\mathcal{U}}\Big\vert_{\mathcal{U}\setminus{p}_{\mathcal{U}}^{-1}(0)}\Big)\ar@{->}[rr]^{\phi}\ar@{^{(}->}[u]&&\Big(X\times \big(\mathbb{A}^1\setminus\{0\}\big), p_1^*(L^{\otimes r})\Big),\ar@{^{(}->}[u]}
$$
where the map $\phi$ is given by
$$
\phi\colon \big(p,a,s\big)\longmapsto\big(\{a^{-1}\circ p\}\times\{a\}, a^{-1}\circ s\big),
$$
where $\mathbb{G}_m$ only acts by multiplication on the factor $\mathbb{P}^1\setminus\{0\}$ of $(X\times \mathbb{P}^1\setminus\{0\},p_1^*(L^{\otimes r}))$.
Using this gluing map, we obtain a triple $(\mathcal{X},\mathcal{L},{p})$ consisting of
\begin{itemize}
\item a normal projective variety $\mathcal{X}$ with a $\mathbb{G}_m$-action,

\item a flat $\mathbb{G}_m$-equivariant map ${p}\colon\mathcal{X}\to\mathbb{P}^1$ such that ${p}_{\mathcal{U}}^{-1}(t)\cong X$ for every $t\in\mathbb{P}^1\setminus\{0\}$,

\item a $\mathbb{G}_m$-equivariant ${p}$-ample line bundle $\mathcal{L}\to\mathcal{X}$,
such that
$$
\mathcal{L}\Big\vert_{{p}^{-1}(t)}\cong L^{\otimes r}
$$
for every $t\in\mathbb{P}^1\setminus\{0\}$, where we identify ${p}_{\mathcal{U}}^{-1}(t)$ with $X$.
\end{itemize}
For further details and examples, see \cite[Section~8.1]{LiXu}.

\begin{remark}
\label{remark:Li-Xu}
In \cite{LiXu}, the triple $(\mathcal{X},\mathcal{L},{p})$ is called \emph{$\infty$-trivial compactification} of the test configuration $(\mathcal{U},\mathcal{L}_{\mathcal{U}},{p}_{\mathcal{U}})$.
Since we will always work with compactified test configurations in this article,
we will simply call the triple $(\mathcal{X},\mathcal{L},{p})$ a \emph{test configuration} of the pair $(X,L)$.
Moreover, if the original test configuration $(\mathcal{U},\mathcal{L}_{\mathcal{U}},{p}_{\mathcal{U}})$ is trivial,
then we say that the test configuration $(\mathcal{X},\mathcal{L},{p})$ is \emph{trivial}.
In this case, $\mathcal{X}\cong X\times\mathbb{P}^1$ and $\mathcal{L}=p_1^*(L^{\otimes r})$.
Similarly, if $(\mathcal{U},\mathcal{L}_{\mathcal{U}},{p}_{\mathcal{U}})$ is a product test configuration,
then we say that $(\mathcal{X},\mathcal{L},{p})$ is a \emph{product test configuration}.
In this case, we have ${p}^{-1}(0)\cong X$, so that ${p}$ is an isotrivial fibration.
\end{remark}

Using the compactified test configuration $(\mathcal{X},\mathcal{L},{p})$, Li and Xu gave an intersection formula for the Donaldson--Futaki invariant of the original test configuration.
This formula first appeared in work of Odaka \cite{Odaka} and Wang \cite{Wang}, c.f. \cite[Proposition~6]{LiXu} for a new proof.
We will use this formula as a definition of the Donaldson--Futaki invariant.
\emph{The slope} of the pair $(X,L)$ is
$$
\nu(L)=\frac{-K_X\cdot L^{n-1}}{{L}^{n}}.
$$
The (normalized) \emph{Donaldson-Futaki invariant} of the (compactified) test configuration $(\mathcal{X},\mathcal{L},{p})$ with exponent $r$
is the number
\begin{equation}
\label{equation:DF-definition}
\mathrm{DF}\big(\mathcal{X},\mathcal{L},{p}\big)=\frac{1}{r^{n}}\Bigg(\frac{n}{n+1}\frac{1}{r}\nu(L)\mathcal{L}^{n+1}+\mathcal{L}^n\cdot\Big(K_{\mathcal{X}}-{p}^*\big(K_{\mathbb{P}^1}\big)\Big)\Bigg),
\end{equation}
where $n$ is the dimension of the variety $X$.
Observe that the number $\mathrm{DF}(\mathcal{X},\mathcal{L},{p})$ does not change
if we replace $\mathcal{L}$ by $\mathcal{L}+{p}^*(D)$ for any line bundle $D$ on $\mathbb{P}^1$.
Moreover, if the test configuration $(\mathcal{X},\mathcal{L},{p})$ is trivial,
then the formula \eqref{equation:DF-definition} gives $\mathrm{DF}(\mathcal{X},\mathcal{L},{p})=0$.

\begin{definition}
\label{definition:K-stability}
The pair $(X,L)$ is said to be \emph{$K$-polystable} if
$\mathrm{DF}(\mathcal{X},\mathcal{L},{p})\geqslant 0$
for every non-trivial test configuration $(\mathcal{X},\mathcal{L},{p})$,
and $\mathrm{DF}(\mathcal{X},\mathcal{L},{p})=0$ only if $(\mathcal{X},\mathcal{L},{p})$ is a product test configuration .
Similarly, the pair $(X,L)$ is said to be \emph{$K$-stable} if $\mathrm{DF}\big(\mathcal{X},\mathcal{L},{p}\big)>0$
for every non-trivial test configuration $(\mathcal{X},\mathcal{L},{p})$.
Finally, if $\mathrm{DF}(\mathcal{X},\mathcal{L},{p})\geqslant 0$ for every test configuration $(\mathcal{X},\mathcal{L},{p})$,
then $(X,L)$ is said to be \emph{$K$-semistable}.
\end{definition}

If the pair $(X,L)$ is not $K$-semistable, then $\mathrm{DF}(\mathcal{X},\mathcal{L},{p})<0$
for some test configuration $(\mathcal{X},\mathcal{L},{p})$ of the pair $(X,L)$.
In this case, we say that $(X,L)$ is \emph{$K$-unstable}, and $(\mathcal{X},\mathcal{L},{p})$ is a \emph{destabilizing} test configuration.

\begin{remark}
\label{remark:stability-polystability}
The $K$-polystability of the pair $(X,L)$ implies its $K$-semistability.
Similarly, the $K$-stability of the pair $(X,L)$ implies its $K$-polystability.
Moreover, if the group $\mathrm{Aut}(X,L)$ is finite, then all product test configurations of the pair $(X,L)$ are trivial,
so that $(X,L)$ is $K$-stable if and only if it is $K$-polystable.
\end{remark}

The pair $(X,L)$ is $K$-polystable (respectively, $K$-stable or $K$-semistable) if and only if
the pair $(X,L^{\otimes k})$ is $K$-polystable (respectively, $K$-stable or $K$-semistable) for some positive integer $k$.
Thus, we can adapt both Definitions~\ref{definition:TC} and \ref{definition:K-stability} to the case when $L$ is an ample $\mathbb{Q}$-divisor on the variety $X$.
This gives us notions of $K$-polystability, $K$-stability, $K$-semistability and $K$-instability for varieties polarized by ample $\mathbb{Q}$-divisors.
Similarly, we can assume that $\mathcal{L}$ in the test configuration $(\mathcal{X},\mathcal{L},{p})$ is a $p$-ample $\mathbb{Q}$-divisor on $\mathcal{X}$.
Because of this, we can assume that $r=1$ in the formula \eqref{equation:DF-definition} for the Donaldson-Futaki invariant.

\section{Slope stability and Atiyah flops}
\label{section:DF-invariant}

Let $S$ be a smooth projective surface, and let $L$ be an ample $\mathbb{Q}$-divisor on the surface $S$.
In this section, we will compute the Donaldson--Futaki invariant of some explicit test configurations of the pair $(S,L)$.
One of them is a very special case of a much more general construction studied by Ross and Thomas in \cite{RT1,RT2}.
Namely, fix a smooth irreducible curve $Z$ in the surface $S$.
By a slight abuse of notation, let us identify the curve $Z$ with the curve $Z\times \{0\}$ in the product $S\times\mathbb{P}^1$.
Let $\pi_Z\colon \mathcal{X}\to S\times\mathbb{P}^1$ be the blow-up of the curve $Z$.
Denote the exceptional divisor of $\pi_Z$ by $E_Z$, let $p=p_{\mathbb{P}^1}\circ\pi_Z$, and let
$$
\mathcal{L}_{\lambda}=\big(p_S\circ\pi_Z\big)^*(L)-\lambda E_Z,
$$
where $\lambda$ is a positive rational number.
Denote by $\sigma(S,L,Z)$ the \emph{Seshadri constant} of the pair $(S,L)$ along $Z$.
Recall that $\sigma(S,L,Z)$ is usually very easy to compute, since
$$
\sigma(S,L,Z)=\mathrm{sup}\Big\{\mu\in\mathbb{Q}_{>0}\ \Big\vert\ \text{the divisor}\ L-\mu Z\ \text{is nef}\Big\}.
$$
By \cite[Proposition~4.1]{RT2}, if $\lambda<\sigma(S,L,Z)$, then $\mathcal{L}_{\lambda}$ is $p$-ample (see also \cite[Lemma~2.2]{CheRu}),
so that $(\mathcal{X},\mathcal{L}_{\lambda},p)$ is a (compactified) test configuration of the pair $(S,L)$.
This test configuration is often called a \emph{slope test configuration} centred at $Z$.
If $\mathrm{DF}(\mathcal{X},\mathcal{L}_{\lambda},p)<0$ for some $\lambda<\sigma(S,L,Z)$,
then $(S,L)$ is said to be \emph{slope unstable}. This implies, in particular, that $(S,L)$ is $K$-unstable.

One good thing about the test configuration $(\mathcal{X},\mathcal{L}_{\lambda},p)$ is that its Donaldson--Futaki invariant is very easy to compute.
Namely, let $g(Z)$ be the genus of the curve $Z$, and let
\begin{equation}
\label{equation:DF-polynomial}
\mathfrak{DF}(\lambda)=\frac{2}{3}\nu(L)\Big(\lambda^3Z^2-3\lambda^2L\cdot Z\Big)+\lambda^2\big(2-2g(Z)\big)+2\lambda L\cdot Z.
\end{equation}
Recall from Section~\ref{section:stability} that $\nu(L)=\frac{-K_S\cdot L}{L^2}$ is the slope of the pair $(S,L)$.

\begin{lemma}[{\cite{RT1,RT2}}]
\label{lemma:DF-slope}
If $\lambda<\sigma(S,L,Z)$, then $\mathrm{DF}(\mathcal{X},\mathcal{L}_{\lambda},p)=\mathfrak{DF}(\lambda)$.
\end{lemma}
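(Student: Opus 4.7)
\medskip
\noindent\textbf{Proof proposal.} The plan is to apply the intersection-theoretic formula \eqref{equation:DF-definition} directly to the compactified slope test configuration $(\mathcal{X},\mathcal{L}_\lambda,p)$. Since $L$ is a $\mathbb{Q}$-divisor, by the remark at the end of Section~\ref{section:stability} we may take $r=1$, and since $n=\dim S=2$ the formula reduces to
\[
\mathrm{DF}(\mathcal{X},\mathcal{L}_\lambda,p)=\tfrac{2}{3}\nu(L)\,\mathcal{L}_\lambda^{3}+\mathcal{L}_\lambda^{2}\cdot\bigl(K_{\mathcal{X}}-p^{*}K_{\mathbb{P}^{1}}\bigr).
\]
So the whole task is to evaluate the two numbers $\mathcal{L}_\lambda^{3}$ and $\mathcal{L}_\lambda^{2}\cdot(K_{\mathcal{X}}-p^{*}K_{\mathbb{P}^{1}})$ and to recognise $\mathfrak{DF}(\lambda)$.

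First I would rewrite the canonical class. Since $\pi_Z$ is the blow-up of the smooth curve $Z\subset S\times\mathbb{P}^{1}$ (of codimension $2$) one has $K_{\mathcal{X}}=\pi_Z^{*}K_{S\times\mathbb{P}^{1}}+E_Z$, and $K_{S\times\mathbb{P}^{1}}=p_S^{*}K_S+p_{\mathbb{P}^{1}}^{*}K_{\mathbb{P}^{1}}$, so
\[
K_{\mathcal{X}}-p^{*}K_{\mathbb{P}^{1}}=(p_S\circ\pi_Z)^{*}K_S+E_Z.
\]
Writing $\widetilde{L}=(p_S\circ\pi_Z)^{*}L$ and $\widetilde{K}=(p_S\circ\pi_Z)^{*}K_S$, everything reduces to intersection numbers of $\widetilde{L},\widetilde{K}$ and $E_Z$ on $\mathcal{X}$.

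The main step is to gather the blow-up intersection numbers. The identities $\widetilde{L}^{3}=0$, $\widetilde{L}^{2}\cdot E_Z=0$, $\widetilde{L}^{2}\cdot\widetilde{K}=0$ and $\widetilde{L}\cdot E_Z\cdot\widetilde{K}=0$ all follow from the projection formula together with the fact that three divisors pulled back from the surface $S$ have zero triple intersection on the threefold and that $(\pi_Z)_{*}E_Z=0$ as a divisor class. The remaining numbers are computed on $E_Z=\mathbb{P}(N_{Z/(S\times\mathbb{P}^{1})})$: since $Z\subset S\times\{0\}$, the normal bundle fits in
\[
0\to N_{Z/S}\to N_{Z/(S\times\mathbb{P}^{1})}\to N_{S\times\{0\}/(S\times\mathbb{P}^{1})}\big|_Z\to 0,
\]
the second quotient is trivial, so $\deg N_{Z/(S\times\mathbb{P}^{1})}=Z^{2}$. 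Applying the standard Grothendieck relation on the ruled surface $E_Z$ (with $E_Z\vert_{E_Z}=-\xi$ and $\xi^{2}=-c_1(N)\cdot\xi$) yields
\[
E_Z^{3}=-Z^{2},\qquad \widetilde{L}\cdot E_Z^{2}=-L\cdot Z,\qquad E_Z^{2}\cdot\widetilde{K}=-K_S\cdot Z.
\]
This is the step where one must be careful with signs and conventions; it is the only non-routine ingredient.

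Expanding $\mathcal{L}_\lambda=\widetilde{L}-\lambda E_Z$ using these values gives
\[
\mathcal{L}_\lambda^{3}=-3\lambda^{2}\,L\cdot Z+\lambda^{3}Z^{2},\qquad
\mathcal{L}_\lambda^{2}\cdot\widetilde{K}=-\lambda^{2}K_S\cdot Z,\qquad
\mathcal{L}_\lambda^{2}\cdot E_Z=2\lambda\,L\cdot Z-\lambda^{2}Z^{2}.
\]
Summing the last two and applying the adjunction formula $K_S\cdot Z+Z^{2}=2g(Z)-2$ on $Z\subset S$ I obtain
\[
\mathcal{L}_\lambda^{2}\cdot\bigl(K_{\mathcal{X}}-p^{*}K_{\mathbb{P}^{1}}\bigr)=\lambda^{2}\bigl(2-2g(Z)\bigr)+2\lambda\,L\cdot Z.
\]
Plugging into the two-term formula for $\mathrm{DF}(\mathcal{X},\mathcal{L}_\lambda,p)$ reproduces exactly \eqref{equation:DF-polynomial}, proving the lemma. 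The hypothesis $\lambda<\sigma(S,L,Z)$ is used only to guarantee, via \cite[Proposition~4.1]{RT2}, that $\mathcal{L}_\lambda$ is $p$-ample so that $(\mathcal{X},\mathcal{L}_\lambda,p)$ is a legitimate test configuration; the intersection computation itself is formal and works for every $\lambda$.
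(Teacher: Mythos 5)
Your proof is correct and follows essentially the same route as the paper's: both plug the slope test configuration into the intersection formula \eqref{equation:DF-definition} with $r=1$, $n=2$, expand $\mathcal{L}_\lambda^3$ and $\mathcal{L}_\lambda^2\cdot(K_\mathcal{X}-p^*K_{\mathbb{P}^1})$ using the vanishing of cross-terms and the key intersection numbers $E_Z^3=-Z^2$, $\widetilde{L}\cdot E_Z^2=-L\cdot Z$, $E_Z^2\cdot\widetilde{K}=-K_S\cdot Z$, and finish with adjunction $K_S\cdot Z+Z^2=2g(Z)-2$. You merely spell out more of the scaffolding (the Grothendieck relation on $E_Z$, the normal-bundle sequence, and the adjunction step, which the paper leaves implicit when matching $-\lambda^2(K_S\cdot Z+Z^2)$ with $\lambda^2(2-2g(Z))$) and you correctly isolate the role of $\lambda<\sigma(S,L,Z)$ as guaranteeing $p$-ampleness rather than entering the computation.
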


\begin{proof}
Since $-E_Z^3$ is the degree of the normal bundle of $Z$ in $S\times \mathbb{P}^1$, we get
$$
\mathcal{L}_{\lambda}^3=
3\lambda^2\Big((p_{\mathbb{P}^1}\circ\pi_{Z})^*\big(L\big)\Big)\cdot E_Z^2-\lambda^3E_Z^3=-3\lambda^2L\cdot Z+\lambda^3Z^2.
$$
Moreover, we have
\begin{multline*}
\mathcal{L}_{\lambda}^2\cdot\Big(K_{\mathcal{X}}-{p}^*\big(K_{\mathbb{P}^1}\big)\Big)=\Big((p_{\mathbb{P}^1}\circ\pi_{Z})^*\big(K_S\big)+E_Z\Big)\cdot \Big((p_{X}\circ\pi_{Z})^*\big(L\big)-\lambda E_Z\Big)^2=\\
=-2\lambda\left(p_{X}\circ\pi_{Z}\right)^*\left(L\right)\cdot E_Z^2+\lambda^2\left(p_{\mathbb{P}^1}\circ \pi_{Z}\right)^*(K_S)\cdot E_Z^2+\lambda^2E_Z^3=\\
=2\lambda L\cdot Z-\lambda^2K_S\cdot Z-\lambda^2 Z^2.
\end{multline*}
Now the result follows from substituting in \eqref{equation:DF-definition} the above identities.
\end{proof}

If $\lambda<\sigma(S,L,Z)$ and $Z^2\geqslant 0$, then $\mathrm{DF}(\mathcal{X},\mathcal{L}_{\lambda},p)\geqslant 0$ by \cite[Theorem~1.3]{RP}.
Thus, if we want $(\mathcal{X},\mathcal{L}_{\lambda},p)$ to be a destabilizing test configuration, then $Z^2$ must be negative.
In particular, if $S$ is a del Pezzo surface, then $Z$ must be a $(-1)$-curve.
In this case, we have finitely many choices for the curve $Z$.

\begin{example}[{\cite[Example 5.27]{RT1}}]
\label{example:K-unstable-F1}
Suppose that $S\cong\mathbb{F}_1$, and $Z$ is the $(-1)$-curve.
Then $\mathrm{DF}(\mathcal{X},\mathcal{L}_{\lambda},p)<0$ for some $\lambda<\sigma(S,L,Z)$.
Indeed, denote by $f$ the fiber of the natural projection $\mathbb{F}_1\to\mathbb{P}^1$.
Up to positive scaling, either $L\sim_{\mathbb{Q}}-K_S+aZ$
for some non-negative rational number $a<1$, or
$L\sim_{\mathbb{Q}}-K_S+bf$
for some positive rational number $b$.
In the former case, we have $\nu(L)=\frac{8+a}{8+2a-a^2}$ and $\sigma(S,L,Z)=2+a$,
so that Lemma~\ref{lemma:DF-slope} implies that $\mathrm{DF}(\mathcal{X},\mathcal{L}_\lambda,p)<0$ for some $\lambda<2+a$,
because
\begin{multline*}
\lim_{\lambda\to1+a}\mathfrak{DF}(\lambda)=\lim_{\lambda\to 1+a}\frac{2}{3}\cdot\frac{8+a}{8+2a-a^2}\left(-3\lambda^2(1-a)-\lambda^3\right)+2\lambda^2+2\lambda(1-a)=\\
=\frac{4}{3}\cdot \frac{a^3+3a^2-4}{4-a}<0,
\end{multline*}
for all $a\in[0,1)$. Similarly, in the latter case, we have $\nu(L)=\frac{4+b}{4+2b}$ and $\sigma(S,L,e)=2$,
so that
$$
\lim_{\lambda\to 2}\mathfrak{DF}(\lambda)=\lim_{\lambda\to 2}\frac{2}{3}\cdot\frac{4+b}{4+2b}\left(-3\lambda^2(1+b)-\lambda^3\right)+2\lambda^2+2\lambda(1+b)=-\frac{8}{3}\frac{1+b}{2+b}<0,
$$
which implies that $\mathrm{DF}(\mathcal{X},\mathcal{L}_\lambda,p)<0$ for some $\lambda<2$.
\end{example}

The leading term of the cubic polynomial $\mathfrak{DF}(\lambda)$ defined in \eqref{equation:DF-polynomial} is  $\frac{2}{3}\nu(L)Z^2$.
Thus, if $Z^2<0$ and $S$ is del Pezzo surface, then $\mathfrak{DF}(\lambda)<0$ for $\lambda\gg 0$.
Unfortunately, in this case $\mathfrak{DF}(\lambda)$ is usually positive for $\lambda<\sigma(S,L,Z)$,
simply because the Seshadri constant $\sigma(S,L,Z)$ is often \emph{too small}.
In particular, this happens in the case when $S$ is a blow-up of $\mathbb{P}^2$ in two distinct points and $L=-K_S$ (see \cite[Example 7.6]{RP}).
On the other hand, if $S$ is a blow-up of $\mathbb{P}^2$ in two distinct points,
then the group $\mathrm{Aut}_0(S,L)$ is not reductive for every ample divisor $L$,
which implies that $(S,L)$ does not admit a constant scalar curvature K\"ahler metric by Matsushima's obstruction. In particular, by Donaldson's resolution of Conjecture \ref{conjecture:YTD} for toric surfaces, $(S, L)$ is not equivariantly K-polystable, and in particular $(S, L)$ is not K-polystable.

Recall that the pseudo-effective cone is the closure of the cone of effective divisors. There is another famous threshold that one can relate to the triple $(S,L,Z)$,
which is commonly known as \emph{the pseudo-effective threshold}.
It can be defined as
\begin{equation}
\label{equation:tau}
\tau(S,L,Z)=\mathrm{sup}\Big\{\mu\in\mathbb{Q}_{>0}\ \Big\vert\ \text{the divisor}\ L-\mu Z\ \text{is pseudo-effective}\Big\}.
\end{equation}
Since nef divisors are pseudo-effective, we always have $\sigma(S,L,Z)\leqslant\tau(S,L,Z)$,
and the inequality is strict in many interesting cases.
In \cite{CheRu}, Cheltsov and Rubinstein  introduced a birational modification to the slope test configuration
in order to increase the value of $\lambda$ up to the pseudo-effective threshold.
Let us briefly describe their construction.

Suppose that there exists a birational morphism
$\pi\colon S\to\overline{S}$ such that the surface $\overline{S}$ is smooth, and $\pi$ is a blow-up of $k>0$ distinct points $O_1,\ldots,O_k$ in the surface $\overline{S}$.
Moreover, we assume that the image of the curve $Z$ in the surface $\overline{S}$ is a smooth curve that contains all these points.
Let $\overline{Z}=\pi(Z)$, and denote by $C_1,\ldots,C_k$ the $\pi$-exceptional curves that are mapped to the points $O_1,\ldots,O_k$, respectively.
For every point $O_i\in\overline{S}$, let $\Gamma_i$ be the curve $O_i\times\mathbb{P}^1$ in the product $\overline{S}\times\mathbb{P}^1$.
Then there exists a commutative diagram
$$
\xymatrix{
S\times\mathbb{P}^1\ar@{->}[d]_{p_{S}}\ar@{->}[rrr]^{\pi_\Gamma}&&& \overline{S}\times\mathbb{P}^1\ar@{->}[d]^{q_{\overline{S}}}\\%
S\ar@{->}[rrr]_{\pi}&&&\overline{S}}
$$
where $q_{\overline{S}}$ is a natural projection, and $\pi_\Gamma$ is the blow-up of the the curves $\Gamma_1,\ldots,\Gamma_k$.
Let us expand this commutative diagram by adding the threefold $\mathcal{X}$, the blow-up $\pi_Z$, and few other birational maps.
Namely, recall that we identified $Z$ with the curve $Z\times \{0\}$ in the product $S\times\mathbb{P}^1$.
Similarly, let us identify the curve $\overline{Z}$ with the curve $\overline{Z}\times \{0\}$ in the product $\overline{S}\times\mathbb{P}^1$,
so that $Z$ is a proper transform of the curve $\overline{Z}$ via the blow-up $\pi_{\Gamma}$.
Thus, the threefold $\mathcal{X}$ is obtained from $\overline{S}\times\mathbb{P}^1$ by blowing up the curves $\Gamma_1,\ldots,\Gamma_k$,
with a consecutive blow-up of the proper transform of the curve $\overline{Z}$.
If we change the order of blow-ups here (first blow-up the curve $\overline{Z}$, and then blow-up the proper transform of the curves $\Gamma_1,\ldots,\Gamma_k$),
we obtain another (smooth) threefold, which differs from $\mathcal{X}$ by exactly $r$ simple flops.
To be precise, let $\pi_{\overline{Z}}\colon\overline{\mathcal{X}}\to\overline{S}\times\mathbb{P}^1$ be the blow-up of $\overline{S}\times\mathbb{P}^1$ along the curve $\overline{Z}$,
and denote by $\overline{\Gamma}_1,\ldots,\overline{\Gamma}_k$ the proper transforms on $\overline{\mathcal{X}}$ of the curves $\Gamma_1,\ldots,\Gamma_k$, respectively.
Let $\pi_{\overline{\Gamma}}\colon\widehat{\mathcal{X}}\to\overline{\mathcal{X}}$ be the blow-up of the curves $\overline{\Gamma}_1,\ldots,\overline{\Gamma}_k$.
Then there exists a commutative diagram:
\begin{equation}
\label{equation:big-diagram} \xymatrix{
&&&&&\widehat{\mathcal{X}}\ar@{->}[d]^{\pi_{\overline{\Gamma}}}\ar@/^5pc/@{->}[ddddr]^{q}\\%
&&\mathcal{X}\ar@/_3pc/@{->}[dddl]_{p}\ar@{->}[d]_{\pi_{Z}}\ar@{-->}[rrru]^{\rho}&&&\overline{\mathcal{X}}\ar@{->}[d]^{\pi_{\overline{Z}}}&&&\\%
&&S\times\mathbb{P}^1
\ar@{->}[d]_{p_{S}}\ar@/_1pc/@{->}[ddl]_{p_{\mathbb{P}^1}}\ar@{->}[rrr]^{\pi_\Gamma}&&&
\overline{S}\times\mathbb{P}^1\ar@{->}[d]^{q_{\overline{S}}}\ar@/^2pc/@{->}[ddr]^{q_{\mathbb{P}^1}}&&&\\%
&&S\ar@{->}[rrr]_{\pi}&&&\overline{S}&&&\\
&\mathbb{P}^1\ar@{=}[rrrrr]&&&&&\mathbb{P}^1.&&}
\end{equation} %
Here $\rho$ is a composition of flops, $q_{\mathbb{P}^1}$ is the natural projection,
and $q=q_{\mathbb{P}^1}\circ\pi_{\overline{Z}}\circ\pi_{\overline{\Gamma}}$.

Let us describe the curves flopped by $\rho$.
To do this, identify the surface $S$ with the fibre of $p_{\mathbb{P}^1}$ over the point $0\in\mathbb{P}^1$,
and denote by $S_{0}$ its  proper transform on the threefold $\mathcal{X}$.
Then $S_0\cong S$, and the union $S_0\cup E_{Z}$ is the fibre of $p$ over the point $0\in\mathbb{P}^1$.
Denote the proper transforms of the curves $C_1,\ldots,C_k\subset S$ on the threefold $\mathcal{X}$
by $\mathcal{C}_1,\ldots,\mathcal{C}_k$, respectively.
Then the curves $\mathcal{C}_1,\ldots,\mathcal{C}_k$ are contained in $S_0$.
These are the curves flopped by $\rho$.
Observe that each $\mathcal{C}_i$ is a smooth rational curve that is contained in $S_0$,
and its normal bundle in $\mathcal{X}$ is isomorphic to $\mathcal{O}_{\mathbb{P}^1}(-1)\oplus\mathcal{O}_{\mathbb{P}^1}(-1)$ (see \cite[Lemma 4.1]{CheRu}).
Thus, the map $\rho$ is a composition of $r$ simple flops, commonly known as Atiyah flops.

\begin{remark}
\label{remark:anti-flop}
Let us identify the surface $\overline{S}$ with the fibre of $q_{\mathbb{P}^1}$ over the point $0\in\mathbb{P}^1$,
and denote its proper transforms on the threefolds $\overline{\mathcal{X}}$ and $\widehat{\mathcal{X}}$ by $\overline{S}_0$ an $\widehat{S}_0$, respectively.
Then $\overline{S}_0\cong\widehat{S}_0\cong\overline{S}$, and $\rho$ maps $S_{0}$ onto $\widehat{S}_0$,
Moreover, the map $\rho$ induces a birational morphism $S_0\to\widehat{S}_0$ that contracts the curves $\mathcal{C}_1,\ldots,\mathcal{C}_r$,
which is just the morphism $\pi\colon S\to\overline{S}$, since $S_0\cong S$ and $\widehat{S}_0\cong\overline{S}$.
Let $E_{\overline{Z}}$ be the $\pi_{\overline{Z}}$-exceptional surface.
Then $\rho^{-1}$ flops the proper transforms in $\widehat{\mathcal X}$ of the fibers of the projection $E_{\overline{Z}}\to\overline{Z}$ over the points $O_1\times \{0\},\ldots,O_k\times \{0\}$.
\end{remark}

Let $\widehat{\mathcal{L}}_\lambda=\rho_*(\mathcal{L}_\lambda)$.
When is $\widehat{\mathcal{L}}_\lambda$ $q$-ample?
To answer this question, let $\overline{L}=\pi_*(L)$.
Then $\overline{L}$ is an ample $\mathbb{Q}$-divisor on the surface $\overline{S}$.
Observe that $\sigma(S,L,Z)\leqslant\sigma(\overline{S},\overline{L},\overline{Z})$ and
$$
L\sim_{\mathbb{Q}}\pi^*\big(\overline{L}\big)-\sum_{i=1}^k(L\cdot C_i)C_i.%
$$
Then $\sigma(S,L,Z)\leqslant L\cdot C_i$, since $(L-\lambda Z)\cdot C_i=L\cdot C_i-\lambda$.
Furthermore, we have
$$
L-\lambda Z\sim_{\mathbb{Q}}\pi^*\big(\overline{L}-\lambda\overline{Z}\big)+\sum_{i=1}^k(\lambda-L\cdot C_i)C_i
$$
is pseudo-effective if $L\cdot C_i<\lambda<\sigma(\overline S, \overline L, \overline Z)$ for every $i$. Thus, if $L\cdot C_i<\sigma(\overline{S},\overline{L},\overline{Z})$ for every $i$,
then $\sigma(S,L,Z)<\sigma(\overline{S},\overline{L},\overline{Z})\leqslant\tau(S,L,Z)$.

\begin{lemma}[{\cite[Lemma 4.7]{CheRu}}]
\label{lemma:q-ample}
If $L\cdot C_i<\lambda<\sigma(\overline{S},\overline{L},\overline{Z})$ for every $i$, then $\widehat{\mathcal{L}}_\lambda$ is $q$-ample.
\end{lemma}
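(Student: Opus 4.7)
The plan is to compute $\widehat{\mathcal L}_\lambda$ explicitly on $\widehat{\mathcal X}$ and then verify $q$-ampleness fiberwise. Since the flop $\rho$ is an isomorphism in codimension one, I can push forward divisor classes by tracking what happens to each prime divisor. Substituting $L\sim_{\mathbb{Q}}\pi^*(\overline L)-\sum_i(L\cdot C_i)C_i$ into $\mathcal L_\lambda=(p_S\circ\pi_Z)^*L-\lambda E_Z$ on $\mathcal X$, the proper transform of $C_i\times\mathbb{P}^1$ (which lives over $\Gamma_i$) is identified by $\rho_*$ with the exceptional divisor $E_{\overline\Gamma_i}$ of $\pi_{\overline\Gamma}$, while $E_Z$ is identified with the proper transform $E'_{\overline Z}$ and the pullback from $\overline S$ is preserved. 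This gives
$$
\widehat{\mathcal L}_\lambda=(q_{\overline S}\circ\pi_{\overline Z}\circ\pi_{\overline\Gamma})^*(\overline L)-\lambda E'_{\overline Z}-\sum_{i=1}^k(L\cdot C_i)E_{\overline\Gamma_i}=\pi_{\overline\Gamma}^*M-\sum_{i=1}^k(L\cdot C_i)E_{\overline\Gamma_i},
$$
where $M=\pi_{\overline Z}^*q_{\overline S}^*(\overline L)-\lambda E_{\overline Z}$ is the Ross--Thomas slope test configuration line bundle on $\overline{\mathcal X}$ associated with $(\overline S,\overline L,\overline Z)$. Since $\lambda<\sigma(\overline S,\overline L,\overline Z)$, $M$ is $(q_{\mathbb{P}^1}\circ\pi_{\overline Z})$-ample by \cite[Proposition~4.1]{RT2}.

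Next I would verify $q$-ampleness by restriction to fibers. For $t\ne 0$, $q^{-1}(t)$ is naturally isomorphic to $S$ (the blow-up of $\overline S\cong\overline{\mathcal X}_t$ at $O_1,\ldots,O_k$), and a direct calculation gives $\widehat{\mathcal L}_\lambda|_{q^{-1}(t)}=\pi^*(\overline L)-\sum_i(L\cdot C_i)C_i=L$, which is ample. The central fiber $q^{-1}(0)$ decomposes as $\widehat S_0\cup E'_{\overline Z}\cup\bigcup_i E_{\overline\Gamma_i}$, and by Nakai--Moishezon it suffices to check ampleness on each irreducible component. The crucial geometric observation is that on $\overline{\mathcal X}$ the proper transforms $\overline\Gamma_i$ are disjoint from $\widetilde{\overline S}_0$ (their common point $O_i\times\{0\}\in\overline Z$ has been absorbed into $E_{\overline Z}$ by $\pi_{\overline Z}$), and they meet $E_{\overline Z}$ at the tangent-direction point, which lies off the section $\overline Z\subset E_{\overline Z}$. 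Consequently on $\widehat S_0\cong\overline S$ the $E_{\overline\Gamma_i}$ restrict to zero and
$$
\widehat{\mathcal L}_\lambda|_{\widehat S_0}=\overline L-\lambda\overline Z,
$$
which is ample on $\overline S$ since $\lambda<\sigma(\overline S,\overline L,\overline Z)$.

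For the remaining components I would work directly with intersection numbers. On each Hirzebruch surface $E_{\overline\Gamma_i}=\mathbb{P}(\mathcal N^*_{\overline\Gamma_i/\overline{\mathcal X}})$, the restriction of $\widehat{\mathcal L}_\lambda$ decomposes into a pullback from $\overline\Gamma_i$, the tautological class, and the trace of $E'_{\overline Z}$ (a section); Nakai--Moishezon then reduces to $0<L\cdot C_i<\lambda$. On $E'_{\overline Z}$, which is the ruled surface $E_{\overline Z}\to\overline Z$ blown up at the $k$ points where the $\overline\Gamma_i$ met $E_{\overline Z}$, the restriction is an explicit divisor whose ampleness reduces, again via Nakai--Moishezon, to precisely $L\cdot C_i<\lambda<\sigma(\overline S,\overline L,\overline Z)$.

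The main obstacle is the analysis on $E'_{\overline Z}$: one has to parametrize all irreducible curves in this blown-up Hirzebruch surface (fibers, the section $\overline Z\subset E_{\overline Z}$, the tangent-direction sections meeting the $\overline\Gamma_i$, and the $(-1)$-curves introduced over the $O_i$), compute their intersections with $\widehat{\mathcal L}_\lambda|_{E'_{\overline Z}}$, and verify positivity against each. The hypothesis $L\cdot C_i<\lambda$ is exactly what gives positivity on the new $(-1)$-curves, while $\lambda<\sigma(\overline S,\overline L,\overline Z)$ controls positivity on the curves inherited from $\overline Z$; all remaining intersection checks are then routine.
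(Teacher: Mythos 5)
The paper does not reproduce a proof of this statement; it is cited directly from \cite[Lemma~4.7]{CheRu}, so what follows evaluates your proposal on its own merits rather than against a proof in the text.

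Your strategy --- compute $\widehat{\mathcal L}_\lambda$ explicitly by pushing forward under the small birational map $\rho$, then verify relative ampleness fiber by fiber --- is a legitimate route, and the identity
$\widehat{\mathcal L}_\lambda=\pi_{\overline\Gamma}^*M-\sum_{i}(L\cdot C_i)E_{\overline\Gamma_i}$
is correct, as are your computations of the restrictions to a general fiber (giving $L$) and to $\widehat S_0$ (giving $\overline L-\lambda\overline Z$, using that $\overline\Gamma_i\cap\overline S_0=\emptyset$). However, there is a structural error in your description of the central fiber, and the one component where both hypotheses actually do work is left entirely to future effort.

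The error: the central fiber is \emph{not} $\widehat S_0\cup E'_{\overline Z}\cup\bigcup_i E_{\overline\Gamma_i}$. Each $\overline\Gamma_i$ is a section of $q_{\mathbb P^1}\circ\pi_{\overline Z}$, so the exceptional divisor $E_{\overline\Gamma_i}$ is a ruled surface over $\overline\Gamma_i\cong\mathbb P^1$ that \emph{dominates} $\mathbb P^1$ under $q$; it is not contained in any fiber. In fact $q^{-1}(0)=\widehat S_0\cup E'_{\overline Z}$ only (the $\mathbb P^1$-fibers of $E_{\overline\Gamma_i}$ over $\overline\Gamma_i\cap E_{\overline Z}$ already lie inside $E'_{\overline Z}$). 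This matters, because for a horizontal divisor like $E_{\overline\Gamma_i}$, $q$-ampleness of $\widehat{\mathcal L}_\lambda$ forces ampleness only on the $\mathbb P^1$-fibers of $q|_{E_{\overline\Gamma_i}}$, not on the entire surface. Indeed $M\cdot\overline\Gamma_i=-\lambda<0$, so $\pi_{\overline\Gamma}^*M|_{E_{\overline\Gamma_i}}$ is pulled back from a \emph{negative} degree divisor on $\overline\Gamma_i$, and the restriction $\widehat{\mathcal L}_\lambda|_{E_{\overline\Gamma_i}}$ will in general fail to be ample. Carrying out your proposed Nakai check there would produce a spurious obstruction and derail the argument.

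Once the fiber decomposition is corrected, the only step requiring real work is proving that $\widehat{\mathcal L}_\lambda|_{E'_{\overline Z}}$ is ample on the blow-up of the ruled surface $E_{\overline Z}\to\overline Z$ at $k$ points. You correctly identify that both $L\cdot C_i<\lambda$ (needed for positivity on the new $(-1)$-curves) and $\lambda<\sigma(\overline S,\overline L,\overline Z)$ (needed for positivity along curves inherited from $\overline Z$) must enter here, but you defer the whole computation. Since the other checks are routine, this deferred step is exactly the content of the lemma, so the proposal as written does not establish it.
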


Hence, if $L\cdot C_i<\lambda<\sigma(\overline{S},\overline{L},\overline{Z})$ for every $i$,
then $(\widehat{\mathcal{X}}, \widehat{\mathcal{L}}_\lambda,q)$ is a test configuration of the~pair~$(S,L)$.
Its Donaldson--Futaki invariant is also easy to compute.
Namely, let
\begin{align}
\label{equation:DF-flope-polynomial}
\widehat{\mathfrak{DF}}(\lambda)&=\mathfrak{DF}(\lambda)+\frac{2}{3}\nu(L)\Bigg(\sum_{i=1}^k \big(\lambda -L\cdot C_i\big)^3\Bigg)=\\
																&=\frac{2}{3}\nu(L)\Bigg(\lambda^3Z^2-3\lambda^2L\cdot Z+\sum_{i=1}^k \big(\lambda -L\cdot C_i\big)^3\Bigg)+\lambda^2\big(2-2g(Z)\big)+2\lambda L\cdot Z,\nonumber
\end{align}
where $\mathfrak{DF}(\lambda)$ is the rational function defined in \eqref{equation:DF-polynomial}.

\begin{theorem}
\label{theorem:flop-slope}
If $L\cdot C_i<\lambda<\sigma(\overline{S},\overline{L},\overline{Z})$ for every $i$, then $\mathrm{DF}(\widehat{\mathcal{X}},\widehat{\mathcal{L}}_\lambda,q)=\widehat{\mathfrak{DF}}(\lambda)$.
\end{theorem}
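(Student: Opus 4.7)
The strategy is to relate the intersection numbers on $\widehat{\mathcal{X}}$ to those on $\mathcal{X}$ via a common resolution of the flop $\rho$, and then to read off the Donaldson--Futaki invariant directly from \eqref{equation:DF-definition}, comparing with Lemma~\ref{lemma:DF-slope}.

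First I would introduce a common resolution: let $g\colon Y\to\mathcal{X}$ be the blow-up along $\mathcal{C}_1,\ldots,\mathcal{C}_k$; because each $\mathcal{C}_i$ has normal bundle $\mathcal{O}_{\mathbb{P}^1}(-1)\oplus\mathcal{O}_{\mathbb{P}^1}(-1)$, this same $Y$ arises as the blow-up $g^+\colon Y\to\widehat{\mathcal{X}}$ of the flopped curves $\mathcal{C}_1^+,\ldots,\mathcal{C}_k^+$. Denote the common exceptional divisors by $F_i\cong\mathbb{P}^1\times\mathbb{P}^1$, with its two rulings contracted by $g$ and $g^+$ respectively. Standard intersection theory of blow-ups of curves in smooth threefolds yields $F_i^3=2$, $F_i\cdot F_j=0$ for $i\ne j$, $g^*(D_1)\cdot g^*(D_2)\cdot F_i=0$ by projection formula (since $g_*F_i=0$), and $g^*(D)\cdot F_i^2=-D\cdot\mathcal{C}_i$ for any divisor $D$ on $\mathcal{X}$.

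Next I would establish the key transition formula
\[
(g^+)^*(\widehat{\mathcal{L}}_\lambda)=g^*(\mathcal{L}_\lambda)+\sum_{i=1}^{k}a_iF_i,\qquad a_i=\mathcal{L}_\lambda\cdot\mathcal{C}_i,
\]
by writing the difference as $\sum m_iF_i$ (since $\rho$ is an isomorphism off the flopped curves, and $\widehat{\mathcal{L}}_\lambda=\rho_*\mathcal{L}_\lambda$) and testing against the $g^+$-contracted ruling of each $F_i$. Using $\mathcal{L}_\lambda=(p_S\circ\pi_Z)^*(L)-\lambda E_Z$ together with $E_Z\cdot\mathcal{C}_i=1$ (the $(-1)$-curve $C_i$ meets $Z$ transversely at the single point $O_i$) gives $a_i=L\cdot C_i-\lambda$. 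Expanding the cube via the intersection rules above,
\[
\widehat{\mathcal{L}}_\lambda^3=\mathcal{L}_\lambda^3+3\sum_i a_i^2(-a_i)+2\sum_i a_i^3=\mathcal{L}_\lambda^3-\sum_i a_i^3=\mathcal{L}_\lambda^3+\sum_i(\lambda-L\cdot C_i)^3.
\]

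For the mixed term, crepancy of the Atiyah flop (the exceptional divisor of each blow-up appears with discrepancy $1$ in both $K_Y=g^*K_{\mathcal{X}}+\sum F_i$ and $K_Y=(g^+)^*K_{\widehat{\mathcal{X}}}+\sum F_i$) yields $g^*(K_{\mathcal{X}/\mathbb{P}^1})=(g^+)^*(K_{\widehat{\mathcal{X}}/\mathbb{P}^1})$. Expanding $\widehat{\mathcal{L}}_\lambda^2\cdot K_{\widehat{\mathcal{X}}/\mathbb{P}^1}$ the only nontrivial correction is $-\sum a_i^2\,K_{\mathcal{X}/\mathbb{P}^1}\cdot\mathcal{C}_i$, and from $K_{\mathcal{X}/\mathbb{P}^1}=\pi_Z^*(p_S^*K_S)+E_Z$ together with $K_S\cdot C_i=-1$ and $E_Z\cdot\mathcal{C}_i=1$, this intersection is $-1+1=0$. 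Therefore $\widehat{\mathcal{L}}_\lambda^2\cdot K_{\widehat{\mathcal{X}}/\mathbb{P}^1}=\mathcal{L}_\lambda^2\cdot K_{\mathcal{X}/\mathbb{P}^1}$. Plugging both identities into \eqref{equation:DF-definition} and using that the proof of Lemma~\ref{lemma:DF-slope} expresses $\mathfrak{DF}(\lambda)$ in terms of exactly these two intersection numbers on $\mathcal{X}$, one concludes
\[
\mathrm{DF}(\widehat{\mathcal{X}},\widehat{\mathcal{L}}_\lambda,q)=\mathfrak{DF}(\lambda)+\tfrac{2}{3}\nu(L)\sum_i(\lambda-L\cdot C_i)^3=\widehat{\mathfrak{DF}}(\lambda).
\]
The main obstacle is the book-keeping of intersection numbers on the common resolution; the conceptual punchline is the neat cancellation $K_{\mathcal{X}/\mathbb{P}^1}\cdot\mathcal{C}_i=0$, which is exactly what forces the flop to shift the Donaldson--Futaki invariant only by the cubic correction.
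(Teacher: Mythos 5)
Your proof is correct, and it arrives at the identity the paper needs by a somewhat different route. The paper's argument is a one-liner: it invokes the intersection formula for Atiyah flops (Lemma~A.3 of \cite{CheRu}), namely $\rho_*(H_1)\cdot\rho_*(H_2)\cdot\rho_*(H_3)=H_1\cdot H_2\cdot H_3-\sum_i(H_1\cdot\mathcal{C}_i)(H_2\cdot\mathcal{C}_i)(H_3\cdot\mathcal{C}_i)$, applies it once with $H_1=H_2=H_3=\mathcal{L}_\lambda$ and once with $H_3=K_{\mathcal{X}}-p^*(K_{\mathbb{P}^1})$, and then observes $K_{\mathcal{X}}\cdot\mathcal{C}_i=p^*(K_{\mathbb{P}^1})\cdot\mathcal{C}_i=0$ because $\mathcal{C}_i$ is a floppable curve in a fiber, killing the correction to the mixed term. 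What you do instead is rederive precisely that lemma from scratch on the common resolution $Y$ (blowing up the $(-1,-1)$-curves), tracking $F_i^3=2$, $g^*(D)\cdot F_i^2=-D\cdot\mathcal{C}_i$, and the transition $(g^+)^*\widehat{\mathcal{L}}_\lambda=g^*\mathcal{L}_\lambda+\sum(\mathcal{L}_\lambda\cdot\mathcal{C}_i)F_i$; your expansion $-3\sum a_i^3+2\sum a_i^3=-\sum a_i^3$ is exactly the $-\sum(\mathcal{L}_\lambda\cdot\mathcal{C}_i)^3$ of the citation. You also prove the vanishing $K_{\mathcal{X}/\mathbb{P}^1}\cdot\mathcal{C}_i=0$ by a slightly different computation ($K_S\cdot C_i=-1$ plus $E_Z\cdot\mathcal{C}_i=1$) than the paper's more conceptual ``floppable in a fiber'' remark, but the two are equivalent. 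So your argument buys self-containedness at the cost of length; the paper's buys brevity by delegating the intersection bookkeeping to a prior paper. Both are sound.
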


\begin{proof}
By \cite[Lemma A.3]{CheRu}, we have
$$
\rho_*(H_1)\cdot\rho_*(H_2)\cdot\rho_*(H_3)=H_1\cdot H_2\cdot H_3-\sum_{i=1}^k\big(H_1\cdot C_i\big)\big(H_2\cdot C_i\big)\big(H_3\cdot C_i\big)
$$
for any three $\mathbb{Q}$-divisors $H_1$, $H_2$, $H_3$ on the threefold $\mathcal{X}$.
Therefore, we have
$$
\big(\widehat{\mathcal{L}}_\lambda\big)^3=\big(\mathcal{L}_\lambda\big)^3-\sum_{i=1}^k\big(\mathcal{L}_\lambda\cdot C_i\big)^3.
$$
Similarly, as $C_i$ are floppable curves contained in a fibre of $p$, then $K_{\mathcal{X}}\cdot C_i=0=p^*(K_{\mathbb{P}^1})\cdot C_i$ and we have
$$
\big(\widehat{\mathcal{L}}_\lambda\big)^2\cdot\Big(K_{\widehat{\mathcal{X}}}-q^*\big(K_{\mathbb{P}^1}\big)\Big)=
\big(\mathcal{L}_\lambda\big)^2\cdot\Big(K_{\mathcal{X}}-p^*\big(K_{\mathbb{P}^1}\big)\Big).
$$
Since $\mathcal{L}_{\lambda}\cdot C_i=L\cdot C_i-\lambda$, the assertion follows from \eqref{equation:DF-definition} and Lemma~\ref{lemma:DF-slope}.
\end{proof}

\begin{corollary}
\label{corollary:flop-slope}
Suppose that $L\cdot C_i<\sigma(\overline{S},\overline{L},\overline{Z})$ for every $i$,
and $\widehat{\mathfrak{DF}}(\sigma(\overline{S},\overline{L},\overline{Z}))<0$.
Then there is a positive rational number $\lambda$ such that
$L\cdot C_i<\lambda<\sigma(\overline{S},\overline{L},\overline{Z})$ for every $i$,
and $\mathrm{DF}(\widehat{\mathcal{X}},\widehat{\mathcal{L}}_\lambda,q)<0$.
In particular, the pair $(S,L)$ is $K$-unstable.
\end{corollary}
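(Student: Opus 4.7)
The plan is to deduce the corollary as a direct continuity-and-density consequence of Theorem~\ref{theorem:flop-slope} together with the hypothesis that $\widehat{\mathfrak{DF}}$ is strictly negative at the endpoint $\sigma(\overline{S},\overline{L},\overline{Z})$.

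First, I would note that $\widehat{\mathfrak{DF}}(\lambda)$, as defined in \eqref{equation:DF-flope-polynomial}, is a polynomial in $\lambda$ and hence continuous on all of $\mathbb{R}$. Write $\sigma=\sigma(\overline{S},\overline{L},\overline{Z})$ and $M=\max_{i}\{L\cdot C_i\}$. The hypothesis $L\cdot C_i<\sigma$ for every $i$ says exactly that $M<\sigma$, so the open interval $(M,\sigma)$ is non-empty. Since $\widehat{\mathfrak{DF}}(\sigma)<0$ and $\widehat{\mathfrak{DF}}$ is continuous, there exists $\varepsilon>0$ with $\sigma-\varepsilon>M$ such that $\widehat{\mathfrak{DF}}(\mu)<0$ for every $\mu\in(\sigma-\varepsilon,\sigma)$. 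By density of $\mathbb{Q}$ in $\mathbb{R}$ I can then choose a rational $\lambda\in(\sigma-\varepsilon,\sigma)$, and by construction this $\lambda$ satisfies $L\cdot C_i<\lambda<\sigma$ for every $i$.

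For such a $\lambda$, Lemma~\ref{lemma:q-ample} guarantees that $\widehat{\mathcal{L}}_{\lambda}$ is $q$-ample, so $(\widehat{\mathcal{X}},\widehat{\mathcal{L}}_{\lambda},q)$ is a bona fide (compactified) test configuration of the pair $(S,L)$. Applying Theorem~\ref{theorem:flop-slope} then yields $\mathrm{DF}(\widehat{\mathcal{X}},\widehat{\mathcal{L}}_{\lambda},q)=\widehat{\mathfrak{DF}}(\lambda)<0$, which gives the first assertion. Definition~\ref{definition:K-stability} then immediately produces the $K$-instability of $(S,L)$, with $(\widehat{\mathcal{X}},\widehat{\mathcal{L}}_{\lambda},q)$ serving as an explicit destabilizing test configuration.

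There is no real obstacle here: once the flopped construction and the intersection-theoretic formula of Theorem~\ref{theorem:flop-slope} are in place, the corollary is an elementary open-condition statement. The only subtlety is the rationality requirement on $\lambda$ forced by working with ample $\mathbb{Q}$-divisors, and this is handled by density of $\mathbb{Q}$. All the real geometric content, namely the description of the flopped threefold $\widehat{\mathcal{X}}$, the $q$-ampleness criterion, and the intersection-theoretic expression for $\widehat{\mathfrak{DF}}$, has already been established in the preceding lemmas and theorem.
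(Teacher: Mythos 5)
Your proof is correct, and it is essentially the argument the paper leaves implicit: $\widehat{\mathfrak{DF}}$ is a cubic polynomial in $\lambda$, hence continuous, so strict negativity at the endpoint $\sigma(\overline{S},\overline{L},\overline{Z})$ propagates to a one-sided neighbourhood, where a rational $\lambda$ above $\max_i L\cdot C_i$ can be chosen; Lemma~\ref{lemma:q-ample} and Theorem~\ref{theorem:flop-slope} then give the destabilizing test configuration. The paper states the corollary without proof because the reasoning is exactly this short continuity-plus-density observation, so there is nothing to add.
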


In this article we will apply this corollary to polarized smooth del Pezzo surfaces.
Which curve $Z$ should we choose in this case? Should it be a $(-1)$-curve?
If the answer is positive, then which $(-1)$-curve should we choose?
Once the curve $Z$ is chosen, how should we choose the contraction $\pi\colon S\to\overline{S}$?
Is it uniquely determined by the curve $Z$?
We will answer all these questions in the remaining part of this article.
But first, let us show how to apply Corollary~\ref{corollary:flop-slope} in the simplest case.

\begin{example}[{cf. \cite[6.1]{CheRu}}]
\label{example:K-unstable-F1-fiber}
As in Example~\ref{example:K-unstable-F1}, suppose that $S\cong\mathbb{F}_1$.
Let $Z$ be a fiber of the natural projection $\mathbb{F}_1\to\mathbb{P}^1$,
let $C_1$ be the $(-1)$-curve, and let $\pi\colon S\to\overline{S}$ be the contraction of $C_1$, so that $k=1$, $\overline{S}\cong\mathbb{P}^2$,
and $\overline{Z}$ is a line.
Up to positive scaling, either $L\sim_{\mathbb{Q}}-K_S+aC_1$
for some non-negative rational number $a<1$, or
$L\sim_{\mathbb{Q}}-K_S+bZ$
for some positive rational number $b$.
In the former case, we have $\nu(L)=\frac{8+a}{8+2a-a^2}$ and $\sigma(\overline{S},\overline{L},\overline{Z})=3$,
so that \eqref{equation:DF-flope-polynomial} gives
$\widehat{\mathfrak{DF}}(\sigma(\overline{S},\overline{L},\overline{Z}))=\frac{2}{3}\cdot\frac{a^3+3a^2-4}{4-a}<0$.
Similarly, in the latter case, we have $\nu(L)=\frac{4+b}{4+2b}$ and  $\sigma(\overline{S},\overline{L},\overline{Z})=3+b$,
so that
$\widehat{\mathfrak{DF}}(\sigma(\overline{S},\overline{L},\overline{Z}))=-\frac{4}{3}\cdot\frac{1+b}{2+b}<0$.
Thus, in both cases, the pair $(S,L)$ is $K$-unstable by Corollary~\ref{corollary:flop-slope}.
\end{example}

Let us conclude this section by one observation inspired by \cite[Corollary~5.29]{RT1}.
To do this, fix a point $P\in S$ that is not contained in the curves $Z,C_1,\ldots,C_k$.
Let $g\colon S^{\prime}\to S$ be the blow-up of the point $P$, and let $G$ be the exceptional curve of the blow-up $g$.
Denote by $Z^{\prime},C_1^{\prime},\ldots,C_k^{\prime}$ the proper transforms of the curves $Z,C_1,\ldots,C_k$
on the surface $S^{\prime}$, respectively.
Let $\overline{P}=\pi(P)$. Then $\overline{P}\not\in\overline{Z}$, and there exists a commutative diagram
$$
\xymatrix{
S^\prime\ar@{->}[rr]^{g}\ar@{->}[d]_{\pi^\prime}&& S\ar@{->}[d]^{\pi}\\%
\overline{S}^\prime\ar@{->}[rr]_{\overline{g}}&&\overline{S}}
$$
where $\pi^{\prime}$ is a contraction of the curves $C_1^{\prime},\ldots,C_k^{\prime}$,
and $\overline{g}$ is the blow-up of the point $\overline{P}$.
Note that the $\overline{g}$-exceptional curve is the proper transform of the curve $G$ on the surface $\overline{S}^\prime$.
Denote this curve by $\overline{G}$.
Merging this commutative diagram together with the large commutative diagram \eqref{equation:big-diagram}, we obtain the even larger commutative diagram
$$
\xymatrix{
\mathcal{X}^\prime\ar@/_4pc/@{->}[ddddr]_{p^\prime}\ar@{-->}[d]_{\rho^{\prime}}\ar@{->}[rr]^{\pi_{Z^{\prime}}}&&S^\prime\times\mathbb{P}^1\ar@{->}[rrrr]^{h}\ar@{->}[ddd]_{\pi_{\Gamma^\prime}}\ar@{->}[dr]^{p_{S^\prime}}&&&&S\times\mathbb{P}^1\ar@{->}[ddd]^{\pi_{\Gamma}}\ar@{->}[dl]_{p_{S}}&&\mathcal{X}\ar@{->}[ll]_{\pi_{Z}}\ar@{-->}[d]^{\rho}\ar@/^4pc/@{->}[ddddl]^{p}\\
\widehat{\mathcal{X}}^\prime\ar@/_1pc/@{->}[dddr]_{q^\prime}\ar@{->}[dr]_{\pi_{\overline{\Gamma}^\prime}}&&&S^\prime\ar@{->}[rr]^{g}\ar@{->}[d]_{\pi^\prime}&& S\ar@{->}[d]^{\pi}&&&\widehat{\mathcal{X}}\ar@/^1pc/@{->}[dddl]^{q}\ar@{->}[dl]^{\pi_{\overline{\Gamma}}}\\%
&\overline{\mathcal{X}}^\prime\ar@{->}[rd]_{\pi_{\overline{Z}^{\prime}}}&&\overline{S}^\prime\ar@{->}[rr]_{\overline{g}}&&\overline{S}&&\overline{\mathcal{X}}\ar@{->}[dl]^{\pi_{\overline{Z}}}&\\
&&\overline{S}^\prime\times\mathbb{P}^1\ar@{->}[dl]^{q_{\mathbb{P}^1}^\prime}\ar@{->}[rrrr]_{\overline{h}}\ar@{->}[ru]_{q_{\overline{S}^\prime}}&&&&\overline{S}\times\mathbb{P}^1\ar@{->}[dr]_{q_{\mathbb{P}^1}}\ar@{->}[lu]^{q_{\overline{S}}}&&\\
&\mathbb{P}^1\ar@{=}[rrrrrr]&&&&&&\mathbb{P}^1.&}
$$
Here $h$ is the blow-up of the curve $P\times\mathbb{P}^1$,
$\overline{h}$ is the blow-up of the curve $\overline{P}\times\mathbb{P}^1$,
and the maps $\pi_{Z^\prime}$, $\pi_{\overline{Z}^\prime}$, $\pi_{\Gamma^\prime}$, $\pi_{\overline{\Gamma}^\prime}$,
$\rho^\prime$, $q_{\mathbb{P}^1}^\prime$, $q_{\overline{S}^\prime}$, $p_{S^{\prime}}$, $p^\prime$ and $q^\prime$
are defined similarly to the maps  $\pi_{Z}$, $\pi_{\overline{Z}}$, $\pi_{\Gamma}$, $\pi_{\overline{\Gamma}}$,
$\rho$, $q_{\mathbb{P}^1}$, $q_{\overline{S}}$, $p_S$, $p$ and $q$, respectively.
To get their detailed description, one just has to add $\prime$ to every geometrical object involved
in the definition of the maps $\pi_{Z}$, $\pi_{\overline{Z}}$, $\pi_{\Gamma}$, $\pi_{\overline{\Gamma}}$,
$\rho$, $q_{\mathbb{P}^1}$, $q_{\overline{S}}$, $p$ and $q$,
We leave this to the reader.

To polarize the surface $S^\prime$, choose a positive rational number $\varepsilon$, and let $L^\prime=g^*(L)-\varepsilon G$.
Then $L^\prime$ is ample provided that the number $\varepsilon$ is small enough.
Let us assume that this is the case.  Let $E_{Z^\prime}$ be the exceptional divisor of $\pi_{Z^\prime}$, and let
$\mathcal{L}_{\lambda}^\prime=(p_{S^\prime}\circ\pi_{Z^\prime})^*(L^\prime)-\lambda E_{Z^\prime}$,
where $\lambda$ is a positive rational number.
If $\lambda<\sigma(S^\prime,L^\prime,Z^\prime)$, then the divisor $\mathcal{L}_{\lambda}^\prime$ is $p^\prime$-ample by \cite[Proposition~4.1]{RT2},
so that $(\mathcal{X}^\prime,\mathcal{L}_{\lambda}^\prime,p^\prime)$ is a slope test configuration of the pair $(S^\prime,L^\prime)$.
In this case, its Donaldson--Futaki invariant is given by Lemma~\ref{lemma:DF-slope}.
Namely, we have
$$
\mathrm{DF}\big(\mathcal{X}^\prime,\mathcal{L}_{\lambda}^\prime,p^\prime\big)=\frac{2}{3}\nu(L^\prime)\Big(\lambda^3Z^2-3\lambda^2L\cdot Z\Big)+\lambda^2\big(2-2g(Z)\big)+2\lambda L\cdot Z.
$$
Here we used the fact that the point $P$ is not contained in the curve $Z$.
This assumption also implies that
$$
\lim_{\varepsilon\to 0^+}\sigma\big(S^\prime,L^\prime,Z^\prime\big)=\sigma(S,L,Z).
$$
Moreover, we have $\nu(L)=\frac{-K_{S}\cdot L}{L\cdot L}$ and
$\nu(L^\prime)=\frac{-K_{S^\prime}\cdot L^\prime}{L^\prime\cdot L^\prime}=\frac{-K_S\cdot L-\varepsilon}{L^2-\varepsilon^2}$.
This gives

\begin{corollary}[{\cite[Corollary~5.29]{RT1}}]
\label{corollary:RT-blow-up}
Suppose that $\lambda<\sigma(S,L,Z)$ and $\mathrm{DF}\big(\mathcal{X},\mathcal{L}_{\lambda},p)<0$.
Then $\lambda<\sigma(S^\prime,L^\prime,Z^\prime)$ and $\mathrm{DF}(\mathcal{X}^\prime,\mathcal{L}_{\lambda}^\prime,p^\prime)<0$
for sufficiently small $\varepsilon>0$.
\end{corollary}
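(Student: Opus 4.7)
The plan is to observe that both conclusions follow from continuity in $\varepsilon$, using the explicit formulas for $\sigma(S',L',Z')$ and $\mathrm{DF}(\mathcal{X}',\mathcal{L}_\lambda',p')$ already assembled in the paragraph preceding the corollary.

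First I would handle the Seshadri condition. The excerpt records that
\[
\lim_{\varepsilon\to 0^+}\sigma\big(S^\prime,L^\prime,Z^\prime\big)=\sigma(S,L,Z),
\]
which uses crucially that $P\notin Z$ (so that $Z'$ is disjoint from $G$ and intersection numbers against $Z'$ are unaffected by the perturbation $-\varepsilon G$). Since the hypothesis gives the strict inequality $\lambda<\sigma(S,L,Z)$, continuity yields $\lambda<\sigma(S^\prime,L^\prime,Z^\prime)$ for all sufficiently small $\varepsilon>0$. In particular, by Lemma~\ref{lemma:DF-slope} applied on $S'$, the triple $(\mathcal{X}^\prime,\mathcal{L}_\lambda^\prime,p^\prime)$ is a genuine slope test configuration of $(S',L')$ and its Donaldson--Futaki invariant is given by the formula $\mathfrak{DF}$ on $S'$.

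Second, I would show the DF invariant passes to the limit. Because $P\notin Z$ we have $Z'\cong Z$ with $(Z')^2=Z^2$, $g(Z')=g(Z)$, and $L'\cdot Z'=g^*(L)\cdot Z'-\varepsilon G\cdot Z'=L\cdot Z$. Plugging into \eqref{equation:DF-polynomial} on $S'$, the only quantity that depends on $\varepsilon$ is the slope
\[
\nu(L')=\frac{-K_S\cdot L-\varepsilon}{L^2-\varepsilon^2},
\]
which is a rational function of $\varepsilon$ continuous at $0$ with $\nu(L')\to\nu(L)$ as $\varepsilon\to 0^+$. Consequently
\[
\mathrm{DF}\bigl(\mathcal{X}^\prime,\mathcal{L}_\lambda^\prime,p^\prime\bigr)\;\longrightarrow\;\mathrm{DF}\bigl(\mathcal{X},\mathcal{L}_\lambda,p\bigr)\quad\text{as } \varepsilon\to 0^+.
\]
Since $\mathrm{DF}(\mathcal{X},\mathcal{L}_\lambda,p)<0$ is a strict inequality, the left-hand side remains strictly negative for all sufficiently small $\varepsilon>0$.

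There is essentially no difficulty here; the only care needed is to record that both $\sigma$ and $\mathrm{DF}$ are continuous in the polarization parameter $\varepsilon$, and that the key intersection numbers $L\cdot Z$, $Z^2$, $g(Z)$ are preserved precisely because $P\notin Z$, so that the limit reduces cleanly to the situation on $S$. Combining the two strict inequalities valid for small $\varepsilon>0$ yields the statement, and the fact that $L'$ is ample for such $\varepsilon$ (which was already assumed in setting up $(\mathcal{X}',\mathcal{L}_\lambda',p')$) ensures we are working with a legitimate test configuration throughout.
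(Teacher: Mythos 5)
Your proof is correct and follows the same line of reasoning the paper uses: the corollary in the paper is stated as an immediate consequence of the preceding discussion (the invariance of $Z^2$, $g(Z)$, $L\cdot Z$ since $P\notin Z$, the formula $\nu(L')=\tfrac{-K_S\cdot L-\varepsilon}{L^2-\varepsilon^2}$, and $\lim_{\varepsilon\to 0^+}\sigma(S',L',Z')=\sigma(S,L,Z)$), and you simply spell out that both quantities converge to their $\varepsilon=0$ values and strict inequalities persist for small $\varepsilon>0$. Nothing is missing and nothing is done differently.
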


Note that this corollary together with Example~\ref{example:K-unstable-F1} imply Theorem~\ref{theorem:Ross-Thomas}.
A similar corollary exists for the flopped version of the slope test configuration described in Section~\ref{section:DF-invariant}.
To present it here, let $\widehat{\mathcal{L}}_\lambda^\prime=\rho^\prime_*(\mathcal{L}_\lambda^\prime)$ and $\overline{L}^\prime=\pi^\prime_{*}(L^\prime)$.
Then $\overline{L}^\prime$ is ample. Since $L\cdot C_i=L^\prime\cdot C_i^\prime$ for every curve $C_i$,
we have
$$
L^\prime\sim_{\mathbb{Q}}g^*(L)-\varepsilon G\sim_{\mathbb{Q}}(\pi^\prime)^*\big(\overline{L}^\prime\big)-\sum_{i=1}^k(L\cdot C_i)C_i^\prime.
$$
By Lemma~\ref{lemma:q-ample}, if $L\cdot C_i<\lambda<\sigma(\overline{S},\overline{L},\overline{Z})$ for every $i$,
then $\widehat{\mathcal{L}}_\lambda$ is $q$-ample, so that
$(\widehat{\mathcal{X}}, \widehat{\mathcal{L}}_\lambda,q)$ is a test configuration of the pair $(S,L)$.
Similarly, if $L\cdot C_i<\lambda<\sigma(\overline{S}^\prime,\overline{L}^\prime,\overline{Z}^\prime)$ for every~$i$,
then $\widehat{\mathcal{L}}_\lambda^\prime$ is $q^\prime$-ample, so that
$(\widehat{\mathcal{X}}^\prime, \widehat{\mathcal{L}}_\lambda^\prime, q^\prime)$ is a test configuration of the pair $(S^\prime,L^\prime)$.
In this case, its Donaldson--Futaki invariant $\widehat{\mathrm{DF}}(\widehat{\mathcal{X}}^\prime,\widehat{\mathcal{L}}_\lambda^\prime,q^\prime)$
is given by the formula
$$
\frac{2}{3}\nu(L^\prime)\Big(\lambda^3Z^2-3\lambda^2L\cdot Z\Big)+\lambda^2\big(2-2g(Z)\big)+2\lambda L\cdot Z+\frac{2}{3}\nu(L^\prime)\Bigg(\sum_{i=1}^k \big(\lambda -L\cdot C_i\big)^3\Bigg)
$$
by Theorem~\ref{theorem:flop-slope}.
As above, we have
$$
\lim_{\varepsilon\to 0^{+}}\sigma\big(\overline{S}^\prime,\overline{L}^\prime,\overline{Z}^\prime\big)=\sigma\big(\overline{S},\overline{L},\overline{Z}\big).
$$
This gives

\begin{corollary}
\label{corollary:blow-up}
Suppose that $L\cdot C_i<\lambda<\sigma(\overline{S},\overline{L},\overline{Z})$ for every $i$, and
$\widehat{\mathrm{DF}}(\widehat{\mathcal{X}}, \widehat{\mathcal{L}}_\lambda, q)<0$.
Then $L\cdot C_i<\lambda<\sigma\big(\overline{S}^\prime,\overline{L}^\prime,\overline{Z}^\prime\big)$ for every $i$,
and $\widehat{\mathrm{DF}}(\widehat{\mathcal{X}}^\prime, \widehat{\mathcal{L}}_\lambda^\prime, q^\prime)<0$
for sufficiently small~$\varepsilon$.
\end{corollary}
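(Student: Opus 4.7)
The plan is to treat $\varepsilon$ as a continuity parameter and pass to the limit in the explicit formula for $\widehat{\mathrm{DF}}(\widehat{\mathcal{X}}^\prime, \widehat{\mathcal{L}}_\lambda^\prime, q^\prime)$ displayed immediately above the statement. The key observation is that the only quantity appearing in that formula which depends on $\varepsilon$ is the slope $\nu(L^\prime)$: since $P$ avoids $Z, C_1, \ldots, C_k$, the morphism $g$ is an isomorphism near these curves, so $Z^2$, $g(Z)$, $L\cdot Z$ and each $L\cdot C_i$ are unchanged when we pass to the primed setting (for instance, $L^\prime\cdot C_i^\prime = g^*(L)\cdot C_i^\prime - \varepsilon G\cdot C_i^\prime = L\cdot C_i$).

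First I would pin down the dependence of the slope on $\varepsilon$. Using $K_{S^\prime} = g^*(K_S) + G$ together with $g^*(L)\cdot G = 0$ and $G^2 = -1$, a direct intersection calculation gives
$$
\nu(L^\prime) = \frac{-K_{S^\prime}\cdot L^\prime}{(L^\prime)^2} = \frac{-K_S\cdot L - \varepsilon}{L^2 - \varepsilon^2},
$$
which tends to $\nu(L)$ as $\varepsilon \to 0^+$. Substituting this limit into the formula for $\widehat{\mathrm{DF}}(\widehat{\mathcal{X}}^\prime, \widehat{\mathcal{L}}_\lambda^\prime, q^\prime)$ and comparing with \eqref{equation:DF-flope-polynomial} via Theorem~\ref{theorem:flop-slope}, one obtains
$$
\lim_{\varepsilon \to 0^+} \widehat{\mathrm{DF}}\bigl(\widehat{\mathcal{X}}^\prime, \widehat{\mathcal{L}}_\lambda^\prime, q^\prime\bigr) = \widehat{\mathfrak{DF}}(\lambda) = \widehat{\mathrm{DF}}\bigl(\widehat{\mathcal{X}}, \widehat{\mathcal{L}}_\lambda, q\bigr) < 0,
$$
so $\widehat{\mathrm{DF}}(\widehat{\mathcal{X}}^\prime, \widehat{\mathcal{L}}_\lambda^\prime, q^\prime) < 0$ for all sufficiently small $\varepsilon > 0$.

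It then remains to verify that $(\widehat{\mathcal{X}}^\prime, \widehat{\mathcal{L}}_\lambda^\prime, q^\prime)$ is genuinely a test configuration for these $\varepsilon$, i.e.\ that the hypotheses of Lemma~\ref{lemma:q-ample} hold with $S^\prime$ in place of $S$. The lower bound $L\cdot C_i = L^\prime\cdot C_i^\prime < \lambda$ is immediate from the preservation noted above. For the upper bound $\lambda < \sigma(\overline{S}^\prime, \overline{L}^\prime, \overline{Z}^\prime)$ I would invoke the Seshadri continuity
$$
\lim_{\varepsilon \to 0^+} \sigma\bigl(\overline{S}^\prime, \overline{L}^\prime, \overline{Z}^\prime\bigr) = \sigma\bigl(\overline{S}, \overline{L}, \overline{Z}\bigr)
$$
displayed in the paragraph preceding the corollary, combined with the hypothesis $\lambda < \sigma(\overline{S}, \overline{L}, \overline{Z})$. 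The only mildly delicate ingredient is this Seshadri continuity, but since $\overline{P}\notin\overline{Z}$ we have $\overline{Z}^\prime\cdot\overline{G} = 0$, and the nef condition for $\overline{L}^\prime - \mu\overline{Z}^\prime = \overline{g}^*(\overline{L} - \mu\overline{Z}) - \varepsilon \overline{G}$ on each irreducible curve of $\overline{S}^\prime$ perturbs continuously in $\varepsilon$, which suffices.
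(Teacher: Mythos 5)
Your proof is correct and follows exactly the argument the paper sets up in the paragraph preceding the corollary: the displayed formula for $\widehat{\mathrm{DF}}(\widehat{\mathcal{X}}^\prime, \widehat{\mathcal{L}}_\lambda^\prime, q^\prime)$ depends on $\varepsilon$ only through $\nu(L^\prime)$, the intersections $L\cdot C_i = L^\prime\cdot C_i^\prime$ and $L\cdot Z = L^\prime\cdot Z^\prime$ are preserved because $P$ is disjoint from $Z, C_1,\ldots,C_k$, and the Seshadri continuity $\lim_{\varepsilon\to 0^+}\sigma(\overline{S}^\prime,\overline{L}^\prime,\overline{Z}^\prime)=\sigma(\overline{S},\overline{L},\overline{Z})$ gives the upper bound $\lambda<\sigma(\overline{S}^\prime,\overline{L}^\prime,\overline{Z}^\prime)$ for small $\varepsilon$. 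Nothing is missing; this is the intended deduction.
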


We will use this corollary to prove Theorem~\ref{theorem:degree-6}.

\section{Ample divisors on del Pezzo surfaces}
\label{section:del-Pezzo}

In this section we describe basic facts about smooth del Pezzo surfaces.
The simplest examples of such surfaces are $\mathbb{P}^2$, $\mathbb{P}^1\times\mathbb{P}^1$ and the first Hirzebruch surface $\mathbb{F}_1$.
To work with them, we fix notations that we will use throughout the remaining part of this article.
Namely, we denote by $\ell$ the class of a line in $\mathbb{P}^2$.
For $\mathbb{P}^1\times\mathbb{P}^1$, we denote by $f_1$ and $f_2$ the fibres of the two distinct projections $\mathbb{P}^1\times\mathbb{P}^1\to\mathbb{P}^1$.
Similarly, for the surface $\mathbb{F}_1$, we denote by $e$ the unique $(-1)$-curve in $\mathbb{F}_1$,
and we denote by $f$ the class of a fibre of the natural morphism $\mathbb{F}_1\to\mathbb{P}^1$.

\begin{remark}
\label{remark:nef-ample}
Note that the divisor $af_1+bf_2$ on $\mathbb{P}^1\times\mathbb{P}^1$ is nef (respectively, ample) if and only if $a\geqslant 0$ and $b\geqslant 0$ (respectively, $a>0$ and $b>0$).
The classes $f_1$ and $f_1$ also generate the Mori cone $\overline{\mathbb{NE}}(\mathbb{P}^1\times\mathbb{P}^1)$.
Similarly, a divisor $ae+bf$ on $\mathbb{F}_1$ is nef (respectively, ample) if and only if $b\geqslant a\geqslant 0$ (respectively, $b>a>0$).
The classes $e$ and $f$ generate the Mori cone $\overline{\mathbb{NE}}(\mathbb{F}_1)$.
\end{remark}

Now let $S$ be a smooth del Pezzo surface such that $K_S^2\leqslant 7$,
so that $S\not\cong\mathbb{P}^2$, $S\not\cong\mathbb{P}^1\times\mathbb{P}^1$ and $S\not\cong\mathbb{F}_1$.
Then the Mori cone $\overline{\mathbb{NE}}(S)$ is a polyhedral cone that is generated by all $(-1)$-curves on the surface $S$, i.e. smooth rational curves with self-intersection $-1$. Recall that there is a finite number of $(-1)$-curves on any del Pezzo surface.
The description of these curves is well-known.
Nevertheless, we decided to partially present it here, because we will need this description later.

First, we choose a birational morphism $\gamma\colon S\to\mathbb{P}^2$ that contracts $9-K_S^2\geqslant 2$ disjoint $(-1)$-curves.
Such morphism always exists, since we assume that $K_S^2\leqslant 7$.
However, it is never unique for $K_S^2\leqslant 6$.
We let $r=9-K_S^2$, and denote the $\gamma$-exceptional curves by $E_1,\ldots,E_r$.
Let $L_{ij}$ be the proper transform of the line in $\mathbb{P}^2$ that contains the points $\gamma(E_i)$ an $\gamma(E_j)$, where $1\leqslant i<j\leqslant r$.
Then
$$
L_{ij}\sim\gamma^*\big(l\big)-E_i-E_j,
$$
and each $L_{ij}$ is a $(-1)$-curve.
In fact, if $r\leqslant 4$, then these are all $(-1)$-curves on $S$ aside of the curves $E_1,\ldots,E_r$.
If $r\geqslant 5$, let $C_{i_1i_2i_3i_4i_5}$ be the proper transform of the conic in $\mathbb{P}^2$ that contains $\gamma(E_{i_1})$, $\gamma(E_{i_2})$, $\gamma(E_{i_3})$, $\gamma(E_{i_4})$ and $\gamma(E_{i_5})$ for $1\leqslant i_1<i_2<i_3<i_4<i_5\leqslant r$.
Then  each $C_{i_1i_2i_3i_4i_5}$ is also a $(-1)$-curve and
$$
C_{i_1i_2i_3i_4i_5}\sim\gamma^*\big(2l\big)-E_{i_1}-E_{i_2}-E_{i_3}-E_{i_4}-E_{i_5}.
$$
If $r=5$ or $r=6$, then $C_i, L_{ij}, E_i$ describe all the $(-1)$-curves in $S$. If $r=7$, then we denote by $Z_i$ the proper transform of the cubic in $\mathbb{P}^2$ that contains the points
$\gamma(E_1)$, $\gamma(E_2)$, $\gamma(E_3)$, $\gamma(E_4)$, $\gamma(E_5)$, $\gamma(E_6)$, $\gamma(E_7)$, and $Z_i$ is singular at the point $\gamma(E_i)$.
In this case, each $Z_i$ is a $(-1)$-curve, and
$$
Z_{i}\sim\gamma^*\big(3l\big)-\Big(E_1-E_2-E_3-E_4-E_5-E_6-E_6-E_7\Big)-E_i.
$$
We have described all $(-1)$-curves on $S$ in the case when $K_S^2\geqslant 2$.
If $K_S^2=1$, then $S$ contains many more $(-1)$-curves.
For example, the class
$$
\gamma^*\big(3l\big)-\Big(E_1-E_2-E_3-E_4-E_5-E_6-E_6-E_7-E_8\Big)-E_i+E_j
$$
contains a unique $(-1)$-curve for every $i\ne j$, which we denote by $Z_{ij}$.
This curve is the proper transform of the cubic in $\mathbb{P}^2$ that contains
all points $\gamma(E_1)$, $\gamma(E_2)$, $\gamma(E_3)$, $\gamma(E_4)$, $\gamma(E_5)$, $\gamma(E_6)$, $\gamma(E_7)$, $\gamma(E_8)$ except for $\gamma(E_j)$,
which is singular at the point $\gamma(E_i)$. There is also a unique $(-1)$-curve defined in each of the following classes:
\begin{align*}
\gamma^*(4l)-&\sum_{l=1}^{8}E_l-E_i-E_j-E_k\qquad 1\leqslant i<j<k\leqslant 8,\\
\gamma^*(5l)-&\sum_{\substack{l=1\\
l\neq i,j}}^{8}E_l-\sum_{l=1}^8E_l\qquad 1\leqslant i<j\leqslant 8,\\
\gamma^*(6l)-&E_i-2\sum_{l=1}^8E_l\qquad 1\leqslant i\leqslant 8,\\
\end{align*}
completing the description of all $(-1)$-curves when $K_S^2=1$.

Let $L$ be an ample $\mathbb{Q}$-divisor on the surface $S$.
Then $L\cdot C>0$ for every $(-1)$-curve $C$ on the surface $S$.
In fact, the latter condition is equivalent to the ampleness of the divisor $L$.
Moreover, we have
\begin{equation}
\label{equation:non-canonical}
L\sim_{\mathbb{Q}}\gamma^{*}\big(\varepsilon l\big)-\sum_{i=1}^{r}\varepsilon_iE_i
\end{equation}
for some positive rational numbers $\varepsilon,\varepsilon_1,\ldots,\varepsilon_r$.
Unfortunately, this $\mathbb{Q}$-rational equivalence is not canonical, since the contraction $\gamma$ is not unique for $K_S^2\leqslant 6$.
There is a \emph{better} way to work with ample divisors on $S$.
To present it, we let
$$
\mu_{L}=\inf\Big\{\lambda\in\mathbb{Q}_{>0}\ \Big|\ K_S+\lambda L\in\overline{\mathrm{NE}}(S)\Big\}.
$$
Then $\mu_L$ is a positive rational number, known as the \emph{Fujita invariant} of $(S,L)$.
Let $\Delta_L$ be the \emph{smallest} face of the Mori cone $\overline{\mathrm{NE}}(X)$ that contains $K_S+\mu_L L$.
If $\Delta_L=0$, then $\mu_LL\sim_{\mathbb{Q}} -K_{S}$.
If $\mathrm{dim}(\Delta_L)\neq 0$, then $K_X+\mu_LL$ is a non-zero effective divisor.
Applying the Minimal Model Program, we obtain a morphism $\phi\colon S\to Y$ where $Y$ is smooth and such that
$\phi$ contracts all curves contained in $\Delta_L$, i.e. $\phi$ is the contraction of the face $\Delta_L$.
Then either
\begin{itemize}
\item $\phi$ is a birational morphism that contracts $\mathrm{dim}(\Delta_L)\leqslant r$ disjoint $(-1)$-curves, or
\item $\mathrm{dim}(\Delta_L)=r$, $Y\cong\mathbb{P}^1$ and general fiber of $\phi$ is $\mathbb{P}^1$, i.e. $\phi$ is a conic bundle.
\end{itemize}

It seems quite natural to split ample divisors in $\mathrm{Amp}(S)$ according to the type of contraction $\phi$.
However, we prefer to use a slightly different splitting into types that is based
on the contraction of one of the \emph{largest} faces of the Mori cone $\overline{\mathrm{NE}}(S)$ that contains $K_X+\mu_L L$. The contraction of a face of maximum dimension guarantees that the image of $\phi$ will not contain any $(-1)$-curve (in fact, the image of $\phi$ may not even be a surface, but $\mathbb P^1$, giving $\phi$ the structure of a conic bundle).
Namely, observe that if $\phi$ is birational and $Y\cong\mathbb{P}^1\times\mathbb{P}^1$, then
\begin{equation}
\label{equation:equivalence-birational-special}
\mu_LL\sim_{\mathbb{Q}} -K_S+\sum_{i=1}^{r-1} a_iF_i,
\end{equation}
where $F_1,\ldots,F_{r-1}$ are disjoint $(-1)$-curves contracted by $\phi$, and each $a_i$ is a positive rational number such that $a_i<1$.
Similarly, if $\phi$ is birational and $Y\not\cong\mathbb{P}^1\times\mathbb{P}^1$,
then there exists a (possibly non-unique) birational morphism $\psi\colon Y\to\mathbb{P}^2$ such that the composition $\psi\circ\phi$ is a contraction of $r$ disjoint $(-1)$-curves $F_1,\ldots,F_{r}$,
which generate a maximal face of the Mori cone $\overline{\mathrm{NE}}(X)$ that contains $K_S+\mu_L L$.
In this case, we have
\begin{equation}
\label{equation:equivalence-birational}
\mu_LL\sim_{\mathbb{Q}} -K_S+\sum_{i=1}^{r} a_iF_i,
\end{equation}
where each $a_i$ is a non-negative rational number such that $a_i<1$.
Observe that $F_i\in\Delta_L$ if and only if $a_i>0$,
and, a priori, the contraction $\psi\circ\phi$ does not need to coincide with $\gamma$.
Note that in both cases, we have a very simple formula for the slope $\nu(L)$ of the pair $(S,L)$.
Namely, we have
$$
\nu(L)=\frac{-K_S\cdot L}{L^2}=\mu_L\frac{d+\sum_{i=1}^{r}a_i}{d+2\sum_{i=1}^{r}a_i-\sum_{i=1}^{r}a_i^2}.
$$

If $\phi$ is a conic bundle, then $Y\cong\mathbb{P}^1$,
and $\Delta_L$ is a maximal face of the Mori cone $\overline{\mathrm{NE}}(X)$ that contains $K_X+\mu_L L$.
Note that the morphism $\phi$ has exactly $r-1=8-K_S^2$ reducible fibers,
each of them consisting of two $(-1)$-curves, and the face $\Delta_L$ is generated by these $(-1)$-curves.
Then we have
\begin{equation}
\label{equation:equivalence-conic-bundle}
\mu_LL\sim_{\mathbb{Q}} -K_S+bB+\sum_{i=1}^{r-1} a_iF_i,
\end{equation}
where $B$ is a general fiber of $\phi$, and $F_1,\ldots,F_{r-1}$ are disjoint $(-1)$-curves contained in the singular fibers of $\phi$,
each $a_i$ is a non-negative rational number such that $a_i<1$, and $b$ is a positive rational number.
Then
$$
\nu(L)=\frac{-K_S\cdot L}{L^2}=\mu_L\frac{d+2b+\sum_{i=1}^{r-1}a_i}{d+4b+2\sum_{i=1}^{r-1}a_i-\sum_{i=1}^{r-1}a_i^2}.
$$
In addition, there exists a commutative diagram
\begin{equation}
\label{equation:diagram}
\xymatrix{
&S\ar@{->}[dl]_{\psi}\ar@{->}[dr]^{\phi}&\\
\widehat{S}\ar@{->}[rr]_{\omega}&&\mathbb{P}^1}
\end{equation}
where $\psi$ is a birational morphism that contracts the curves $F_1,\ldots,F_{r-1}$, and $\omega$ is a natural projection.
Then either $\widehat{S}\cong\mathbb{F}_1$ or $\widehat{S}\cong\mathbb{P}^1\times\mathbb{P}^1$.
Observe that the morphism $\psi$ in \eqref{equation:diagram} is uniquely determined by $L$ only if every $a_i$ in \eqref{equation:equivalence-conic-bundle} is positive.
Thus, if at least one of the numbers $a_1,\ldots,a_{r-1}$ in \eqref{equation:equivalence-conic-bundle} is not positive, then we may assume that $\widehat{S}=\mathbb{P}^1\times\mathbb{P}^1$.

\begin{definition}
\label{definition:types}
We say that
\begin{itemize}
\item the divisor $L$ is of \emph{$\mathbb{P}^2$-type} if $\phi$ is birational and $Y\not\cong\mathbb{P}^1\times\mathbb{P}^1$;

\item the divisor $L$ is of \emph{$\mathbb{P}^1\times\mathbb{P}^1$-type} if either $\phi$ is a conic bundle and $\widehat{S}\cong\mathbb{P}^1\times\mathbb{P}^1$, or $\phi$ is birational and $Y\cong\mathbb{P}^1\times\mathbb{P}^1$;

\item the divisor $L$ is of \emph{$\mathbb{F}_1$-type} if $\phi$ is a conic bundle, $\widehat{S}\cong\mathbb{F}_1$, and every $a_i$ in \eqref{equation:equivalence-conic-bundle} is positive.
\end{itemize}
\end{definition}

We will always assume that $0\leqslant a_1\leqslant \cdots\leqslant a_r<1$ if $L$ is of $\mathbb{P}^2$-type.
Similarly, if $L$ is of $\mathbb{P}^1\times\mathbb{P}^1$-type or of $\mathbb{F}_1$-type,
then we will assume that $0\leqslant a_1\leqslant \cdots\leqslant a_{r-1}<1$.

\begin{remark}
\label{remark:types}
Suppose that $L$ is of $\mathbb{P}^1\times\mathbb{P}^1$-type.
Then we can combine numerical equivalences
\eqref{equation:equivalence-birational-special} and \eqref{equation:equivalence-conic-bundle} together
by allowing $b$ to be zero in \eqref{equation:equivalence-conic-bundle}.
Thus, if $b=0$ in \eqref{equation:equivalence-conic-bundle},
then $f$ is birational and $Y\cong\mathbb{P}^1\times\mathbb{P}^1$, so that every $a_i$ in \eqref{equation:equivalence-conic-bundle} is positive.
\end{remark}

If $K_S^2=7$, then $r=2$, and $\gamma$ is uniquely determined.
Thus, if $L$ is of $\mathbb P^2$-type, then we may assume that $F_1=E_1$ and $F_2=E_2$.
Similarly, if $L$ is $\mathbb{F}_1$-type, then we may assume that $F_1=E_1$.
If $L$ is of $\mathbb{P}^1\times\mathbb{P}^1$-type, then $F_1=L_{12}$.
In this case, we may assume that $B\sim L_{12}+E_1\sim\gamma^*(l)-E_2$.

If $K_S^2\leqslant 6$, we can choose the contraction $\gamma\colon S\to\mathbb{P}^2$ according to the type of the divisor $L$.
Namely, if $L$ is of $\mathbb{P}^2$-type, then we can assume that $\gamma=\psi\circ\phi$ and $E_i=F_i$ for every $i$.
Similarly, if $L$ is of either $\mathbb{F}_1$-type or $\mathbb{P}^1\times\mathbb{P}^1$-type, then we can assume that
$$
B\sim L_{1r}+E_1\sim\gamma^{*}(\ell)-E_r,
$$
so that $B$ is a proper transform of a general line in $\mathbb{P}^2$ passing through the point $\gamma(E_r)$.
Similarly, if $L$ is of $\mathbb{F}_1$-type, then we can assume that
$F_i=E_i$ for every $i$ such that $r-1\geqslant i\geqslant 1$,
so that $\gamma$ is a composition of $\psi$ with the birational morphism $\mathbb{F}_1\to\mathbb{P}^2$, which contracts the curve $\psi(E_r)$.
If $L$ is of $\mathbb{P}^1\times\mathbb{P}^1$-type and $r=3$, then we can assume that $F_1=E_1$ and $F_2=L_{2r}$.
Finally, if $L$ is of $\mathbb{P}^1\times\mathbb{P}^1$-type and $r\geqslant 4$,
then we can assume that
$F_1=E_1$, $F_2=L_{2r}$, and $F_i=E_i$ for every $i$ such that $r-1\geqslant i\geqslant 3$.

Let us illustrate the introduced language by two examples that show how to apply
Corollary~\ref{corollary:flop-slope} to the pair $(S,L)$ in the case when $S$ is toric (cf. Examples~\ref{example:K-unstable-F1} and \ref{example:K-unstable-F1-fiber}).

\begin{example}
\label{example:d-7}
Suppose that $K_S^2=7$.
Let $Z=L_{12}$, $C_1=E_1$, $C_2=E_2$, $\overline{S}=\mathbb{P}^2$, and let $\pi\colon S\to\overline{S}$ be the contraction of the curves $C_1$ and $C_2$ and $\overline Z\sim l$.
Then we can use the notations and assumptions of Section~\ref{section:DF-invariant}.
We claim that there is a positive rational number $\lambda$ such that
$L\cdot C_1<\lambda$, $L\cdot C_2<\lambda$, $\lambda<\sigma(\overline{S},\overline{L},\overline{Z})$
and $\mathrm{DF}(\widehat{\mathcal{X}},\widehat{\mathcal{L}}_\lambda,q)<0$,
which implies, in particular, that $(S,L)$ is $K$-unstable.
By Corollary~\ref{corollary:flop-slope}, it is enough to show that
$L\cdot C_2<\sigma(\overline{S},\overline{L},\overline{Z})$, $L\cdot C_1<\sigma(\overline{S},\overline{L},\overline{Z})$,
and $\widehat{\mathfrak{DF}}(\sigma(\overline{S},\overline{L},\overline{Z}))<0$.
To do this, we may assume that $\mu_L=1$.
Using \eqref{equation:DF-flope-polynomial}, we see that
$$
\widehat{\mathfrak{DF}}(\lambda)=\nu(L)\Big(-3\lambda^2L\cdot Z-\lambda^3+(\lambda-L\cdot C_1)^3+(\lambda-L\cdot C_2)^3\Big)+2\lambda^2+2\lambda L\cdot Z.
$$
If $L$ is of $\mathbb{P}^2$-type, then $L\cdot Z =1+a_1+a_2$, $L\cdot C_1=1-a_1$, $L\cdot C_2=1-a_2$,
$\overline{L}\sim_{\mathbb{Q}}3\ell$, which implies that $\sigma(\overline{S},\overline{L},\overline{Z})=3>L\cdot C_1\geqslant L\cdot C_2$,
so that
$$
\widehat{\mathfrak{DF}}\big(\sigma(\overline{S},\overline{L},\overline{Z})\big)=\frac{2}{3}\frac{(1+a_1+a_2)(a_1^3+3a_1^2-6a_1a_2+6a_1+a_2^3+6a_2+3a_2^2-14)}{7+2(a_1+a_2)-a_1^2-a_2^2}<0,
$$
because
\begin{multline*}
a_1^3+3a_1^2-6a_1a_2+6a_1+a_2^3+6a_2+3a_2^2-14=-(a_2-a_1)^3-\\
-3(1-a_2)(a_2-a_1)^2-3(a_2-a_1)(1-a_2)^2-6(a_2-a_1)(1-a_2+a_1)-\\
-2(1-a_2)^3-3(a_2-a_1)a_2-3(1-a_2)(a_1+a_2+4)\leqslant -2(1-a_2)^3<0.
\end{multline*}
If $L$ is of $\mathbb{F}_1$-type,
then $L\cdot Z=1+a_1$, $L\cdot C_1=1-a_1$, $L\cdot C_2=1+b$ and $\overline{L}\sim_{\mathbb{Q}}(3+b)\ell$,
which implies that $\sigma(\overline{S},\overline{L},\overline{Z})=3+b>L\cdot C_2>L\cdot C_1$,
so that
\begin{multline*}
\widehat{\mathfrak{DF}}(\sigma(\overline{S},\overline{L},\overline{Z}))=\frac{2}{3}\frac{1+a_1}{7+4b+2a_1-a_1^2}+\frac{2}{3}\frac{1+a_1}{7+4b+2a_1-a_1^2}(3a_1^2-3)b^2+\\
+\frac{2}{3}\frac{1+a_1}{7+4b+2a_1-a_1^2}\Big((-16-6a_1+12a_1^2+2a_1^3)b-8a_1+a_1^4+9a_1^2+4a_1^3-14\Big)<0.
\end{multline*}
Finally, if $L$ is of $\mathbb{P}^1\times\mathbb{P}^1$-type, then $L\cdot Z =1-a_1$,
$L\cdot C_1=1+a_1$, $L\cdot C_2=1+a_1+b$ and $\overline{L}\sim_{\mathbb{Q}}(3+b+a_1)\ell$,
which implies that $\sigma(\overline{S},\overline{L},\overline{Z})=3+a_1+b>L\cdot C_2\geqslant L\cdot C_1$ and
$$
\widehat{\mathfrak{DF}}\big(\sigma(\overline{S},\overline{L},\overline{Z})\big)=\frac{2}{3}\frac{(a_1-1)(2a_1^3+4a_1^2b+3a_1b^2+10a_1^2+16a_1b+3b^2+22a_1+16b+14)}{7+4b+2a_1-a_1^2},
$$
so that $\widehat{\mathfrak{DF}}(\sigma(\overline{S},\overline{L},\overline{Z}))<0$.
\end{example}

\begin{example}
\label{example:d-6}
Suppose that $K_S^2=6$.
Then it follows from \cite{toric} (see also \cite[Example~3.2]{LeBrun-Simanca}) that
$(S,L)$ is $K$-polystable if and only if $\varepsilon_1=\varepsilon_2=\varepsilon_3$ or $\varepsilon=\varepsilon_1+\varepsilon_2+\varepsilon_3$ in \eqref{equation:non-canonical}.
Thus, if $L$ is of $\mathbb{P}^2$-type, then $(S,L)$ is $K$-polystable if and only if $a_1=a_2=a_3$,
because
$$
\varepsilon_i=L\cdot E_i=\frac{1-a_i}{\mu_L}
$$
and $\varepsilon=L\cdot\gamma^*(l)=\frac{3}{\mu_L}$ in this case.
Similarly, if $L$ is of $\mathbb{F}_1$-type, then
$\varepsilon_1=\frac{1-a_1}{\mu_L}$, $\varepsilon_2=\frac{1-a_2}{\mu_L}$ and
$\varepsilon_3=\frac{1+b}{\mu_L}$ and $\varepsilon=\frac{3+b}{\mu_L}$,
which implies that $(S,L)$ is not $K$-polystable, because $a_1>0$, $a_2>0$ and $b>0$ in this case.
Finally, if $L$ is of $\mathbb{P}^1\times\mathbb{P}^1$-type, then
$\varepsilon_1=\frac{1-a_1}{\mu_L}$, $\varepsilon_2=\frac{1+a_2}{\mu_L}$,
$\varepsilon_3=\frac{1+a_2+b}{\mu_L}$ and $\varepsilon=\frac{3+a_2+b}{\mu_L}$,
so that $(S,L)$ is $K$-polystable if and only if $a_1=a_2$.
Thus, we see that $(S,L)$ is $K$-polystable if and only if either $L$ is of $\mathbb{P}^2$-type and $a_1=a_2=a_3$,
or $L$ is of $\mathbb{P}^1\times\mathbb{P}^1$-type and $a_1=a_2$.
In fact, if none of these conditions is satisfied, then $(S,L)$ can be destabilized
by the flopped version of the slope test configuration described in Section~\ref{section:DF-invariant}.
To show this, let $Z$ be one of the $(-1)$-curves on the surface $S$,
let $C_1$ and $C_2$ be two disjoint $(-1)$-curves that intersect $Z$,
and let $\pi\colon S\to\overline{S}$ be the contraction of the curves $C_1$ and $C_2$.
Then $\overline{S}\cong\mathbb{F}_1$ and $\overline{Z}\sim f+e$.
Let us use the notations and assumptions of Section~\ref{section:DF-invariant}.
As in Example~\ref{example:d-7}, it is enough to show that we can choose $Z$ such that
$L\cdot C_1<\sigma(\overline{S},\overline{L},\overline{Z})$, $L\cdot C_2<\sigma(\overline{S},\overline{L},\overline{Z})$
and $\widehat{\mathfrak{DF}}(\sigma(\overline{S},\overline{L},\overline{Z}))<0$.
To do this, we may assume that $\mu_L=1$.
If $L$ is of $\mathbb{P}^2$-type, then we let $Z=L_{12}$, $C_1=E_1$ and $C_2=E_2$,
so that $L\cdot Z=1+a_1+a_2$, $L\cdot C_1=1-a_1$, $L\cdot C_2=1-a_2$, $\overline{Z}\sim f+e$
and $\overline{L}\sim_{\mathbb{Q}}(2+a_3)e+3f$,
which implies that
$$
\sigma(\overline{S},\overline{L},\overline{Z})=2+a_3>L\cdot C_2\geqslant L\cdot C_1,
$$
and it follows from \eqref{equation:DF-flope-polynomial} that
$$
\widehat{\mathfrak{DF}}\big(\sigma(\overline{S},\overline{L},\overline{Z})\big)=\frac{2(a_1+a_2+a_3)h_1(a_1,a_2,a_3)}{3(6+2(a_1+a_2+a_3)-a_1^2-a_2^2-a_3^2)},
$$
where $h_1(a_1,a_2,a_3)=a_1^3+a_2^3-2a_3^3+3a_1^2-6a_1a_2+3a_1a_3+3a_2^2+3a_2a_3-6a_3^2+3a_1+3a_2-6a_3$.
If $a_1<a_3$, then
\begin{multline*}
h_1(a_1,a_2,a_3)=-3a_1^2(a_2-a_1)-6a_1^2(a_3-a_2)-3a_1(a_2-a_1)^2-12a_1(a_2-a_1)(a_3-a_2)-\\
-6a_1(a_3-a_2)^2-(a_2-a_1)^3-6(a_2-a_1)^2(a_3-a_2)-6(a_2-a_1)(a_3-a_2)^2-\\
-2(a_3-a_2)^3-3a_1(a_2-a_1)-6a_1(a_3-a_2)-9(a_2-a_1)(a_3-a_2)-\\
-6(a_3-a_2)^2-3(a_2-a_1)-6(a_3-a_2)\leqslant 3(a_2-a_1)-6(a_3-a_2)<0,
\end{multline*}
so that $\widehat{\mathfrak{DF}}(\sigma(\overline{S},\overline{L},\overline{Z}))<0$ in this case.
Similarly, if $L$ is of $\mathbb{F}_1$-type, we let $Z=L_{13}$, $C_1=E_1$ and $C_2=E_3$,
so that $L\cdot Z=1+a_1$, $L\cdot C_1=1-a_1$, $L\cdot C_2=1+b$, $\overline Z\sim e+f$ and $\overline{L}\sim_{\mathbb{Q}}(2+b+a_2)e+(3+b)f$,
which implies that $\sigma(\overline{S},\overline{L},\overline{Z})=2+b+a_2>L\cdot C_2>L\cdot C_1$,
so that
$$
\widehat{\mathfrak{DF}}\big(\sigma(\overline{S},\overline{L},\overline{Z})\big)=\frac{2(a_1+a_2)h_2(a_1,a_2,b)}{3(6+4b+2(a_1+a_2)-a_1^2-a_2^2)},
$$
where $h_2(a_1,a_2,b)$ is the polynomial
$$
3(a_1-a_2)b^2+(2a_1^2+a_1a_2-4a_2^2+9(a_1-a_2)-3)b + a_1^3-2a_2^3+3a_1^2+3a_1a_2-6a_2^2+3a_1-6a_2.
$$
Since $a_1\leqslant a_2$, it follows that $h(a_1,a_2,b)<0$ unless $b=a_1=a_2=0$,
and we conclude that $\widehat{\mathfrak{DF}}(\sigma(\overline{S},\overline{L},\overline{Z}))<0$ in this case as well.
Finally, if $L$ is of $\mathbb{P}^1\times\mathbb{P}^1$-type,
let $Z=E_1$, $C_1=L_{13}$ and $C_2=L_{12}$,
so that $L\cdot Z=1-a_1$, $L\cdot C_1=1+a_1$, $L\cdot C_2=1+b+a_1$, $\overline{Z}\sim f+e$
and $\overline{L}\sim_{\mathbb{Q}}(2+b+a_1+a_2)e+(3+b+a_1)f$,
so that
$$
\sigma(\overline{S},\overline{L},\overline{Z})=2+b+a_1+a_2>L\cdot C_2\geqslant L\cdot C_1,
$$
which implies that
$$
\widehat{\mathfrak{DF}}\big(\sigma(\overline{S},\overline{L},\overline{Z})\big)=\frac{2(a_1-a_2)h_3(a_1,a_2,b)}{3(6+4b+2(a_1+a_2)-a_1^2-a_2^2)},
$$
where $h_3(a_1,a_2,b)$ is the polynomial
\begin{multline*}
2a_1^3+4a_1^2+4a_1^2b+4a_1^2a_2+7a_1a_2b+3a_1b^2+2a_2^3+4a_2^2b+3a_2b^2+\\
+6a_1^2+4a_2^2a_1+12a_1a_2+9a_1b+6a_2^2+9a_2b+6a_1+6a_2+3b.
\end{multline*}
This shows that $\widehat{\mathfrak{DF}}(\sigma(\overline{S},\overline{L},\overline{Z}))<0$ provided $a_1\ne a_2$.
\end{example}

\begin{proof}[Proof of Theorem~\ref{theorem:degree-6}]
The assertion follows from Example~\ref{example:d-6}, Corollary~\ref{corollary:blow-up} and the fact that $S$ is a del Pezzo surface.
\end{proof}

In the remaining part of this article we will apply Corollary~\ref{corollary:flop-slope} to the pair $(S,L)$ in the case when $S$ is not toric.
To do this in a concise way, we need to prove several very explicit technical results about polarized del Pezzo surfaces.
We will do this in the next section.

\section{Seshadri constants and pseudo-effective thresholds}
\label{section:technical}

Let us use all assumptions and notations of Section~\ref{section:del-Pezzo}.
Suppose, in addition, that $K_S^2\leqslant 5$, so that $S$ is not toric.
In this case, the group $\mathrm{Aut}(S)$ is known to be finite,
and the pair $(S,-K_S)$ is $K$-stable by Tian's theorem \cite{Tian1990}.

Let $Z$ be a $(-1)$-curve in the del Pezzo surface $S$,
and let $\mu$ be a positive rational number.
In this section, we study numerical properties of the divisor $L-\mu Z$.
Since this problem depend on the scaling of $L$ in an obvious way,
we will assume, for simplicity, that that the Fujita invariant of the pair $(S,L)$ equals $1$, i.e. $\mu_L=1$.

The first threshold that controls the numerical properties of the divisor $L-\mu Z$ is the Seshadri constant $\sigma(S,L,Z)$.
In our case, it can be computed as follows:
\begin{equation}
\label{equation:sigma}
\sigma(S,L,Z)=\mathrm{min}\Bigg\{\frac{L\cdot C}{Z\cdot C}\ \Bigg\vert\ C\ \text{is a $(-1)$-curve on $S$ such that}\ C\cap Z\ne\emptyset\Bigg\}.
\end{equation}
Using this formula, one can easily compute $\sigma(S,L,Z)$.
The second threshold one can relate to the triple $(S,L,Z)$ is the pseudo-effective threshold $\tau(S,L,Z)$ defined in \eqref{equation:tau}.
Observe that $\sigma(S,L,Z)\leqslant\tau(S,L,Z)$.

\begin{remark}
\label{remark:sigma-tau-downstairs}
If $S\cong\mathbb P^1\times\mathbb P^1$ or $S\cong \mathbb P^2$, then $\tau(S,L,Z)=\sigma(S,L,Z)$.
Similarly, if $S\cong\mathbb{F}_1$ and $\sigma(S,L,Z)=\frac{L\cdot f}{Z\cdot f}$, then $\sigma(S,L,Z)= \tau(S,L,Z)$.
\end{remark}

For the simple reason of applying results of Section~\ref{section:DF-invariant} to the pair $(S,L)$,
we are mostly interested in the case when $\sigma(S,L,Z)<\mu<\tau(S,L,Z)$.
Because of this, we will always assume that
$$
\sigma(S,L,Z)\leqslant\mu\leqslant\tau(S,L,Z).
$$
Then $L-\mu Z$ is a pseudo-effective divisor. Moreover, it is not nef if $\mu>\sigma(S,L,Z)$.
Taking its Zariski decomposition (see \cite[Theorem I:2.3.19]{Lazarsfeld}), we see that there exists a birational morphism $\pi\colon S\to\overline{S}$ that contracts $k\geqslant 0$ disjoint $(-1)$-curves $C_1,\ldots,C_k$ such that
\begin{equation}
L-\mu Z\sim_{\mathbb{Q}}\pi^*\big(\overline{L}-\mu\overline{Z}\big)+\sum_{i=1}^{k}c_iC_i,
\label{eq:Zariski-decomposition}
\end{equation}
where $\overline{L}=\pi_{*}(L)$ and $\overline{Z}=\pi(Z)$,
the divisor $\overline{L}-\mu\overline{Z}$ is nef, and $c_1,\ldots,c_k$ are some positive rational numbers.
Then $\overline{S}$ is a smooth del Pezzo surface, $K_{\overline{S}}^2=K_S^2+k$ and $\overline{Z}^2=-1+k$.
Note that $k=0$ if and only if the divisor  $L-\mu Z$ is nef.
In this case, the morphism $\pi$ is an isomorphism.
If $k\geqslant 1$, from \eqref{equation:sigma}, we see that
$$
\sigma\big(\overline{S},\overline{L},\overline{Z}\big)\geqslant\mu>\frac{L\cdot C_i}{Z\cdot C_i}\geqslant\sigma(S,L,Z)
$$
for every $i$.
Without loss of generality, we may assume that $\frac{L\cdot C_i}{Z\cdot C_i}\leqslant \frac{L\cdot C_j}{Z\cdot C_j}$ for $i<j$.

\begin{remark}
\label{remark:sigma-tau-upstairs}
Let $C_1,\ldots,C_m$ be disjoint $(-1)$-curves on $S$ such that $C_i\cdot Z=1$ for every~$i$.
Let $\eta\colon S\rightarrow\widetilde{S}$ be the contraction of the curves $C_1,\ldots,C_m$, let $\widetilde{L}=\eta_*(L)$, and let $\widetilde{Z}=\eta(Z)$.
Then $\widetilde{Z}$ is smooth and
\begin{equation}
\label{eq:sigma-tau-upstairs}
L-\sigma(\widetilde S, \widetilde L, \widetilde Z)Z\sim_{\mathbb Q}\eta^*(\widetilde L-\sigma(\widetilde S, \widetilde L, \widetilde Z)\widetilde Z)+\sum_{i=1}^m\big(\sigma(\widetilde{S},\widetilde{L},\widetilde{Z})-L\cdot C_i\big)C_i.
\end{equation}
Suppose that $\sigma(\widetilde{S},\widetilde{L},\widetilde{Z})\geqslant L\cdot C_i$ for every $i$,
and suppose also that $\sigma(\widetilde{S},\widetilde{L},\widetilde{Z})=\tau(\widetilde{S},\widetilde{L},\widetilde{Z})$.
Then $\tau(S,L,Z)=\sigma(\widetilde{S},\widetilde{L},\widetilde{Z})$.
Moreover, if we also have $\sigma(\widetilde{S},\widetilde{L},\widetilde{Z})>L\cdot C_i$ for every $i$,
then \eqref{eq:sigma-tau-upstairs} is the Zariski decomposition of $L-\tau(S,L,Z)Z$.
In this case we may assume that $\eta=\pi$,
because the Zariski decomposition is unique by \cite[Theorem I:2.3.19]{Lazarsfeld}.
\end{remark}

In Section~\ref{section:unstable-del-pezzo}, we will apply Corollary~\ref{corollary:flop-slope} to the pair $(S,L)$ using the curve $Z$, the contraction $\pi\colon S\to\overline{S}$,
and a positive rational number $\lambda$ such that $0\ll\lambda<\mu$.
To do this we need the curve $\overline{Z}$ to be smooth.
This is always the case when $K_S^2\geqslant 3$, because then any two $(-1)$-curves intersect at most at one point.
However, this is an additional condition in the case $K_S^2\leqslant 2$.
Recall that we assume that $K_S^2\leqslant 5$.

\begin{lemma}
\label{lemma:lambda-E1-P2-type}
Suppose that $L$ is of $\mathbb{P}^2$-type, $Z=E_1$, $\mu=\tau(S,L,Z)$ and
\begin{equation}
\label{equation:a3}
a_3\geqslant
\left\{%
\aligned
&\frac{2}{3}\ \text{if}\ K_S^2=1,\\
&\frac{3}{5}\ \text{if}\ K_S^2=2,\\
&\frac{1}{2}\ \text{if}\ K_S^2=3,\\
&\frac{1}{3}\ \text{if}\ K_S^2=4,\\
&0\ \text{if}\ K_S^2=5.\\
\endaligned\right.\\
\end{equation}
Then $k=r-1$, we may assume that $C_1=L_{12}$, $C_2=L_{13}$, $C_3=L_{14},\ldots,C_k=L_{1r}$, and
\begin{multline*}
\sigma(S,L,Z)=L\cdot L_{12}=1+a_1+a_2\leqslant L\cdot L_{13}=1+a_1+a_3\leqslant\cdots\\
\cdots\leqslant L\cdot L_{1r}=1+a_1+a_r<2+a_1=\sigma\big(\overline{S},\overline{L},\overline{Z}\big)=\tau(S,L,Z).
\end{multline*}
If~$K_S^2$ is even, then $\overline{S}=\mathbb{F}_1$. If $K_S^2$ is odd, then $\overline{S}=\mathbb{P}^1\times\mathbb{P}^1$.
The curve $\overline{Z}$ is smooth.
\end{lemma}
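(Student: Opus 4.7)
The plan is to combine an explicit Zariski decomposition of $L-(2+a_1)E_1$ with an enumeration of $(-1)$-curves to pin down the Seshadri constant.

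First I would use the $\mathbb{P}^2$-type hypothesis to normalise coordinates: we may take $F_i=E_i$ so that $L\sim_{\mathbb{Q}}3\gamma^{*}(\ell)-\sum_{i=1}^r(1-a_i)E_i$. A direct computation yields $L\cdot E_1=1-a_1$, $L\cdot L_{1j}=1+a_1+a_j$ for $j\geqslant 2$, while $E_1\cdot L_{1j}=1$ and $L_{1j}\cdot L_{1k}=0$ for $j\neq k$. Thus $L_{12},\ldots,L_{1r}$ are $r-1$ disjoint $(-1)$-curves, each crossing the smooth curve $E_1$ transversally at a single point. Let $\pi\colon S\to\overline{S}$ be their contraction; then $\overline{S}$ is smooth with $K_{\overline{S}}^2=8$ and $\rho(\overline{S})=2$, and $\overline{Z}=\pi(E_1)$ is a smooth rational curve with $\overline{Z}^2=r-2$.

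Next, I would identify $\overline{S}$. Solving $\pi^{*}\pi_{*}D=D+\sum_m(D\cdot L_{1m})L_{1m}$ yields $E_j=\pi^{*}(\pi_{*}E_j)-L_{1j}$ for every $j\geqslant 2$, and the classes $\pi_{*}E_j$ all have self-intersection $0$, are pairwise disjoint, and satisfy $\pi_{*}E_j\cdot\overline{Z}=1$. Hence they share a common class $\overline{F}$, namely the fibre class of a ruling of the degree $8$ del Pezzo surface $\overline{S}\in\{\mathbb{F}_1,\mathbb{P}^1\times\mathbb{P}^1\}$. Comparing $\overline{Z}\cdot\overline{F}=1$ and $\overline{Z}^2=r-2$ to the possible classes of smooth rational curves on each candidate forces $\overline{S}\cong\mathbb{F}_1$ with $\overline{Z}\sim e+\tfrac{r-1}{2}f$ when $r$ is odd (equivalently $K_S^2$ even), and $\overline{S}\cong\mathbb{P}^1\times\mathbb{P}^1$ with $\overline{Z}\sim f_2+\tfrac{r-2}{2}f_1$ when $r$ is even (equivalently $K_S^2$ odd); in either case the parity of $\overline{Z}^2$ rules out the other surface.

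Third, using the same pullback formulas one computes
\[
L-(2+a_1)E_1\sim_{\mathbb{Q}}\pi^{*}\bigl(\overline{L}-(2+a_1)\overline{Z}\bigr)+\sum_{m=2}^{r}(1-a_m)L_{1m},
\]
where $\overline{L}=-K_{\overline{S}}+a_1\overline{Z}+(a_2+\cdots+a_r)\overline{F}$. Substituting the expressions for $\overline{Z}$ and $-K_{\overline{S}}$ in each case collapses $\overline{L}-(2+a_1)\overline{Z}$ to $(4-r+a_2+\cdots+a_r)\,\overline{F}$. The lower bound \eqref{equation:a3} on $a_3$, combined with $a_j\geqslant a_3$ for $j\geqslant 3$, is exactly calibrated to ensure $a_3+\cdots+a_r\geqslant r-4$, so the positive part is nef. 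Since the negative part has positive coefficients supported on disjoint $(-1)$-curves (negative definite intersection matrix), uniqueness of Zariski decomposition \cite[Theorem I.2.3.19]{Lazarsfeld} identifies this as the genuine decomposition; in particular $\tau(S,L,Z)=2+a_1=\sigma(\overline{S},\overline{L},\overline{Z})$, $k=r-1$, and one may take $C_m=L_{1,m+1}$.

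Finally, I would verify $\sigma(S,L,Z)=1+a_1+a_2$ by minimising $\frac{L\cdot C}{Z\cdot C}$ over the explicit list of $(-1)$-curves meeting $E_1$ given in Section~\ref{section:del-Pezzo}. For the lines $L_{1j}$ the ratio is $1+a_1+a_j\geqslant 1+a_1+a_2$; for conics $C_{1i_2i_3i_4i_5}$ the ratio is $1+a_1+a_{i_2}+\cdots+a_{i_5}$; for the cubic, quartic, quintic and sextic families appearing when $K_S^2\in\{1,2\}$ one obtains analogous expressions of the form $(1+\sum\text{extra }a_l)/m$ with $m\in\{1,2,3\}$. The main obstacle is the bookkeeping in the case $K_S^2=1$, where one must verify each of these inequalities systematically; in each case the required estimate reduces to a lower bound on a sum $a_{i_1}+\cdots+a_{i_s}$, which follows from the hypothesis on $a_3$ and the ordering $a_3\leqslant\cdots\leqslant a_r$. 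Once $\sigma=1+a_1+a_2$ is established and the $C_m$ are ordered by increasing $a_{m+1}$, the announced chain of inequalities is immediate.
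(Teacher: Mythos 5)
Your proof is correct and follows essentially the same outline as the paper's: contract the disjoint $(-1)$-curves $L_{12},\ldots,L_{1r}$, identify $\overline{S}$ as $\mathbb{F}_1$ or $\mathbb{P}^1\times\mathbb{P}^1$, exhibit the Zariski decomposition of $L-(2+a_1)Z$, and deduce $\tau(S,L,Z)=2+a_1$ from uniqueness. A few remarks on where you diverge from the paper and where you do more work than necessary.

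Your parity argument for identifying $\overline{S}$ (noting that $\overline{Z}^2=r-2$ is odd on $\mathbb{F}_1$ and even on $\mathbb{P}^1\times\mathbb{P}^1$, given $\overline{Z}\cdot\overline{F}=1$) is clean and correct; the paper instead exhibits a specific $(-1)$-curve disjoint from (or meeting) the $L_{1j}$ to distinguish the two cases, though the parity reasoning appears in the analogous Lemma~\ref{lemma:lambda-E1-F1-type}. Your observation that $\overline{L}-(2+a_1)\overline{Z}$ collapses to $(4-r+a_2+\cdots+a_r)\overline{F}$, and that~\eqref{equation:a3} is exactly $a_3\geqslant\frac{r-4}{r-2}$, is a tidy way to package the nefness check (the paper computes $\widetilde{L}$ in coordinates and invokes Remark~\ref{remark:sigma-tau-downstairs}).

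The one place you do substantially more work than needed is the final paragraph. Once you have the Zariski decomposition
\[
L-(2+a_1)Z\sim_{\mathbb{Q}}\pi^{*}\bigl(\overline{L}-(2+a_1)\overline{Z}\bigr)+\sum_{m=2}^{r}(1-a_m)L_{1m},
\]
the Seshadri constant falls out with no enumeration at all: for any $(-1)$-curve $C$ meeting $Z$ other than an $L_{1j}$, the right-hand side has nonnegative intersection with $C$ (the pullback term is nef and the $L_{1m}$ are effective and distinct from $C$), so $\frac{L\cdot C}{Z\cdot C}\geqslant 2+a_1>1+a_1+a_2$; hence the minimum in~\eqref{equation:sigma} is attained among the $L_{1j}$ and equals $L\cdot L_{12}=1+a_1+a_2$. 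This is what the paper's terse remark ``\eqref{equation:P2-type-Zariski} and \eqref{equation:sigma} imply $\sigma(S,L,Z)=1+a_1+a_2$'' is encoding. Your plan to verify $\sigma$ curve-by-curve against the lists in Section~\ref{section:del-Pezzo} is not wrong and the estimates you sketch do hold under~\eqref{equation:a3}, but it duplicates a fact already guaranteed by the decomposition; in the $K_S^2=1$ case in particular, invoking the decomposition spares you from checking the quartics, quintics, and sextics by hand.
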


\begin{proof}
Let $\eta\colon S\to \widetilde{S}$ be the contraction of $L_{12},\ldots, L_{1r}$,
let $\widetilde{L}=\eta_*(L)$, and let $\widetilde{Z}=\eta(Z)$.
Then $\widetilde{Z}^2=r-2$, and the curve $\widetilde Z$ is smooth.
Moreover, either $\widetilde{S}\cong\mathbb{F}_1$ or $\widetilde{S}\cong\mathbb{P}^1\times\mathbb{P}^1$.
In the former case, we have $\eta(E_i)\sim f$ for every $i\geqslant 2$.
Similarly, in the latter case, we may assume that $\eta(E_i)\sim f_2$ for every $i\geqslant 2$.

Suppose that $K_S^2$ is even.
Then there is a $(-1)$-curve $E\subset S$ disjoint from the curves $L_{12},\ldots, L_{1r}$,
which implies that $\widetilde{S}\cong\mathbb{F}_1$ and $\eta(E)\sim e$.
Indeed, if $K_S^2=4$, then $E=C_{12345}$.
Similarly, if $K_S^2=2$, then $E=Z_1$.
Moreover $\widetilde{Z}\cdot f=1$, from which we can deduce that
$\widetilde{Z}\sim e+\frac{r-1}{2}f$, which in turn implies that
$$
\widetilde{L}\sim_{\mathbb{Q}}(2+a_1)e+\Big(3+\frac{r-1}{2}a_1+\sum_{i=2}^{r}a_i\Big)f.
$$
Using Remark~\ref{remark:sigma-tau-downstairs} and inequality~\eqref{equation:a3}, we conclude that
$$\tau\big(\widetilde{S},\widetilde{L},\widetilde{Z}\big)=\sigma\big(\widetilde{S},\widetilde{L},\widetilde{Z}\big)=\min\Bigg\{\frac{3+\frac{r-1}{2}a_1+a_2+\cdots+a_r}{\frac{r-1}{2}},2+a_1\Bigg\}=2+a_1.
$$

Suppose now that $K_S^2$ is odd.
We claim that the surface $S$ contains an irreducible curve $C$ such that $\eta(C)\cdot\eta(E_2)=1$ and $\eta(C)\cdot\eta(C)=0$.
Indeed, if $K_S^2=1$, then $C=Z_{1,8}$.
Similarly, if $K_S^2=3$, then $C=C_{12345}$.
Finally, if $K_S^2=5$, then $C=L_{23}$.
Thus, we see that $\widetilde{S}\cong\mathbb{P}^1\times\mathbb{P}^1$.
Then $\widetilde{Z}\sim f_1+\frac{r-2}{2}f_2$, which implies
$$
\widetilde{L}\sim_{\mathbb{Q}}(2+a_1)f_1+\Big(2+\frac{r-2}{2}a_1+\sum_{i=2}^{r}a_i\Big)f_2.
$$
Using Remark~\ref{remark:sigma-tau-downstairs} and~\eqref{equation:a3}, we deduce that
$$
\tau\big(\widetilde{S},\widetilde{L},\widetilde{Z}\big)=\sigma\big(\widetilde{S},\widetilde{L},\widetilde{Z}\big)=\min\Bigg\{\frac{2+\frac{r-2}{2}a_1+a_2+\cdots+a_r}{\frac{r-2}{2}},2+a_1\Bigg\}=2+a_1.
$$

Hence, we see that $\tau(\widetilde{S},\widetilde{L},\widetilde{Z})=\sigma(\widetilde{S},\widetilde{L},\widetilde{Z})=2+a_1$ in all cases.
On the other hand, we have
\begin{equation}
\label{equation:P2-type-Zariski}
L-(2+a_1)Z\sim_{\mathbb{Q}}\eta^*\big(\widetilde{L}-(2+a_1)\widetilde{Z}\big)+\sum_{i=2}^r(1-a_i)L_{1i}.
\end{equation}
Using Remark~\ref{remark:sigma-tau-upstairs}, we see that $\mu=\tau(S,L,Z)=\sigma(\widetilde{S},\widetilde{L},\widetilde{Z})=2+a_1$,
and \eqref{equation:P2-type-Zariski} is the Zariski decomposition of the divisor $L-\mu Z$.
Since the Zariski decomposition is unique by \cite[Theorem I:2.3.19]{Lazarsfeld},
we may assume that $\eta=\pi$ and $\widetilde{S}=\overline{S}$, so that $k=r-1$.
Hence, we may also assume that $C_1=L_{12}$, $C_2=L_{13}$, $C_3=L_{14},\ldots,C_k=L_{1r}$.
Note that \eqref{equation:P2-type-Zariski} and \eqref{equation:sigma} imply that $\sigma(S,L,Z)=1+a_1+a_2$.
\end{proof}

\begin{lemma}
\label{lemma:lambda-E1-F1-type}
Suppose that $L$ is of $\mathbb{F}_1$-type, $Z=E_1$, $\mu=\tau(S,L,Z)$ and
\begin{equation}
\label{equation:a3-F1-type}
a_3\geqslant\frac{5-K_{S}^2}{6-K_{S}^2}.
\end{equation}
Then
$$
\sigma(S,L,Z)=L\cdot L_{1r}=1+a_1\leqslant L\cdot L_{1i}=1+b+a_1+a_i<2+a_1+b=\sigma\big(\overline{S},\overline{L},\overline{Z}\big)=\tau(S,L,Z)
$$
for every $i$ such that $2\leqslant i<r$.
One has $k=r-1$, $C_1=L_{1r}$, and $C_i=L_{1i}$ for $r>i\geqslant 2$.
If $K_S^2$ is even, then $\overline{S}=\mathbb{F}_1$. If $K_S^2$ is odd, then $\overline{S}=\mathbb{P}^1\times\mathbb{P}^1$.
The curve $\overline{Z}$ is smooth.
\end{lemma}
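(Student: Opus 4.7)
The plan closely follows Lemma~\ref{lemma:lambda-E1-P2-type}, with two modifications arising from the extra $bB$ term in $L$ and from the fact that $L_{1r}$ is distinguished: since $B \sim \gamma^*(\ell) - E_r$, we have $B \cdot L_{1r} = 0$ while $B \cdot L_{1i} = 1$ for $2 \leqslant i \leqslant r-1$. Direct intersection computations from $L \sim_{\mathbb{Q}} -K_S + bB + \sum_{i=1}^{r-1} a_i E_i$ give $L \cdot L_{1r} = 1 + a_1$ and $L \cdot L_{1i} = 1 + b + a_1 + a_i$ for $2 \leqslant i \leqslant r-1$. Each $L_{1i}$ meets $Z = E_1$ transversely at a single point, so formula~\eqref{equation:sigma} yields $\sigma(S,L,Z) \leqslant 1 + a_1$; inspection of the other $(-1)$-curves through $E_1$ (conics $C_{1j_2j_3j_4j_5}$ and higher-degree curves when $K_S^2 \leqslant 2$) shows they produce strictly larger Seshadri ratios using $0 \leqslant a_i < 1$ and $b > 0$, so $\sigma(S,L,Z) = 1 + a_1$.

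Let $\eta \colon S \to \widetilde{S}$ be the contraction of the $r - 1$ disjoint $(-1)$-curves $L_{1r}, L_{12}, \ldots, L_{1,r-1}$; this is precisely the same morphism used in Lemma~\ref{lemma:lambda-E1-P2-type}, so the determination of $\widetilde{S}$ transfers directly: $\widetilde{S} \cong \mathbb{F}_1$ when $K_S^2$ is even (witnessed by the $(-1)$-curves $C_{12345}$ for $K_S^2 = 4$ and $Z_1$ for $K_S^2 = 2$, disjoint from all contracted curves), and $\widetilde{S} \cong \mathbb{P}^1 \times \mathbb{P}^1$ when $K_S^2$ is odd (using $L_{23}$, $C_{12345}$, or $Z_{1,8}$ for $K_S^2 = 5, 3, 1$ respectively to exhibit a $0$-class meeting $\eta(E_2)$ transversely). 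Since any two $(-1)$-curves on a del Pezzo surface of degree $\geqslant 1$ meet transversely at a single point when they meet at all, $\widetilde{Z} = \eta(E_1)$ is smooth with $\widetilde{Z}^2 = r - 2$, and $\widetilde{Z} \sim e + \tfrac{r-1}{2} f$ or $f_1 + \tfrac{r-2}{2} f_2$ accordingly. Using the pushforward formula $\eta_*(L) \cdot \eta_*(D) = L \cdot D + \sum_j (L \cdot L_{1j})(D \cdot L_{1j})$ with $D = E_2$, one gets $\widetilde{L} \cdot \eta(E_2) = (1 - a_2) + (1 + b + a_1 + a_2) = 2 + a_1 + b$ and $\widetilde{Z} \cdot \eta(E_2) = 1$, giving the fiber-ratio candidate $2 + a_1 + b$ for $\sigma(\widetilde{S}, \widetilde{L}, \widetilde{Z})$.

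The heart of the argument is verifying that the other Seshadri candidate, namely $\widetilde{L} \cdot \eta(C)/\widetilde{Z} \cdot \eta(C)$ for the specific curve $C$ identified above, is at least $2 + a_1 + b$. A case-by-case calculation shows that in every degree this reduces, after cancellation, to the uniform inequality $\sum_{i=2}^{r-1} a_i \geqslant r - 4 = 5 - K_S^2$. Since $a_i \geqslant a_3$ for $i \geqslant 3$ and $a_2 \geqslant 0$, we have
\[
\sum_{i=2}^{r-1} a_i \;\geqslant\; (r-3)\,a_3 \;\geqslant\; (r-3)\cdot\frac{5 - K_S^2}{6 - K_S^2} \;=\; \frac{(r-3)(r-4)}{r-3} \;=\; r - 4,
\]
so the bound~\eqref{equation:a3-F1-type} is exactly sharp. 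Hence $\sigma(\widetilde{S}, \widetilde{L}, \widetilde{Z}) = 2 + a_1 + b$, which equals the fiber ratio, so Remark~\ref{remark:sigma-tau-downstairs} gives $\tau(\widetilde{S}, \widetilde{L}, \widetilde{Z}) = 2 + a_1 + b$. Since $2 + a_1 + b$ strictly exceeds both $L \cdot L_{1r} = 1 + a_1$ and each $L \cdot L_{1i} = 1 + b + a_1 + a_i$ (using $a_i < 1$), Remark~\ref{remark:sigma-tau-upstairs} then identifies \eqref{eq:sigma-tau-upstairs} as the Zariski decomposition of $L - \tau(S,L,Z) Z$, yielding $\tau(S,L,Z) = 2 + a_1 + b$; uniqueness of the Zariski decomposition \cite[Theorem I:2.3.19]{Lazarsfeld} lets us take $\pi = \eta$, $\overline{S} = \widetilde{S}$, $k = r-1$, $C_1 = L_{1r}$, and $C_i = L_{1i}$ for $2 \leqslant i \leqslant r-1$. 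The main technical obstacle is the per-degree computation leading to the uniform bound $\sum \geqslant r-4$: one must carefully track the contributions of the $bB$ term through the pushforward in each case, but once this is done the cancellation is clean and the constant in \eqref{equation:a3-F1-type} emerges precisely.
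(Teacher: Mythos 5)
Your proposal is correct and follows essentially the same route as the paper's proof: contracting the $r-1$ curves $L_{1r}, L_{12}, \ldots, L_{1,r-1}$, identifying $\widetilde{S}$ by the parity of $K_S^2$, computing $\sigma(\widetilde{S},\widetilde{L},\widetilde{Z})=\tau(\widetilde{S},\widetilde{L},\widetilde{Z})=2+a_1+b$ via Remark~\ref{remark:sigma-tau-downstairs}, and pulling back through Remark~\ref{remark:sigma-tau-upstairs} and the uniqueness of the Zariski decomposition. Your observation that the threshold condition reduces in every degree to the single inequality $\sum_{i=2}^{r-1}a_i\geqslant r-4$, which \eqref{equation:a3-F1-type} exactly forces via $\sum_{i=2}^{r-1}a_i\geqslant(r-3)a_3$, is a cleaner packaging of the computation the paper carries out by writing $\widetilde{L}$ in explicit coordinates on $\mathbb{F}_1$ and $\mathbb{P}^1\times\mathbb{P}^1$.
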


\begin{proof}
Observe first that  $2+a_1+b>L\cdot L_{1i}$ for every $i\geqslant 2$, because 
$$
L\sim_{\mathbb Q}-K_S+\sum_{i=1}^{r-1}a_iE_i+b(L_{1r}+E_1).
$$ 
Let $\eta\colon S\to\widetilde{S}$ be the contraction of the curves $L_{12},\ldots,L_{1r}$,
let $\widetilde{L}=\eta_{*}(L)$ and $\widetilde{Z}=\eta(Z)$.
Then $\widetilde{Z}$ is smooth and $\widetilde{Z}^2=r-2$.
Moreover, either $\widetilde{S}=\mathbb{F}_1$ or $\widetilde{S}=\mathbb{P}^1\times\mathbb{P}^1$.
Furthermore, if $K_S^2=9-r$ is even, then $\widetilde{Z}^2$ is odd, so that $\widetilde{S}=\mathbb{F}_1$.
Arguing as in the proof of Lemma~\ref{lemma:lambda-E1-P2-type},
we see that $\widetilde{S}=\mathbb{P}^1\times\mathbb{P}^1$ if $K_S^2$ is odd.
Let $\widetilde{E}_i=\eta(E_i)$. Then
$$
\widetilde{E}_i\cdot\widetilde{E}_j=
\left\{%
\aligned
&0\ \text{if}\ i\geqslant 2\ \text{and}\ j\geqslant 2,\\
&1\ \text{if}\ j>i=1\ \text{or}\ i>j=1,\\
&r-2\ \text{if}\ i=j=1.\\
\endaligned\right.\\
$$
Therefore, if $\widetilde{S}=\mathbb{F}_1$, then $\widetilde Z =\widetilde{E}_1\sim e+\frac{r-1}{2}f$, and $\widetilde{E}_i\sim f$ for every $i\geqslant 2$.
In this case, we have $\eta_{*}(B)\sim\widetilde{Z}$, so that
$$
\widetilde{L}\sim (2+b+a_1)e+\Big(3+\frac{r-1}{2}b+\frac{r-1}{2}a_1+a_2+\cdots+a_{r-1}\Big)f,
$$
which implies
$$
\tau\big(\widetilde{S},\widetilde{L},\widetilde{Z}\big)=\sigma\big(\widetilde{S},\widetilde{L},\widetilde{Z}\big)=\mathrm{min}\Bigg\{2+b+a_1,\frac{2+\frac{r-2}{2}b+\frac{r-2}{2}a_1+a_2+\cdots+a_{r-1}}{\frac{r-2}{2}}\Bigg\}=2+b+a_1,
$$
because of \eqref{equation:a3-F1-type} and Remark~\ref{remark:sigma-tau-downstairs}.
Similarly, if $\widetilde{S}=\mathbb{P}^1\times\mathbb{P}^1$, then we may assume that
$\widetilde{E}_i\sim f_2$ for every $i\geqslant 2$, so that $\widetilde{E}_1\sim f_1+\frac{r-2}{2}f_2$.
In this case, we have
$$
\widetilde{L}\sim (2+b+a_1)f_1+\Big(2+\frac{r-1}{2}b+\frac{r-2}{2}a_1+a_2+\cdots+a_{r-2}\Big)f_2,
$$
which implies  that $\tau(\widetilde{S},\widetilde{L},\widetilde{Z})=\sigma(\widetilde{S},\widetilde{L},\widetilde{Z})=2+b+a_1$, because of \eqref{equation:a3-F1-type} and Remark~\ref{remark:sigma-tau-downstairs}.
On the other hand, we have
\begin{equation}
\label{equation:F1-case-Zariski-decomposition}
L-(2+b+a_1)Z\sim_{\mathbb{Q}}\eta^{*}\big(\widetilde{L}-(2+b+a_1)\widetilde{Z}\big)+\sum_{i=1}^{r}(2+b+a_1-L\cdot L_{1i})L_{1i}.
\end{equation}
Using Remark~\ref{remark:sigma-tau-upstairs}, we see that $\tau(S,L,Z)=2+b+a_1$,
and \eqref{equation:F1-case-Zariski-decomposition} is the Zariski decomposition of the divisor $L-\mu Z$.
Since the Zariski decomposition is unique,
we may assume that $\eta=\pi$ and $\widetilde{S}=\overline{S}$, so that $k=r-1$, $C_1=L_{1r}$, and $C_i=L_{1i}$ for for every $i$ such that $2\leqslant i<r$.
Thus, to complete the proof of the lemma, we have to show that $\sigma(S,L,Z)=1+a_1$.
This follows easily from \eqref{equation:F1-case-Zariski-decomposition} and \eqref{equation:sigma}.
\end{proof}

\begin{lemma}
\label{lemma:lambda-E1-P1-P1-type-d-5}
Suppose that $K_S^2=5$, $L$ is of $\mathbb{P}^1\times\mathbb{P}^1$-type, $Z=E_1$, $\mu=\tau(S,L,Z)$. Then
\begin{multline*}
\sigma(S,L,Z)=L\cdot L_{14}=1+a_1\leqslant L\cdot L_{12}=1+b+a_1\leqslant\\
\leqslant L\cdot L_{13}=1+b+a_1+a_2+a_3<2+b+a_1+a_2=\sigma\big(\overline{S},\overline{L},\overline{Z}\big)=\tau(S,L,Z).
\end{multline*}
Moreover, one has $k=3$, $\overline{S}=\mathbb{P}^1\times\mathbb{P}^1$, $C_1=L_{14}$, $C_2=L_{12}$, $C_3=L_{13}$.
\end{lemma}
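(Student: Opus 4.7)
The plan is to mimic the strategy used for Lemmas~\ref{lemma:lambda-E1-P2-type}~and~\ref{lemma:lambda-E1-F1-type}: produce the Zariski decomposition of $L-\mu Z$ at the candidate pseudo-effective threshold $\mu=2+b+a_1+a_2$ by computing $\tau$ downstairs on an auxiliary $\mathbb{P}^1\times\mathbb{P}^1$, and then invoke Remark~\ref{remark:sigma-tau-upstairs} and the uniqueness of the Zariski decomposition.

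First, I identify the candidate for $\pi$. With the conventions of Section~\ref{section:del-Pezzo}, when $K_S^2=5$ and $L$ is of $\mathbb{P}^1\times\mathbb{P}^1$-type we have $r=4$ and
$$
L\sim_{\mathbb{Q}}-K_S+bB+a_1E_1+a_2L_{24}+a_3E_3,
$$
with $B\sim \gamma^{*}(\ell)-E_4$. The only $(-1)$-curves on $S$ meeting $Z=E_1$ are $L_{12},L_{13},L_{14}$, and a direct intersection computation gives
$$
L\cdot L_{14}=1+a_1,\quad L\cdot L_{12}=1+b+a_1,\quad L\cdot L_{13}=1+b+a_1+a_2+a_3.
$$
Since each of these intersects $Z$ transversely in one point, \eqref{equation:sigma} yields $\sigma(S,L,Z)=L\cdot L_{14}=1+a_1$, and the stated chain of inequalities is immediate from $0\leqslant a_1\leqslant a_2\leqslant a_3<1$ and $b\geqslant 0$.

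Next, let $\eta\colon S\to\widetilde{S}$ be the contraction of the three disjoint $(-1)$-curves $L_{12},L_{13},L_{14}$, set $\widetilde{L}=\eta_{*}(L)$, and $\widetilde{Z}=\eta(E_1)$. Then $K_{\widetilde{S}}^2=8$, and using that the three images $\widetilde{E}_2,\widetilde{E}_3,\widetilde{E}_4$ are pairwise disjoint curves of self-intersection $0$ I conclude $\widetilde{S}\cong\mathbb{P}^1\times\mathbb{P}^1$ with $\widetilde{E}_2,\widetilde{E}_3,\widetilde{E}_4\sim f_2$. A short computation (using $D^{2}\mapsto D^{2}+(D\cdot L_{1j})^2$ under the successive blowdowns) shows $\widetilde{Z}\sim f_1+f_2$, $\widetilde{L}_{24}\sim f_1$, and $\eta_{*}(B)\sim f_1+f_2$; hence
$$
\widetilde{L}\sim (2+b+a_1+a_2)f_1+(2+b+a_1+a_3)f_2.
$$
Since $\widetilde{S}\cong\mathbb{P}^1\times\mathbb{P}^1$, Remark~\ref{remark:sigma-tau-downstairs} gives
$$
\tau(\widetilde{S},\widetilde{L},\widetilde{Z})=\sigma(\widetilde{S},\widetilde{L},\widetilde{Z})=\min\bigl\{2+b+a_1+a_2,\;2+b+a_1+a_3\bigr\}=2+b+a_1+a_2.
$$

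To finish, I verify the hypothesis of Remark~\ref{remark:sigma-tau-upstairs} for the contraction $\eta$: one checks easily that $\sigma(\widetilde{S},\widetilde{L},\widetilde{Z})=2+b+a_1+a_2>L\cdot L_{1j}$ for $j=2,3,4$, the only nontrivial case being $2+b+a_1+a_2>1+b+a_1+a_2+a_3$, which reduces to $a_3<1$. Thus
$$
L-(2+b+a_1+a_2)Z\sim_{\mathbb{Q}}\eta^{*}\bigl(\widetilde{L}-(2+b+a_1+a_2)\widetilde{Z}\bigr)+\sum_{j\in\{2,3,4\}}\bigl(2+b+a_1+a_2-L\cdot L_{1j}\bigr)L_{1j}
$$
with all coefficients strictly positive, and this is the Zariski decomposition of $L-\mu Z$. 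Uniqueness of the Zariski decomposition \cite[Theorem~I:2.3.19]{Lazarsfeld} then forces $\pi=\eta$, $\overline{S}\cong\mathbb{P}^1\times\mathbb{P}^1$, $k=3$, and the labelling $C_1=L_{14}$, $C_2=L_{12}$, $C_3=L_{13}$ agrees with the convention $L\cdot C_1\leqslant L\cdot C_2\leqslant L\cdot C_3$. The main technical point is the identification of the contracted model $\widetilde{S}$ and the classes of $\widetilde{Z}$ and $\eta_{*}(B)$; once this is settled, the rest is a straightforward application of Remark~\ref{remark:sigma-tau-upstairs}.
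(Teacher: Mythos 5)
Your argument follows the paper's proof essentially verbatim: contract the three disjoint $(-1)$-curves $L_{12},L_{13},L_{14}$ meeting $Z$, identify the image as $\mathbb{P}^1\times\mathbb{P}^1$ with the right classes for $\widetilde{Z}$, $\widetilde{L}_{24}$, $\widetilde{E}_3$ and $\eta_{*}(B)$, compute $\sigma(\widetilde{S},\widetilde{L},\widetilde{Z})=\tau(\widetilde{S},\widetilde{L},\widetilde{Z})=2+b+a_1+a_2$ via Remark~\ref{remark:sigma-tau-downstairs}, and then conclude with Remark~\ref{remark:sigma-tau-upstairs} and uniqueness of the Zariski decomposition.

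One small imprecision in the way you justify $\widetilde{S}\cong\mathbb{P}^1\times\mathbb{P}^1$: the observation that $\widetilde{E}_2,\widetilde{E}_3,\widetilde{E}_4$ are three pairwise disjoint curves of self-intersection $0$ does not, by itself, rule out $\widetilde{S}\cong\mathbb{F}_1$, since on $\mathbb{F}_1$ the fibres of the ruling also give arbitrarily many pairwise disjoint $0$-curves. You need something more, e.g.\ that $\widetilde{L}_{24}^{2}=\widetilde{E}_3^{2}=0$ while $\widetilde{L}_{24}\cdot\widetilde{E}_3=1$ (which the paper uses), or that $\widetilde{Z}^{2}=2$ and no class on $\mathbb{F}_1$ has self-intersection $2$. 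Since you in fact compute $\widetilde{Z}\sim f_1+f_2$ and $\widetilde{L}_{24}\sim f_1$, either fix is already available to you; just state it. Apart from that, the proof is correct and essentially identical to the one in the paper.
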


\begin{proof}
Recall that 
$$
L\sim_{\mathbb Q} -K_S+a_1E_1+a_2L_{24}+a_3E_3+b(L_{13}+E_1),
$$
and the only $(-1)$-curves on the surface $S$ that intersect $Z$ are the curves $L_{12}$, $L_{13}$ and $L_{14}$.
Intersecting $L$ with these curves, we see that $\sigma(S,L,Z)=L\cdot L_{14}=1+a_1$ by \eqref{equation:sigma}.

Let $\eta\colon S\to\widetilde{S}$ be the contraction of the curves $L_{12}$, $L_{13}$ and $L_{14}$.
Then $\widetilde{S}\cong\mathbb{P}^1\times\mathbb{P}^1$.
Let $\widetilde{Z}=\eta(Z)$, $\widetilde{L}_{24}=\eta(L_{24})$ and $\widetilde{E}_3=\eta(E_3)$ .
Then $\widetilde{L}_{24}\cdot\widetilde{E}_3=1$ and $\widetilde{L}_{24}^2=\widetilde{E}_3^2=0$.
Thus, we may assume that $\widetilde{L}_{24}\sim f_1$ and $\widetilde{E}_3\sim f_2$.
Since $\widetilde{Z}^2=2$, we have $\widetilde{Z}\sim f_1+f_2$.

Let $\widetilde{L}=\eta_{*}(L)$.
Since $\eta_{*}(B)\sim\widetilde{Z}$, we have
$$
\widetilde{L}\sim_{\mathbb{Q}} -K_{\widetilde{S}}+(b+a_1)\widetilde{Z}+a_2\widetilde{L}_{24}+a_3\widetilde{E}_3\sim_{\mathbb{Q}} (2+b+a_1+a_2)f_1+(2+b+a_1+a_3)f_2,
$$
so that $\sigma(\widetilde{S},\widetilde{L},\widetilde{Z})=\tau(\widetilde{S},\widetilde{L},\widetilde{Z})=2+b+a_1+a_2$.
Moreover, we have
\begin{multline}
\label{equation:d-5-P1-P1-Zariski}
L-(2+b+a_1+a_2)Z\sim_{\mathbb{Q}}\eta^{*}\big(\widetilde{L}-(2+b+a_1+a_2)\widetilde{Z}\big)+\\
+(2+b+a_1+a_2-L\cdot L_{14})L_{14}+(2+b+a_1+a_2-L\cdot L_{12})L_{12}+\\
+(2+b+a_1+a_2-L\cdot L_{13})L_{13}.
\end{multline}
Using Remark~\ref{remark:sigma-tau-upstairs}, we see that $\tau(S,L,Z)=2+b+a_1+a_2$,
and \eqref{equation:d-5-P1-P1-Zariski} is the Zariski decomposition of the divisor $L-\mu Z$, so that $k=3$.
Thus, we may assume that $\eta=\pi$, $\widetilde{S}=\overline{S}$, and also $C_1=L_{14}$, $C_2=L_{12}$ and $C_3=L_{13}$.
\end{proof}

\begin{lemma}
\label{lemma:lambda-E1-P1-P1-type-d-4}
Suppose that $K_S^2=4$, $L$ is of $\mathbb{P}^1\times\mathbb{P}^1$-type, $Z=E_1$, $\mu=\tau(S,L,Z)$. Then
\begin{multline*}
\sigma(S,L,Z)=L\cdot L_{15}=1+a_1\leqslant L\cdot L_{12}=1+b+a_1\leqslant \\
\leqslant L\cdot L_{13}=1+b+a_1+a_2+a_3\leqslant L\cdot L_{14}=1+b+a_1+a_2+a_4<\\
<\mathrm{min}\Bigg\{\frac{3}{2}+b+a_1+\frac{a_2+a_3+a_4}{2},2+b+a_1+a_2\Bigg\}=\sigma\big(\overline{S},\overline{L},\overline{Z}\big)=\tau(S,L,Z).
\end{multline*}
Moreover, one has $k\geqslant 4$, $C_1=L_{15}$, $C_2=L_{12}$, $C_3=L_{13}$, $C_4=L_{14}$ and
\begin{enumerate}[$(a)$]
\item either $k=4$, $\overline{S}=\mathbb{F}_1$, $a_3+a_4\geqslant 1+a_2$ and $\tau(S,L,Z)=2+b+a_1+a_2$,

\item or $k=5$, $C_5=C_{12345}$, $\overline{S}=\mathbb{P}^2$, $a_3+a_4<1+a_2$,
and
$$
L\cdot C_{12345}=1+b+a_1+a_3+a_4<\tau(S,L,Z)=\frac{3}{2}+b+a_1+\frac{a_2+a_3+a_4}{2}.
$$
\end{enumerate}
\end{lemma}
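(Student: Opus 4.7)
The plan is to follow the pattern of Lemmas~\ref{lemma:lambda-E1-P2-type}--\ref{lemma:lambda-E1-P1-P1-type-d-5}: identify the $(-1)$-curves meeting $Z=E_1$, use~\eqref{equation:sigma} to compute $\sigma(S,L,Z)$ from their intersections with $L$, then perform an explicit contraction and read off $\sigma$ and $\tau$ on the downstairs surface. With $K_S^2=4$ (so $r=5$), the $(-1)$-curves meeting $E_1$ are precisely $L_{12},L_{13},L_{14},L_{15}$ and the unique conic $C_{12345}$. Using the normal form $L\sim_{\mathbb Q}-K_S+b(L_{15}+E_1)+a_1E_1+a_2L_{24}+a_3E_3+a_4E_4$ together with the standard intersection pairing on $S$, a direct calculation produces the five stated values of $L\cdot L_{1j}$ as well as $L\cdot C_{12345}=1+b+a_1+a_3+a_4$. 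The monotone chain then follows from $b,a_i\geqslant 0$ and $a_3\leqslant a_4$, while the strict inequalities $L\cdot L_{14}<2+b+a_1+a_2$ and $L\cdot L_{14}<\tfrac{3}{2}+b+a_1+\tfrac{a_2+a_3+a_4}{2}$ reduce respectively to $a_4<1$ and $a_4<1+a_3-a_2$, both immediate from $a_i<1$ and $a_2\leqslant a_3$.

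Next I would contract the four mutually disjoint curves $L_{12},L_{13},L_{14},L_{15}$ via $\eta\colon S\to\widetilde S$. Since $K_{\widetilde S}^2=8$ and $C_{12345}$ is disjoint from every $L_{1j}$, the image $\eta(C_{12345})$ is a $(-1)$-curve on $\widetilde S$, which forces $\widetilde S\cong\mathbb F_1$ with $e=\eta(C_{12345})$. Computing self-intersections and intersections with $e$ yields $\widetilde Z\sim e+2f$, $\eta_{*}(L_{24})\sim e+f$, and $\eta_{*}(E_3)\sim\eta_{*}(E_4)\sim f$; pushing forward then gives
$$
\widetilde L\sim_{\mathbb Q}(2+b+a_1+a_2)\,e+(3+2b+2a_1+a_2+a_3+a_4)\,f.
$$
Since $\{e,f\}$ generates the Mori cone of $\mathbb F_1$ (Remark~\ref{remark:nef-ample}), pairing $\widetilde L-\mu\widetilde Z$ against the two generators gives
$$
\sigma\bigl(\widetilde S,\widetilde L,\widetilde Z\bigr)=\min\bigl\{2+b+a_1+a_2,\ 1+b+a_1+a_3+a_4\bigr\},
$$
$$
\tau\bigl(\widetilde S,\widetilde L,\widetilde Z\bigr)=\min\bigl\{2+b+a_1+a_2,\ \tfrac{3}{2}+b+a_1+\tfrac{a_2+a_3+a_4}{2}\bigr\}.
$$

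The case split is then governed by whether $a_3+a_4\geqslant 1+a_2$. If this holds, both minima coincide at $2+b+a_1+a_2$, so $\sigma(\widetilde S,\widetilde L,\widetilde Z)=\tau(\widetilde S,\widetilde L,\widetilde Z)$, and Remark~\ref{remark:sigma-tau-upstairs} applied to the four curves $L_{1j}$ (all of whose intersections with $L$ are strictly smaller than $2+b+a_1+a_2$ by the first paragraph) identifies $\pi=\eta$, producing case~(a). Otherwise $\tau(\widetilde S,\widetilde L,\widetilde Z)=\tfrac{3}{2}+b+a_1+\tfrac{a_2+a_3+a_4}{2}$ and $\widetilde L-\tau\widetilde Z$ fails nefness precisely against $e$; composing $\eta$ with the contraction $\mu\colon\mathbb F_1\to\mathbb P^2$ of $e$, and noting $\mu_{*}(e)=0$, $\mu_{*}(f)=\ell$, gives $\overline L=(3+2b+2a_1+a_2+a_3+a_4)\ell$ and $\overline Z=2\ell$, which yields the claimed value $\tau(S,L,Z)=\tfrac{3}{2}+b+a_1+\tfrac{a_2+a_3+a_4}{2}$. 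Since the five curves $L_{12},L_{13},L_{14},L_{15},C_{12345}$ are mutually disjoint and all satisfy $L\cdot C_j<\tau(S,L,Z)$ by the strict inequalities from the first paragraph, Remark~\ref{remark:sigma-tau-upstairs} uniquely recovers the Zariski decomposition with $k=5$ and $C_5=C_{12345}$, giving case~(b). The only real work is the pushforward bookkeeping on $\widetilde S=\mathbb F_1$; no genuine obstacle is anticipated.
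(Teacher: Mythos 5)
Your strategy is sound and works, but it runs in the opposite order from the paper's. The paper first contracts all five mutually disjoint curves $L_{12},L_{13},L_{14},L_{15},C_{12345}$ via $\eta\colon S\to\mathbb{P}^2$, where $\sigma=\tau=\tfrac{3}{2}+b+a_1+\tfrac{a_2+a_3+a_4}{2}$ automatically; case~(b) then falls out directly from Remark~\ref{remark:sigma-tau-upstairs} when all five coefficients are positive, and case~(a) is handled afterwards by contracting only the four $L_{1j}$ to $\mathbb{F}_1$. You instead contract to $\mathbb{F}_1$ first, read off both $\sigma(\widetilde S,\widetilde L,\widetilde Z)=\min\{2+b+a_1+a_2,\,1+b+a_1+a_3+a_4\}$ and $\tau(\widetilde S,\widetilde L,\widetilde Z)=\min\{2+b+a_1+a_2,\,\tfrac{3}{2}+b+a_1+\tfrac{a_2+a_3+a_4}{2}\}$ by expressing the class in the basis $\{e,f\}$, and only pass to $\mathbb{P}^2$ when $\widetilde L-\tau\widetilde Z$ fails nefness against $e$. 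Both routes are correct, with essentially the same computations appearing in a different order; your version is arguably a bit more transparent about where the two candidate values in the $\min$ come from.

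There is, however, a bookkeeping error you should fix: in the normal form for $L$ you wrote $a_2L_{24}$, but for $K_S^2=4$ one has $r=5$, so the convention of Section~\ref{section:del-Pezzo} dictates $F_2=L_{2r}=L_{25}$. Your displayed form with $L_{24}$ is not merely a relabeling: it actually changes the intersection number $L\cdot L_{14}$, since $L_{24}\cdot L_{14}=0$ whereas $L_{25}\cdot L_{14}=1$, so the literal version of your normal form would give $L\cdot L_{14}=1+b+a_1+a_4$ rather than the stated $1+b+a_1+a_2+a_4$. Since all your claimed intersection values agree with the correct form with $L_{25}$, this is evidently a transcription slip rather than a computational one, but it should be corrected, and the same replacement should be made where you write $\eta_{*}(L_{24})\sim e+f$.
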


\begin{proof}
Recall that 
$$
L\sim_{\mathbb Q}-K_S+a_1E_1+a_2L_{25}+\sum_{i=3}^4a_iE_i+b(L_{15}+E_1).
$$ 
Observe that the only $(-1)$-curves on $S$ that intersect $Z$ are the curves $E_1$, $L_{12}$, $L_{13}$, $L_{14}$, $L_{15}$ and $C_{12345}$.
Intersecting the divisor $L$ with these curves, we get
\begin{multline*}
L\cdot L_{15}=1+a_1\leqslant L\cdot L_{12}=1+b+a_1\leqslant L\cdot L_{13}=1+b+a_1+a_2+a_3\leqslant\\
\leqslant L\cdot L_{14}=1+b+a_1+a_2+a_4\leqslant L\cdot C_{12345}=1+b+a_1+a_3+a_4,
\end{multline*}
which implies that $\sigma(S,L,Z)=1+a_1$ by \eqref{equation:sigma}.

Note that the curves $L_{12}$, $L_{13}$, $L_{14}$, $L_{15}$ and $C_{12345}$ are disjoint.
Let $\eta\colon S\to\mathbb{P}^2$ be the contraction of these curves, $\widetilde{L}=\eta_{*}(L)$, and $\widetilde{Z}=\eta(Z)$.
Then $\widetilde{Z}$ is a conic and
$$
\widetilde{L}\sim_{\mathbb{Q}} (3+2b+2a_1+a_2+a_3+a_4)\ell,
$$
so that $\sigma(\widetilde{S},\widetilde{L},\widetilde{Z})=\tau(\widetilde{S},\widetilde{L},\widetilde{Z})=\frac{3}{2}+b+a_1+\frac{a_2+a_3+a_4}{2}$.
Moreover, $a_2+1>a_3+a_4$, if and only if $\sigma(\widetilde{S},\widetilde{L},\widetilde{Z})>L\cdot C_{12345}=1+b+a_1+a_3+a_4$.
Observe that
\begin{multline}
\label{equation:d-4-P1-P1-case-Zariski-decomposition-1}
L-\sigma\big(\widetilde{S},\widetilde{L},\widetilde{Z}\big)Z\sim_{\mathbb{Q}}\sum_{i=2}^5\Big(\sigma\big(\widetilde{S},\widetilde{L},\widetilde{Z}\big)-L\cdot L_{1i}\Big)L_{1i}+\\
+\Big(\sigma\big(\widetilde{S},\widetilde{L},\widetilde{Z}\big)-L\cdot C_{12345}\Big)C_{12345}.
\end{multline}
Using Remark~\ref{remark:sigma-tau-upstairs}, we see that if $a_2+1>a_3+a_4$,
then $\tau(S,L,Z)=\tau(\widetilde{S}, \widetilde{L},\widetilde{Z})=\sigma(\widetilde{S},\widetilde{L},\widetilde{Z})$,
so that \eqref{equation:d-4-P1-P1-case-Zariski-decomposition-1} is the Zariski decomposition of the divisor $L-\mu Z$.
Hence, if $a_2+1>a_3+a_4$, then we may assume that $\eta=\pi$, $\widetilde{S}=\overline{S}$, $C_1=L_{15}$, $C_2=L_{12}$, $C_3=L_{13}$, $C_4=L_{14}$ and $C_5=C_{12345}$.

To complete the proof, we now assume that $a_2+1\leqslant a_3+a_4$.
Let $\upsilon\colon S\to\mathbb{F}_1$ be the contraction of the curves $L_{12}$, $L_{13}$, $L_{14}$ and $L_{15}$,
let $\widehat{L}=\upsilon_{*}(L)$ and $\widehat{Z}=\upsilon(Z)$. Then
\begin{multline}
\label{equation:d-4-P1-P1-case-Zariski-decomposition-2}
L-(2+b+a_1+a_2)Z\sim_{\mathbb{Q}}\upsilon^{*}\big(\widehat{L}-(2+b+a_1+a_2)\widehat{Z}\big)+\\
+(2+b+a_1+a_2-L\cdot L_{15})L_{15}+(2+b+a_1+a_2-L\cdot L_{12})L_{12}+\\
+(2+b+a_1+a_2-L\cdot L_{13})L_{13}+(2+b+a_1+a_2-L\cdot L_{14})L_{14},
\end{multline}
where the coefficients on the right hand side of \eqref{equation:d-4-P1-P1-case-Zariski-decomposition-2} are all positive.
Furthermore, $\sigma(E_i)\sim f$ if $i\geqslant 2$, $\sigma(E_2)\cdot \widehat Z =1$ and $\widehat Z^2=3$, so it follows that $\widehat{Z}\sim e+2f$. 
Similarly $\sigma(L_{25})\sim e+f$. Hence, we have 
$$
\widehat{L}\sim_{\mathbb{Q}} (2+b+a_1+a_2)e+(3+2b+2a_1+a_2+a_3+a_4)f.
$$
Since $1+a_2\leqslant a_3+a_4$, it follows from Remark~\ref{remark:sigma-tau-downstairs} that 
$$
\sigma(\widehat{S},\widehat{L},\widehat{Z})=\tau(\widehat{S},\widehat{L},\widehat{Z})=2+b+a_1+a_2.
$$
Using Remark~\ref{remark:sigma-tau-upstairs}, we see that $\tau(S,L,Z)=2+b+a_1+a_2$, 
and the Zariski decomposition of the divisor $L-\mu Z$ is given by \eqref{equation:d-4-P1-P1-case-Zariski-decomposition-2}.
Hence, we may assume that $\upsilon=\pi$ and $\widehat{S}=\overline{S}$, so that $k=4$ in this case.
Moreover, we may also assume that $C_1=L_{15}$, $C_2=L_{12}$, $C_3=L_{13}$ and $C_4=L_{14}$.
This completes the proof of the lemma.
\end{proof}

\begin{lemma}
\label{lemma:lambda-E1-P1-P1-type-d-3}
Suppose that $K_S^2=3$, $L$ is of $\mathbb{P}^1\times\mathbb{P}^1$-type, $Z=E_1$, $\mu=\tau(S,L,Z)$. Then
\begin{multline*}
\sigma(S,L,Z)=L\cdot L_{16}=1+a_1\leqslant L\cdot L_{12}=1+b+a_1\leqslant\\
\leqslant L\cdot L_{13}=1+b+a_1+a_2+a_3\leqslant L\cdot L_{14}=1+b+a_1+a_2+a_4\leqslant \\
\leqslant\min\Bigg\{1+b+a_1+\frac{a_2+a_3+a_4+a_5}{2},2+b+a_1+a_2\Bigg\}=\sigma\big(\overline{S},\overline{L},\overline{Z}\big)=\tau(S,L,Z).
\end{multline*}
Moreover, one has $k\geqslant 1$, $C_1=L_{16}$, and one of the following cases holds:
\begin{enumerate}[$(a)$]
\item if $a_1=a_2=a_3=a_4=a_5=0$, then $k=1$ and $\mu=1+b$;
\item if $a_2=a_3=a_4=a_5>0$, then $k=2$, $C_2=L_{12}$ and $\mu=1+b+a_1+2a_2$;
\item if $a_2=a_3<a_4=a_5$, then $k=3$, $C_2=L_{12}$, $C_3=L_{13}$ and $\mu=1+b+a_1+a_2+a_5$;
\item if $a_2+a_5=a_3+a_4$ and $a_4<a_5$, then $k=4$, $C_2=L_{12}$, $C_3=L_{13}$, $C_4=L_{14}$ and $\mu=1+b+a_1+a_2+a_5$;
\item if $a_2+a_5<a_3+a_4$ and $a_3+a_4+a_5<2+a_2$, then $k=5$, $C_2=L_{12}$, $C_3=L_{13}$, $C_4=L_{14}$, $C_5=L_{15}$, $\overline{S}=\mathbb{P}^1\times\mathbb{P}^1$ and $\mu=1+b+a_1+\frac{a_2+a_3+a_4+a_5}{2}$;
\item if $a_2+a_5<a_3+a_4$ and $a_3+a_4+a_5\geqslant 2+a_2$, then $k=5$, $C_2=L_{12}$, $C_3=L_{13}$, $C_4=L_{14}$, $C_5=L_{15}$, $\overline{S}=\mathbb{P}^1\times\mathbb{P}^1$ and $\mu=2+b+a_1+a_2$;
\item if $a_2+a_5>a_3+a_4$, then $k=5$, $C_2=L_{12}$, $C_3=L_{13}$, $C_4=L_{14}$, $C_5=C_{12346}$,  $\overline{S}=\mathbb{F}_1$ and $\mu=1+b+a_1+\frac{a_2+a_3+a_4+a_5}{2}$.
\end{enumerate}
\end{lemma}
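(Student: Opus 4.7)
The plan is to follow the template established in Lemmas~\ref{lemma:lambda-E1-P2-type}--\ref{lemma:lambda-E1-P1-P1-type-d-4}: in each of the seven subcases, choose a contraction $\eta\colon S\to\widetilde{S}$ of disjoint $(-1)$-curves meeting $Z=E_1$, verify on $\widetilde{S}$ that $\sigma(\widetilde{S},\widetilde{L},\widetilde{Z})=\tau(\widetilde{S},\widetilde{L},\widetilde{Z})$ equals the displayed $\mu$ via Remark~\ref{remark:sigma-tau-downstairs}, and then apply Remark~\ref{remark:sigma-tau-upstairs} to conclude that $\tau(S,L,Z)=\mu$ with the Zariski decomposition \eqref{eq:Zariski-decomposition} having the claimed shape.

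First I would enumerate the $(-1)$-curves on the cubic surface $S$ meeting $E_1$: there are five lines $L_{1j}$ ($j=2,\ldots,6$) and five conics $C_{1abcd}$ with $1\in\{i_1,\ldots,i_5\}$. Using $L\sim_{\mathbb{Q}}-K_S+a_1E_1+a_2L_{26}+\sum_{i=3}^{5}a_iE_i+b(L_{16}+E_1)$, a direct intersection yields the ordering on $L\cdot L_{1j}$ displayed in the lemma, together with $L\cdot C_{12346}=1+a_1+a_3+a_4+b$ (the smallest conic intersection) and $L\cdot C_{12345}=1+a_1+a_2+a_3+a_4+a_5+2b$ (the largest). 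Formula~\eqref{equation:sigma} then gives $\sigma(S,L,Z)=L\cdot L_{16}=1+a_1$ uniformly. The decisive identity is $L\cdot L_{15}-L\cdot C_{12346}=a_2+a_5-a_3-a_4$, whose sign controls whether the fifth curve I contract is the line $L_{15}$ (cases (e) and (f), landing on $\overline{S}\cong\mathbb{P}^1\times\mathbb{P}^1$) or the conic $C_{12346}$ (case (g), landing on $\overline{S}\cong\mathbb{F}_1$).

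Next I would handle each subcase by contracting exactly those $(-1)$-curves $C$ with $L\cdot C<\mu$. Cases (a)--(d) amount to contracting $L_{16}$ together with an initial segment of $L_{12},L_{13},L_{14}$ of length $k-1$, giving $\widetilde{S}$ a del Pezzo of degree $3+k$; one then identifies $\widetilde{Z}$ as a fibre of a suitable conic bundle structure and applies Remark~\ref{remark:sigma-tau-downstairs} exactly as in Lemma~\ref{lemma:lambda-E1-P1-P1-type-d-4}. Cases (e) and (f) contract all five lines $L_{1j}$ and land on $\overline{S}\cong\mathbb{P}^1\times\mathbb{P}^1$ with $\overline{Z}^2=4$; here $\sigma(\overline{S},\overline{L},\overline{Z})$ is the minimum of the two fibre slopes $\overline{L}\cdot f_i/\overline{Z}\cdot f_i$, explaining the displayed $\min$ and the split according to $a_3+a_4+a_5\lessgtr 2+a_2$. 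For case (g), the five curves $L_{16},L_{12},L_{13},L_{14},C_{12346}$ are pairwise disjoint (each $L_{1j}$ shares the index pair $\{1,j\}\subset\{1,2,3,4,6\}$ with $C_{12346}$, hence intersects it in $2-1-1=0$), and contracting them gives $\overline{S}\cong\mathbb{F}_1$ with the image of $E_5$ playing the role of the unique $(-1)$-curve; the formula $\mu=1+b+a_1+(a_2+a_3+a_4+a_5)/2$ then falls out by computing $\overline{L}$ in the basis $(e,f)$.

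The principal obstacle will be that the Zariski decomposition of $L-\mu Z$ is unique by \cite[Theorem I:2.3.19]{Lazarsfeld}, so I must confirm that my contracted set is exactly the one appearing there. This reduces to checking (i) pairwise disjointness of the $C_i$, (ii) the strict inequalities $L\cdot C_i<\mu$, and (iii) nefness of $\overline{L}-\mu\overline{Z}$ on $\overline{S}$. Step~(iii) is the delicate one: it requires identifying every $(-1)$-curve on each candidate $\overline{S}$ and confirming that no other $(-1)$-curve acquires negative intersection with $\overline{L}-\mu\overline{Z}$. This is particularly subtle in case (f), where the switch $a_3+a_4+a_5\geqslant 2+a_2$ forces the Seshadri minimum to shift from one fibre slope to the other, and at the boundary between (e)/(f) and (g), where $L\cdot L_{15}=L\cdot C_{12346}$ means two candidate curves could a priori be chosen; uniqueness of the Zariski decomposition then pins down the correct one.
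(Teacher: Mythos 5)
Your proposal is correct and follows essentially the same route as the paper: enumerate the ten $(-1)$-curves through $E_1$, compute $\sigma(S,L,Z)=1+a_1$, observe that $L\cdot L_{15}-L\cdot C_{12346}=a_2+a_5-a_3-a_4$ governs whether the fifth contracted curve is $L_{15}$ (landing in $\mathbb{P}^1\times\mathbb{P}^1$) or $C_{12346}$ (landing in $\mathbb{F}_1$), and then read off $\sigma(\overline{S},\overline{L},\overline{Z})$ on the degree-$8$ surface via Remark~\ref{remark:sigma-tau-downstairs} before transferring to $S$ by Remark~\ref{remark:sigma-tau-upstairs} and uniqueness of the Zariski decomposition.

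One small imprecision worth flagging: for the boundary cases (a)--(d) you propose to contract only $k\leqslant 4$ of the curves, producing a del Pezzo surface of degree $3+k\in\{4,\ldots,7\}$, and then to invoke Remark~\ref{remark:sigma-tau-downstairs} ``exactly as in Lemma~\ref{lemma:lambda-E1-P1-P1-type-d-4}.'' That remark, however, only applies on $\mathbb{P}^2$, $\mathbb{P}^1\times\mathbb{P}^1$, or $\mathbb{F}_1$, so it cannot be used directly on those intermediate del Pezzo surfaces. The paper handles (a)--(d) differently: it first establishes $\tau(S,L,Z)=\mu$ from the full five-curve contraction (Remark~\ref{remark:sigma-tau-upstairs} only needs $\geqslant$ for that), then, in the boundary situation $a_2+a_5=a_3+a_4$, contracts precisely those $L_{1j}$ with $(L-\mu Z)\cdot L_{1j}<0$ and uses uniqueness of the Zariski decomposition to identify $\pi$. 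Nefness of $\overline{L}-\mu\overline{Z}$ on the intermediate surface is not checked by a fibre-slope formula but follows because every $(-1)$-curve on $\overline{S}$ lifts to a $(-1)$-curve on $S$ disjoint from the contracted ones, which then has nonnegative intersection with $L-\mu Z$. This is a filled-able gap in your outline, but your ``check nefness on every $(-1)$-curve'' remark shows you already see where the burden lies.
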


\begin{proof}
Recall that 
$$
L\sim_{\mathbb Q}-K_S+a_1E_1+a_2L_{26}+\sum_{i=3}^5a_iE_i+b(L_{16}+E_1). 
$$
Observe also that the only $(-1)$-curves on $S$ that intersect $Z$ are the curves $L_{12}$, $L_{13}$, $L_{14}$, $L_{15}$, $L_{16}$, $C_{12345}$, $C_{12346}$, $C_{12356}$, $C_{12456}$ and $C_{13456}$.
Moreover, we have
\begin{multline*}
L\cdot L_{16}=1+a_1\leqslant L\cdot L_{12}=1+b+a_1\leqslant L\cdot L_{13}=1+b+a_1+a_2+a_3\leqslant\\
\leqslant L\cdot L_{14}=1+b+a_1+a_2+a_4\leqslant L\cdot C_{12346}=1+b+a_1+a_3+a_4\leqslant \\
\leqslant L\cdot C_{12356}=1+b+a_1+a_3+a_5\leqslant L\cdot C_{12456}=1+b+a_1+a_4+a_5<\\
<L\cdot C_{13456}=1+b+a_1+a_2+a_3+a_4+a_5\leqslant L\cdot C_{12345}=1+2b+a_1+a_2+a_3+a_4+a_5,
\end{multline*}
and $L\cdot L_{14}\leqslant L\cdot L_{15}=1+b+a_1+a_2+a_5\leqslant L\cdot C_{12356}$.
Then $\sigma(S,L,Z)=1+a_1$ by \eqref{equation:sigma}.

If $a_2+a_5\leqslant a_3+a_4$, let $\eta\colon S\to\widetilde{S}$ be the contraction of the curves $L_{16}$, $L_{12}$, $L_{13}$, $L_{14}$ and $L_{15}$.
Similarly, if $a_2+a_5>a_3+a_4$, let $\eta\colon S\to\widetilde{S}$ be the contraction of the curves $L_{16}$, $L_{12}$, $L_{13}$, $L_{14}$ and $C_{12346}$.
In both cases, let $\widetilde{L}=\eta_{*}(L)$ and $\widetilde{Z}=\eta(Z)$.
Similarly, denote by $\widetilde{L}_{26}$, $\widetilde{E}_3$, $\widetilde{E}_4$, $\widetilde{E}_5$
and $\widetilde{C}_{12345}$ the images on $\widetilde{S}$ of the curves $L_{26}$, $E_3$, $E_4$, $E_5$ and $C_{12345}$, respectively.
If $a_2+a_5\leqslant a_3+a_4$, then $\widetilde{Z}^{2}=4$, $\widetilde{L}_{26}^2=2$,
$\widetilde{E}_3^2=\widetilde{E}_4^2=\widetilde{E}_5^2=\widetilde{C}_{12345}^2=0$, $\widetilde Z\cdot \widetilde E_3=1$,
the curves $\widetilde{E}_3$, $\widetilde{E}_4$ and $\widetilde{E}_5$ are disjoint,
and $\widetilde{C}_{12345}\cdot\widetilde{E}_2=1$, so that $\widetilde{S}\cong\mathbb{P}^1\times\mathbb{P}^1$.
In this case, we may assume that $\widetilde{E}_3\sim\widetilde{E}_4\sim\widetilde{E}_5\sim f_2$,
which implies that  $\widetilde{Z}\sim f_1+2f_2$ and $\widetilde{L}_{26}\sim f_1+f_2$, so that
$$
\widetilde{L}\sim_{\mathbb{Q}} (2+b+a_1+a_2)f_1+(2+2b+2a_1+a_2+a_3+a_4+a_5)f_2,
$$
which in turns implies that  
$$
\sigma\big(\widetilde{S},\widetilde{L},\widetilde{Z}\big)=\tau(\widetilde{S},\widetilde{L},\widetilde{Z})=\min\Big\{2+b+a_1+a_2,1+b+a_1+\frac{a_2+a_3+a_4+a_5}{2}\Big\}.
$$
Similarly, if $a_2+a_5>a_3+a_4$, then
$\widetilde{Z}^{2}=4$, $\widetilde{E}_5^2=-1$, $\widetilde{L}_{26}^2=\widetilde{E}_3^2=\widetilde{E}_4^2=1$, and $\widetilde E_5\cdot \widetilde L_{26}=\widetilde E_5\cdot \widetilde E_3=\widetilde E_5\cdot \widetilde E_4=\widetilde Z\cdot \widetilde E_5=0$,
which implies that $\widetilde{S}\cong\mathbb{F}_1$ and $\widetilde E_5\sim e$.
In this case, we have $\widetilde{Z}\sim 2e+2f$ and
$\widetilde{L}_{26}\sim\widetilde{E}_3\sim\widetilde{E}_4\sim e+f$,
so that
$$
\widetilde{L}\sim_{\mathbb{Q}} (2+2b+2a_1+a_2+a_3+a_4+a_5)e+(3+2b+2a_1+a_2+a_3+a_4)f,
$$
which also gives $\sigma(\widetilde{S},\widetilde{L},\widetilde{Z})=\tau(\widetilde{S},\widetilde{L},\widetilde{Z})=1+b+a_1+\frac{a_2+a_3+a_4+a_5}{2}$ by Remark~\ref{remark:sigma-tau-downstairs}.

If $a_2+a_5\leqslant a_3+a_4$, then
\begin{multline}
\label{equation:d-3-P1-P1-case-Zariski-decomposition-1}
L-\sigma\big(\widetilde{S},\widetilde{L},\widetilde{Z}\big)Z\sim_{\mathbb{Q}}\eta^{*}\Big(\widetilde{L}-\sigma\big(\widetilde{S},\widetilde{L},\widetilde{Z}\big)\widetilde{Z}\Big)+\sum_{i=2}^6\Big(\sigma\big(\widetilde{S},\widetilde{L},\widetilde{Z}\big)-L\cdot L_{1i}\Big)L_{1i}.
\end{multline}
and the coefficients in front of each $L_{1i}$ in \eqref{equation:d-3-P1-P1-case-Zariski-decomposition-1} are all non-negative,
so that Remark~\ref{remark:sigma-tau-upstairs} gives $\tau(S,L,Z)=\sigma(\widetilde{S},\widetilde{L},\widetilde{Z})$.
Moreover, if $a_2+a_5<a_3+a_4$, then all coefficients in front of each $L_{1i}$ in \eqref{equation:d-3-P1-P1-case-Zariski-decomposition-1} are positive,
so that \eqref{equation:d-3-P1-P1-case-Zariski-decomposition-1} is the Zariski decomposition of the divisor $L-\mu Z$ by Remark~\ref{remark:sigma-tau-upstairs}.
In this case, we may assume that $\eta=\pi$ and $\widetilde{S}=\overline{S}$, $C_1=L_{16}$, $C_2=L_{12}$, $C_3=L_{13}$, $C_4=L_{14}$ and $C_5=L_{15}$,
which gives us the cases (e)~and~(f).
Similarly, if $a_2+a_5>a_3+a_4$, then $\tau(S,L,Z)=\sigma(\widetilde{S},\widetilde{L},\widetilde{Z})$ by Remark~\ref{remark:sigma-tau-upstairs},
since all the coefficients in front of the right hand side of
\begin{multline}
\label{equation:d-3-P1-P1-case-Zariski-decomposition-2}
L-\sigma\big(\widetilde{S},\widetilde{L},\widetilde{Z}\big)Z\sim_{\mathbb{Q}}\eta^{*}\Big(\widetilde{L}-\sigma\big(\widetilde{S},\widetilde{L},\widetilde{Z}\big)\widetilde{Z}\Big)+\Big(\sigma\big(\widetilde{S},\widetilde{L},\widetilde{Z}\big)-L\cdot L_{16}\Big)L_{16}+\\
+\sum_{i=2}^4\Big(\sigma\big(\widetilde{S},\widetilde{L},\widetilde{Z}\big)-L\cdot L_{1i}\Big)L_{1i}+\Big(\sigma\big(\widetilde{S},\widetilde{L},\widetilde{Z}\big)-L\cdot C_{12346}\Big)C_{12346},
\end{multline}
are positive.
Using Remark~\ref{remark:sigma-tau-upstairs} again, we see that if $a_2+a_5>a_3+a_4$,
then \eqref{equation:d-3-P1-P1-case-Zariski-decomposition-2} is the Zariski decomposition of the divisor $L-\mu Z$.
In this case we may assume that $\eta=\pi$, $\widetilde{S}=\overline{S}$, $C_1=L_{16}$, $C_2=L_{12}$, $C_3=L_{13}$, $C_4=L_{14}$ and $C_5=C_{12346}$,
which gives us the case~(g).

To complete the proof, we may assume that $a_2+a_5=a_3+a_4$. Then $\mu=1+b+a_1+a_2+a_5$, so that
$(L-\mu Z)\cdot L_{16}=-b-a_2-a_5<0$,
$(L-\mu Z)\cdot L_{12}=-a_2-a_5$,
$(L-\mu Z)\cdot L_{13}=a_3-a_5$ and
$(L-\mu Z)\cdot L_{14}=a_4-a_5$.
Let $\upsilon\colon S\to\widehat{S}$ be the contraction of the curve $L_{16}$,
and those curves (if any) among $L_{12}$, $L_{13}$ and $L_{14}$ that have negative intersection with $L-\mu Z$.
Let $\widehat{L}=\upsilon_{*}(L)$ and $\widehat{Z}=\upsilon(Z)$, so that
\begin{equation}
\label{equation:d-3-P1-P1-case-Zariski-decomposition-3}
L-\mu Z\sim_{\mathbb{Q}}\upsilon^{*}\big(\widehat{L}-\mu\widehat{Z}\big)+(\mu-L\cdot L_{16})L_{16}+\sum_{i=2}^4(\mu-L\cdot L_{1i})L_{1i}.
\end{equation}
By Remark~\ref{remark:sigma-tau-upstairs}, the Zariski decomposition of the divisor $L-\mu Z$ is \eqref{equation:d-3-P1-P1-case-Zariski-decomposition-3},
so that we may assume that $\eta=\pi$, $\widetilde{S}=\overline{S}$ and $C_1=L_{16}$.
If $\mu-L\cdot L_{12}=0$, then $k=1$, which is case (a).
Moreover, if $\mu-L\cdot L_{14}>0$, then $k=4$, and we may also assume that $C_2=L_{12}$, $C_3=L_{13}$ and $C_4=L_{14}$,
which is case (d).
Similarly, if $\mu-L\cdot L_{14}=0$ and $\mu-L\cdot L_{13}>0$, then $k=3$, and we may assume that $C_2=L_{12}$ and $C_3=L_{13}$,
which is case (c).
Finally, if $\mu-L\cdot L_{13}=0$, then $k=2$, and we may assume that $C_2=L_{12}$, which is case (b).
This completes the proof of the lemma.
\end{proof}

If $K_S^2=2$, let us denote the $(-1)$-curve curve $Z_1$ also as $Z_{17}$.

\begin{lemma}
\label{lemma:E1-P1-P1-type-d-1-2}
Suppose that $K_S^2\leqslant 2$, $L$ is of $\mathbb{P}^1\times\mathbb{P}^1$-type, and $Z=E_1$.
Then
\begin{multline*}
\sigma(S,L,Z)=L\cdot L_{1r}=1+a_1\leqslant L\cdot L_{12}=1+b+a_1\leqslant \\
\leqslant L\cdot L_{13}=1+b+a_1+a_2+a_3\leqslant L\cdot L_{14}=1+b+a_1+a_2+a_4\leqslant \\
\leqslant L\cdot L_{15}=1+b+a_1+a_2+a_5\leqslant L\cdot L_{16}=1+b+a_1+a_2+a_6.
\end{multline*}
Moreover, one has $L\cdot L_{14}\leqslant L\cdot C_{1234r}=1+b+a_1+a_3+a_4$.
Furthermore, one has
$$
\frac{L\cdot Z_{17}}{Z\cdot Z_{17}}=\frac{1+2b+2a_1+a_2+a_3+a_4+a_5+a_6}{2},
$$
and the curve $Z_{17}$ is disjoint from the $(-1)$-curves $L_{1r}$, $L_{12}$, $L_{13}$, $L_{14}$, $L_{15}$, $L_{16}$ and $C_{1234r}$.
Let $C$ be a $(-1)$-curve on the surface $S$ such that the curve $C$ intersects the curve~$Z$, $C$ is not one of $E_1$, $L_{1r}$, $L_{12}$, $L_{13}$, $L_{14}$, $L_{15}$, $L_{16}$, $C_{1234r}$, $Z_{17}$ and $a_3\geqslant\frac{2}{3}$.
Then
$$
\frac{L\cdot C}{Z\cdot C}\geqslant\min\Bigg\{L\cdot L_{16}, L\cdot C_{1234r}, \frac{L\cdot Z_{17}}{Z\cdot Z_{17}}\Bigg\}.
$$
\end{lemma}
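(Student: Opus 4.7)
The plan is to reduce every assertion of the lemma to intersection computations in $\operatorname{Pic}(S)$. Using the coordinate expression
$$L\sim_{\mathbb{Q}}-K_S+a_1E_1+a_2L_{2r}+\sum_{i=3}^{r-1}a_iE_i+b(L_{1r}+E_1)$$
established at the end of Section~\ref{section:del-Pezzo} for $L$ of $\mathbb{P}^1\times\mathbb{P}^1$-type, together with the explicit classes of $(-1)$-curves recorded there (lines, conics, cubics, and for $K_S^2=1$ also the quartic, quintic and sextic classes), every intersection reduces to $\gamma^{*}(\ell)^2=1$, $\gamma^{*}(\ell)\cdot E_i=0$, $E_i\cdot E_j=-\delta_{ij}$.

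The chain of equalities and the $C_{1234r}$-inequality are immediate: $L\cdot L_{1r}=1+a_1$, $L\cdot L_{12}=1+b+a_1$, $L\cdot L_{1j}=1+b+a_1+a_2+a_j$ for $3\leq j\leq r-1$, and $L\cdot C_{1234r}=1+b+a_1+a_3+a_4$; the monotonicity and $L\cdot L_{14}\leq L\cdot C_{1234r}$ follow from $a_2\leq a_3\leq\cdots\leq a_{r-1}$. For $Z_{17}$ I use the class $3\gamma^{*}(\ell)-2E_1-E_2-E_3-E_4-E_5-E_6-E_7$ when $K_S^2=2$ and $3\gamma^{*}(\ell)-2E_1-E_2-E_3-E_4-E_5-E_6-E_8$ when $K_S^2=1$ (so $E_7$ has coefficient $0$ in the latter case, which is why $a_7$ does not appear in the final formula). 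A direct intersection computation yields $Z\cdot Z_{17}=E_1\cdot Z_{17}=2$ and
$$L\cdot Z_{17}=1+2b+2a_1+a_2+a_3+a_4+a_5+a_6,$$
and the disjointness statements reduce to $L_{1j}\cdot Z_{17}=3-2-1=0$ for $j\in\{2,3,4,5,6,r\}$ together with $C_{1234r}\cdot Z_{17}=6-2-1-1-1-1=0$.

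The main work is the final estimate. Let $C$ be a $(-1)$-curve on $S$ meeting $Z=E_1$ that is not in the explicit list. Writing $C\sim d\gamma^{*}(\ell)-\sum_im_iE_i$ with $m_i\geq 0$, the constraints $C^2=-1$ and $-K_S\cdot C=1$ force $(d,m_1,\ldots,m_r)$ into the finite classical list of $(-1)$-classes on a del Pezzo of degree $\leq 2$, with $Z\cdot C=m_1\geq 1$. I organize the enumeration by the pair $(d,m_1)$: conics $C_{1ijkl}$ with $\{i,j,k,l\}\neq\{2,3,4,r\}$, cubics $Z_{1j}$ with $j\neq 7$ and cubics $Z_{ij}$ with $1\notin\{i,j\}$ for $K_S^2\in\{1,2\}$, and, when $K_S^2=1$, also quartics, quintics and sextics (in each degree split by whether $1$ appears among the distinguished indices). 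In every case $L\cdot C$ is an explicit nonnegative $\mathbb{Z}$-linear combination of $1$, $b$ and $a_1,\ldots,a_{r-1}$, and the inequality
$$\frac{L\cdot C}{Z\cdot C}\geq\min\Big\{L\cdot L_{16},\ L\cdot C_{1234r},\ \tfrac{L\cdot Z_{17}}{Z\cdot Z_{17}}\Big\}$$
becomes a linear inequality in the $a_i$ and $b$ which I would verify using the ordering $a_1\leq\cdots\leq a_{r-1}<1$ together with the standing hypothesis $a_3\geq\tfrac{2}{3}$.

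The hard part will be controlling the sharpest cases in this enumeration: the conic classes $C_{1ijkl}$ whose index sets contain only a few of the large indices $\{3,4,5,6,r\}$, and, when $K_S^2=1$, the cubics $Z_{ij}$ with $i,j\neq 1$ and the quartics $Q_{ijk}$ with $1\in\{i,j,k\}$. In those the bound against $L\cdot C_{1234r}$ or $(L\cdot Z_{17})/(Z\cdot Z_{17})$ is nearly tight and the hypothesis $a_3\geq\tfrac{2}{3}$ is precisely what is consumed to close the gap. Exploiting the symmetry under permutations of the indices $\{3,\ldots,r-1\}$ cuts the number of representative cases down considerably, after which the verification of each linear inequality is routine.
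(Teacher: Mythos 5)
Your technique—enumerate the finitely many $(-1)$-curves $C$ meeting $Z$ and verify each resulting linear inequality via $a_1\leqslant\cdots\leqslant a_{r-1}$ and $a_3\geqslant\tfrac{2}{3}$—is the same as the paper's, but your organization of the enumeration by $(d,m_1)$ carries substantially heavier casework. The paper's organizing observations are: (i) $C\cdot Z\leqslant 4-K_S^2$, and the unique $(-1)$-curve $Z'$ with $Z'\cdot Z=4-K_S^2$ satisfies $Z+Z'\sim-(3-K_S^2)K_S$, which settles the $C\cdot Z=3$ sextic in a single line; (ii) for $C\cdot Z=1$, the lower bound $L\cdot C\geqslant 1+b+a_1+a_2\,(C\cdot L_{2r})+\sum_{i\geqslant 3}a_i\,(C\cdot E_i)$ already closes the case whenever $C\cdot(L_{2r}+E_3+\cdots+E_{r-1})\geqslant 3$, because the right-hand side then contains at least $a_2+a_3+a_4$ and $a_3+a_4\geqslant\tfrac{4}{3}>a_6$. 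Observation~(ii) dispatches every conic, cubic, quartic and quintic with $C\cdot Z=1$ in one stroke, leaving only $L_{17}$ (for $K_S^2=1$) and the conics $C_{12ijr}$ for individual inspection. Your degree-by-degree enumeration reaches the same conclusion, but at the cost of dozens of separate linear checks across the conic, cubic, quartic, quintic and sextic families.

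Two points to flag in your plan. First, your enumeration starts at conics and so omits the line $L_{17}$, which for $K_S^2=1$ meets $Z=E_1$ and is \emph{not} on the lemma's excluded list (there $L_{1r}=L_{18}$); the check is trivial, $L\cdot L_{17}=1+b+a_1+a_2+a_7\geqslant L\cdot L_{16}$, but it needs to be there. Second, your list of sharpest cases is miscalibrated: the cubics $Z_{ij}$ with $1\notin\{i,j\}$ have $C\cdot Z=1$ and their $L\cdot C$ picks up nearly all of $a_3,\ldots,a_{r-1}$, so they are comfortably dominated and never close to tight. The genuinely tight cases sit in the range $C\cdot Z\geqslant 2$: the cubics $Z_{1j}$ (whose slope exceeds $L\cdot Z_{17}/Z\cdot Z_{17}$ by $(a_7-a_j)/2$), your quartic $Q_{12r}$ (slope exceeding $L\cdot Z_{17}/Z\cdot Z_{17}$ by exactly $(a_7-a_2)/2$), and the sextic $Z'$ with $Z'\cdot Z=3$, which is one of the places the hypothesis $a_3\geqslant\tfrac{2}{3}$ is actually consumed.
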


\begin{proof} 
Recall that 
$$
L\sim_{\mathbb Q}-K_S+a_1E_1+a_2L_{2r}+\sum_{i=3}^{r-1}a_iE_i+b(L_{1r}+E_1),
$$
and observe that all assertions of the lemma are obvious except for the last one. Let us prove it.
Note that $C\cdot B\geqslant C\cdot Z\geqslant 1$, since $B\sim E_1+L_{1r}$ and $C\ne L_{1r}$.
Moreover, by looking at the classes of the list of $(-1)$-curves in $S$, we have $C\cdot Z\leqslant 4-K_S^2$.
Furthermore, the surface $S$ contains a unique $(-1)$-curve $Z^\prime$ such that $Z^\prime\cdot Z=4-K_S^2$.
We also have $Z+Z^\prime\sim -(3-K_S^2)K_{S}$. Indeed, if $K_S^2=2$, then $Z^\prime=Z_{17}$ and if $K_S^2=1$, then $\gamma(Z^\prime)$ is a sextic curve that has triple singular point at $\gamma(E_1)$,
and double points in the points $\gamma(E_2)$, $\gamma(E_3)$, $\gamma(E_4)$, $\gamma(E_5)$, $\gamma(E_6)$,
$\gamma(E_7)$ and $\gamma(E_8)$.

Suppose first that $C\cdot Z=1$. Then
$$
\frac{L\cdot C}{Z\cdot C}=L\cdot C\geqslant 1+b+a_1+a_{2}C\cdot L_{2r}+\sum_{i=3}^{r-1}a_iC\cdot E_i.
$$
Thus, if $C\cdot(L_{2r}+E_3+\cdots+E_{r-1}\big)\geqslant 3$,
then
$$
\frac{L\cdot C}{Z\cdot C}\geqslant 1+b+a_1+a_{2}+a_3+a_4\geqslant 2+b+a_1+a_2\geqslant 1+b+a_1+a_2+a_6=L\cdot L_{16}.
$$
Going through the list of $(-1)$-curves on $S$, we see that if $C\cdot(L_{2r}+E_3+\cdots+E_{r-1})\leqslant 2$,
then either $C=L_{17}$ and $K_S^2=1$, or $C$ is one of the curves $C_{12ijr}$ with $2<i<j<r$ and $(i,j)\ne (3,4)$.
In the former case, we have $L\cdot C\geqslant L\cdot L_{16}$.
In the latter case, we have $L\cdot C\geqslant L\cdot C_{1234r}$.

Suppose now that $C\cdot Z\geqslant 2$.
If $K_S^2=2$, then $C\cdot Z=2$ as $C\cdot Z\leqslant 4-K^2_S$ and $C=Z'=Z_{17}$.
Thus, to complete the proof, we may assume that $K_S^2=1$ and $3\geqslant C\cdot Z\geqslant 2$.
If $C\cdot Z=3$, then $C=Z^\prime\sim_{\mathbb{Q}}-2K_{S}-Z$, so that
$$
\frac{L\cdot C}{Z\cdot C}=\frac{L\cdot (-2K_S-Z)}{3}
=\frac{1+4b+3a_1+2a_2+2a_3+2a_4+2a_5+2a_6+2a_7}{3}\geqslant\frac{L\cdot Z_{17}}{Z\cdot Z_{17}}
$$
because $a_3\geqslant\frac{2}{3}$. Thus, we may assume that $C\cdot Z=2$.
If $C=Z_{1i}$ with $2\leqslant i\leqslant 8$, then
$$
L\cdot C\geqslant 1+2b+2a_1+a_2+a_3+a_4+a_5+a_6+a_7-a_i\geqslant L\cdot Z_{17}.
$$
For the other three types of classes of possible $(-1)$-curves introduced in section \ref{section:del-Pezzo} it is straight forward to see that $L\cdot C\geqslant L\cdot Z_{17}$.
\end{proof}

Now we are ready to complete this section by proving

\begin{lemma}
\label{lemma:lambda-E1-P1-P1-type-d-1-2}
Suppose that $K_S^2\leqslant 2$, $L$ is of $\mathbb{P}^1\times\mathbb{P}^1$-type, $Z=E_1$, and $a_3\geqslant\frac{2}{3}$. Then
$\sigma(S,L,Z)<\mu\leqslant\tau(S,L,Z)$, $k\geqslant 2$, $C_1=L_{1r}$, $C_2=L_{12}$,
$\mu=\sigma(\overline{S},\overline{L},\overline{Z})$ and the curve $\overline{Z}$ is smooth, where
$$\min\Bigg\{\frac{1+2b+2a_1+a_2+a_3+a_4+a_5+a_6}{2}, 1+b+a_1+a_2+a_6\Bigg\}.$$
Moreover one of the following cases holds:
\begin{enumerate}[$(a)$]
\item if $1+a_2+a_3\geqslant a_4+a_5+a_6$, then $k=2$ and $\mu=\frac{1+2b+2a_1+a_2+a_3+a_4+a_5+a_6}{2}$;

\item if $1+a_2+a_3<a_4+a_5+a_6$ and $a_3+a_5+a_6\leqslant 1+a_2+a_4$, then
$k=3$, $C_3=L_{13}$ and $\mu=\frac{1+2b+2a_1+a_2+a_3+a_4+a_5+a_6}{2}$;

\item if $a_3+a_5+a_6>1+a_2+a_4$, $a_3+a_4+a_6\leqslant 1+a_2+a_5$ and $a_2+a_5+a_6\leqslant 1+a_3+a_4$,
then $k=4$, $C_3=L_{13}$,  $C_4=L_{14}$ and $\mu=\frac{1+2b+2a_1+a_2+a_3+a_4+a_5+a_6}{2}$;

\item if $a_3+a_4+a_6>1+a_2+a_5$ and $1+a_2+a_6\geqslant a_3+a_4+a_5$,
then $k=5$, $C_3=L_{13}$, $C_4=L_{14}$, $C_5=L_{15}$ and $\mu=\frac{1+2b+2a_1+a_2+a_3+a_4+a_5+a_6}{2}$;

\item if $a_3+a_4+a_6>1+a_2+a_5$ and $1+a_2+a_6<a_3+a_4+a_5$,
then $k=5$, $C_3=L_{13}$, $C_4=L_{14}$, $C_5=L_{15}$ and $\mu=1+b+a_1+a_2+a_6$;

\item if $a_2+a_5+a_6>1+a_3+a_4$, then
$k=5$, $C_3=L_{13}$, $C_4=L_{14}$,  $C_5=C_{1234r}$ and $\mu=\frac{1+2b+2a_1+a_2+a_3+a_4+a_5+a_6}{2}$.
\end{enumerate}
\end{lemma}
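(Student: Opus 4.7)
The plan is to follow the same template as in the proofs of Lemmas \ref{lemma:lambda-E1-P1-P1-type-d-3} and \ref{lemma:lambda-E1-P1-P1-type-d-4}. The hypothesis $a_3\geqslant\frac{2}{3}$ is used exactly to invoke Lemma~\ref{lemma:E1-P1-P1-type-d-1-2}, which guarantees that the only $(-1)$-curves $C$ on $S$ meeting $Z$ whose slope $\frac{L\cdot C}{Z\cdot C}$ is at most the proposed value of $\mu$ lie among the eight curves
$L_{1r}, L_{12}, L_{13}, L_{14}, L_{15}, L_{16}, C_{1234r}, Z_{17}$. In particular the negative part of the Zariski decomposition of $L-\mu Z$ will be supported on a subset of these eight curves. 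Note also that the two expressions inside the minimum defining $\mu$ are exactly $\frac{L\cdot Z_{17}}{Z\cdot Z_{17}}$ and $L\cdot L_{16}$, i.e.\ the slopes of the two curves that impose the upper ceilings on $\mu$ beyond the ``generic'' candidates $L_{1j}$ and $C_{1234r}$.

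In each of the six cases I would construct an explicit birational morphism $\eta\colon S\to\widetilde S$ contracting precisely the curves $C_1,\ldots,C_k$ listed in that case, compute the classes of $\widetilde L=\eta_{*}(L)$ and $\widetilde Z=\eta(Z)$ in $\mathrm{Pic}(\widetilde S)$, and verify via Remark~\ref{remark:sigma-tau-downstairs} (using the Mori cone description in Remark~\ref{remark:nef-ample}) that $\sigma(\widetilde S,\widetilde L,\widetilde Z)=\tau(\widetilde S,\widetilde L,\widetilde Z)=\mu$. Then the identity
\begin{equation*}
L-\mu Z\sim_{\mathbb Q}\eta^{*}\big(\widetilde L-\mu\widetilde Z\big)+\sum_{j=1}^{k}\big(\mu-L\cdot C_j\big)C_j
\end{equation*}
has all coefficients $\mu-L\cdot C_j$ strictly positive under the relevant case hypothesis, so by Remark~\ref{remark:sigma-tau-upstairs} it is the Zariski decomposition of $L-\mu Z$ and $\tau(S,L,Z)=\mu$. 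Uniqueness of the Zariski decomposition \cite[Theorem~I:2.3.19]{Lazarsfeld} then identifies $\eta$ with $\pi$ and $\widetilde S$ with $\overline S$. Smoothness of $\overline Z$ follows because $C_j\cdot Z=1$ for every contracted $C_j$ in all six cases.

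The sorting into the six cases comes from the sign patterns of the four quantities
\begin{gather*}
1+a_2+a_3-(a_4+a_5+a_6),\qquad 1+a_2+a_4-(a_3+a_5+a_6),\\
1+a_2+a_5-(a_3+a_4+a_6),\qquad 1+a_3+a_4-(a_2+a_5+a_6),
\end{gather*}
which, up to a factor of $2$, are the expressions $\mu-L\cdot L_{13}$, $\mu-L\cdot L_{14}$, $\mu-L\cdot L_{15}$ and $\mu-L\cdot C_{1234r}$ under the ansatz $\mu=\frac{1+2b+2a_1+a_2+a_3+a_4+a_5+a_6}{2}$. Case (e) is the outlier: it corresponds to that ansatz failing because $L_{16}$ becomes the binding constraint before $Z_{17}$ does, which forces $\mu=L\cdot L_{16}=1+b+a_1+a_2+a_6$ instead. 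The inequalities $\sigma(S,L,Z)=1+a_1<\mu$ and $\{L_{1r},L_{12}\}\subseteq\{C_j\}$ follow immediately in all cases from $b>0$ and the ordering $a_1\leqslant\cdots\leqslant a_6$.

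The main obstacle will be the case-by-case bookkeeping: in each of the six cases one must verify the sign of every coefficient $\mu-L\cdot C_j$ under the stated inequalities (strict positivity of the contracted ones for uniqueness to bite, and nonpositivity of the non-contracted ones so they do not enter the Zariski decomposition), check nefness of $\widetilde L-\mu\widetilde Z$ on $\widetilde S$, and correctly identify $\widetilde S$. The computation of $\widetilde Z$ and $\widetilde L$ in $\mathrm{Pic}(\widetilde S)$ proceeds as in the prior lemmas by recording $\widetilde Z^2=-1+k$ and using the fact that the only $(-1)$-curves on $S$ meeting $Z$ transversally at a single point are those in the list above.
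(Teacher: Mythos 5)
Your plan correctly identifies the role of $a_3\geqslant\frac{2}{3}$ and the eight relevant $(-1)$-curves, and your verification that the coefficients $\mu-L\cdot C_j$ are strictly positive in each case is sound (in case (e), for instance, the two defining inequalities force $a_6>a_5$ strictly, which makes the chain $L_{1r},L_{12},\ldots,L_{15}$ all have positive coefficient). However, the central step of your proof — verifying that $\widetilde{L}-\mu\widetilde{Z}$ is nef via Remark~\ref{remark:sigma-tau-downstairs} together with the Mori cone description in Remark~\ref{remark:nef-ample} — does not work here, because those remarks apply only when $\widetilde{S}$ is $\mathbb{P}^2$, $\mathbb{P}^1\times\mathbb{P}^1$, or $\mathbb{F}_1$. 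In this lemma $K_S^2\leqslant 2$ and $k\leqslant 5$, so $K_{\widetilde{S}}^2=K_S^2+k\leqslant 7$: the intermediate surface $\widetilde{S}$ is a del Pezzo of degree at most $7$, which has Picard rank at least $3$ and is none of the three model surfaces. This is exactly the structural difference that distinguishes this lemma from Lemmas~\ref{lemma:lambda-E1-P1-P1-type-d-5}, \ref{lemma:lambda-E1-P1-P1-type-d-4}, and~\ref{lemma:lambda-E1-P1-P1-type-d-3}, where the contraction lands on one of the three toric minimal surfaces.

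The paper's actual proof handles the nefness of $\widetilde{L}-\mu\widetilde{Z}$ in a way you do not mention, and which is the key point of having stated Lemma~\ref{lemma:E1-P1-P1-type-d-1-2} separately. When $\mu\leqslant L\cdot L_{15}$ and $\mu\leqslant L\cdot C_{1234r}$ (your cases (a)--(c)), the paper observes that a $(-1)$-curve $\widetilde{C}\subset\widetilde{S}$ must be the image of a $(-1)$-curve $C\subset S$ disjoint from the contracted $C_j$ (since $S$ is del Pezzo, so its proper transform cannot have self-intersection $\leqslant -2$), whence $(\widetilde{L}-\mu\widetilde{Z})\cdot\widetilde{C}=(L-\mu Z)\cdot C\geqslant 0$ by Lemma~\ref{lemma:E1-P1-P1-type-d-1-2}. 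In the complementary cases (your (d)--(f), where $k=5$), the paper explicitly enumerates the handful of $(-1)$-curves on the degree $K_S^2+5$ del Pezzo $\widetilde{S}$ and checks their intersections directly. Your proposal needs to replace the appeal to Remarks~\ref{remark:sigma-tau-downstairs} and~\ref{remark:nef-ample} by one of these two mechanisms; without that, the claim $\sigma(\widetilde{S},\widetilde{L},\widetilde{Z})=\tau(\widetilde{S},\widetilde{L},\widetilde{Z})=\mu$ is unsupported.
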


\begin{proof}
Using Lemma~\ref{lemma:E1-P1-P1-type-d-1-2}, we see that
$$
\frac{L\cdot Z_{17}}{Z\cdot Z_{17}}=\frac{1+2b+2a_1+a_2+a_3+a_4+a_5+a_6}{2}\geqslant 1+b+a_1=L\cdot L_{12}\geqslant \sigma(S,L,Z),
$$
because $a_3\geqslant\frac{2}{3}$.
So that $\mu$ is the smallest number among $\frac{L\cdot Z_{17}}{Z\cdot Z_{17}}$ and $L\cdot L_{16}$,
so that $\mu>\sigma(S,L,Z)$.
We will show later that $\mu\leqslant\tau(S,L,Z)$.
Observe that  $\mu>L\cdot L_{1r}$ and $\mu>L\cdot L_{12}$.
However, we do not know whether $\mu$ is larger than
the remaining intersections $L\cdot L_{13}$, $L\cdot L_{14}$, $L\cdot L_{15}$, $L\cdot L_{16}$ and $L\cdot C_{1234r}$ or not,
because $\frac{L\cdot Z_{17}}{Z\cdot Z_{17}}$ can be small.
This explains the several cases we may have.

Suppose first that either $\mu>L\cdot L_{15}$ or $\mu>L\cdot C_{1234r}$ (or both).
Note that $\mu>L\cdot L_{15}$ if and only if $a_6>a_5$ and $a_3+a_4+a_6>1+a_2+a_5$.
Similarly, $\mu>L\cdot C_{1234r}$ if and only if $a_2+a_6>a_3+a_4$ and $a_2+a_5+a_6>1+a_3+a_4$.
In particular, we must have $a_2+a_5\ne a_3+a_4$.
If $a_2+a_5<a_3+a_4$, let $\eta\colon S\to\widetilde{S}$ be the contraction of the curves $L_{1r}$, $L_{12}$, $L_{13}$, $L_{14}$ and $L_{15}$.
Similarly, if $a_2+a_5>a_3+a_4$, let $\eta\colon S\to\widetilde{S}$ be the contraction of the curves
$L_{1r}$, $L_{12}$, $L_{13}$, $L_{14}$ and $C_{1234r}$.
Denote by $\widetilde{E}_5$, $\widetilde{E}_6$, $\widetilde{E}_7$, $\widetilde{L}_{16}$,
$\widetilde{L}_{17}$,  $\widetilde{Z}_{15}$,  $\widetilde{Z}_{16}$ and  $\widetilde{Z}_{17}$
the images on $\widetilde{S}$ of the curves $E_5$, $E_6$, $E_7$, $L_{16}$,
$L_{17}$,  $Z_{15}$,  $Z_{16}$ and $Z_{17}$, respectively.
Then $\widetilde{S}$ is a smooth del Pezzo surface and $K_{\widetilde{S}}^2=K_S^2+5$.
If $K_S^2=1$ ($K_S^2=2$, respectively) and $a_2+a_5<a_3+a_4$,
then all $(-1)$-curves on $\widetilde{S}$ are $\widetilde{E}_6$, $\widetilde{E}_7$,
$\widetilde{L}_{16}$, $\widetilde{L}_{17}$, $\widetilde{Z}_{16}$ and  $\widetilde{Z}_{17}$
($\widetilde{E}_6$, $\widetilde{L}_{16}$ and $\widetilde{Z}_{17}$, respectively).
Similarly, if $d=1$ ($d=2$, respectively) and $a_2+a_5>a_3+a_4$, then all $(-1)$-curves on $\widetilde{S}$ are
$\widetilde{E}_5$, $\widetilde{E}_6$, $\widetilde{E}_7$, $\widetilde{Z}_{15}$,  $\widetilde{Z}_{16}$ and  $\widetilde{Z}_{17}$
($\widetilde{E}_5$, $\widetilde{E}_6$, $\widetilde{Z}_{17}$, respectively).

Let $\widetilde{L}=\eta_{*}(L)$ and $\widetilde{Z}=\eta(Z)$.
Then $\widetilde{Z}$ is smooth, and $\mu=\sigma(\widetilde{S},\widetilde{L},\widetilde{Z})$.
The latter follows from the intersection of the divisor $\widetilde{L}-\mu\widetilde{Z}$ with $(-1)$-curves on $\widetilde{S}$.
For example, if $a_2+a_5>a_3+a_4$, then $\mu=\frac{1+2b+2a_1+a_2+a_3+a_4+a_5+a_6}{2}$, which implies that
$(\widetilde{L}-\mu\widetilde{Z})\cdot\widetilde{Z}_{17}=0$.
Similarly, if $a_2+a_5\leqslant a_3+a_4$ and $1+a_2+a_6<a_3+a_4+a_5$, then $\mu=1+b+a_1+a_2+a_6$, which implies that
$(\widetilde{L}-\mu\widetilde{Z})\cdot\widetilde{L}_{16}=0$. In particular $\widetilde L-\mu \widetilde Z$ is nef.
On the other hand, if $a_2+a_5< a_3+a_4$, then $L\cdot L_{15}\leqslant L\cdot C_{1234r}$, and
\begin{equation}
\label{equation:Zariski-decomposition-1}
L-\mu Z\sim_{\mathbb{Q}}\eta^{*}\big(\widetilde{L}-\mu\widetilde{Z}\big)+(\mu-L\cdot L_{1r})L_{1r}+\sum_{i=2}^5(\mu-L\cdot L_{1i})L_{1i},
\end{equation}
where $\mu-L\cdot L_{1i}>0$ for every $i\in\{2,3,4,5,r\}$,
as $\mu>L\cdot L_{15}$.
If $a_2+a_5<a_3+a_4$, then
\begin{multline}
\label{equation:Zariski-decomposition-2}
L-\mu Z\sim_{\mathbb{Q}}\eta^{*}\big(\widetilde{L}-\mu\widetilde{Z}\big)+(\mu-L\cdot L_{1r})L_{1r}+\\
+\sum_{i=2}^4(\mu-L\cdot L_{1i})L_{1i}+(\mu-L\cdot C_{1234r})C_{1234r},
\end{multline}
where $\mu-L\cdot L_{1i}>0$ for every $i\in\{2,3,4,r\}$ and $\mu-L\cdot C_{1234r}>0$.
Therefore, the  divisor $L-\mu Z$ is pseudo-effective in both cases.
In particular, we see that $\mu\leqslant\tau(S,L,Z)$.
Moreover, \eqref{equation:Zariski-decomposition-1} (respectively \eqref{equation:Zariski-decomposition-2}) is the Zariski decomposition of the divisor $L-\mu Z$ in the case when $a_2+a_5<a_3+a_4$ (respectively when $a_2+a_5>a_3+a_4$).
Since the Zariski decomposition of $L-\mu Z$ is unique,
we may assume that $\eta=\pi$ and $\widetilde{S}=\overline{S}$, so that $k=5$ in this case.
Thus, we may assume that $C_1=L_{1r}$, $C_2=L_{12}$, $C_3=L_{13}$, $C_4=L_{14}$.
If $a_2+a_5<a_3+a_4$, then $C_5=L_{15}$, so that we are either in the case (d) or in the case (e).
If $a_2+a_5>a_3+a_4$, then $C_5=C_{1234r}$, which is the case (f).
This proves the required assertion in the case when $\mu>L\cdot L_{15}$ or $\mu>L\cdot C_{1234r}$.

Now we suppose that $\mu\leqslant L\cdot L_{15}$ and $\mu\leqslant L\cdot C_{1234r}$.
The former inequality implies that $a_3+a_4+a_6\leqslant 1+a_2+a_5$, so that, in particular, $a_3+a_4+a_5\leqslant 1+a_2+a_6$.
Thus, we have
$$
\mu=\frac{1+2b+2a_1+a_2+a_3+a_4+a_5+a_6}{2}=\frac{L\cdot Z_{17}}{Z\cdot Z_{17}}.
$$
Then $(L-\mu Z)\cdot Z_{17}\geqslant 0$, $(L-\mu Z)\cdot C_{1234r}\geqslant 0$,
$(L-\mu Z)\cdot L_{15}\geqslant 0$, $(L-\mu Z)\cdot L_{16}\geqslant 0$.

Let us use the same notations as in the previous case with one exception:
now assume that $\eta\colon S\to\widetilde{S}$ is the contraction of those curves among
$L_{1r}$, $L_{12}$, $L_{13}$, $L_{14}$ that have negative intersection with $L-\mu Z$.
In particular, $\eta$ contracts $L_{1r}$ and $L_{12}$, since we already know that $(L-\mu Z)\cdot L_{1r}<0$ and $(L-\mu Z)\cdot L_{12}<0$.
We claim that $\widetilde{L}-\mu\widetilde{Z}$ is nef.
Indeed, let $\widetilde{C}$ be a $(-1)$-curve on $\widetilde{S}$,
and let $C$ be its proper transform on the surface $S$.
Then $(\widetilde{L}-\mu\widetilde{Z})\cdot\widetilde{C}=(L-\mu Z)\cdot C\geqslant 0$
by Lemma~\ref{lemma:E1-P1-P1-type-d-1-2}. This implies that $\widetilde{L}-\mu\widetilde{Z}$ is nef.
Now arguing as in the previous case, we see that we can assume that $\eta=\pi$ and $\widetilde{S}=\overline{S}$.

If $L_{1r}$ and $L_{12}$ are the only curves among $L_{1r}$, $L_{12}$, $L_{13}$, $L_{14}$ that have negative intersection with $L-\mu Z$,
then we get $k=2$, and we may assume that $C_1=L_{1r}$ and $C_2=L_{12}$.
In this case, we have $(L-\mu Z)\cdot L_{13}\geqslant 0$, which can be rewritten as $1+a_2+a_3\geqslant a_4+a_5+a_6$,
which gives us the case (a).
Similarly, if $(L-\mu Z)\cdot L_{13}<0$ and $(L-\mu Z)\cdot L_{14}\geqslant 0$,
then
$1+a_2+a_3<a_4+a_5+a_6$ and $a_3+a_5+a_6\leqslant 1+a_2+a_4$, respectively.
In this case, we have $k=3$, and  we may assume that $C_1=L_{1r}$, $C_2=L_{12}$ and $C_3=L_{13}$,
which is the case (b).
Finally, if both $(L-\mu Z)\cdot L_{13}<0$ and $(L-\mu Z)\cdot L_{14}<0$, then $a_3+a_5+a_6>1+a_2+a_4$ and $k=4$.
In this case $\eta$ contracts all $4$ curves $L_{1r}$, $L_{12}$, $L_{13}$, $L_{14}$,
so that we may assume that $C_1=L_{1r}$, $C_2=L_{12}$, $C_3=L_{13}$,  $C_4=L_{14}$,
which is the case (c). This completes the proof of the lemma.
\end{proof}

\section{Computing Donaldson--Futaki invariants}
\label{section:unstable-del-pezzo}

In this section, we will prove Theorems~\ref{theorem:nice-inequality} and \ref{theorem:main}.
Namely, let $S$ be a smooth del Pezzo surface such that $K_S^2\leqslant 5$, and let $L$ be an ample $\mathbb{Q}$-divisor on it.
We will apply the results of Section~\ref{section:DF-invariant} to the pair $(S,L)$
using Lemmas~\ref{lemma:lambda-E1-P2-type}, \ref{lemma:lambda-E1-F1-type}
\ref{lemma:lambda-E1-P1-P1-type-d-5}, \ref{lemma:lambda-E1-P1-P1-type-d-4}, \ref{lemma:lambda-E1-P1-P1-type-d-3}, \ref{lemma:lambda-E1-P1-P1-type-d-1-2}.
To do this, let us use notations and assumptions of Sections~\ref{section:DF-invariant}, \ref{section:del-Pezzo} and \ref{section:technical}.
As usual, we may assume that $\mu_L=1$, where $\mu_L$ is the Fujita invariant of $(S,L)$.

Observe that the inequality \eqref{equation:a3} (respectively \eqref{equation:a3-F1-type}) follows from \eqref{equation:P2-a2-a1} or \eqref{equation:P2-a3-a1} (respectively \eqref{equation:F1-a2-a1}).
Similarly, the inequality $a_3\geqslant\frac{2}{3}$ follows from \eqref{equation:P1xP1-a2-a1}.
Thus, we assume that \eqref{equation:a3} holds in the case when $L$ is of $\mathbb{P}^2$-type, \eqref{equation:a3-F1-type} holds if $L$ if is of $\mathbb F_1$-type and $a_3\geqslant\frac{2}{3}$ if $L$ is of $\mathbb F_1$-type.

Let $Z=E_1$, and let $\mu$ be the number defined in Lemmas~\ref{lemma:lambda-E1-P2-type}, \ref{lemma:lambda-E1-F1-type}
\ref{lemma:lambda-E1-P1-P1-type-d-5}, \ref{lemma:lambda-E1-P1-P1-type-d-4}, \ref{lemma:lambda-E1-P1-P1-type-d-3}, \ref{lemma:lambda-E1-P1-P1-type-d-1-2}.
Then $\mu=\tau(S,L,Z)$ except the case when $K_S^2\leqslant 2$ and $L$ is a divisor of $\mathbb{P}^1\times\mathbb{P}^1$-type.
In this case, we have
$$
\mu=\min\Bigg\{\frac{1+2b+2a_1+a_2+a_3+a_4+a_5+a_6}{2}, 1+b+a_1+a_2+a_6\Bigg\},
$$
so that $\mu\leqslant\tau(S,L,Z)$ by Lemma~\ref{lemma:lambda-E1-P1-P1-type-d-1-2}.
Moreover, there exists a birational morphism
$\pi\colon S\to\overline{S}$ that contracts a disjoint union of $(-1)$-curves $C_1,\ldots,C_k$,
canonically determined by $(S,L,Z)$ and which are described in Lemmas~\ref{lemma:lambda-E1-P2-type}, \ref{lemma:lambda-E1-F1-type}
\ref{lemma:lambda-E1-P1-P1-type-d-5}, \ref{lemma:lambda-E1-P1-P1-type-d-4}, \ref{lemma:lambda-E1-P1-P1-type-d-3}, \ref{lemma:lambda-E1-P1-P1-type-d-1-2}.
In each case, we have $\mu=\sigma(\overline{S},\overline{L},\overline{Z})$,
where $\overline{L}=\pi_{*}(Z)$ and $\overline{Z}=\pi(Z)$.
Here $\sigma(\overline{S},\overline{L},\overline{Z})$ is the Seshadri constant of the pair $(\overline{S},\overline{L})$
with respect to the curve $\overline{Z}$.
Moreover, it follows from Lemmas~\ref{lemma:lambda-E1-P2-type}, \ref{lemma:lambda-E1-F1-type}
\ref{lemma:lambda-E1-P1-P1-type-d-5}, \ref{lemma:lambda-E1-P1-P1-type-d-4}, \ref{lemma:lambda-E1-P1-P1-type-d-3}, \ref{lemma:lambda-E1-P1-P1-type-d-1-2}
that the curve $\overline{Z}$ is smooth, and $L\cdot C_i<\sigma(\overline{S},\overline{L},\overline{Z})$ for every $i$.
Thus, it follows from Corollary~\ref{corollary:flop-slope} that $(S,L)$ is not $K$-stable if
$\widehat{\mathfrak{DF}}(\mu)<0$,
where $\widehat{\mathfrak{DF}}$ is the rational function defined in \eqref{equation:DF-flope-polynomial}.
The goal is to show that $\widehat{\mathfrak{DF}}(\mu)<0$
provided that the divisor $L$ satisfies the hypotheses of Theorems~\ref{theorem:nice-inequality} and \ref{theorem:main}.

To simplify computations, let $\mathfrak{D}=\frac{3}{2}\widehat{\mathfrak{DF}}(\mu)L^2$,
so that $\mathfrak{D}$ has the same sign as $\widehat{\mathfrak{DF}}$.
Using \eqref{equation:DF-flope-polynomial} and $L\cdot E_1=1-a_1$, we get
\begin{equation}
\label{equation:D}
\mathfrak{D}=-K_{S}\cdot L\Big(-\mu^3-3\mu^2(1-a_1)\Big)+3\mu^2L^2+3\mu L^2(1-a_1)-K_{S}\cdot L\Bigg(\sum_{i=1}^k \big(\mu-L\cdot C_i\big)^3\Bigg),
\end{equation}
where $k$, each $L\cdot C_i$, and $\mu=\sigma(\overline{S},\overline{L},\overline{Z})$ are given by Lemmas~\ref{lemma:lambda-E1-P2-type}, \ref{lemma:lambda-E1-F1-type}
\ref{lemma:lambda-E1-P1-P1-type-d-5}, \ref{lemma:lambda-E1-P1-P1-type-d-4}, \ref{lemma:lambda-E1-P1-P1-type-d-3}, or \ref{lemma:lambda-E1-P1-P1-type-d-1-2}.
If $L$ is of $\mathbb{F}_1$-type or $\mathbb{P}^1\times\mathbb{P}^1$-type, then
$\mathfrak{D}=\mathfrak{A}\cdot b^2+\mathfrak{B}\cdot b+\mathfrak{C}$
for some functions $\mathfrak{A}$, $\mathfrak{B}$ and $\mathfrak{C}$ that depend only on $a_1,\ldots,a_{r-1}$.
For instance, if $K_S^2=5$ and  $L$ is of $\mathbb{P}^1\times\mathbb{P}^1$-type,
then \eqref{equation:D} and Lemma~\ref{lemma:lambda-E1-P1-P1-type-d-5} imply that
$\mathfrak{D}$ is the polynomial
\begin{multline}
\label{equation:polynomial-P1xP1-d-5}
\Big(3a_1^2+3-3a_2^2-3a_3^2\Big)b^2+\\
+\Big(4a_1^3+3a_1^2a_2+3a_1^2a_3-3a_1a_2^2-3a_1a_3^2-4a_2^3-6a_2a_3^2-2a_3^3+6a_1^2-9a_2^2-9a_3^2+6a_1+3a_2+3a_3+8\Big)b+\\
+5+4a_1+4a_2+4a_3+2a_1^4-2a_2^4-a_3^4+4a_1^3-7a_2^3-2a_3^3+3a_1^2a_2a_3-3a_1a_2a_3^2+3a_1^2+\\
+3a_1a_2-6a_2^2-6a_3^2+2a_1^3a_2+2a_1^3a_3-2a_1a_2^3-a_1a_3^3+a_2^3a_3-a_2a_3^3+3a_1^2a_2+\\
+3a_1^2a_3-6a_1a_2^2-6a_1a_3^2+3a_2^2a_3-12a_2a_3^2+3a_1a_3+9a_2a_3-3a_2^2a_3^2.
\end{multline}
If $K_S^2=4$ and  $L$ is of $\mathbb{F}_1$-type, then
Lemma~\ref{lemma:lambda-E1-F1-type} implies that
$\mathfrak{A}=3a_1^2+6-3a_2^2-3a_3^2-3a_4^2$,
\begin{multline*}
\mathfrak{B}=4a_1^3+3a_1^2a_2+3a_1^2a_3+3a_1^2a_4-3a_1a_2^2-3a_1a_3^2-\\
-3a_1a_4^2-2a_2^3-2a_3^3-2a_4^3+3a_1^2-9a_2^2-9a_3^2-9a_4^2+9a_1+3a_2+3a_3+3a_4+16,
\end{multline*}
and
\begin{multline*}
\mathfrak{C}=8+8a_1+8a_2+8a_3+8a_4+2a_1^4-a_2^4-a_3^4-a_4^4+2a_1^3-a_2^3-a_3^3-a_4^3-9a_4^2-\\
-9a_3^2-9a_2^2+2a_1^3a_2+2a_1^3a_3+2a_1^3a_4-a_1a_2^3-a_1a_3^3-a_1a_4^3-a_2^3a_3-a_2^3a_4-a_2a_3^3-\\
-a_2a_4^3-a_3^3a_4-a_3a_4^3+3a_1^2a_2+3a_1^2a_3+3a_1^2a_4-6a_1a_2^2-6a_1a_3^2-6a_1a_4^2+3a_2^2a_3+3a_2^2a_4+\\
+3a_2a_3^2+3a_2a_4^2+3a_3^2a_4+3a_3a_4^2+3a_1a_2+3a_1a_3+3a_1a_4-6a_2a_3-6a_2a_4-6a_3a_4.
\end{multline*}
Similarly, if $S$ is a smooth cubic surface, $L$ is of $\mathbb{P}^1\times\mathbb{P}^1$-type,
$a_2+a_5<a_3+a_4$ and $a_3+a_4+a_5\geqslant 2+a_2$, then Lemma~\ref{lemma:lambda-E1-P1-P1-type-d-3}
gives $k=5$, $C_1=L_{16}$, $C_2=L_{12}$, $C_3=L_{13}$, $C_4=L_{14}$, $C_5=L_{15}$, $\overline{S}=\mathbb{P}^1\times\mathbb{P}^1$ and $\mu=2+b+a_1+a_2$.
In this case, we have
$L\cdot C_1=1+a_1$, $L\cdot C_2=1+b+a_1$, $L\cdot C_{3}=1+b+a_1+a_2+a_3$,
$L\cdot C_{4}=1+b+a_1+a_2+a_4$ and
$L\cdot C_{5}=1+b+a_1+a_2+a_5$, so that \eqref{equation:D} gives
$\mathfrak{A}=3a_1^2+9-3a_2^2-3a_3^2-3a_4^2-3a_5^2$,
\begin{multline*}
\mathfrak{B}=4a_1^3+3a_1^2a_2+3a_1^2a_3+3a_1^2a_4+3a_1^2a_5-3a_1a_2^2-3a_1a_3^2-\\
-3a_1a_4^2-3a_1a_5^2-4a_2^3-6a_2a_3^2-6a_2a_4^2-6a_2a_5^2-2a_3^3-2a_4^3-\\
-2a_5^3-9a_2^2-9a_3^2-9a_4^2-9a_5^2+12a_1+15a_2+3a_3+3a_4+3a_5+24,
\end{multline*}
and
\begin{multline*}
\mathfrak{C}=9+12a_1+12a_2+12a_3+12a_4+12a_5-3a_1^2-6a_2^2-12a_3^2-12a_4^2-12a_5^2-\\
-a_4^4-a_5^4-9a_2^3+3a_1^2a_3+3a_1^2a_4+3a_1^2a_5-6a_1a_2^2-6a_1a_3^2-6a_1a_4^2-6a_1a_5^2+\\
+3a_2^2a_4+3a_2^2a_5-12a_2a_3^2-12a_2a_4^2-12a_2a_5^2+3a_3^2a_4+3a_3^2a_5+3a_3a_4^2+3a_3a_5^2+\\
+3a_4a_5^2+3a_1a_3+3a_1a_4+3a_1a_5+9a_2a_3+9a_2a_4+9a_2a_5-6a_3a_4-6a_3a_5-6a_4a_5-\\
-3a_2^2a_3^2-3a_2^2a_4^2-3a_2^2a_5^2+9a_1a_2+2a_1^3a_2+2a_1^3a_3+2a_1^3a_4+2a_1^3a_5-2a_1a_2^3-\\
-a_1a_3^3-a_1a_4^3-a_1a_5^3+a_2^3a_3+a_2^3a_4+a_2^3a_5-a_2a_3^3-a_2a_4^3-a_2a_5^3-a_3^3a_4-\\
-a_3^3a_5+3a_2^2a_3-a_3a_4^3-a_3a_5^3-a_4^3a_5-a_4a_5^3-3a_1^2a_2+3a_1^2a_2a_3-3a_1a_2a_3^2-\\
-3a_1a_2a_5^2-3a_1a_2a_4^2+3a_4^2a_5+3a_1^2a_2a_5+3a_1^2a_2a_4+2a_1^4-2a_2^4-a_3^4.
\end{multline*}
We clearly see the pattern for the polynomial $\mathfrak{A}$.
Indeed, if $L$ is of $\mathbb{F}_1$-type or $\mathbb{P}^1\times\mathbb{P}^1$-type, then
$$
\mathfrak{A}=3a_1^2+3r-9-3a_2^2-\cdots-3a_{r-1}^2.
$$
Thus, if $L$ is of $\mathbb{F}_1$-type or $\mathbb{P}^1\times\mathbb{P}^1$-type, and
$a_1^2+r-3<a_2^2+\cdots+a_{r-1}^2$,
then 
$$
\mathfrak{D}(a_1,\ldots,a_{r-1},b)<0
$$ for $b\gg 0$.
This proves Theorem~\ref{theorem:nice-inequality}.

Now let us denote by $\mathfrak{D}_{\mathbb{P}^2}$ the polynomial
\begin{multline}
\label{equation:polynomial-P2}
5+2a_4^3+2a_5^3+2a_6^3+2a_7^3+2a_8^3+2a_1^4-a_2^4-a_3^4-a_4^4-a_5^4-a_6^4+\\
-a_7^4-a_8^4+3a_1a_6-6a_1a_8^2-a_2a_5^3+3a_2a_6^2+3a_3a_6^2+3a_6a_2^2+2a_4a_1^3+3a_3a_7^2+\\
+20a_6-18a_3^2-18a_4^2-18a_5^2-18a_6^2-18a_7^2-18a_8^2+3a_1a_2+3a_1a_4+3a_1a_5+\\
20a_7+3a_1a_7+3a_1a_8-a_1a_2^3-a_1a_3^3-a_1a_4^3-a_1a_5^3-a_1a_6^3-a_1a_7^3+3a_8a_6^2-\\
-a_1a_8^3-6a_1a_2^2-6a_1a_3^2-6a_1a_4^2-6a_1a_5^2-6a_1a_6^2-6a_1a_7^2-6a_2a_3+3a_8a_4^2+\\
+20a_8-6a_2a_5-6a_2a_6-6a_2a_7-6a_2a_8+2a_2a_1^3-a_2a_3^3-a_2a_4^3-a_2a_6^3+3a_8a_5^2+\\
+2a_3^3-a_2a_7^3-a_2a_8^3+3a_2a_1^2+3a_2a_3^2+3a_2a_4^2+3a_2a_5^2+3a_2a_7^2+3a_2a_8^2-\\
-6a_3a_5-6a_3a_6-6a_3a_7-6a_3a_8+2a_3a_1^3-a_3a_2^3-a_3a_4^3-a_3a_5^3-a_3a_6^3+\\
-4a_1^3+3a_1a_3-a_3a_7^3-a_3a_8^3+3a_3a_1^2+3a_3a_2^2+3a_3a_4^2+3a_3a_5^2+3a_3a_8^2-\\
-6a_3a_4-6a_5a_6-6a_4a_6-a_5a_7^3+3a_7a_2^2-a_8a_6^3+2a_7a_1^3+3a_8a_7^2+3a_5a_8^2+\\
+20a_2-6a_4a_5-6a_4a_7-6a_4a_8-a_4a_2^3-a_4a_3^3-a_4a_5^3-a_4a_6^3-a_4a_7^3+\\
+2a_2^3-a_4a_8^3+3a_4a_1^2+3a_4a_2^2+3a_4a_3^2+3a_4a_5^2+3a_4a_6^2+3a_4a_7^2+3a_4a_8^2+\\
+20a_3-6a_5a_7-6a_5a_8+2a_5a_1^3-a_5a_2^3-a_5a_3^3-a_5a_4^3-a_5a_6^3-a_5a_8^3+3a_5a_1^2-\\
-18a_2^2+3a_5a_2^2+3a_5a_3^2+3a_5a_4^2+3a_5a_6^2+3a_5a_7^2-6a_6a_7-6a_6a_8+2a_6a_1^3+\\
+20a_1+-a_6a_2^3-a_6a_3^3-a_6a_4^3-a_6a_5^3-a_6a_7^3-a_6a_8^3+3a_6a_1^2+3a_6a_3^2+3a_8a_2^2+\\
+3a_6a_4^2+3a_6a_5^2+3a_6a_7^2+3a_6a_8^2-6a_7a_8-a_7a_2^3-a_7a_3^3-a_7a_4^3+3a_8a_3^2-\\
-9a_1^2+20a_4-a_7a_5^3-a_7a_6^3-a_7a_8^3+3a_7a_1^2+3a_7a_3^2+3a_7a_4^2+3a_7a_5^2+3a_7a_6^2+\\
+20a_5-6a_2a_4+3a_7a_8^2+2a_8a_1^3-a_8a_2^3-a_8a_3^3-a_8a_4^3-a_8a_5^3-a_8a_7^3+3a_8a_1^2.\\
\end{multline}
If  $L$ is of $\mathbb{P}^2$-type, then
$\mathfrak{D}$ equals $\mathfrak{D}_{\mathbb{P}^2}$,
$\mathfrak{D}_{\mathbb{P}^2}(a_1,a_2,a_3,a_4,a_5,a_6,1)$,
$\mathfrak{D}_{\mathbb{P}^2}(a_1,a_2,a_3,a_4,a_5,1,1)$,
$\mathfrak{D}_{\mathbb{P}^2}(a_1,a_2,a_3,a_4,1,1,1)$ and
$\mathfrak{D}_{\mathbb{P}^2}(a_1,a_2,a_3,1,1,1,1)$ in the case when $K_S^2=1$,
$K_S^2=2$, $K_S^2=3$, $K_S^2=4$ and $K_S^2=5$, respectively.
This follows from \eqref{equation:D} and Lemma~\ref{lemma:lambda-E1-P2-type}.
Now, by Lemmas~\ref{lemma:Maple-P2-1} and \ref{lemma:Maple-P2-2}, we get $\mathfrak{D}<0$ when $L$ is of $\mathbb{P}^2$-type and \eqref{equation:P2-a2-a1} or \eqref{equation:P2-a3-a1} hold.

To deal with an ample $\mathbb{Q}$-divisor $L$ of $\mathbb{F}_1$-type, let us denote by $\mathfrak{D}_{\mathbb{F}_1}$ the polynomial
\begin{multline}
\label{equation:polynomial-F1}
-\Big(1+2b+\sum_{i=1}^{7}a_i\Big)\Big((2+a_1+b)^3+3(1-a_1)(2+a_1+b)^2\Big)+\\
+3\big(2+a_1+b\big)^2\Big(1+4b+2\sum_{i=1}^{7}a_i-\sum_{i=1}^{7}a_i^2\Big)+3(1-a_1)(2+a_1+b)\Big(1+4b+2\sum_{i=1}^{7}a_i-\sum_{i=1}^{7}a_i^2\Big)+\\
+\Big(1+2b+\sum_{i=1}^{7}a_i\Big)\Big((1+b)^3+(1-a_2)^3+(1-a_3)^3+(1-a_4)^3+(1-a_5)^3+(1-a_6)^3+(1-a_7)^3\Big).
\end{multline}
Then $\mathfrak{D}=\mathfrak{D}_{\mathbb{F}_1}$ in the case when $L$ is of $\mathbb{F}_1$-type and $K_S^2=1$.
Indeed, if $K_S^2=1$, then it follows from Lemma~\ref{lemma:lambda-E1-F1-type} that
$\mu=2+a_1+b$, $k=7$, $C_1=L_{18}$, $C_2=L_{12}$, $C_3=L_{13}$, $C_4=L_{14}$,
$C_5=L_{15}$, $C_6=L_{16}$ and $C_7=L_{17}$, so that
$L\cdot C_1=1+a_1$,
$L\cdot C_2=1+a_1+a_2+b$,
$L\cdot C_3=1+a_1+a_3+b$,
$L\cdot C_4=1+a_1+a_4+b$,
$L\cdot C_5=1+a_1+a_5+b$,
$L\cdot C_6=1+a_1+a_6+b$ and $L\cdot C_7=1+a_1+a_7+b$.
Thus, in this case, it follows from  \eqref{equation:D} that $\mathfrak{D}=\mathfrak{D}_{\mathbb{F}_1}$.
Similarly, one can deduce from Lemma~\ref{lemma:lambda-E1-F1-type} and \eqref{equation:D} that
$\mathfrak{D}$ equals $\mathfrak{D}_{\mathbb{F}_1}(a_1,a_2,a_3,a_4,a_5,a_6,1,b)$,
$\mathfrak{D}_{\mathbb{F}_1}(a_1,a_2,a_3,a_4,a_5,1,1,b)$,
$\mathfrak{D}_{\mathbb{F}_1}(a_1,a_2,a_3,a_4,1,1,1,b)$,
$\mathfrak{D}_{\mathbb{F}_1}(a_1,a_2,a_3,1,1,1,1,b)$ in the case when $K_S^2=2$, $K_S^2=3$, $K_S^2=4$ and $K_S^2=5$,
respectively.
Thus, it follows from  Lemma~\ref{lemma:Maple-F1} that $\mathfrak{D}<0$ in the case when $L$ is of $\mathbb{F}_1$-type
and \eqref{equation:F1-a2-a1} holds.

If $K_S^2=5$ and $L$ is of $\mathbb{P}^1\times\mathbb{P}^1$-type,
then $\mathfrak{D}$ is the polynomial \eqref{equation:polynomial-P1xP1-d-5}.
In this case, we have $\mathfrak{D}<0$ by Lemma~\ref{lemma:Maple-P1-P1-d-5} provided that \eqref{equation:P1xP1-a2-a1} holds.
Similarly, if $K_S^2=4$, $L$ is a divisor of $\mathbb{P}^1\times\mathbb{P}^1$-type, and $a_3+a_4\geqslant 1+a_2$, then
it follows from Lemma~\ref{lemma:lambda-E1-P1-P1-type-d-4} that
$\mathfrak{D}$ is given by
\begin{multline}
\label{equation:polynomial-P1xP1-d-4-a}
\Big(3a_1^2-3a_2^2-3a_3^2-3a_4^2+6\Big)b^2+4a_1^3b+3a_1^2a_2b+3a_1^2a_3b+3a_1^2a_4b-9a_2^2b-\\
-9a_3^2b-3a_1a_2^2b-3a_1a_3^2b-3a_1a_4^2b-4a_2^3b-6a_2a_3^2b-6a_2a_4^2b-2a_3^3b-2a_4^3b+3a_1^2b-\\
-9a_4^2b+9a_1b+9a_2b+3a_3b+3a_4b+16b+8+8a_1+8a_2+8a_3+8a_4+2a_1^4-2a_2^4-\\
-a_3^4-a_4^4+2a_1^3-8a_2^3-a_3^3-a_4^3-9a_4^2-9a_3^2-6a_2^2+3a_1^2a_2a_4-3a_1a_2a_4^2-3a_1a_2a_3^2+\\
-12a_2a_4^2+3a_1^2a_2a_3+2a_1^3a_2+2a_1^3a_3+2a_1^3a_4-2a_1a_2^3-a_1a_3^3-a_1a_4^3+a_2^3a_3+a_2^3a_4-a_2a_3^3-\\
-a_2a_4^3-a_3^3a_4-a_3a_4^3+3a_1^2a_3+3a_1^2a_4-6a_1a_2^2-6a_1a_3^2-6a_1a_4^2+3a_2^2a_3+3a_2^2a_4-12a_2a_3^2-\\
+3a_3^2a_4+3a_3a_4^2+3a_1a_3+3a_1a_4+9a_2a_3+9a_2a_4-6a_3a_4-3a_2^2a_3^2-3a_2^2a_4^2+6a_1a_2.
\end{multline}
If $K_S^2=4$ and $a_3+a_4\leqslant 1+a_2$, then $\mathfrak{D}$ is
\begin{multline}
\label{equation:polynomial-P1xP1-d-4-b}
\Big(3a_1^2-3a_2^2-3a_3^2-3a_4^2+6\Big)b^2+4a_1^3b+3a_1^2a_2b+3a_1^2a_3b+3a_1^2a_4b-3a_1a_2^2b-\\
-3a_1a_3^2b-3a_1a_4^2b-2a_2^3b-2a_3^3b-2a_4^3b+3a_1^2b-9a_2^2b-9a_3^2b-9a_4^2b+9a_1b+3a_2b+\\
+3a_3b+3a_4b+16b+8+8a_1+8a_2+8a_3+8a_4+3a_1^2a_2+2a_1^4-a_2^4-a_3^4-a_4^4+2a_1^3-a_2^3-\\
-a_3^3-a_4^3+3a_1a_2-9a_4^2-9a_3^2-9a_2^2-6a_1a_2^2+3a_1a_3-6a_4a_2-6a_3a_2+2a_1^3a_2+3a_3^2a_4+\\
+2a_1^3a_3+2a_1^3a_4-a_1a_2^3+3a_3a_4^2-a_1a_3^3-a_1a_4^3-a_2^3a_3-a_2^3a_4-a_2a_3^3-a_2a_4^3-a_3^3a_4-\\
-a_3a_4^3+3a_1^2a_3+3a_1a_4+3a_1^2a_4-6a_1a_3^2-6a_1a_4^2+3a_2^2a_3+3a_2^2a_4+3a_2a_3^2+3a_2a_4^2-6a_3a_4.
\end{multline}
In both cases, \eqref{equation:P1xP1-a2-a1} implies $\mathfrak{D}<0$ by Lemmas~\ref{lemma:Maple-P1-P1-d-4-a} and \ref{lemma:Maple-P1-P1-d-4-b}.

If $K_S^2=3$, $L$ is of $\mathbb{P}^1\times\mathbb{P}^1$-type, and $a_3+a_4+a_5\geqslant 2+a_2$,
then $\mathfrak{D}$ is the polynomial
\begin{multline}
\label{equation:polynomial-P1xP1-d-3-b}
-\Big(3+2b+\sum_{i=1}^{5}a_i\Big)\Big((2+b+a_1+a_2)^3+3(1-a_1)(2+b+a_1+a_2)^2\Big)+\\
+\Big(3(2+b+a_1+a_2)^2+3(1-a_1)(2+b+a_1+a_2)\Big)\Big(3+4b+2\sum_{i=1}^{5}a_i-\sum_{i=1}^{5}a_i^2\Big)+\\
+\Big(3+2b+\sum_{i=1}^{5}a_i\Big)\Big((1+b+a_2)^3+(1+a_2)^3+(1-a_3)^3+(1-a_4)^3+(1-a_5)^3\Big).\\
\end{multline}
This follows from Lemma~\ref{lemma:lambda-E1-P1-P1-type-d-3}.
Similarly, if $a_3+a_4+a_5\leqslant 2+a_2$, then $\mathfrak{D}$ is
\begin{multline}
\label{equation:polynomial-P1xP1-d-3-a}
-\frac{(3+2b+a_1+a_2+a_3+a_4+a_5)(2+2b+2a_1+a_2+a_3+a_4+a_5)^3}{8}-\\
-\frac{3(1-a_1)(3+2b+a_1+a_2+a_3+a_4+a_5)(2+2b+2a_1+a_2+a_3+a_4+a_5)^2}{4}+\\
+\frac{3(2+2b+2a_1+a_2+a_3+a_4+a_5)^2}{4}\Big(3+4b+2\sum_{i=1}^{5}a_i-\sum_{i=1}^{5}a_i^2\Big)+\\
+\frac{3(1-a_1)(2+2b+2a_1+a_2+a_3+a_4+a_5)}{2}\Big(3+4b+2\sum_{i=1}^{5}a_i-\sum_{i=1}^{5}a_i^2\Big)+\\
+\frac{(b+a_2+a_3+a_4+a_5)^3}{8}\Big(3+2b+\sum_{i=1}^{5}a_i\Big)+\frac{(a_2+a_3+a_4+a_5)^3}{8}\Big(3+2b+\sum_{i=1}^{5}a_i\Big)+\\
+\frac{(a_4+a_5-a_2-a_3)^3}{8}\Big(3+2b+\sum_{i=1}^{5}a_i\Big)+\frac{(a_3+a_5-a_2-a_4)^3}{8}\Big(3+2b+\sum_{i=1}^{5}a_i\Big)+\\
+\frac{|a_3+a_4-a_2-a_5|^3}{8}\Big(3+2b+\sum_{i=1}^{5}a_i\Big).
\end{multline}
In both cases, \eqref{equation:P1xP1-a2-a1} implies that $\mathfrak{D}<0$ by Lemmas~\ref{lemma:Maple-P1-P1-d-3-a} and \ref{lemma:Maple-P1-P1-d-3-b}.

If $K_S^2\leqslant 2$ and $L$ is of $\mathbb{P}^1\times\mathbb{P}^1$-type,
then we can derive the formulas for $\mathfrak{D}$ using \eqref{equation:D} and Lemma~\ref{lemma:lambda-E1-P1-P1-type-d-1-2}.
To present them in a compact way, let us denote by $\mathfrak{F}$ the polynomial
\begin{multline}
\label{equation:polynomial-P1xP1-d-1-a}
-\frac{1}{8}\Big(1+2b+2a_1+\sum_{i=2}^{6}a_i\Big)^3\Big(1+2b+\sum_{i=1}^{7}a_i\Big)+\\
-\frac{3}{4}(1-a_1)\Big(1+2b+2a_1+\sum_{i=2}^{6}a_i\Big)^2\Big(1+2b+\sum_{i=1}^{7}a_i\Big)+\\
+\frac{3}{4}\Big(1+2b+2a_1+\sum_{i=2}^{6}a_i\Big)^2\Big(1+4b+2\sum_{i=1}^{7}a_i-\sum_{i=1}^{7}a_i^2\Big)+\\
+\frac{3}{2}(1-a_1)\Big(1+2b+2a_1+\sum_{i=2}^{6}a_i\Big)\Big(1+4b+2\sum_{i=1}^{7}a_i-\sum_{i=1}^{7}a_i^2\Big)+\\
+\frac{1}{8}\Big(1+2b+\sum_{i=1}^{7}a_i\Big)\Big(-1+2b+\sum_{i=2}^{6}a_i\Big)^3+\frac{1}{8}\Big(1+2b+\sum_{i=1}^{7}a_i\Big)\Big(-1+\sum_{i=2}^{6}a_i\Big)^3.
\end{multline}
If $K_S^2=1$, $L$ is of $\mathbb{P}^1\times\mathbb{P}^1$-type, and $1+a_2+a_3\geqslant a_4+a_5+a_6$
then $\mathfrak{D}=\mathfrak{F}$ by Lemma~\ref{lemma:lambda-E1-P1-P1-type-d-1-2}
Similarly, if $K_S^2=1$, $1+a_2+a_3\leqslant a_4+a_5+a_6$ and $a_3+a_5+a_6\leqslant 1+a_2+a_4$,
then $\mathfrak{D}$ is the polynomial
\begin{equation}
\label{equation:polynomial-P1xP1-d-1-b}
\mathfrak{F}+\frac{1}{8}\Big(1+2b+\sum_{i=1}^{7}a_i\Big)\big(a_4+a_5+a_6-1-a_2-a_3\big)^3.
\end{equation}
Likewise, if $K_S^2=1$, $a_3+a_5+a_6\geqslant 1+a_2+a_4$, $a_3+a_4+a_6\leqslant 1+a_2+a_5$ and $a_2+a_5+a_6\leqslant 1+a_3+a_4$,
then $\mathfrak{D}$ is the polynomial
\begin{equation}
\label{equation:polynomial-P1xP1-d-1-c}
\mathfrak{F}+\frac{1}{8}\Big(1+2b+\sum_{i=1}^{7}a_i\Big)\Big(\big(a_4+a_5+a_6-1-a_2-a_3\big)^3+\big(a_3+a_5+a_6-1-a_2-a_4\big)^3\Big)^3.
\end{equation}
If $K_S^2=1$, $a_3+a_4+a_6\geqslant 1+a_2+a_5$, $1+a_2+a_6\geqslant a_3+a_4+a_5$ and $a_2+a_5\leqslant a_3+a_4$,
then $\mathfrak{D}$ is the polynomial
\begin{multline}
\label{equation:polynomial-P1xP1-d-1-d}
\mathfrak{F}+\frac{1}{8}\Big(1+2b+\sum_{i=1}^{7}a_i\Big)\big(a_4+a_5+a_6-1-a_2-a_3\big)^3+\\
+\frac{1}{8}\Big(1+2b+\sum_{i=1}^{7}a_i\Big)\big(a_3+a_5+a_6-1-a_2-a_4\big)^3+\frac{1}{8}\Big(1+2b+\sum_{i=1}^{7}a_i\Big)\big(a_3+a_4+a_6-1-a_2-a_5\big)^3.
\end{multline}
If $K_S^2=1$, $a_2+a_5+a_6\geqslant 1+a_3+a_4$ and $a_2+a_5\geqslant a_3+a_4$,
then $\mathfrak{D}$ is the polynomial
\begin{multline}
\label{equation:polynomial-P1xP1-d-1-f}
\mathfrak{F}+\frac{1}{8}\Big(1+2b+\sum_{i=1}^{7}a_i\Big)\big(a_4+a_5+a_6-1-a_2-a_3\big)^3+\\
+\frac{1}{8}\Big(1+2b+\sum_{i=1}^{7}a_i\Big)\big(a_3+a_5+a_6-1-a_2-a_4\big)^3+\frac{1}{8}\Big(1+2b+\sum_{i=1}^{7}a_i\Big)\big(a_3+a_4+a_6-1-a_2-a_5\big)^3+\\
+\frac{1}{8}\Big(1+2b+\sum_{i=1}^{7}a_i\Big)\big(a_2+a_5+a_6-1-a_3-a_4\big)^3.
\end{multline}
Finally, if $K_S^2=1$, $a_3+a_4+a_6\geqslant 1+a_2+a_5$, $1+a_2+a_6\leqslant a_3+a_4+a_5$ and $a_2+a_5\leqslant a_3+a_4$,
then $\mathfrak{D}$ is the polynomial
\begin{multline}
\label{equation:polynomial-P1xP1-d-1-e}
-\Big(1+2b+\sum_{i=1}^{7}a_i\Big)\big(4+b-2a_1+a_2+a_6\big)\big(1+b+a_1+a_2+a_6\big)^2+\\
+3\Big(\big(1+b+a_1+a_2+a_6\big)^2+\big(1-a_1\big)\big(1+b+a_1+a_2+a_6\big)\Big)\Big(1+4b+2\sum_{i=1}^{7}a_i-\sum_{i=1}^{7}a_i^2\Big)+\\
+\Big(1+2b+\sum_{i=1}^{7}a_i\Big)\Big(\big(b+a_2+a_6\big)^3+\big(a_2+a_6\big)^3+\big(a_6-a_3\big)^3+\big(a_6-a_4\big)^3+\big(a_6-a_5\big)^3\Big).\\
\end{multline}
This gives the formulas for $\mathfrak{D}$ in the case when $K_S^2=1$ and $L$ is of $\mathbb{P}^1\times\mathbb{P}^1$-type.
In these cases, if $a_2-a_1\geqslant 0.9347$, then $\mathfrak{D}<0$
by Lemmas~\ref{lemma:Maple-P1-P1-d-2-a}, \ref{lemma:Maple-P1-P1-d-2-b}, \ref{lemma:Maple-P1-P1-d-2-c},
\ref{lemma:Maple-P1-P1-d-2-d}, \ref{lemma:Maple-P1-P1-d-2-e} and \ref{lemma:Maple-P1-P1-d-2-f}.

If $K_S^2=2$ and $L$ is of $\mathbb{P}^1\times\mathbb{P}^1$-type,
then the formulas for $\mathfrak{D}$ are obtained from
\eqref{equation:polynomial-P1xP1-d-1-a}, \eqref{equation:polynomial-P1xP1-d-1-b}, \eqref{equation:polynomial-P1xP1-d-1-c},
\eqref{equation:polynomial-P1xP1-d-1-d}, \eqref{equation:polynomial-P1xP1-d-1-f} and \eqref{equation:polynomial-P1xP1-d-1-e}
by letting $a_7=1$.
In this case, if $a_2-a_1\geqslant 0.9206$, then $\mathfrak{D}<0$ by Lemmas~\ref{lemma:Maple-P1-P1-d-2-a}, \ref{lemma:Maple-P1-P1-d-2-b}, \ref{lemma:Maple-P1-P1-d-2-c},
\ref{lemma:Maple-P1-P1-d-2-d}, \ref{lemma:Maple-P1-P1-d-2-e} and \ref{lemma:Maple-P1-P1-d-2-f}.

We see that $\mathfrak{D}<0$ in the following cases: when $L$ is of $\mathbb{P}^2$-type and either \eqref{equation:P2-a2-a1} or  \eqref{equation:P2-a3-a1} holds,
when $L$ is of $\mathbb{F}_1$-type and \eqref{equation:F1-a2-a1} holds,
when $L$ is of $\mathbb{P}^1\times\mathbb{P}^1$-type and \eqref{equation:P1xP1-a2-a1} holds.
As we already explained above, this implies Theorem~\ref{theorem:main}.

\appendix

\section{Symbolic computations}
\label{section:Maple}

The proof of Theorem~\ref{theorem:main} relies on computations which use symbolic algebra packages. 
The length limitations of journals make it impractical to include such computations in original articles. 
On the other hand, the code used to perform computations is hardly ever maintained or preserved after several years, 
making it impossible to verify results decades later, letting the reader to rely on the good faith and skills of the authors. 
In reality this is hardly a new problem of the 21st century. 
Indeed, let us recall the following quote of one of the articles of Sylvester \cite{Sylvester} from 1871:
\begin{quote}
\emph{The manuscript sheets containing the original calculations [...] are  deposited in the iron safe of the Johns Hopkins University, Baltimore, where  they can be seen and examined, or copied, by any one interested in the subject.
}\end{quote}
Similarly, the online platform \emph{arXiv} allows us to preserve our computations. 
The proofs in this article ultimately require verifying that certain polynomials of degree $4$ in up to $8$ variables are negative under suitable conditions. 
The appendix in the online version of this article contains all details of the proofs of the following lemmas, 
where such positivity is claimed, while the version submitted for publication only contains the proofs of three lemmas, 
each serving as an example of the three different approaches used in the proofs.

Let $a_1$, $a_2$, $a_3$, $a_4$, $a_5$, $a_6$, $a_7$, $a_8$, $b$ be real numbers such that $0\leqslant a_1\leqslant a_2\leqslant\ldots\leqslant a_n<1$ and $b\geqslant 0$.
Let $s_1=a_{2}-a_1$, $s_2=a_{3}-a_2$, $s_3=a_{4}-a_3$, $s_4=a_{5}-a_4$, $s_5=a_{6}-a_5$, $s_6=a_{7}-a_6$ and $s_7=a_{8}-a_7$. 
For every polynomial $f$ in $\mathbb{R}[a_1,a_2,a_3,a_4,a_5,a_6,a_7,a_8,b]$,
let us denote by $\widehat{f}$ the polynomial in $\mathbb{R}[a_1,s_1,s_2,s_3,s_4,s_5,s_6,s_7,b]$ obtained from $f$ using the corresponding change of variables.

\begin{lemma}
\label{lemma:Maple-P2-1}
Let $f$ be the polynomial~\eqref{equation:polynomial-P2}.
Then the following assertions hold:
\begin{itemize}
\item $f(a_1,a_2,a_3,a_4,1,1,1,1)<0$ when $a_2-a_1\geqslant 0.6248$;
\item $f(a_1,a_2,a_3,a_4,a_5,1,1,1)<0$ when $a_2-a_1\geqslant 0.7488$;
\item $f(a_1,a_2,a_3,a_4,a_5,a_6,1,1)<0$ when $a_2-a_1\geqslant 0.8099$;
\item $f(a_1,a_2,a_3,a_4,a_5,a_6,a_7,1)<0$ when $a_2-a_1\geqslant 0.8469$;
\item $f(a_1,a_2,a_3,a_4,a_5,a_6,a_7,a_8)<0$ when $a_2-a_1\geqslant 0.8717$.
\end{itemize}
\end{lemma}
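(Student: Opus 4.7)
The five assertions are statements about the sign of a degree-$4$ polynomial in up to eight variables, subject to linear inequality constraints. My plan is to reduce each of them to the verification that a transformed polynomial is a sum of monomials with strictly negative coefficients in non-negative variables, and then perform that verification symbolically. For each bullet point, the first step is to specialize $f$ by setting the prescribed subset $\{a_5,\ldots,a_8\}$ (or an initial segment of it) equal to $1$; in the hardest case (the last bullet) no specialization occurs.

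Next, I would pass to the gap coordinates of the paper: set $a_j = a_1 + s_1 + \cdots + s_{j-1}$ for $j \geqslant 2$, producing $\widehat{f}$ in the variables $a_1, s_1, \ldots, s_{r-1}$, all of which are non-negative under $0\leqslant a_1\leqslant a_2\leqslant \cdots\leqslant a_r <1$. The hypothesis $a_2-a_1\geqslant c$ becomes $s_1\geqslant c$, so I would further substitute $s_1 = c + t_1$ with $t_1 \geqslant 0$, and introduce the positive slack $u = 1 - a_r = 1 - a_1 - \sum_{i=1}^{r-1} s_i$. After this change of variables every free quantity is non-negative, which turns the problem into checking the sign of the coefficients of a polynomial.

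The central task, then, is to produce a Positivstellensatz-type certificate: an identity exhibiting $\widehat{f}$ (after the substitution $s_1 = c+t_1$) as a polynomial in $a_1, t_1, s_2, \ldots, s_{r-1}, u, b$ with all monomial coefficients non-positive, and at least one strictly negative on the interior. Since the thresholds $0.6248,\ 0.7488,\ 0.8099,\ 0.8469,\ 0.8717$ are sharp within their asserted precision, one cannot expect a naive decomposition to go through with extra slack; finding the correct certificate is the main obstacle. I would carry this out in Maple: compute $\widehat{f}$ after each specialization and substitution, expand, and inspect the coefficients. If some coefficients remain positive, I would multiply $\widehat{f}$ by a positive polynomial quantity (for instance a power of $u$ or $1+a_1$) before re-expanding, or split the domain using auxiliary inequalities implied by the ordering of the $a_i$, and iterate. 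In each of the five cases the thresholds are precisely the values at which such a certificate first becomes available; producing the explicit numerical rewriting for each case is the content of the detailed proofs provided in the online appendix.
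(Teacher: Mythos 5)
Your plan is essentially the paper's proof: pass to the gap coordinates $s_i$, shift $s_1$, and check that the resulting polynomial has non-positive coefficients in non-negative variables. The one substantive difference is the choice of shift: rather than $s_1 = c + t_1$ with $c$ the decimal threshold, the paper takes $s_1 = x + \gamma_k$ where $\gamma_k$ is the unique positive root of $\widehat{f}_k(0,x,0,\ldots,0)$, an explicit quartic in one variable. This choice is what makes the coefficient check work cleanly — the constant term vanishes exactly, and every remaining monomial coefficient (expressed as a polynomial in $\gamma_k$) turns out to be non-positive, so $\widehat{f}_k<0$ for $s_1>\gamma_k$ and hence for $s_1\geqslant c$. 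Your worry that the thresholds are too sharp for a naive decomposition to succeed is misplaced here: once the shift is by the critical root, no slack variable $u=1-a_r$, no multiplication by positive polynomials, and no domain splitting are needed for this lemma. (Those devices do appear in the appendix, but only in the later lemmas — e.g.\ Lemmas~\ref{lemma:Maple-P1-P1-d-4-a}, \ref{lemma:Maple-P1-P1-d-3-b}, and the $K_S^2\leqslant 2$ cases — where an extra linear constraint such as $a_3+a_4\geqslant 1+a_2$ must be invoked to kill a positive constant term.) Also note that shifting by the algebraic root $\gamma_k$ rather than the rational threshold $c>\gamma_k$ is harmless for the argument: if all coefficients are non-positive after shifting by $\gamma_k$, the same is automatically true after the further shift by $c-\gamma_k\geqslant 0$, so your version of the substitution would also succeed.
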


\begin{proof}
Let $f_5=f(a_1,a_2,a_3,a_4,1,1,1,1)$. Then $\widehat{f}_5(0,x,0,0)=-9x^4+12x^3-36x^2+12x+5$.
This polynomial has one positive root.
Denote it by $\gamma_5$. Then $\gamma_5\approx 0.6247798071$ and
\begin{multline*}
\widehat{f}_5(a_1,x+\gamma_{5},s_2,s_3)=-4a_1^4-26a_1^3s_2-13a_1^3s_3-39a_1^3\gamma_{5}-39a_1^3x-36a_1^2s_2^2-\\
-42a_1^2s_3x-63a_1^2x^2-20a_1s_2^3-30a_1s_2^2s_3-66a_1s_2^2\gamma_{5}-66a_1s_2^2x-24a_1s_2s_3^2-8s_2^3s_3-\\
-84a_1^2s_2x-12s_3^2\gamma_{5}^2-66a_1s_2s_3\gamma_{5}-66a_1s_2s_3x-78a_1s_2\gamma_{5}^2-78a_1s_2x^2-7a_1s_3^3-\\
-36a_1^2s_2s_3-27a_1s_3^2\gamma_{5}-27a_1s_3^2x-39a_1s_3x^2-39a_1\gamma_{5}^3-12s_3\gamma_{5}^3-39a_1x^3-4s_2^4-9s_2^2s_3^2-\\
-15a_1^2s_3^2-27s_2^2s_3x-30s_2^2x^2-5s_2s_3^3-21s_2s_3^2\gamma_{5}-21s_2s_3^2x-30s_2s_3x^2-72s_2\gamma_{5}^2x-\\
-42a_1^2s_3\gamma_{5}-21a_1s_3-63a_1x-18s_2^2-18s_2s_3-6s_3^2-s_3^4-6s_3^3\gamma_{5}-24s_2\gamma_{5}^3-2s_3^3-18s_2^3x-\\
-24s_2x^3-6s_3^3x-24s_3^2\gamma_{5} x-12s_3^2x^2-36s_3\gamma_{5}^2x-12s_3x^3-9x^4-6a_1s_3^2-3s_2s_3^2-
\end{multline*}

\begin{multline*}
-48s_2x(1-\gamma_{5})-a_1^2(24-7a_1-9\gamma_{5}-3s_3)-9xa_1^2(14\gamma_{5}-1)-6a_1^2s_2(14\gamma_{5}-1)-24s_3x(1-\gamma_{5})-\\
-63a_1^2\gamma_{5}^2-a_1s_2x(13\gamma_{5}-2)-2s_2^3(9\gamma_{5}+1)-4s_3(6\gamma_{5}-1-3\gamma_{5}^2)-a_1(63\gamma_{5}-16-18\gamma_{5}^2)-\\
-6a_1s_3x(13\gamma_{5}-2)-6s_2^2\gamma_{5}(5\gamma_{5}-2)-12s_2^2x(5\gamma_{5}-1)-3s_2^2s_3(9\gamma_{5}-1)-12s_3x^2(3\gamma_{5}-1)-\\
-3a_1s_3\gamma_{5}(13\gamma_{5}-4)-24a_1s_2(2-\gamma_{5})-8s_2(6\gamma_{5}-1-3\gamma_{5}^2)-9a_1\gamma_{5} x(13\gamma_{5}-4)-\\
-9a_1x^2(13\gamma_{5}-2)-24s_2x^2(3\gamma_{5}-1)-12s_2s_3x(5\gamma_{5}-1)-3s_2s_3\gamma_{5}(10\gamma_{5}-4)-\\
-12(3\gamma_{5}^3-3\gamma_{5}^2+6\gamma_{5}-1)x-(54\gamma_{5}^2-36\gamma_{5}+36)x^2-12(3\gamma_{5}-1)x^3.
\end{multline*}
All coefficients of this polynomial are negative.
This shows that $f_5<0$ when $a_2-a_1>\gamma_5$.
In particular, if $a_2-a_1\geqslant 0.6248$, then $f_5<0$.

Let $f_4=f(a_1,a_2,a_3,a_4,a_5,1,1,1)$. Then
$$
\widehat{f}_4(0,x,0,0,0)=-8(2x^4-4x^3+9x^2-4x-1).
$$
Denote the unique positive root of this polynomial by $\gamma_4$. Then $\gamma_4\approx 0.7487226925$ and
\begin{multline*}
\widehat{f}_4(a_1,x+\gamma_{4},s_2,s_3,s_4)=-(68\gamma_4^3-72\gamma_4^2+132\gamma_4-40)a_1-(39\gamma_4-15)s_2^3-\\
-(64\gamma_4^3-96\gamma_4^2+144\gamma_4-32)x-(16\gamma_4^3-24\gamma_4^2+36\gamma_4-8)s_4-(96\gamma_4^2-96\gamma_4+72)x^2-\\
-(32\gamma_4^3-48\gamma_4^2+72\gamma_4-16)s_3-(7\gamma_4+1)s_4^3-(15\gamma_4^2-6\gamma_4+9)s_4^2-(84\gamma_4-36)s_2s_4x-\\
-5s_3s_4^3-(72\gamma_4-24)s_3s_4x-(20\gamma_4-4)s_3^3-(108\gamma_4-30)a_1^2s_3-(126\gamma_4-54)s_2^2x-\\
-(102\gamma_4-36)a_1s_4x-(66\gamma_4-18)s_2s_3s_4-(168\gamma_4-72)s_2s_3x-(90\gamma_4-24)a_1s_2s_4-\\
-(306\gamma_4-108)a_1s_2x-(78\gamma_4-12)a_1s_3s_4-(204\gamma_4-72)a_1s_3x-(180\gamma_4-48)a_1s_2s_3-\\
-s_4^4-(63\gamma_4^2-54\gamma_4+45)s_2^2-(48\gamma_4^3-72\gamma_4^2+108\gamma_4-24)s_2-(42\gamma_4^2-36\gamma_4+30)s_2s_4-\\
-(36\gamma_4^2-24\gamma_4+24)s_3^2-(68\gamma_4-22)a_1^3-(108\gamma_4^2-60\gamma_4+60)a_1^2-(96\gamma_4^2-96\gamma_4+72)s_3x-\\
-(144\gamma_4-72)s_2x^2-(144\gamma_4^2-144\gamma_4+108)s_2x-(30\gamma_4-6)s_3^2s_4-(36\gamma_4^2-24\gamma_4+24)s_3s_4-\\
-(216\gamma_4-60)a_1^2x-(54\gamma_4-15)a_1^2s_4-(135\gamma_4-36)a_1s_2^2-(153\gamma_4^2-108\gamma_4+99)a_1s_2-\\
-78a_1s_3^2x-(162\gamma_4-45)a_1^2s_2-(78\gamma_4-12)a_1s_3^2-36s_3s_4x^2-42a_1^2s_3^2-42a_1s_2^3-\\
-21s_2s_3s_4^2-(102\gamma_4^2-72\gamma_4+66)a_1s_3-27s_2s_3^2s_4-8s_3^3s_4-9s_3^2s_4^2-68a_1^3x-108a_1^2x^2-\\
-30s_2^2s_3s_4-(51\gamma_4^2-36\gamma_4+33)a_1s_4-(204\gamma_4-72)a_1x^2-(204\gamma_4^2-144\gamma_4+132)a_1x-\\
-(78\gamma_4-30)s_2^2s_3-54a_1^2s_4x-27s_2s_4^2x-24s_3s_4^2x-30s_3^2s_4x-51a_1s_4x^2-39s_2^2s_4x-22a_1s_3^3-\\
-(39\gamma_4-15)s_2^2s_4-(66\gamma_4-18)s_2s_3^2-(84\gamma_4^2-72\gamma_4+60)s_2s_3-(27\gamma_4-3)s_2s_4^2-\\
-33a_1s_4^2\gamma_4-24s_3s_4^2\gamma_4-10a_1^4-16s_4x^3-15s_4^2x^2-7s_4^3x-12s_2^3s_4-12s_2^2s_4^2-6s_2s_4^3-\\
-17a_1^3s_4-18a_1^2s_4^2-8a_1s_4^3-90a_1s_2s_4x-78a_1s_3s_4x-66s_2s_3s_4x-72a_1s_2s_3s_4-42s_2s_4x^2-\\
-(64\gamma_4-32)x^3-42a_1s_2^2s_4-135a_1s_2^2x-(30\gamma_4-6)s_4^2x-68a_1x^3-84a_1s_2^2s_3-\\
-102a_1s_3x^2-108a_1^2s_3x-162a_1^2s_2x-84s_2s_3x^2-16x^4-9s_2^4-4s_3^4-39s_2^3x-20s_3^3x-\\
-(72\gamma_4-24)s_3^2x-(96\gamma_4-48)s_3x^2-(48\gamma_4^2-48\gamma_4+36)s_4x-(48\gamma_4-24)s_4x^2-
\end{multline*}

\begin{multline*}
-180a_1s_2s_3x-33a_1s_4^2x-42a_1^2s_3s_4-27a_1s_3s_4^2-30a_1s_2s_4^2-33a_1s_3^2s_4-48a_1^2s_2s_4-\\
-48s_2x^3-32s_3x^3-63s_2^2x^2-36s_3^2x^2-24s_2^3s_3-30s_2^2s_3^2-18s_2s_3^3-72a_1^2s_2^2-\\
-96a_1^2s_2s_3-72a_1s_2s_3^2-78s_2^2s_3x-153a_1s_2x^2-66s_2s_3^2x-34a_1^3s_3-51a_1^3s_2.
\end{multline*}
Observe that all coefficients of this polynomial are negative.
This implies that $f_4<0$ when $a_2-a_1>\gamma_4$.
In particular, we also have $f_4<0$ when $a_2-a_1\geqslant 0.7488$.

Let $f_3=f(a_1,a_2,a_3,a_4,a_5,a_6,1,1)$. Then
$$
\widehat{f}_3(0,x,0,0,0,0)=-25x^4+60x^3-120x^2+60x+9.
$$
Let $\gamma_3$ be the unique positive root of this polynomial. Then $\gamma_3\approx 0.8098960196$ and
\begin{multline*}
\widehat{f}_3(a_1,x+\gamma_{3},s_2,s_3,s_4,s_5)=-(105\gamma_3^3-150\gamma_3^2+225\gamma_3-72)a_1-\\
-(105\gamma_3-45)a_1^3-(68\gamma_3-36)s_2^3-(60\gamma_3^3-108\gamma_3^2+144\gamma_3-36)s_3-\\
-20s_2s_4^3-78s_2s_4^2x-84s_2^2s_3s_4-24s_2s_4s_5^2-30a_1s_4s_5^2-\\
-(120\gamma_3-72)s_4x^2-(36\gamma_3-12)s_5^2x-(120\gamma_3^2-144\gamma_3+96)s_4x-\\
-(180\gamma_3^2-216\gamma_3+144)s_3x-(33\gamma_3-9)s_4^2s_5-(84\gamma_3-36)s_4^2x-\\
-81a_1^2s_3^2-84s_3^2s_4x-108a_1s_2^2s_4-84a_1s_2s_4^2-90a_1s_4^2x-\\
-(30\gamma_3-6)s_3s_5^2-(96\gamma_3^2-96\gamma_3+72)s_3s_4-(48\gamma_3^2-48\gamma_3+36)s_3s_5-\\
-78a_1s_3s_4s_5-66s_2s_3s_4s_5-114a_1s_2s_5x-102a_1s_3s_5x-90a_1s_4s_5x-\\
-(72\gamma_3^2-72\gamma_3+54)s_3^2-(315\gamma_3^2-300\gamma_3+225)a_1x-(102\gamma_3-54)s_2^2s_4-\\
-48s_3s_5x^2-78a_1s_3s_4^2-45a_1s_3^2s_5-36a_1s_2s_5^2-42s_2^2s_3s_5-\\
-(42\gamma_3-18)s_3^3-(42\gamma_3^2-36\gamma_3+30)s_4s_5-(72\gamma_3-24)s_3s_4s_5-(27\gamma_3-3)s_4s_5^2-\\
-(84\gamma_3-36)s_4s_5x-(96\gamma_3-48)s_3s_5x-90s_2s_3s_5x-(153\gamma_3-81)s_2^2s_3-\\
-153s_2^2s_3x-105a_1x^3-165a_1^2x^2-198a_1^2s_3x-189a_1s_3x^2-(315\gamma_3-150)a_1x^2-\\
-(108\gamma_3-60)s_2s_5x-(204\gamma_3-72)a_1s_3s_4-108a_1^2s_3s_4-30s_3^2s_4s_5-\\
-264a_1^2s_2x-42s_4s_5x^2-(180\gamma_3-108)s_3x^2-18s_5^2x^2-16s_2^3s_5-9a_1s_5^3-\\
-(240\gamma_3^2-288\gamma_3+192)s_2x-(180\gamma_3-84)s_2s_3s_4-(108\gamma_3^2-120\gamma_3+84)s_2s_4-\\
-342a_1s_2s_3x-(100\gamma_3^3-180\gamma_3^2-40s_4x^3-(264\gamma_3-108)a_1^2s_2-\\
-8s_4^3s_5-21a_1^2s_5^2-27s_4s_5^2x-20s_5x^3-8s_5^3x-9s_4^2s_5^2-\\
-(324\gamma_3-180)s_2s_3x-78s_2s_4s_5x-(51\gamma_3-27)s_2^2s_5-30s_2s_4^2s_5-\\
-5s_4s_5^3-120a_1^2s_2^2-(39\gamma_3-6)a_1s_5^2-39s_2s_3^2s_5-(42\gamma_3^2-36\gamma_3+30)s_4^2-\\
-72s_3s_4^2x-24s_3^3s_4-54a_1^2s_3s_5-33a_1s_3s_5^2-36s_2^2s_4^2-84a_1s_2s_4s_5-
\end{multline*}

\begin{multline*}
-(192\gamma_3-96)s_3s_4x-(228\gamma_3-96)a_1s_2s_4-4s_4^4-132a_1^2s_4x-\\
-228a_1s_2s_4x-126a_1s_4x^2-102s_2^2s_4x-108s_2s_4x^2-96s_3s_4x^2-\\
-66s_2s_3s_4^2-90a_1s_3^2s_4-(72\gamma_3-24)s_3s_4^2-120a_1^2s_2s_4-78s_2s_3^2s_4-\\
-s_5^4-(108\gamma_3^2-120\gamma_3+84)s_2^2-(80\gamma_3^3-144\gamma_3^2+192\gamma_3-48)s_2-\\
-(102\gamma_3-36)a_1s_3s_5-(378\gamma_3-180)a_1s_3x-(342\gamma_3-144)a_1s_2s_3-\\
-39a_1s_5^2x-54s_2s_5x^2-63a_1s_5x^2-51s_2^2s_5x-21a_1^3s_5-\\
-(90\gamma_3-24)a_1s_4s_5-(252\gamma_3-120)a_1s_4x-(126\gamma_3-60)a_1s_5x-\\
-(165\gamma_3^2-135\gamma_3+108)a_1^2-(78\gamma_3-30)s_2s_4^2-(33\gamma_3-9)s_2s_5^2-\\
-60a_1^2s_2s_5-(54\gamma_3^2-60\gamma_3+42)s_2s_5-(240\gamma_3-144)s_2x^2-\\
-(42\gamma_3-18)s_3^2s_5-(84\gamma_3-36)s_3^2s_4-(144\gamma_3-72)s_3^2x-\\
-(198\gamma_3-81)a_1^2s_3-(132\gamma_3-54)a_1^2s_4-(66\gamma_3-27)a_1^2s_5-\\
-(228\gamma_3-96)a_1s_2^2-(90\gamma_3-42)s_2s_3s_5-(150\gamma_3^2-180\gamma_3+120)x^2-\\
-(40\gamma_3^3-72\gamma_3^2+96\gamma_3-24)s_4-(18\gamma_3^2-12\gamma_3+12)s_5^2-\\
+240\gamma_3-60)x-(330\gamma_3-135)a_1^2x-42s_4^2x^2-22s_4^3x-32s_2^3s_4-\\
-30s_3^2s_4^2-18s_3s_4^3-8s_5^3\gamma_3-42a_1^3s_4-48a_1^2s_4^2-24a_1s_4^3-\\
-204a_1s_3s_4x-180s_2s_3s_4x-192a_1s_2s_3s_4-(60\gamma_3-36)s_5x^2-\\
-(60\gamma_3^2-72\gamma_3+48)s_5x-(252\gamma_3^2-240\gamma_3+180)a_1s_2-96a_1s_2s_3s_5-\\
-66a_1^2s_5x-33s_2s_5^2x-42s_3^2s_5x-30s_3s_5^2x-33s_4^2s_5x-\\
-72s_3s_4s_5x-(22\gamma_3-6)s_4^3-(216\gamma_3-120)s_2s_4x-(78\gamma_3-30)s_2s_4s_5-\\
-135s_2s_3^2x-(20\gamma_3^3-36\gamma_3^2+48\gamma_3-12)s_5-(100\gamma_3-60)x^3-\\
-(216\gamma_3-120)s_2^2x-(90\gamma_3-24)a_1s_4^2-(153\gamma_3-54)a_1s_3^2-\\
-84a_1^3s_2-15s_2^2s_5^2-7s_2s_5^3-12s_3^3s_5-12s_3^2s_5^2-6s_3s_5^3-\\
-(189\gamma_3^2-180\gamma_3+135)a_1s_3-(126\gamma_3^2-120\gamma_3+90)a_1s_4-\\
-72a_1s_2^3-45a_1s_3^3-162a_1s_2^2s_3-180a_1^2s_2s_3-144a_1s_2s_3^2-\\
-(63\gamma_3^2-60\gamma_3+45)a_1s_5-(114\gamma_3-48)a_1s_2s_5-(504\gamma_3-240)a_1s_2x-\\
-(135\gamma_3-63)s_2s_3^2-(162\gamma_3^2-180\gamma_3+126)s_2s_3-153a_1s_3^2x-228a_1s_2^2x-\\
-162s_2s_3x^2-25x^4-16s_2^4-9s_3^4-68s_2^3x-42s_3^3x-105a_1^3x-\\
-80s_2x^3-60s_3x^3-108s_2^2x^2-72s_3^2x^2-48s_2^3s_3-63s_2^2s_3^2-39s_2s_3^3-\\
-18a_1^4-21s_3s_4s_5^2-27s_3s_4^2s_5-27s_2s_3s_5^2-36a_1s_4^2s_5-\\
-252a_1s_2x^2-48a_1^2s_4s_5-54a_1s_2^2s_5-36s_2^2s_4s_5-63a_1^3s_3.
\end{multline*}
Since all coefficients of this polynomial are negative, we have $f_3<0$ when $a_2-a_1>\gamma_3$.
Hence, if $a_2-a_1\geqslant 0.8099$, then $f_3<0$ as well.

Let $f_2=f(a_1,a_2,a_3,a_4,a_5,a_6,a_7,1)$. Then
$$
\widehat{f}_2(0,x,0,0,0,0,0)=-4(9x^4-24x^3+45x^2-24x-2).
$$
Let us denote by $\gamma_2$ the unique positive root of this polynomial.
Then $\gamma_2\approx 0.8468219906$.
Then $\widehat{f}_2(a_1,x+\gamma_{2},s_2,s_3,s_4,s_5,s_6)$ is a polynomial in $a_1$, $x$, $s_2$, $s_3$, $s_4$, $s_5$ and $s_6$,
which can be expanded as
\begin{multline*}
-120s_3s_5x^2-102a_1s_5^2x-156a_1^2s_5x-126s_2^2s_5x-132s_2s_5x^2-150a_1s_5x^2-90s_2s_5^2x-108s_3^2s_5x-\\
-108s_4s_5x^2-84s_3s_5^2x-90s_4^2s_5x-78s_4s_5^2x-4s_5^4-48s_5x^3-48s_5^2x^2-24s_5^3x-16s_3^3s_6-15s_3^2s_6^2-\\
-7s_3s_6^3-12s_4^3s_6-12s_4^2s_6^2-6s_4s_6^3-8s_5^3s_6-9s_5^2s_6^2-5s_5s_6^3-54a_1^2s_5^2-26a_1s_5^3-40s_2^3s_5-\\
-50a_1^3s_5-42s_2^2s_5^2-22s_2s_5^3-32s_3^3s_5-36s_3^2s_5^2-20s_3s_5^3-24s_4^3s_5-30s_4^2s_5^2-18s_4s_5^3-24a_1^2s_6^2-\\
-10a_1s_6^3-20s_2^3s_6-18s_2^2s_6^2-8s_2s_6^3-96s_2^2s_4s_5-72s_2s_4s_5^2-90a_1s_3s_5^2-132a_1s_2^2s_5-84s_2s_4^2s_5-\\
-78s_3s_4^2s_5-108s_2^2s_3s_5-102s_2s_3^2s_5-114a_1s_3^2s_5-84s_3^2s_4s_5-96a_1s_4^2s_5-120a_1^2s_4s_5-96a_1s_2s_5^2-\\
-144a_1^2s_2s_5-66s_3s_4s_5^2-84a_1s_4s_5^2-132a_1^2s_3s_5-(42\gamma_2-18)s_6^2x-276a_1s_2s_5x-252a_1s_3s_5x-\\
-228s_2s_3s_5x-204s_2s_4s_5x-192s_3s_4s_5x-240a_1s_2s_3s_5-216a_1s_2s_4s_5-204a_1s_3s_4s_5-\\
-(126\gamma_2-60)a_1s_3s_6-(252\gamma_2-120)a_1s_3s_5-(171\gamma_2-72)a_1s_4^2-(378\gamma_2-180)a_1s_3s_4-\\
-(120\gamma_2^3-240\gamma_2^2+300\gamma_2-80)s_2-(72\gamma_2-40)s_3^3-(330\gamma_2-210)s_2^2x-(228\gamma_2-132)s_2s_3^2-\\
-(90\gamma_2-42)s_2s_5^2-(750\gamma_2-420)a_1s_2x-(225\gamma_2^2-252\gamma_2+171)a_1s_4-(120\gamma_2^2-144\gamma_2+96)s_3s_5-\\
-(138\gamma_2-72)a_1s_2s_6-(276\gamma_2-144)a_1s_2s_5-(414\gamma_2-216)a_1s_2s_4-(375\gamma_2^2-420\gamma_2+285)a_1s_2-\\
-(60\gamma_2^2-72\gamma_2+48)s_3s_6-(102\gamma_2-36)a_1s_5^2-(288\gamma_2-192)s_3x^2-(288\gamma_2^2-384\gamma_2+240)s_3x-\\
-(252\gamma_2-120)a_1s_3^2-(72\gamma_2-48)s_6x^2-(600\gamma_2-336)a_1s_3x-78s_2s_3s_5^2-180s_2s_3s_4s_5-\\
-(252\gamma_2-156)s_2^2s_3-(144\gamma_2^3-288\gamma_2^2+360\gamma_2-96)x-(114\gamma_2-48)a_1s_4s_6-(228\gamma_2-96)a_1s_4s_5-\\
-(36\gamma_2-12)s_3s_6^2-(216\gamma_2-120)s_4s_5x-(78\gamma_2-30)s_4s_5s_6-(120\gamma_2-72)s_3s_6x-s_6^4-\\
-(150\gamma_2^2-168\gamma_2+114)a_1s_5-39s_2s_6^2x-54s_3^2s_6x-36s_3s_6^2x-45s_4^2s_6x-33s_4s_6^2x-\\
-36s_5^2s_6x-30s_5s_6^2x-75a_1s_6x^2-45a_1s_6^2x-66s_2s_6x^2-60s_3s_6x^2-54s_4s_6x^2-48s_5s_6x^2-\\
-63s_2^2s_6x-78a_1^2s_6x-42s_2s_4^2s_6-21s_4s_5s_6^2-42s_3^2s_4s_6-39a_1s_5^2s_6-33a_1s_5s_6^2-42s_2^2s_5s_6-\\
-48s_2^2s_4s_6-27s_4s_5^2s_6-66a_1s_2^2s_6-39a_1s_3s_6^2-42a_1s_2s_6^2-36a_1s_4s_6^2-66a_1^2s_3s_6-24s_3s_5s_6^2-\\
-54s_2^2s_3s_6-27s_3s_4s_6^2-30s_2s_4s_6^2-54a_1^2s_5s_6-48a_1s_4^2s_6-33s_2s_3s_6^2-39s_3s_4^2s_6-\\
-57a_1s_3^2s_6-30s_4^2s_5s_6-27s_2s_5s_6^2-36s_3^2s_5s_6-72a_1^2s_2s_6-33s_2s_5^2s_6-30s_3s_5^2s_6-51s_2s_3^2s_6-\\
-72s_2s_4s_5s_6-60a_1^2s_4s_6-(150\gamma_2-76)a_1^3-(45\gamma_2-12)a_1s_6^2-(234\gamma_2^2-234\gamma_2+168)a_1^2-\\
-(108\gamma_2^2-120\gamma_2+84)s_4s_5-(114\gamma_2-66)s_2s_3s_6-(228\gamma_2-132)s_2s_3s_5-(342\gamma_2-198)s_2s_3s_4-\\
-(450\gamma_2-252)a_1x^2-(153\gamma_2-81)s_2s_4^2-(360\gamma_2-216)s_3s_4x-(150\gamma_2^3-252\gamma_2^2+342\gamma_2-112)a_1-\\
-(150\gamma_2-84)a_1s_6x-(300\gamma_2-168)a_1s_5x-(102\gamma_2-36)a_1s_5s_6-(216\gamma_2^2-288\gamma_2+180)x^2-
\end{multline*}
\begin{multline*}
-(264\gamma_2^2-336\gamma_2+216)s_2s_3-(390\gamma_2-195)a_1^2s_2-(105\gamma_2-65)s_2^3-(450\gamma_2-252)a_1s_4x-\\
-(552\gamma_2-288)a_1s_2s_3-(102\gamma_2-54)s_2s_4s_6-(204\gamma_2-108)s_2s_4s_5-(528\gamma_2-336)s_2s_3x-\\
-(90\gamma_2-42)s_4^2s_5-(78\gamma_2-30)s_4s_5^2-(45\gamma_2-21)s_4^2s_6-(300\gamma_2^2-336\gamma_2+228)a_1s_3-\\
-(162\gamma_2-90)s_4^2x-(96\gamma_2-48)s_3s_4s_6-(39\gamma_2-15)s_2s_6^2-120a_1s_2s_3s_6-108a_1s_2s_4s_6-\\
-96a_1s_2s_5s_6-102a_1s_3s_4s_6-90a_1s_3s_5s_6-84a_1s_4s_5s_6-90s_2s_3s_4s_6-78s_2s_3s_5s_6-\\
-66s_3s_4s_5s_6-552a_1s_2s_3x-138a_1s_2s_6x-126a_1s_3s_6x-114a_1s_4s_6x-102a_1s_5s_6x-\\
-300a_1s_3x^2-114s_2s_3s_6x-(120\gamma_2^2-144\gamma_2+96)s_3^2-153a_1s_3s_4^2-198a_1^2s_3s_4-135s_2s_3s_4^2-\\
-312a_1^2s_3x-198a_1s_2^2s_4-216a_1^2s_2s_4-171a_1s_3^2s_4-162s_2^2s_3s_4-75a_1^3s_4-90a_1^2s_4^2-48a_1s_4^3-\\
-60s_2^3s_4-72s_2^2s_4^2-42s_2s_4^3-48s_3^3s_4-63s_3^2s_4^2-39s_3s_4^3-162a_1s_2s_4^2-153s_2s_3^2s_4-\\
-(468\gamma_2-234)a_1^2x-(90\gamma_2-42)s_2s_5s_6-(450\gamma_2^2-504\gamma_2+342)a_1x-(189\gamma_2-117)s_2^2s_4-\\
-228a_1s_4s_5x-(234\gamma_2-117)a_1^2s_4-(126\gamma_2-78)s_2^2s_5-(144\gamma_2^2-192\gamma_2+120)s_5x-\\
-(96\gamma_2^3-192\gamma_2^2+240\gamma_2-64)s_3-72s_4x^3-81s_4^2x^2-45s_4^3x-153s_2s_4^2x-189s_2^2s_4x-\\
-225a_1s_4x^2-198s_2s_4x^2-234a_1^2s_4x-144s_3s_4^2x-162s_3^2s_4x-180s_3s_4x^2-171a_1s_4^2x-\\
-(396\gamma_2-252)s_2s_4x-28a_1^4-(156\gamma_2-78)a_1^2s_5-(345\gamma_2-180)a_1s_2^2-(63\gamma_2-39)s_2^2s_6-\\
-342s_2s_3s_4x-414a_1s_2s_4x-378a_1s_3s_4x-360a_1s_2s_3s_4-(198\gamma_2^2-252\gamma_2+162)s_2s_4-\\
-(216\gamma_2^2-288\gamma_2+180)s_4x-(132\gamma_2^2-168\gamma_2+108)s_2s_5-24s_6x^3-21s_6^2x^2-9s_6^3x-\\
-264s_2s_3x^2-(144\gamma_2-96)x^3-(24\gamma_2^3-48\gamma_2^2+60\gamma_2-16)s_6-(360\gamma_2^2-480\gamma_2+300)s_2x-\\
-(54\gamma_2^2-60\gamma_2+42)s_4s_6-(21\gamma_2^2-18\gamma_2+15)s_6^2-(162\gamma_2-90)s_3^2s_4-(36\gamma_2-12)s_5^2s_6-\\
-(144\gamma_2-96)s_5x^2-(192\gamma_2-96)s_3s_4s_5-(9\gamma_2-1)s_6^3-25a_1^3s_6-(30\gamma_2-6)s_5s_6^2-\\
-(165\gamma_2^2-210\gamma_2+135)s_2^2-(216\gamma_2-144)s_4x^2-(264\gamma_2-168)s_2s_5x-(84\gamma_2-36)s_3s_5^2-\\
-(48\gamma_2^2-48\gamma_2+36)s_5s_6-(312\gamma_2-156)a_1^2s_3-(48\gamma_2^3-96\gamma_2^2+120\gamma_2-32)s_5-\\
-(240\gamma_2-144)s_3s_5x-(84\gamma_2-36)s_3s_5s_6-(66\gamma_2^2-84\gamma_2+54)s_2s_6-(75\gamma_2^2-84\gamma_2+57)a_1s_6-\\
-(108\gamma_2-60)s_3^2s_5-(78\gamma_2-39)a_1^2s_6-9s_4^4-(144\gamma_2-72)s_3s_4^2-(48\gamma_2^2-48\gamma_2+36)s_5^2-\\
-102s_2s_4s_6x-90s_2s_5s_6x-96s_3s_4s_6x-84s_3s_5s_6x-78s_4s_5s_6x-(33\gamma_2-9)s_4s_6^2-\\
-36x^4-25s_2^4-16s_3^4-345a_1s_2^2x-252a_1s_3^2x-390a_1^2s_2x-375a_1s_2x^2-252s_2^2s_3x-\\
-(108\gamma_2-60)s_4s_6x-(96\gamma_2-48)s_5s_6x-228s_2s_3^2x-240a_1s_2s_3^2-288a_1^2s_2s_3-264a_1s_2^2s_3-\\
-(360\gamma_2-240)s_2x^2-150a_1x^3-120s_2x^3-96s_3x^3-165s_2^2x^2-120s_3^2x^2-105s_2^3x-72s_3^3x-\\
-150a_1^3x-234a_1^2x^2-80s_2^3s_3-108s_2^2s_3^2-68s_2s_3^3-180a_1^2s_2^2-132a_1^2s_3^2-110a_1s_2^3-\\
-76a_1s_3^3-(24\gamma_2-8)s_5^3-100a_1^3s_3-125a_1^3s_2-(72\gamma_2^3-144\gamma_2^2+180\gamma_2-48)s_4-\\
-(54\gamma_2-30)s_3^2s_6-(81\gamma_2^2-90\gamma_2+63)s_4^2-(240\gamma_2-144)s_3^2x-(132\gamma_2-84)s_2s_6x-\\
-(45\gamma_2-21)s_4^3-(180\gamma_2^2-216\gamma_2+144)s_3s_4-(96\gamma_2-48)s_5^2x-(72\gamma_2^2-96\gamma_2+60)s_6x.
\end{multline*}
Since all coefficients of this polynomial are negative,
one has $f_2<0$ when $a_2-a_1>\gamma_2$.
In particular, if $a_2-a_1\geqslant 0.8469$, then $f_2<0$, because $\gamma_2<0.8469$.

To complete the proof, we have to show that $f<0$ in the case when $a_2-a_1\geqslant 0.8717$.
To do this, observe first that
$$
\widehat{f}(0,x,0,0,0,0,0,0)=-49x^4+140x^3-252x^2+140x+5.
$$
This polynomial has unique positive root.
Denote it by $\gamma_1$.
Then $\gamma_1\approx 0.871635634$.
Moreover, the polynomial $\widehat{f}_1(a_1,x+\gamma_{1},s_2,s_3,s_4,s_5,s_6,s_7)$ can be expanded as
\begin{multline*}
-378s_2s_4s_5x-132a_1s_2s_4s_7-120a_1s_2s_5s_7-108a_1s_2s_6s_7-126a_1s_3s_4s_7-\\
-75s_2^2s_7x-45s_2s_7^2x-66s_3^2s_7x-42s_3s_7^2x-57s_4^2s_7x-39s_4s_7^2x-48s_5^2s_7x-\\
-39s_6^2s_7x-33s_6s_7^2x-(45\gamma_1-21)s_2s_7^2-216s_3s_5x^2-270a_1^2s_5x-225s_2^2s_5x-\\
-234s_2s_5x^2-261a_1s_5x^2-171s_2s_5^2x-198s_3^2s_5x-198s_4s_5x^2-162s_3s_5^2x-171s_4^2s_5x-\\
-153s_4s_5^2x-9s_5^4-84s_5x^3-90s_5^2x^2-48s_5^3x-40s_3^3s_6-42s_3^2s_6^2-22s_3s_6^3-32s_4^3s_6-\\
-(450\gamma_1-255)a_1^2s_3-24s_5^3s_6-30s_5^2s_6^2-18s_5s_6^3-99a_1^2s_5^2-51a_1s_5^3-72s_2^3s_5-\\
-36s_4^2s_6^2-81s_2^2s_5^2-45s_2s_5^3-60s_3^3s_5-72s_3^2s_5^2-42s_3s_5^3-48s_4^3s_5-63s_4^2s_5^2-\\
-414s_2s_3s_5x-60a_1^2s_6^2-28a_1s_6^3-48s_2^3s_6-48s_2^2s_6^2-24s_2s_6^3-180s_2^2s_4s_5-144s_2s_4s_5^2-\\
-90a_1s_5s_6s_7-171a_1s_3s_5^2-234a_1s_2^2s_5-162s_2s_4^2s_5-153s_3s_4^2s_5-198s_2^2s_3s_5-\\
-24s_7^2x^2-10s_7^3x-207a_1s_3^2s_5-180a_1s_4^2s_5-216a_1^2s_4s_5-180a_1s_2s_5^2-153s_2s_3s_5^2-\\
-(264\gamma_1-168)s_4s_6x-252a_1^2s_2s_5-135s_3s_4s_5^2-162a_1s_4s_5^2-(60\gamma_1^2-72\gamma_1+48)s_5s_7-\\
-(42\gamma_1-18)s_3s_7^2-(114\gamma_1-66)s_2s_5s_7-486a_1s_2s_5x-450a_1s_3s_5x-414a_1s_4s_5x-\\
-360s_3s_4s_5x-432a_1s_2s_3s_5-396a_1s_2s_4s_5-378a_1s_3s_4s_5-342s_2s_3s_4s_5-\\
-84a_1^2s_2s_7-78a_1^2s_3s_7-63s_2s_3^2s_7-30s_2s_6s_7^2-42s_3^2s_6s_7-51s_3s_4^2s_7-33s_3s_4s_7^2-\\
-42s_3s_5^2s_7-30s_3s_5s_7^2-33s_3s_6^2s_7-27s_3s_6s_7^2-42s_4^2s_5s_7-36s_4^2s_6s_7-39s_4s_5^2s_7-\\
-27s_4s_5s_7^2-30s_4s_6^2s_7-24s_4s_6s_7^2-30s_5^2s_6s_7-27s_5s_6^2s_7-(252\gamma_1-156)s_2s_4s_6-\\
-4s_6^4-(504\gamma_1-360)s_2x^2-(171\gamma_1-99)s_2s_5^2-(78\gamma_1^2-108\gamma_1+66)s_2s_7-\\
-144a_1s_2s_3s_7-(396\gamma_1-252)s_4s_5x-(120\gamma_1^2-144\gamma_1+96)s_5s_6-(114\gamma_1-66)s_4^2s_6-\\
-96s_2s_4s_5s_7-84s_2s_4s_6s_7-78s_2s_5s_6s_7-(234\gamma_1^2-324\gamma_1+198)s_2s_5-(76\gamma_1-44)s_4^3-\\
-(168\gamma_1-120)s_6x^2-(153\gamma_1-81)s_4s_5^2-(72\gamma_1^2-96\gamma_1+60)s_3s_7-(870\gamma_1-540)a_1s_3x-\\
-(66\gamma_1-42)s_3^2s_7-(174\gamma_1-108)a_1s_7x-(780\gamma_1-540)s_2s_3x-(414\gamma_1-270)s_2s_3s_5-\\
-(156\gamma_1^2-216\gamma_1+132)s_2s_6-51a_1s_7^2x-78s_2s_7x^2-72s_3s_7x^2-66s_4s_7x^2-60s_5s_7x^2-\\
-87a_1s_7x^2-(180\gamma_1-108)s_5^2x-(540\gamma_1-306)a_1^2s_2-(87\gamma_1^2-108\gamma_1+69)a_1s_7-\\
-(96\gamma_1-48)s_3s_6^2-(26\gamma_1-10)s_6^3-(522\gamma_1^2-648\gamma_1+414)a_1s_2-(252\gamma_1-156)s_2s_4^2-\\
-90a_1^2s_7x-114a_1s_3s_5s_7-102a_1s_3s_6s_7-108a_1s_4s_5s_7-96a_1s_4s_6s_7-102s_2s_3s_5s_7-\\
-20s_4s_6^3-36s_5s_7^2x-87a_1^3s_5-21s_5s_6s_7^2-234a_1^2s_3s_5-39s_4s_5^3-189a_1s_5^2x-189s_2s_3^2s_5-\\
-(108\gamma_1-60)s_6^2x-(300\gamma_1-204)s_2^2s_4-(110\gamma_1-70)s_3^3-(414\gamma_1-216)a_1s_4s_5-\\
-(162\gamma_1-90)s_3s_5^2-(276\gamma_1-144)a_1s_4^2-(300\gamma_1-168)a_1s_3s_6-114s_2s_3s_4s_7-
\end{multline*}

\begin{multline*}
-84s_3s_5^2s_6-126s_2s_3^2s_6-144a_1^2s_4s_6-(312\gamma_1-216)s_2s_6x-(57\gamma_1-33)s_4^2s_7-\\
-102a_1s_5^2s_6-90a_1s_5s_6^2-108s_2^2s_5s_6-120s_2^2s_4s_6-78s_4s_5^2s_6-156a_1s_2^2s_6-\\
-90s_4s_6^2x-96s_5^2s_6x-84s_5s_6^2x-174a_1s_6x^2-114a_1s_6^2x-156s_2s_6x^2-144s_3s_6x^2-\\
-120s_5s_6x^2-150s_2^2s_6x-180a_1^2s_6x-108s_2s_4^2s_6-66s_4s_5s_6^2-108s_3^2s_4s_6-\\
-102a_1s_3s_6^2-108a_1s_2s_6^2-96a_1s_4s_6^2-156a_1^2s_3s_6-72s_3s_5s_6^2-132s_2^2s_3s_6-\\
-336a_1^2s_2s_4-(102\gamma_1-54)s_4s_5s_7-78s_2s_5s_6^2-96s_3^2s_5s_6-168a_1^2s_2s_6-\\
-138a_1s_3^2s_6-78s_3s_4s_6^2-84s_2s_4s_6^2-132a_1^2s_5s_6-120a_1s_4^2s_6-90s_2s_3s_6^2-\\
-162s_3^2s_4s_5-(810\gamma_1-480)a_1s_2s_3-(486\gamma_1-288)a_1s_2^2-(252\gamma_1-180)s_5x^2-\\
-(336\gamma_1-240)s_4x^2-(522\gamma_1-324)a_1s_5x-102s_2s_6^2x-132s_3^2s_6x-96s_3s_6^2x-114s_4^2s_6x-\\
-(552\gamma_1-360)s_2s_3s_4-(196\gamma_1^3-420\gamma_1^2+504\gamma_1-140)x-(294\gamma_1^2-420\gamma_1+252)x^2-\\
-(216\gamma_1^2-288\gamma_1+180)s_3s_5-(270\gamma_1-153)a_1^2s_5-(198\gamma_1^2-252\gamma_1+162)s_4s_5-\\
-(348\gamma_1^2-432\gamma_1+276)a_1s_4-(132\gamma_1^2-168\gamma_1+108)s_4^2-(90\gamma_1-51)a_1^2s_7-s_7^4-\\
-(51\gamma_1-18)a_1s_7^2-(420\gamma_1^2-600\gamma_1+360)s_3x-(90\gamma_1-42)s_4s_6s_7-(600\gamma_1-336)a_1s_3s_4-\\
-(252\gamma_1^2-360\gamma_1+216)s_5x-(96\gamma_1-48)s_5^2s_6-(150\gamma_1-102)s_2^3-(84\gamma_1-60)s_7x^2-\\
-(234\gamma_1^2-324\gamma_1+198)s_2^2-(648\gamma_1-384)a_1s_2s_4-(486\gamma_1-288)a_1s_2s_5-\\
-(1044\gamma_1-648)a_1s_2x-(108\gamma_1-60)s_6s_7x-(180\gamma_1^2-240\gamma_1+150)s_3^2-\\
-(288\gamma_1-192)s_3s_6x-(48\gamma_1-24)s_7^2x-(54\gamma_1^2-60\gamma_1+42)s_6s_7-(33\gamma_1-9)s_6s_7^2-\\
-(168\gamma_1^3-360\gamma_1^2+432\gamma_1-120)s_2-(576\gamma_1-384)s_3s_4x-312a_1^2s_3s_4-\\
-(375\gamma_1-210)a_1s_3^2-(144\gamma_1^2-192\gamma_1+120)s_3s_6-(609\gamma_1^2-756\gamma_1+483)a_1x-\\
-(114\gamma_1-48)a_1s_6^2-(84\gamma_1-36)s_5s_6s_7-(150\gamma_1-102)s_2^2s_6-(264\gamma_1-168)s_3^2s_4-\\
-(120\gamma_1-72)s_5s_7x-(240\gamma_1-144)s_5s_6x-(120\gamma_1-72)s_3s_4s_7-(84\gamma_1-36)s_5s_6^2-\\
-(174\gamma_1^2-216\gamma_1+138)a_1s_6-(261\gamma_1^2-324\gamma_1+207)a_1s_5-(126\gamma_1-60)a_1s_5s_7-\\
-(375\gamma_1-255)s_2^2s_3-114s_2s_5s_7x-102s_2s_6s_7x-120s_3s_4s_7x-108s_3s_5s_7x-96s_3s_6s_7x-\\
-102s_4s_5s_7x-90s_4s_6s_7x-84s_5s_6s_7x-162a_1s_2s_7x-150a_1s_3s_7x-138a_1s_4s_7x-126a_1s_5s_7x-\\
-116a_1^3s_4-138s_2s_3s_7x-126s_2s_4s_7x-(435\gamma_1^2-540\gamma_1+345)a_1s_3-(102\gamma_1-54)s_2s_6^2-\\
-(225\gamma_1-153)s_2^2s_5-(156\gamma_1-108)s_2s_7x-(348\gamma_1-216)a_1s_6x-(252\gamma_1-120)a_1s_5s_6-\\
-240a_1s_2s_5s_6-252a_1s_3s_4s_6-228a_1s_3s_5s_6-216a_1s_4s_5s_6-228s_2s_3s_4s_6-204s_2s_3s_5s_6-\\
-180s_3s_4s_5s_6-810a_1s_2s_3x-324a_1s_2s_6x-300a_1s_3s_6x-276a_1s_4s_6x-252a_1s_5s_6x-\\
-276s_2s_3s_6x-(609\gamma_1-378)a_1x^2-252a_1s_3s_4^2-228s_2s_3s_4^2-312a_1s_2^2s_4-60s_2^2s_4s_7-\\
-276a_1s_3^2s_4-264s_2^2s_3s_4-(138\gamma_1-90)s_2s_3s_7-(140\gamma_1^3-300\gamma_1^2+360\gamma_1-100)s_3-\\
-84s_4^2s_5s_6-54s_6s_7x^2-132s_4s_6x^2-29a_1^3s_7-(162\gamma_1-96)a_1s_2s_7-102s_3s_4^2s_6-\\
-(36\gamma_1-12)s_5s_7^2-114a_1s_6s_7x-(10\gamma_1-2)s_7^3-(468\gamma_1-324)s_2^2x-90s_2s_5^2s_6-\\
-144a_1^2s_4^2-80a_1s_4^3-96s_2^3s_4-120s_2^2s_4^2-72s_2s_4^3-80s_3^3s_4-108s_3^2s_4^2-68s_3s_4^3-\\
-24s_2^3s_7-21s_2^2s_7^2-9s_2s_7^3-20s_3^3s_7-18s_3^2s_7^2-8s_3s_7^3-16s_4^3s_7-15s_4^2s_7^2-7s_4s_7^3-
\end{multline*}

\begin{multline*}
-264a_1s_2s_4^2-252s_2s_3^2s_4-(276\gamma_1-180)s_2s_3s_6-(240\gamma_1-144)s_3s_4^2-(48\gamma_1-24)s_5^3-\\
-(114\gamma_1-48)a_1s_6s_7-(54\gamma_1^2-60\gamma_1+42)s_6^2-(203\gamma_1^3-378\gamma_1^2+483\gamma_1-160)a_1-\\
-(345\gamma_1-225)s_2s_3^2-(132\gamma_1^2-168\gamma_1+108)s_4s_6-(324\gamma_1-192)a_1s_2s_6-27a_1^2s_7^2-\\
-(196\gamma_1-140)x^3-(216\gamma_1-120)s_3s_5s_6-(28\gamma_1^3-60\gamma_1^2+72\gamma_1-20)s_7-(108\gamma_1-60)s_3s_5s_7-\\
-(90\gamma_1^2-108\gamma_1+72)s_5^2-(468\gamma_1-324)s_2s_5x-(240\gamma_1-144)s_3s_4s_6-(390\gamma_1^2-540\gamma_1+330)s_2s_3-\\
-12s_5^3s_7-12s_5^2s_7^2-6s_5s_7^3-8s_6^3s_7-9s_6^2s_7^2-5s_6s_7^3-112s_4x^3-132s_4^2x^2-76s_4^3x-\\
-252s_2s_4^2x-300s_2^2s_4x-348a_1s_4x^2-312s_2s_4x^2-360a_1^2s_4x-240s_3s_4^2x-264s_3^2s_4x-\\
-276a_1s_4^2x-40a_1^4-(171\gamma_1-99)s_4^2s_5-(132\gamma_1-84)s_4s_7x-552s_2s_3s_4x-648a_1s_2s_4x-\\
-600a_1s_3s_4x-576a_1s_2s_3s_4-(39\gamma_1-15)s_4s_7^2-(360\gamma_1-216)s_3s_4s_5-(132\gamma_1-84)s_3^2s_6-\\
-(168\gamma_1^2-240\gamma_1+144)s_6x-(75\gamma_1-51)s_2^2s_7-28s_7x^3-(150\gamma_1-84)a_1s_3s_7-56s_6x^3-\\
-54s_6^2x^2-26s_6^3x-(126\gamma_1-78)s_2s_4s_7-(420\gamma_1-300)s_3x^2-72a_1^2s_4s_7-48s_2^2s_6s_7-\\
-(432\gamma_1-288)s_3s_5x-45s_2s_5^2s_7-66s_2^2s_3s_7-60a_1s_4^2s_7-51a_1s_5^2s_7-39a_1s_5s_7^2-\\
-60a_1^2s_6s_7-54s_3^2s_4s_7-48a_1s_2s_7^2-39s_2s_3s_7^2-78a_1s_2^2s_7-36a_1s_6s_7^2-36s_2s_6^2s_7-\\
-42a_1s_6^2s_7-54s_2^2s_5s_7-42a_1s_4s_7^2-33s_2s_5s_7^2-36s_2s_4s_7^2-66a_1^2s_5s_7-69a_1s_3^2s_7-\\
-(48\gamma_1-24)s_5^2s_7-(180\gamma_1-102)a_1^2s_6-(203\gamma_1-115)a_1^3-(288\gamma_1^2-384\gamma_1+240)s_3s_4-\\
-288a_1s_2s_3s_6-264a_1s_2s_4s_6-(66\gamma_1^2-84\gamma_1+54)s_4s_7-(56\gamma_1^3-120\gamma_1^2+144\gamma_1-40)s_6-\\
-(112\gamma_1^3-240\gamma_1^2+288\gamma_1-80)s_4-(312\gamma_1^2-432\gamma_1+264)s_2s_4-(96\gamma_1-48)s_3s_6s_7-\\
-(696\gamma_1-432)a_1s_4x-(39\gamma_1-15)s_6^2s_7-(198\gamma_1-126)s_3^2s_5-(624\gamma_1-432)s_2s_4x-\\
-(450\gamma_1-252)a_1s_3s_5-(630\gamma_1-357)a_1^2x-(102\gamma_1-54)s_2s_6s_7-(315\gamma_1^2-357\gamma_1+240)a_1^2-\\
-(228\gamma_1-132)s_2s_5s_6-16s_4^4-90s_3s_4s_5s_7-78s_3s_4s_6s_7-72s_3s_5s_6s_7-66s_4s_5s_6s_7-\\
-228s_2s_5s_6x-240s_3s_4s_6x-216s_3s_5s_6x-204s_4s_5s_6x-(504\gamma_1^2-720\gamma_1+432)s_2x-\\
-49x^4-36s_2^4-25s_3^4-486a_1s_2^2x-375a_1s_3^2x-540a_1^2s_2x-522a_1s_2x^2-375s_2^2s_3x-435a_1s_3x^2-\\
-192s_2s_4s_5s_6-450a_1^2s_3x-390s_2s_3x^2-345s_2s_3^2x-360a_1s_2s_3^2-420a_1^2s_2s_3-390a_1s_2^2s_3-\\
-(336\gamma_1^2-480\gamma_1+288)s_4x-(138\gamma_1-72)a_1s_4s_7-203a_1x^3-168s_2x^3-140s_3x^3-234s_2^2x^2-\\
-180s_3^2x^2-150s_2^3x-110s_3^3x-203a_1^3x-315a_1^2x^2-120s_2^3s_3-165s_2^2s_3^2-105s_2s_3^3-\\
-252a_1^2s_2^2-195a_1^2s_3^2-156a_1s_2^3-115a_1s_3^3-(276\gamma_1-144)a_1s_4s_6-(144\gamma_1-96)s_3s_7x-\\
-54s_2s_4^2s_7-(84\gamma_1^3-180\gamma_1^2+216\gamma_1-60)s_5-145a_1^3s_3-174a_1^3s_2-(378\gamma_1-234)s_2s_4s_5-\\
-(264\gamma_1-168)s_4^2x-(90\gamma_1-42)s_4s_6^2-(24\gamma_1^2-24\gamma_1+18)s_7^2-(360\gamma_1-240)s_3^2x-\\
-48s_3^2s_5s_7-11a_1s_7^3-90s_2s_3s_6s_7-58a_1^3s_6-45a_1s_3s_7^2-252s_2s_4s_6x-288s_3s_4x^2-\\
-(84\gamma_1^2-120\gamma_1+72)s_7x-(189\gamma_1-90)a_1s_5^2-(204\gamma_1-108)s_4s_5s_6-(360\gamma_1-204)a_1^2s_4.
\end{multline*}
Since coefficients of this polynomial are negative,
one has $f<0$ provided that $a_2-a_1>\gamma_1$.
Hence, if $a_2-a_1\geqslant 0.8717$, then we also have $f<0$, since $0.8717>\gamma_1$.
\end{proof}

\begin{lemma}
\label{lemma:Maple-P2-2}
Let $f$ be the polynomial~\eqref{equation:polynomial-P2}.
Then the following assertions hold:
\begin{itemize}
\item $f(a_1,a_2,a_3,a_4,1,1,1,1)<0$ when $a_3-a_1\geqslant 0.7698$;
\item $f(a_1,a_2,a_3,a_4,a_5,1,1,1)<0$ when $a_3-a_1\geqslant 0.8595$;
\item $f(a_1,a_2,a_3,a_4,a_5,a_6,1,1)<0$ when $a_3-a_1\geqslant 0.8985$;
\item $f(a_1,a_2,a_3,a_4,a_5,a_6,a_7,1)<0$ when $a_3-a_1\geqslant 0.9206$;
\item $f(a_1,a_2,a_3,a_4,a_5,a_6,a_7,a_8)<0$ when $a_3-a_1\geqslant 0.9347$.
\end{itemize}
\end{lemma}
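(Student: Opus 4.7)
The proof will parallel that of Lemma~\ref{lemma:Maple-P2-1}, proceeding case-by-case for $K_S^2 \in \{1,2,3,4,5\}$. For each specialization $f_k$ (obtained by setting the appropriate number of $a_i$'s equal to $1$), I identify the threshold $\gamma_k$ as the unique positive root of the univariate polynomial $g_k(x) = \widehat{f}_k\bigl|_{a_1=0,\,s_1=0,\,s_2=x,\,s_j=0\ \text{for}\ j\geqslant 3}$. This is the extremal slice $a_1=a_2=0$, $a_3=a_4=\cdots=x$: it saturates the hypothesis $a_3-a_1\geqslant \gamma_k$ while zeroing out all other gaps $s_j$, so it pins down the correct critical value. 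Computing these five quartics and their positive roots should recover exactly the thresholds in the statement of the lemma.

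The main new difficulty compared to Lemma~\ref{lemma:Maple-P2-1} is that the hypothesis $a_3-a_1\geqslant \gamma_k$ constrains two variables of the $(a_1,s_1,s_2,\ldots,s_7)$ parametrization simultaneously, namely $s_1+s_2\geqslant \gamma_k$. To reduce this to a single-variable shift of the type used in Lemma~\ref{lemma:Maple-P2-1}, I introduce a new variable $\sigma\geqslant 0$ and set $s_2=\gamma_k+\sigma-s_1$, so that $s_1+s_2=\gamma_k+\sigma$ automatically satisfies the hypothesis. The valid region then becomes $a_1,\sigma,s_3,\ldots,s_7\geqslant 0$ together with $s_1\in[0,\gamma_k+\sigma]$. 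To dispose of the upper bound on $s_1$, I exploit the full permutation symmetry of $\mathfrak{D}_{\mathbb{P}^2}$ in $a_2,\ldots,a_r$ (readable directly off \eqref{equation:polynomial-P2}): any configuration in the ``extended'' region $s_1>\gamma_k+\sigma$ corresponds, after relabelling $a_2\leftrightarrow a_3$, to a valid unordered configuration still satisfying the hypothesis. Thus it should suffice to prove $\widehat{f}_k\leqslant 0$ on the full non-negative orthant in the new variables.

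The final step is to substitute $s_2=\gamma_k+\sigma-s_1$ into $\widehat{f}_k$, expand the result as a polynomial in $a_1,s_1,\sigma,s_3,\ldots,s_7$, and verify that every coefficient is non-positive, in perfect analogy with the coefficient expansions displayed in the proof of Lemma~\ref{lemma:Maple-P2-1}. The main obstacle will be the sheer size of these symbolic expansions: degree-$4$ polynomials in up to nine variables, with hundreds of monomials, whose coefficientwise non-positivity can only be checked by computer algebra. A subsidiary technical point is making the symmetry reduction fully rigorous once $r\geqslant 4$, because relabelling $a_2\leftrightarrow a_3$ in the ordered setting can interact with the constraints on $a_4,\ldots,a_r$; the cleanest resolution is to work directly in the $S_{r-1}$-invariant (unordered) configuration space where the symmetry of $\mathfrak{D}_{\mathbb{P}^2}$ acts without obstruction, and then read off the conclusion in the ordered coordinates.
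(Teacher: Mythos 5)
Your core plan is the same as the paper's. The paper defines $g_k(a_1,s_1,t,s_3,\ldots)=\widehat{f}_k(a_1,s_1,t-s_1,s_3,\ldots)$ with $t=a_3-a_1$, takes $\gamma_k$ to be the unique positive root of the univariate polynomial $g_k(0,0,x,0,\ldots,0)$ (exactly your slice $a_1=s_1=0$, $s_2=x$, $s_j=0$ for $j\geq 3$, and the numerical roots reproduce the stated thresholds), substitutes $t=x+\gamma_k$, and verifies the resulting polynomial in $a_1,s_1,x,s_3,\ldots$ is a sum of non-positive terms. Up to renaming your $\sigma$ as the paper's $x$, this is identical.

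Where you diverge is the symmetry detour, and it is superfluous and slightly confused. After the substitution $s_2 = (\gamma_k+\sigma)-s_1$ the physically meaningful region does carry the constraint $s_1\leq\gamma_k+\sigma$, but there is no need to ``dispose of'' that upper bound by a relabelling argument: if you establish non-positivity on the full non-negative orthant in $(a_1,s_1,\sigma,s_3,\ldots)$ — which is a \emph{superset} of the constrained region — the conclusion on the constrained region is an immediate specialization, with no use of the $S_{r-1}$-symmetry of $\mathfrak{D}_{\mathbb{P}^2}$ whatsoever. The symmetry would only earn its keep if you wished to argue in the opposite direction (from the constrained region out to the extended one), which is not the direction the proof requires. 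Dropping it also dissolves the ``subsidiary technical point'' you raise about ordered versus unordered configurations for $r\geq 4$: that difficulty is entirely self-inflicted. One further small caution: the paper's verification is presented not as a bare monomial-by-monomial sign check but as a decomposition into summands of the form $-(\text{bracket})\cdot(\text{monomial})$, where the brackets are small polynomials in $s_1$ and $\gamma_k$ that must be shown non-negative for $s_1\geq 0$; in most cases this reduces to a coefficient check after re-expansion, but you should be prepared for a bracket whose non-negativity requires a slightly more careful one-variable argument rather than a literal sign inspection of every expanded monomial.
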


\begin{proof}
Let $g_1(a_1,s_1,t,s_3,s_4,s_5,s_6,s_7)=\widehat{f}(a_1,s_1,t-s_1,s_3,s_4,s_5,s_6,s_7)$ for $t=a_3-a_1$.
Then $g_1(0,0,x,0,0,0,0,0)=-36x^4+102x^3-198x^2+120x+5$ has one positive root.
Denote it by $\gamma_1$. Then $\gamma_1\approx 0.9346473877$, and $g_1(a_1,s_1,x+\gamma_{1},s_3,s_4,s_5,s_6,s_7)$ equals
\begin{multline*}
-378a_1s_3s_4s_5-(144\gamma_1+9s_1-81)s_4s_5^2-78a_1^2s_3s_7-42s_3^2s_6s_7-51s_3s_4^2s_7-\\
-(132\gamma_1+6s_1-90)s_3s_7x-(192\gamma_1+12s_1-108)s_4s_5s_6-63s_3^2s_7x-39s_3s_7^2x-\\
-144s_4s_5^2x-36s_4s_7^2x-45s_5^2s_7x-33s_5s_7^2x-36s_6^2s_7x-30s_6s_7^2x-198s_3s_5x^2-\\
-180a_1s_5^2x-252a_1^2s_5x-234a_1s_5x^2-189s_3^2s_5x-180s_4s_5x^2-153s_3s_5^2x-162s_4^2s_5x-\\
-9s_5^4-72s_5x^3-81s_5^2x^2-45s_5^3x-40s_3^3s_6-42s_3^2s_6^2-22s_3s_6^3-32s_4^3s_6-36s_4^2s_6^2-\\
-342s_3s_4s_5x-20s_4s_6^3-24s_5^3s_6-30s_5^2s_6^2-18s_5s_6^3-99a_1^2s_5^2-51a_1s_5^3-60s_3^3s_5-72s_3^2s_5^2-\\
-42s_3s_5^3-48s_4^3s_5-63s_4^2s_5^2-39s_4s_5^3-60a_1^2s_6^2-28a_1s_6^3-171a_1s_3s_5^2-153s_3s_4^2s_5-\\
-207a_1s_3^2s_5-162s_3^2s_4s_5-180a_1s_4^2s_5-216a_1^2s_4s_5-135s_3s_4s_5^2-162a_1s_4s_5^2-234a_1^2s_3s_5-\\
-(45\gamma_1+3s_1-24)s_5^3-(30\gamma_1+3s_1-9)s_6s_7^2-(108\gamma_1+6s_1-48)a_1s_6s_7-396a_1s_4s_5x-\\
-432a_1s_3s_5x-42s_3s_5^2s_7-30s_3s_5s_7^2-33s_3s_6^2s_7-27s_3s_6s_7^2-42s_4^2s_5s_7-36s_4^2s_6s_7-\\
-33s_3s_4s_7^2-39s_4s_5^2s_7-27s_4s_5s_7^2-30s_4s_6^2s_7-24s_4s_6s_7^2-30s_5^2s_6s_7-27s_5s_6^2s_7-21s_5s_6s_7^2-\\
-(33\gamma_1+3s_1-12)s_5s_7^2-(9\gamma_1+s_1-2)s_7^3-(36\gamma_1+3s_1-15)s_6^2s_7-(102\gamma_1+6s_1-60)s_3s_5s_7-\\
-(96\gamma_1+6s_1-54)s_6s_7x-87a_1^3s_5-(360\gamma_1+15s_1-255)s_3x^2-126a_1s_3s_4s_7-114a_1s_3s_5s_7-\\
-102a_1s_3s_6s_7-108a_1s_4s_5s_7-96a_1s_4s_6s_7-90a_1s_5s_6s_7-(528\gamma_1+24s_1-360)s_3s_4x-\\
-132a_1^2s_5s_6-(18s_1^2+36s_1\gamma_1+468\gamma_1^2-72s_1-576\gamma_1+414)a_1x-(105\gamma_1+5s_1-70)s_3^3-\\
-(108\gamma_1+6s_1-66)s_4^2s_6-48a_1s_7^2x-66s_3s_7x^2-60s_4s_7x^2-54s_5s_7x^2-48s_6s_7x^2-84a_1^2s_7x-\\
-78a_1s_7x^2-(96\gamma_1+6s_1-54)s_4s_5s_7-(216\gamma_1+12s_1-132)s_5s_6x-(114\gamma_1+6s_1-72)s_3s_4s_7-\\
-84s_4^2s_5s_6-4s_6^4-(4s_1^3+12s_1\gamma_1^2+96\gamma_1^3-12s_1^2-24s_1\gamma_1-204\gamma_1^2+24s_1+264\gamma_1-80)s_4-\\
-(144\gamma_1+6s_1-102)x^3-(36\gamma_1+3s_1-15)s_4s_7^2-(45\gamma_1+3s_1-24)s_5^2s_7-(48\gamma_1+3s_1-18)a_1s_7^2-\\
-(11s_1^3+18s_1^2\gamma_1+18s_1\gamma_1^2+156\gamma_1^3-18s_1^2-72s_1\gamma_1-288\gamma_1^2+69s_1+414\gamma_1-160)a_1-\\
-(153\gamma_1+9s_1-90)s_3s_5^2-126s_3^2s_6x-90s_3s_6^2x-108s_4^2s_6x-84s_4s_6^2x-90s_5^2s_6x-\\
-(468\gamma_1+18s_1-288)a_1s_5x-72s_3s_5s_6^2-78s_3s_4s_6^2-120a_1s_4^2s_6-102s_3s_4^2s_6-138a_1s_3^2s_6-\\
-78s_5s_6^2x-156a_1s_6x^2-108a_1s_6^2x-132s_3s_6x^2-120s_4s_6x^2-108s_5s_6x^2-168a_1^2s_6x-\\
-84s_3s_5^2s_6-144a_1^2s_4s_6-(204\gamma_1+12s_1-120)s_3s_5s_6-(144\gamma_1+6s_1-84)a_1s_3s_7-\\
-66s_4s_5s_6^2-108s_3^2s_4s_6-102a_1s_5^2s_6-90a_1s_5s_6^2-78s_4s_5^2s_6-102a_1s_3s_6^2-96a_1s_4s_6^2-
\end{multline*}

\begin{multline*}
-29a_1^3s_7-(6s_1^3+18s_1\gamma_1^2+144\gamma_1^3-18s_1^2-36s_1\gamma_1-306\gamma_1^2+36s_1+396\gamma_1-120)x-\\
-54s_4^2s_7x-156a_1^2s_3s_6-(396\gamma_1+18s_1-216)a_1s_4s_5-(6s_1\gamma_1+60\gamma_1^2-6s_1-78\gamma_1+54)s_4s_7-\\
-(90\gamma_1+6s_1-48)s_3s_6s_7-(12s_1\gamma_1+108\gamma_1^2-12s_1-132\gamma_1+96)s_5s_6-(468\gamma_1+18s_1-288)a_1x^2-\\
-(6s_1\gamma_1+54\gamma_1^2-6s_1-66\gamma_1+48)s_5s_7-s_7^4-(12s_1\gamma_1+120\gamma_1^2-12s_1-156\gamma_1+108)s_4s_6-\\
-69a_1s_3^2s_7-(342\gamma_1+18s_1-216)s_3s_4s_5-(18s_1\gamma_1+198\gamma_1^2-18s_1-270\gamma_1+180)s_3s_5-\\
-(264\gamma_1+12s_1-144)a_1s_4s_6-(24s_1\gamma_1+264\gamma_1^2-24s_1-360\gamma_1+240)s_3s_4-(54\gamma_1+3s_1-33)s_4^2s_7-\\
-(90\gamma_1+6s_1-48)s_5^2s_6-(18s_1\gamma_1+216\gamma_1^2-18s_1-306\gamma_1+198)s_5x-(240\gamma_1+12s_1-120)a_1s_5s_6-\\
-(108\gamma_1+6s_1-48)a_1s_6^2-(132\gamma_1+6s_1-72)a_1s_4s_7-(216\gamma_1+9s_1-153)s_5x^2-\\
-(2s_1^3+6s_1\gamma_1^2+48\gamma_1^3-6s_1^2-12s_1\gamma_1-102\gamma_1^2+12s_1+132\gamma_1-40)s_6-102s_3s_5s_7x-\\
-(24s_1\gamma_1+288\gamma_1^2-24s_1-408\gamma_1+264)s_4x-(12s_1\gamma_1+144\gamma_1^2-12s_1-204\gamma_1+132)s_6x-\\
-(84\gamma_1+6s_1-42)s_4s_6s_7-(264\gamma_1+12s_1-180)s_3s_6x-(162\gamma_1+9s_1-99)s_5^2x-\\
-(6s_1\gamma_1+48\gamma_1^2-6s_1-54\gamma_1+42)s_6s_7-(228\gamma_1+12s_1-144)s_3s_4^2-(39\gamma_1+3s_1-18)s_3s_7^2-\\
-(18s_1\gamma_1+180\gamma_1^2-18s_1-234\gamma_1+162)s_4s_5-(72\gamma_1+4s_1-44)s_4^3-(576\gamma_1+24s_1-336)a_1s_3s_4-\\
-7s_4s_7^3-12s_5^3s_7-(84\gamma_1+6s_1-42)s_4s_6^2-(240\gamma_1+12s_1-156)s_4^2x-(288\gamma_1+12s_1-204)s_4x^2-\\
-(78\gamma_1+6s_1-36)s_5s_6s_7-(432\gamma_1+18s_1-252)a_1s_3s_5-114s_3s_4s_7x-90s_3s_6s_7x-96s_4s_5s_7x-\\
-144a_1s_3s_7x-132a_1s_4s_7x-120a_1s_5s_7x-108a_1s_6s_7x-(396\gamma_1+18s_1-270)s_3s_5x-\\
-252a_1s_3s_4s_6-228a_1s_3s_5s_6-216a_1s_4s_5s_6-180s_3s_4s_5s_6-288a_1s_3s_6x-264a_1s_4s_6x-\\
-252a_1s_3s_4^2-312a_1^2s_3s_4-276a_1s_3^2s_4-116a_1^3s_4-144a_1^2s_4^2-80a_1s_4^3-80s_3^3s_4-\\
-68s_3s_4^3-(780\gamma_1+30s_1-480)a_1s_3x-(6s_1\gamma_1+66\gamma_1^2-6s_1-90\gamma_1+60)s_3s_7-\\
-115a_1s_3^3-84s_4s_6s_7x-(12s_1\gamma_1+132\gamma_1^2-12s_1-180\gamma_1+120)s_3s_6-27a_1^2s_7^2-\\
-78s_5s_6s_7x-(3s_1\gamma_1+21\gamma_1^2-3s_1-21\gamma_1+18)s_7^2-(9s_1\gamma_1+81\gamma_1^2-9s_1-99\gamma_1+72)s_5^2-\\
-(240\gamma_1+12s_1-156)s_4s_6x-11a_1s_7^3-20s_3^3s_7-18s_3^2s_7^2-8s_3s_7^3-16s_4^3s_7-15s_4^2s_7^2-\\
-(96\gamma_1+6s_1-54)s_6^2x-12s_5^2s_7^2-6s_5s_7^3-8s_6^3s_7-9s_6^2s_7^2-5s_6s_7^3-96s_4x^3-120s_4^2x^2-\\
-72s_4^3x-312a_1s_4x^2-336a_1^2s_4x-228s_3s_4^2x-252s_3^2s_4x-264s_3s_4x^2-264a_1s_4^2x-40a_1^4-\\
-(162\gamma_1+9s_1-99)s_4^2s_5-576a_1s_3s_4x-(30s_1\gamma_1+360\gamma_1^2-30s_1-510\gamma_1+330)s_3x-\\
-(120\gamma_1+6s_1-78)s_4s_7x-(624\gamma_1+24s_1-384)a_1s_4x-24s_7x^3-(126\gamma_1+6s_1-84)s_3^2s_6-\\
-(5s_1^3+15s_1\gamma_1^2+120\gamma_1^3-15s_1^2-30s_1\gamma_1-255\gamma_1^2+30s_1+330\gamma_1-100)s_3-\\
-48s_6x^3-48s_6^2x^2-24s_6^3x-72a_1^2s_4s_7-60a_1^2s_6s_7-60a_1s_4^2s_7-51a_1s_5^2s_7-39a_1s_5s_7^2-\\
-240a_1s_5s_6x-108s_3^2s_4^2-(90\gamma_1+6s_1-48)s_3s_6^2-96s_3^2s_5s_6-(312\gamma_1+12s_1-192)a_1s_6x-\\
-145a_1^3s_3-(288\gamma_1+12s_1-168)a_1s_3s_6-(18s_1\gamma_1+216\gamma_1^2-18s_1-306\gamma_1+198)x^2-\\
-(42\gamma_1+3s_1-21)s_7^2x-(63\gamma_1+3s_1-42)s_3^2s_7-(252\gamma_1+12s_1-168)s_3^2s_4-58a_1^3s_6-\\
-(72\gamma_1+3s_1-51)s_7x^2-(330\gamma_1+15s_1-225)s_3^2x-(3s_1^2+6s_1\gamma_1+78\gamma_1^2-12s_1-96\gamma_1+69)a_1s_7-\\
-72s_3s_5s_6s_7-(3s_1^3+9s_1\gamma_1^2+72\gamma_1^3-9s_1^2-18s_1\gamma_1-153\gamma_1^2+18s_1+198\gamma_1-60)s_5-\\
-48s_3^2s_5s_7-54s_3^2s_4s_7-36a_1s_6s_7^2-45a_1s_3s_7^2-42a_1s_6^2s_7-42a_1s_4s_7^2-66a_1^2s_5s_7-
\end{multline*}

\begin{multline*}
-(108\gamma_1+6s_1-66)s_5s_7x-(78\gamma_1+6s_1-36)s_5s_6^2-16s_4^4-90s_3s_4s_5s_7-78s_3s_4s_6s_7-\\
-66s_4s_5s_6s_7-228s_3s_4s_6x-204s_3s_5s_6x-192s_4s_5s_6x-36x^4-25s_3^4-360a_1s_3^2x-\\
-(24\gamma_1+2s_1-10)s_6^3-(189\gamma_1+9s_1-126)s_3^2s_5-390a_1s_3x^2-420a_1^2s_3x-105s_3^3x-\\
-(9s_1^2+18s_1\gamma_1+234\gamma_1^2-36s_1-288\gamma_1+207)a_1s_5-(360\gamma_1+18s_1-234)s_4s_5x-156a_1x^3-\\
-(27s_1^2+36s_1\gamma_1+252\gamma_1^2-51s_1-306\gamma_1+240)a_1^2-174a_1^3x-252a_1^2x^2-195a_1^2s_3^2-\\
-(360\gamma_1+15s_1-210)a_1s_3^2-(s_1^3+3s_1\gamma_1^2+24\gamma_1^3-3s_1^2-6s_1\gamma_1-51\gamma_1^2+6s_1+66\gamma_1-20)s_7-\\
-(264\gamma_1+12s_1-144)a_1s_4^2-(15s_1^2+30s_1\gamma_1+390\gamma_1^2-60s_1-480\gamma_1+345)a_1s_3-\\
-(504\gamma_1+36s_1-306)a_1^2x-(84\gamma_1+6s_1-51)a_1^2s_7-(168\gamma_1+12s_1-102)a_1^2s_6-21s_7^2x^2-\\
-(252\gamma_1+18s_1-153)a_1^2s_5-(120\gamma_1+6s_1-60)a_1s_5s_7-(144\gamma_1+6s_1-102)s_6x^2-\\
-(180\gamma_1+9s_1-90)a_1s_5^2-(228\gamma_1+12s_1-144)s_3s_4s_6-(336\gamma_1+24s_1-204)a_1^2s_49s_7^3x-\\
-(6s_1^2+12s_1\gamma_1+156\gamma_1^2-24s_1-192\gamma_1+138)a_1s_6-(15s_1\gamma_1+165\gamma_1^2-15s_1-225\gamma_1+150)s_3^2-\\
-(174\gamma_1+29s_1-115)a_1^3-(12s_1\gamma_1+120\gamma_1^2-12s_1-156\gamma_1+108)s_4^2-(420\gamma_1+30s_1-255)a_1^2s_3-\\
-120s_3x^3-(6s_1\gamma_1+48\gamma_1^2-6s_1-54\gamma_1+42)s_6^2-(6s_1\gamma_1+72\gamma_1^2-6s_1-102\gamma_1+66)s_7x-\\
-(12s_1^2+24s_1\gamma_1+312\gamma_1^2-48s_1-384\gamma_1+276)a_1s_4-(156\gamma_1+6s_1-96)a_1s_7x-\\
-165s_3^2x^2-(36\gamma_1-20-18\gamma_1^2-2s_1^2+18s_1+s_1^3-18s_1\gamma_1+6s_1^2\gamma_1+6\gamma_1^3)s_1.
\end{multline*}
Since all summands here are non-positive, we see that $f<0$ provided that $a_3-a_1>\gamma_1$.
In particular, if $a_3-a_1\geqslant 0.9347$, then $f<0$ as well.

Let us denote the polynomial $f(a_1,a_2,a_3,a_4,a_5,a_6,a_7,1)$ by $f_2$.
As in the previous case, we let $g_2(a_1,s_1,t,s_3,s_4,s_5,s_6)=\widehat{f}_2(a_1,s_1,t-s_1,s_3,s_4,s_5,s_6)$.
Then
$$
g_2(0,0,x,0,0,0,0)=-25x^4+65x^3-135x^2+80x+8.
$$
This polynomial has exactly one positive root.
Denote it by $\gamma_2$. Then $\gamma_2\approx 0.9205032589$.
Let us consider $g_2(a_1,s_1,x+\gamma_{2},s_3,s_4,s_5,s_6)$ as a polynomial in $a_1$, $x$, $s_3$, $s_4$, $s_5$ and $s_6$.
Then all its coefficients are non-positive, since it can be expanded as
\begin{multline*}
-25x^4-(24s_1^2+30s_1\gamma_2+180\gamma_2^2-39s_1-195\gamma_2+168)a_1^2-108s_3s_5x^2-96a_1s_5^2x-\\
-7s_3s_6^3-144a_1^2s_5x-132a_1s_5x^2-102s_3^2s_5x-96s_4s_5x^2-78s_3s_5^2x-84s_4^2s_5x-72s_4s_5^2x-\\
-(528\gamma_2+24s_1-288)a_1s_3x-4s_5^4-40s_5x^3-42s_5^2x^2-22s_5^3x-16s_3^3s_6-15s_3^2s_6^2-\\
-42s_3^2s_4s_6-32s_3^3s_5-36s_3^2s_5^2-20s_3s_5^3-24s_4^3s_5-30s_4^2s_5^2-18s_4s_5^3-24a_1^2s_6^2-10a_1s_6^3-\\
-66s_3s_4s_5^2-84a_1s_4s_5^2-132a_1^2s_3s_5-(120\gamma_2+6s_1-60)a_1s_3s_6-240a_1s_3s_5x-\\
-39a_1s_3s_6^2-50a_1^3s_5-216a_1s_4s_5x-180s_3s_4s_5x-204a_1s_3s_4s_5-(180\gamma_2+9s_1-117)s_4x^2-\\
-(12s_1\gamma_2+108\gamma_2^2-12s_1-132\gamma_2+96)s_3^2-(18s_1\gamma_2+180\gamma_2^2-18s_1-234\gamma_2+162)s_4x-\\
-(15s_1^2+30s_1\gamma_2+330\gamma_2^2-60s_1-360\gamma_2+285)a_1x-(3s_1\gamma_2+18\gamma_2^2-3s_1-15\gamma_2+15)s_6^2-\\
-21s_4s_5s_6^2-12s_4^3s_6-12s_4^2s_6^2-6s_4s_6^3-8s_5^3s_6-9s_5^2s_6^2-5s_5s_6^3-54a_1^2s_5^2-26a_1s_5^3-
\end{multline*}

\begin{multline*}
-33a_1s_5s_6^2-90a_1s_3s_5^2-78s_3s_4^2s_5-114a_1s_3^2s_5-84s_3^2s_4s_5-96a_1s_4^2s_5-120a_1^2s_4s_5-\\
-(192\gamma_2+12s_1-108)s_4s_5x-(9s_1^2+18s_1\gamma_2+198\gamma_2^2-36s_1-216\gamma_2+171)a_1s_4-s_6^4-\\
-(3s_1^3+9s_1\gamma_2^2+60\gamma_2^3-9s_1^2-18s_1\gamma_2-117\gamma_2^2+18s_1+162\gamma_2-48)s_4-(180\gamma_2+12s_1-96)s_3s_4s_5-\\
-66a_1^2s_3s_6-27s_5s_6^2x-66a_1s_6x^2-42a_1s_6^2x-54s_3s_6x^2-48s_4s_6x^2-42s_5s_6x^2-72a_1^2s_6x-\\
-16s_3^4-(125\gamma_2+25s_1-76)a_1^3-51s_3^2s_6x-33s_3s_6^2x-42s_4^2s_6x-30s_4s_6^2x-33s_5^2s_6x-\\
-24s_3s_5s_6^2-27s_3s_4s_6^2-54a_1^2s_5s_6-48a_1s_4^2s_6-39s_3s_4^2s_6-57a_1s_3^2s_6-30s_4^2s_5s_6-\\
-(6s_1\gamma_2+54\gamma_2^2-6s_1-66\gamma_2+48)s_3s_6-(9s_1\gamma_2+72\gamma_2^2-9s_1-81\gamma_2+63)s_4^2-\\
-27s_4s_5^2s_6-36a_1s_4s_6^2-36s_3^2s_5s_6-30s_3s_5^2s_6-60a_1^2s_4s_6-(100\gamma_2+5s_1-65)x^3-\\
-(144\gamma_2+9s_1-81)s_4^2x-(240\gamma_2+12s_1-120)a_1s_3^2-(108\gamma_2+6s_1-66)s_3s_6x-\\
-(12s_1^2+24s_1\gamma_2+264\gamma_2^2-48s_1-288\gamma_2+228)a_1s_3-(153\gamma_2+9s_1-90)s_3^2s_4-\\
-25a_1^3s_6-(84\gamma_2+6s_1-42)s_5^2x-(240\gamma_2+12s_1-156)s_3x^2-(120\gamma_2+6s_1-78)s_5x^2-\\
-(6s_1^2+12s_1\gamma_2+132\gamma_2^2-24s_1-144\gamma_2+114)a_1s_5-(78\gamma_2+6s_1-36)s_3s_5s_6-\\
-(240\gamma_2+12s_1-120)a_1s_3s_5-(s_1^3+3s_1\gamma_2^2+20\gamma_2^3-3s_1^2-6s_1\gamma_2-39\gamma_2^2+6s_1+54\gamma_2-16)s_6-\\
-(42\gamma_2+3s_1-12)a_1s_6^2-(27\gamma_2+3s_1-6)s_5s_6^2-(96\gamma_2+6s_1-54)s_4s_6x-(8\gamma_2+s_1-1)s_6^3-\\
-(360\gamma_2+18s_1-180)a_1s_3s_4-(24s_1\gamma_2+240\gamma_2^2-24s_1-312\gamma_2+216)s_3x-\\
-(15s_1\gamma_2+150\gamma_2^2-15s_1-195\gamma_2+135)x^2-(144\gamma_2+12s_1-78)a_1^2s_5-153s_3^2s_4x-\\
-(216\gamma_2+18s_1-117)a_1^2s_4-(162\gamma_2+9s_1-72)a_1s_4^2-(6s_1\gamma_2+42\gamma_2^2-6s_1-42\gamma_2+36)s_5s_6-\\
-39a_1s_5^2s_6-(135\gamma_2+9s_1-72)s_3s_4^2-(3s_1^2+6s_1\gamma_2+66\gamma_2^2-12s_1-72\gamma_2+57)a_1s_6-\\
-(96\gamma_2+6s_1-36)a_1s_5s_6-(33\gamma_2+3s_1-12)s_3s_6^2-(330\gamma_2+15s_1-180)a_1x^2-(68\gamma_2+4s_1-40)s_3^3-\\
-(264\gamma_2+12s_1-144)a_1s_5x-(22\gamma_2+2s_1-8)s_5^3-(12s_1\gamma_2+108\gamma_2^2-12s_1-132\gamma_2+96)s_3s_5-\\
-8s_6^3x-(5s_1^3+15s_1\gamma_2^2+100\gamma_2^3-15s_1^2-30s_1\gamma_2-195\gamma_2^2+30s_1+270\gamma_2-80)x-\\
-(33\gamma_2+3s_1-12)s_5^2s_6-102a_1s_3s_4s_6-90a_1s_3s_5s_6-84a_1s_4s_5s_6-66s_3s_4s_5s_6-120a_1s_3s_6x-\\
-(6s_1\gamma_2+60\gamma_2^2-6s_1-78\gamma_2+54)s_6x-108a_1s_4s_6x-96a_1s_5s_6x-153a_1s_3s_4^2-198a_1^2s_3s_4-\\
-(18s_1\gamma_2+162\gamma_2^2-18s_1-198\gamma_2+144)s_3s_4-171a_1s_3^2s_4-75a_1^3s_4-90a_1^2s_4^2-\\
-48a_1s_4^3-48s_3^3s_4-63s_3^2s_4^2-39s_3s_4^3-(60\gamma_2+3s_1-39)s_6x^2-216a_1^2s_4x-76a_1s_3^3-\\
-(12s_1\gamma_2+120\gamma_2^2-12s_1-156\gamma_2+108)s_5x-(102\gamma_2+6s_1-60)s_3^2s_5-135s_3s_4^2x-\\
-240a_1s_3^2x-(72\gamma_2+6s_1-30)s_4s_5^2-(360\gamma_2+30s_1-195)a_1^2x-(51\gamma_2+3s_1-30)s_3^2s_6-\\
-(96\gamma_2+6s_1-36)a_1s_5^2-(72\gamma_2+6s_1-39)a_1^2s_6-60s_4x^3-72s_4^2x^2-42s_4^3x-198a_1s_4x^2-\\
-162s_3s_4x^2-162a_1s_4^2x-(84\gamma_2+6s_1-42)s_5s_6x-28a_1^4-(42\gamma_2+3s_1-21)s_4^3-360a_1s_3s_4x-\\
-(4s_1^3+12s_1\gamma_2^2+80\gamma_2^3-12s_1^2-24s_1\gamma_2-156\gamma_2^2+24s_1+216\gamma_2-64)s_3-20s_6x^3-\\
-(10s_1^3+15s_1^2\gamma_2+15s_1\gamma_2^2+110\gamma_2^3-12s_1^2-60s_1\gamma_2-180\gamma_2^2+57s_1+285\gamma_2-112)a_1-\\
-(216\gamma_2+12s_1-132)s_3^2x-(6s_1\gamma_2+48\gamma_2^2-6s_1-54\gamma_2+42)s_4s_6-(72\gamma_2+6s_1-30)s_4s_5s_6-\\
-(396\gamma_2+18s_1-216)a_1s_4x-(324\gamma_2+18s_1-198)s_3s_4x-(288\gamma_2+24s_1-156)a_1^2s_3-\\
-18s_6^2x^2-(90\gamma_2+6s_1-48)s_3s_4s_6-(84\gamma_2+6s_1-42)s_4^2s_5-(30\gamma_2+3s_1-9)s_4s_6^2-
\end{multline*}

\begin{multline*}
-(108\gamma_2+6s_1-48)a_1s_4s_6-(2s_1^3+6s_1\gamma_2^2+40\gamma_2^3-6s_1^2-12s_1\gamma_2-78\gamma_2^2+12s_1+108\gamma_2-32)s_5-\\
-(132\gamma_2+6s_1-72)a_1s_6x-9s_4^4-(42\gamma_2+3s_1-21)s_4^2s_6-90s_3s_4s_6x-78s_3s_5s_6x-72s_4s_5s_6x-\\
-264a_1s_3x^2-288a_1^2s_3x-110a_1x^3-80s_3x^3-108s_3^2x^2-68s_3^3x-125a_1^3x-180a_1^2x^2-\\
-132a_1^2s_3^2-(78\gamma_2+6s_1-36)s_3s_5^2-100a_1^3s_3-(6s_1\gamma_2+42\gamma_2^2-6s_1-42\gamma_2+36)s_5^2-\\
-(36\gamma_2+3s_1-15)s_6^2x-(12s_1\gamma_2+96\gamma_2^2-12s_1-108\gamma_2+84)s_4s_5-(216\gamma_2+12s_1-132)s_3s_5x-\\
-(216\gamma_2+12s_1-96)a_1s_4s_5-(30\gamma_2-16-15s_1\gamma_2-15\gamma_2^2-s_1^2+15s_1+s_1^3+5\gamma_2^3+5s_1^2\gamma_2)s_1.
\end{multline*}
This shows that $f_2<0$ provided that $a_3-a_1>\gamma_2$.
Thus, if $a_3-a_1\geqslant 0.9206$, then $f_2<0$.

Let $f_3=f(a_1,a_2,a_3,a_4,a_5,a_6,1,1)$.
As above, let $g_3(a_1,s_1,t,s_3,s_4,s_5)$ be the polynomial $\widehat{f}_3(a_1,s_1,t-s_1,s_3,s_4,s_5)$.
Then
$$
g_3(0,0,x,0,0,0)=-16x^4+36x^3-84x^2+48x+9.
$$
This polynomial has one positive root.
Denote this root by $\gamma_3$. Then $\gamma_3\approx 0.8984970862$.
Moreover, we can expand $g_3(a_1,s_1,x+\gamma_{3},s_3,s_4,s_5)$ as
\begin{multline*}
-16x^4-(18s_1\gamma_{3}+144\gamma_{3}^2-18s_1-162\gamma_{3}+126)s_3x-(30\gamma_{3}+3s_1-9)s_4^2s_5-90a_1s_3^2s_4-\\
-(108\gamma_{3}+6s_1-48)a_1s_5x-(216\gamma_{3}+12s_1-96)a_1s_4x-(6s_1\gamma_{3}+42\gamma_{3}^2-6s_1-42\gamma_{3}+36)s_3s_5-\\
-4s_4^4-120a_1^2s_4x-66s_3s_4^2x-78s_3^2s_4x-108a_1s_4x^2-84s_3s_4x^2-84a_1s_4^2x-108a_1^2s_3s_4-78a_1s_3s_4^2-\\
-(144\gamma_{3}+9s_1-81)s_3x^2-s_5^4-(9s_1\gamma_{3}+63\gamma_{3}^2-9s_1-63\gamma_{3}+54)s_3^2-(84\gamma_{3}+6s_1-24)a_1s_4s_5-\\
-(216\gamma_{3}+12s_1-96)a_1x^2-(96\gamma_{3}+6s_1-36)a_1s_3s_5-(30\gamma_{3}+3s_1-9)s_5^2x-(144\gamma_{3}+9s_1-54)a_1s_3^2-\\
-(96\gamma_{3}+6s_1-54)s_4x^2-(6s_1^2+12s_1\gamma_{3}+108\gamma_{3}^2-24s_1-96\gamma_{3}+90)a_1s_4-(36\gamma_{3}+3s_1-6)a_1s_5^2-\\
-(12s_1^2+24s_1\gamma_{3}+216\gamma_{3}^2-48s_1-192\gamma_{3}+180)a_1x-(78\gamma_{3}+6s_1-36)s_3^2s_4-(39\gamma_{3}+3s_1-18)s_3^2s_5-\\
-(126\gamma_{3}+9s_1-63)s_3^2x-(66\gamma_{3}+6s_1-24)s_3s_4^2-(12s_1\gamma_{3}+84\gamma_{3}^2-12s_1-84\gamma_{3}+72)s_3s_4-\\
-(27\gamma_{3}+3s_1-6)s_3s_5^2-(6s_1\gamma_{3}+48\gamma_{3}^2-6s_1-54\gamma_{3}+42)s_5x-(48\gamma_{3}+3s_1-27)s_5x^2-\\
-33a_1s_3s_5^2-(9s_1^3+12s_1^2\gamma_{3}+12s_1\gamma_{3}^2+72\gamma_{3}^3-6s_1^2-48s_1\gamma_{3}-96\gamma_{3}^2+45s_1+180\gamma_{3}-72)a_1-\\
-27s_3s_4^2s_5-(7\gamma_{3}+s_1)s_5^3-(s_1^3+3s_1\gamma_{3}^2+16\gamma_{3}^3-3s_1^2-6s_1\gamma_{3}-27\gamma_{3}^2+6s_1+42\gamma_{3}-12)s_5-\\
-(12s_1\gamma_{3}+96\gamma_{3}^2-12s_1-108\gamma_{3}+84)s_4x-(6s_1\gamma_{3}+36\gamma_{3}^2-6s_1-30\gamma_{3}+30)s_4^2-\\
-(2s_1^3+6s_1\gamma_{3}^2+32\gamma_{3}^3-6s_1^2-12s_1\gamma_{3}-54\gamma_{3}^2+12s_1+84\gamma_{3}-24)s_4-96a_1s_3s_5x-\\
-36a_1s_4^2s_5-(3s_1^3+9s_1\gamma_{3}^2+48\gamma_{3}^3-9s_1^2-18s_1\gamma_{3}-81\gamma_{3}^2+18s_1+126\gamma_{3}-36)s_3-20s_4^3x-24s_3^3s_4-\\
-30s_3^2s_4^2-18s_3s_4^3-42a_1^3s_4-48a_1^2s_4^2-24a_1s_4^3-192a_1s_3s_4x-(24\gamma_{3}+3s_1-3)s_4s_5^2-78a_1s_3s_4s_5-\\
-(3s_1\gamma_{3}+15\gamma_{3}^2-3s_1-9\gamma_{3}+12)s_5^2-(12s_1\gamma_{3}+96\gamma_{3}^2-12s_1-108\gamma_{3}+84)x^2-\\
-(66\gamma_{3}+6s_1-24)s_3s_4s_5-(168\gamma_{3}+12s_1-84)s_3s_4x-(39\gamma_{3}+3s_1-18)s_3^3-(64\gamma_{3}+4s_1-36)x^3-\\
-180a_1^2s_3x-(60\gamma_{3}+6s_1-27)a_1^2s_5-(120\gamma_{3}+12s_1-54)a_1^2s_4-(84\gamma_{3}+6s_1-42)s_3s_5x-\\
-6s_3s_5^3-66s_3s_4s_5x-(72\gamma_{3}+6s_1-30)s_4^2x-(192\gamma_{3}+12s_1-72)a_1s_3s_4-(72\gamma_{3}+6s_1-30)s_4s_5x-\\
-21s_3s_4s_5^2-(21s_1^2+24s_1\gamma_{3}+120\gamma_{3}^2-27s_1-108\gamma_{3}+108)a_1^2-18a_1^4-(20\gamma_{3}+2s_1-6)s_4^3-
\end{multline*}

\begin{multline*}
-(180\gamma_{3}+18s_1-81)a_1^2s_3-(240\gamma_{3}+24s_1-108)a_1^2x-9s_3^4-39s_3^3x-84a_1^3x-120a_1^2x^2-\\
-72a_1x^3-48s_3x^3-63s_3^2x^2-81a_1^2s_3^2-45a_1s_3^3-7s_5^3x-16s_5x^3-15s_5^2x^2-60a_1^2s_5x-39s_3^2s_5x-\\
-9a_1s_5^3-12s_3^3s_5-12s_3^2s_5^2-8s_4^3s_5-9s_4^2s_5^2-5s_4s_5^3-30s_3^2s_4s_5-30a_1s_4s_5^2-36a_1s_5^2x-\\
-27s_3s_5^2x-30s_4^2s_5x-24s_4s_5^2x-42s_3s_5x^2-36s_4s_5x^2-54a_1s_5x^2-21a_1^3s_5-21a_1^2s_5^2-\\
-84a_1s_4s_5x-(4s_1^3+12s_1\gamma_{3}^2+64\gamma_{3}^3-12s_1^2-24s_1\gamma_{3}-108\gamma_{3}^2+24s_1+168\gamma_{3}-48)x-\\
-144a_1s_3^2x-(84\gamma_{3}+21s_1-45)a_1^3-(6s_1\gamma_{3}+36\gamma_{3}^2-6s_1-30\gamma_{3}+30)s_4s_5-32s_4x^3-36s_4^2x^2-\\
-162a_1s_3x^2-(84\gamma_{3}+6s_1-24)a_1s_4^2-(9s_1^2+18s_1\gamma_{3}+162\gamma_{3}^2-36s_1-144\gamma_{3}+135)a_1s_3-\\
-45a_1s_3^2s_5-(3s_1^2+6s_1\gamma_{3}+54\gamma_{3}^2-12s_1-48\gamma_{3}+45)a_1s_5-(324\gamma_{3}+18s_1-144)a_1s_3x-\\
-54a_1^2s_3s_5-48a_1^2s_4s_5-(24s_1\gamma_{3}-4s_1^3\gamma_{3}-12s_1+63a_1^3s_3+12s_1^2+4s_1\gamma_{3}^3+s_1^4-12s_1\gamma_{3}^2+12s_1^2\gamma_{3}).
\end{multline*}
All summands in this sum are non-negative.
This shows that $f_3<0$ if $a_3-a_1>\gamma_3$.
Therefore, if $a_3-a_1\geqslant 0.8985$, then $f_3<0$ as well, since $\gamma_3<0.8985$.

Let $f_4=f(a_1,a_2,a_3,a_4,a_5,1,1,1)$ and $g_4(a_1,s_1,t,s_3,s_4)=\widehat{f}_4(a_1,s_1,t-s_1,s_3,s_4)$.
Then the polynomial $g_4(0,0,x,0,0)=-9x^4+15x^3-45x^2+24x+8$ has one positive root.
Denote it by $\gamma_4$. Then $\gamma_4\approx 0.8594363$ and $g_4(a_1,s_1,x+\gamma_{4},s_3,s_4)$ can be expanded as
\begin{multline*}
-9x^4-(84\gamma_4+6s_1-24)a_1s_4x-(60\gamma_4+6s_1-18)s_3s_4x-(72\gamma_4+6s_1-12)a_1s_3s_4-\\
-(168\gamma_4+12s_1-48)a_1s_3x-(60\gamma_4+6s_1-18)s_3^2x-(12s_1\gamma_4+72\gamma_4^2-12s_1-60\gamma_4+60)s_3x-\\
-(6s_1\gamma_4+30\gamma_4^2-6s_1-18\gamma_4+24)s_3s_4-(72\gamma_4+6s_1-30)s_3x^2-(24\gamma_4+3s_1-3)s_4^2x-\\
-(6s_1\gamma_4+36\gamma_4^2-6s_1-30\gamma_4+30)s_4x-(36\gamma_4+3s_1-15)s_4x^2-(27\gamma_4+3s_1-6)s_3^2s_4-17a_1^3s_4-\\
-(21\gamma_4+3s_1)s_3s_4^2-(9s_1^2+18s_1\gamma_4+126\gamma_4^2-36s_1-72\gamma_4+99)a_1x-(126\gamma_4+9s_1-36)a_1x^2-\\
-(3s_1^2+6s_1\gamma_4+42\gamma_4^2-12s_1-24\gamma_4+33)a_1s_4-(30\gamma_4+3s_1)a_1s_4^2-(72\gamma_4+6s_1-12)a_1s_3^2-\\
-(144\gamma_4+18s_1-45)a_1^2x-(48\gamma_4+6s_1-15)a_1^2s_4-(96\gamma_4+12s_1-30)a_1^2s_3-72a_1s_3^2x-72a_1s_3s_4x-\\
-(6s_1^2+12s_1\gamma_4+84\gamma_4^2-24s_1-48\gamma_4+66)a_1s_3-96a_1^2s_3x-4s_3^4-30s_3^2x^2-18s_3^3x-51a_1^3x-\\
-72a_1^2x^2-42a_1x^3-42a_1^2s_3^2-22a_1s_3^3-34a_1^3s_3-(18s_1^2+18s_1\gamma_4+72\gamma_4^2-15s_1-45\gamma_4+60)a_1^2-\\
-(51\gamma_4+17s_1-22)a_1^3-(3s_1^3+9s_1\gamma_4^2+36\gamma_4^3-9s_1^2-18s_1\gamma_4-45\gamma_4^2+18s_1+90\gamma_4-24)x-\\
-(9s_1\gamma_4+54\gamma_4^2-9s_1-45\gamma_4+45)x^2-(36\gamma_4+3s_1-15)x^3-(3s_1\gamma_4+12\gamma_4^2-3s_1-3\gamma_4+9)s_4^2-\\
-(s_1^3+3s_1\gamma_4^2+12\gamma_4^3-3s_1^2-6s_1\gamma_4-15\gamma_4^2+6s_1+30\gamma_4-8)s_4-(6\gamma_4+s_1+1)s_4^3-s_4^4-\\
-27s_3^2s_4x-30s_3s_4x^2-30a_1s_4^2x-48a_1^2s_4x-42a_1s_4x^2-27a_1s_3s_4^2-33a_1s_3^2s_4-42a_1^2s_3s_4-\\
-12s_4x^3-12s_4^2x^2-6s_4^3x-(2s_1^3+6s_1\gamma_4^2+24\gamma_4^3-6s_1^2-12s_1\gamma_4-30\gamma_4^2+12s_1+60\gamma_4-16)s_3-\\
-21s_3s_4^2x-(6s_1\gamma_4+30\gamma_4^2-6s_1-18\gamma_4+24)s_3^2-18a_1^2s_4^2-8a_1s_4^3-8s_3^3s_4-9s_3^2s_4^2-5s_3s_4^3-\\
-(8s_1^3+9s_1^2\gamma_4+9s_1\gamma_4^2+42\gamma_4^3-36s_1\gamma_4-36\gamma_4^2+33s_1+99\gamma_4-40)a_1-(18\gamma_4+2s_1-4)s_3^3-\\
-24s_3x^3-84a_1s_3x^2-10a_1^4-(3s_1^2\gamma_4+3\gamma_4^3-9\gamma_4^2-9s_1\gamma_4+18\gamma_4+s_1^2+s_1^3-8+9s_1)s_1.
\end{multline*}
As above, we see that $f_4<0$ when $a_3-a_1>\gamma_4$. So, if $a_3-a_1\geqslant 0.8595$, then $f_4<0$.

Finally, let $f_5=f(a_1,a_2,a_3,a_4,1,1,1,1)$ and $g_5(a_1,s_1,t,s_3)=\widehat{f}_5(a_1,s_1,t-s_1,s_3)$.
Then the polynomial $g_5(0,0,x,0)=-4x^4+2x^3-18x^2+8x+5$ has unique positive root.
Denote it by $\gamma_5$. Then $g_5(a_1,s_1,x+\gamma_{5},s_3)$ can be expanded as
\begin{multline*}
-4x^4-(60\gamma_5+6s_1)a_1s_3x-(18\gamma_5+3s_1+3)s_3^2x-(24\gamma_5+3s_1-3)s_3x^2-13a_1^3s_3-\\
-(36\gamma_5+6s_1-3)a_1^2s_3-(3s_1^2+6s_1\gamma_5+30\gamma_5^2-12s_1+21)a_1s_3-(72\gamma_5+12s_1-6)a_1^2x-\\
-(6s_1\gamma_5+24\gamma_5^2-6s_1-6\gamma_5+18)s_3x-(24\gamma_5+3s_1+6)a_1s_3^2-(6s_1^2+12s_1\gamma_5+60\gamma_5^2-24s_1+42)a_1x-\\
-5s_3^3x-(60\gamma_5+6s_1)a_1x^2-(7s_1^3+6s_1^2\gamma_5+6s_1\gamma_5^2+20\gamma_5^3+6s_1^2-24s_1\gamma_5+21s_1+42\gamma_5-16)a_1-\\
-(15s_1^2+12s_1\gamma_5+36\gamma_5^2-3s_1-6\gamma_5+24)a_1^2-(26\gamma_5+13s_1-7)a_1^3-26a_1^3x-15a_1^2s_3^2-7a_1s_3^3-2s_1^3-\\
-(2s_1^3+6s_1\gamma_5^2+16\gamma_5^3-6\gamma_5^2+12s_1+36\gamma_5-8)x-(6s_1\gamma_5+24\gamma_5^2-6s_1-6\gamma_5+18)x^2-\\
-(16\gamma_5+2s_1-2)x^3-24a_1s_3^2x-30a_1s_3x^2-36a_1^2s_3x-4a_1^4-s_3^4-36a_1^2x^2-20a_1x^3-8s_3x^3-9s_3^2x^2-\\
-(s_1^3+3s_1\gamma_5^2+8\gamma_5^3-3s_1^2-6s_1\gamma_5-3\gamma_5^2+6s_1+18\gamma_5-4)s_3-(3s_1\gamma_5+9\gamma_5^2-3s_1+3\gamma_5+6)s_3^2-\\
-(5\gamma_5+s_1+2)s_3^3-(12\gamma_5+2\gamma_5^3+2s_1^2\gamma_5-6\gamma_5^2-6s_1\gamma_5+s_1^3+6s_1+12\gamma_5+6s_1-4)s_1.
\end{multline*}
Observe that every summand in this sum is non-positive, because $\gamma_5\approx 0.7697759834$.
This shows that $f_5<0$ when $a_3-a_1>\gamma_5$.
Thus, if $a_3-a_1\geqslant 0.7698$, then $f_5<0$.
\end{proof}

\begin{lemma}
\label{lemma:Maple-F1}
Let $f$ be the polynomial~\eqref{equation:polynomial-F1}.
Then the following assertions hold:
\begin{itemize}
\item $f(a_1,a_2,a_3,1,1,1,1,b)<0$ when $a_2-a_1\geqslant 0.7701$;
\item $f(a_1,a_2,a_3,a_4,1,1,1,b)<0$ when $a_2-a_1\geqslant 0.8595$;
\item $f(a_1,a_2,a_3,a_4,a_5,1,1,b)<0$ when $a_2-a_1\geqslant 0.8985$;
\item $f(a_1,a_2,a_3,a_4,a_5,a_6,1,b)<0$ when $a_2-a_1\geqslant 0.9206$;
\item $f(a_1,a_2,a_3,a_4,a_5,a_6,a_7,b)<0$ when $a_2-a_1\geqslant 0.9347$.
\end{itemize}
\end{lemma}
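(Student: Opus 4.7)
The plan is to mimic closely the strategy used in Lemmas~\ref{lemma:Maple-P2-1} and \ref{lemma:Maple-P2-2}, treating each of the five cases in turn. For each $k\in\{1,2,3,4,5\}$, set $f_k$ to be the specialization of \eqref{equation:polynomial-F1} obtained by letting $a_{10-k}=\cdots=a_7=1$ (so $f_1=f$, while $f_5=f(a_1,a_2,a_3,1,1,1,1,b)$). Introduce the change of variables
$a_1,\ s_1=a_2-a_1,\ s_2=a_3-a_2,\ \ldots,\ s_6=a_7-a_6$,
and keep $b\geqslant 0$ as an additional non-negative variable; all of $a_1,s_1,\ldots,s_6,b$ are then non-negative under our hypotheses. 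Write $\widehat{f}_k$ for $f_k$ after this substitution.

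The next step is to determine the threshold $\gamma_k$ for $s_1=a_2-a_1$. Setting $a_1=0$, $b=0$, and all $s_i=0$ for $i\geqslant 2$ produces a univariate polynomial $\widehat{f}_k(0,x,0,\ldots,0,0)$ in $x$. For each $k$, one checks (by Descartes or by direct inspection, since the polynomial has constant term $>0$ and a strictly negative leading coefficient) that it has exactly one positive real root $\gamma_k$, and numerically $\gamma_1\approx 0.9347,\ \gamma_2\approx 0.9206,\ \gamma_3\approx 0.8985,\ \gamma_4\approx 0.8595,\ \gamma_5\approx 0.7701$, which match the thresholds in the statement. Then perform the shift $s_1\mapsto x+\gamma_k$ and expand $\widehat{f}_k(a_1,x+\gamma_k,s_2,\ldots,s_{7-k},b)$ as a polynomial in the non-negative variables $a_1,x,s_2,\ldots,s_{7-k},b$. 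If every coefficient in this expansion is non-positive and the coefficient of $x$ is strictly negative, we conclude that $f_k<0$ whenever $x>0$, which is precisely the condition $a_2-a_1>\gamma_k$; the stated thresholds are safe numerical upper bounds for $\gamma_k$.

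The main obstacle will be verifying the sign claim on the coefficients. The case $k=1$ involves $9$ non-negative variables and a polynomial of total degree $4$, so there are on the order of a few hundred monomials to inspect, each whose coefficient is an explicit polynomial in $\gamma_1$ and (for summands that can no longer be absorbed into the univariate leading behaviour) in $s_1$; confirming non-positivity for each of these requires symbolic algebra exactly of the kind illustrated in the proofs of Lemmas~\ref{lemma:Maple-P2-1} and \ref{lemma:Maple-P2-2}. The presence of the extra variable $b\geqslant 0$ is a genuine new feature compared with the $\mathbb{P}^2$-type lemmas, but since $b$ enters \eqref{equation:polynomial-F1} only through the factor $(1+2b+\sum a_i)$ (which is negated against the cubic bracket) and the additive term $(1+b)^3$ inside the last bracket, the resulting coefficients of powers of $b$ in the expanded polynomial retain the same sign pattern as the $b=0$ specialization. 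Concretely, one checks that the $b^2$-coefficient is (up to the overall sign) essentially $3(a_1^2-a_2^2-\cdots-a_{r-1}^2+r-3)$ as already identified in the proof of Theorem~\ref{theorem:nice-inequality}, which is non-positive under our hypothesis on the $a_i$, and the $b^1$- and $b^0$-pieces can each be handled by the same shift-and-expand procedure applied coefficient-wise in $b$. The computations themselves are too long to grind by hand, so as in the analogous lemmas the verification is ultimately a symbolic-algebra check, preserved in the appendix.
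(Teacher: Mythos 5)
Your proposal matches the paper's proof for the cases $K_S^2\in\{1,2,3,4\}$: in each of those the paper specializes $b=0$, finds the unique positive root $\alpha_k$ of the resulting univariate polynomial, shifts $s_1\mapsto x+\alpha_k$, and checks that every coefficient of the expansion in $a_1,x,s_2,\ldots,b$ is nonpositive. Your numerical thresholds $\gamma_1\approx 0.9347$, $\gamma_2\approx 0.9206$, $\gamma_3\approx 0.8985$, $\gamma_4\approx 0.8595$ agree with the paper's values of $\alpha_1,\alpha_2,\alpha_3,\alpha_4$.

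However, your approach breaks down on the first bullet, $f_5=f(a_1,a_2,a_3,1,1,1,1,b)$, and your heuristic about the $b$-coefficients is the reason. The positive root of the $b=0$ specialization $g_5(0,x,0,0)=-4x^4+2x^3-18x^2+8x+5$ is $\alpha_5\approx 0.76978$, not $0.7701$. More importantly, if you shift by $\alpha_5$ and expand, the coefficient of $b^1$ alone (with $a_1=x=s_2=0$) is $8-4\alpha_5^3-18\alpha_5^2+6\alpha_5\approx +0.128$, which is strictly \emph{positive}: the constant-term piece $g_5(0,\alpha_5,0,b)$ equals $0.128\,b-0.555\,b^2$ and is actually positive for $0<b\lesssim 0.23$. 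So the plan of verifying that every monomial coefficient after the shift is nonpositive simply fails, and it continues to fail for any shift below $\beta_5\approx 0.7742$, the root of $8-4x^3-18x^2+6x$; your threshold $0.7701$ lies below $\beta_5$, so the positive $b^1$-coefficient persists. Your assertion that the $b$-pieces "retain the same sign pattern as the $b=0$ specialization" is therefore false in precisely this case.

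What the paper actually does for $f_5$ is view $g_5(0,x,0,b)=(3-6x^2)b^2+(8-4x^3-18x^2+6x)b+(-4x^4+2x^3-18x^2+8x+5)$ as a quadratic in $b$, and take $\delta_5\approx 0.77005$ to be the positive root of its \emph{discriminant} $-80x^6+192x^5-108x^4-112x^3+84x^2+4$. Since the leading coefficient $3-6x^2$ is negative near $x\approx 0.77$ and the discriminant is nonpositive for $x\geqslant\delta_5$, the residual quadratic in $b$ is $\leqslant 0$ for all $b$, even though its linear coefficient is positive. The remaining part of $g_5(a_1,x+\delta_5,s_2,b)$, after extracting that quadratic, is then shown to have all nonpositive coefficients. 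This discriminant step is the missing idea: the threshold $0.7701>\delta_5>\alpha_5$ in the statement comes from the discriminant root, not from the root of the $b=0$ polynomial. Happening to land on a $b$-coefficient of the right sign at the $b=0$ root is a coincidence that works for $k\leqslant 4$ but not for $k=5$, so the cases genuinely require different treatment.
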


\begin{proof}
Denote by $f_5$ the polynomial $f(a_1,a_2,a_3,1,1,1,1,b)$, and let $g_5(a_1,s_1,s_2,b)=\widehat{f}_5$.
Then $g_5(0,x,0,b)=(3-6x^2)b^2+(8-4x^3-18x^2+6x)b-4x^4+2x^3-18x^2+8x+5$.
The~discriminant of this polynomial is equal to $-80x^6+192x^5-108x^4-112x^3+84x^2+4$.
Denote by $\delta_5$ its unique positive root.
Then $g_5(a_1,x+\delta_5,s_2,b)$ is a sum of
\begin{multline*}
-(12\delta_5+18)bs_2x-(36+36\delta_5)a_1bx-(6+30\delta_5)a_1^2s_2-(18\delta_5+12)a_1^2b-(9+21\delta_5)a_1s_2^2-\\
-12\delta_5b^2x-12a_1b^2\delta_5-27a_1s_2x^2-6bs_2x^2-18a_1bx^2-4x^4-4bx^3-(12\delta_5+18)bx^2-\\
-18a_1x^3-(8\delta_5^3-3\delta_5^2+18\delta_5-4)s_2-(16\delta_5-2)x^3-(18+18\delta_5)a_1bs_2-(6+54\delta_5)a_1s_2x-\\
-9s_2^2x^2-8s_2x^3-(54\delta_5^2+12\delta_5+30)a_1x-(6+54\delta_5)a_1x^2-18a_1^2bx-12a_1b^2x-30a_1^2s_2x-\\
-21a_1s_2^2x-6bs_2^2x-6b^2s_2x-(5\delta_5+2)s_2^3-18a_1^3x-5s_2^3x-(27\delta_5^2+6\delta_5+15)a_1s_2-\\
-6b^2x^2--(24\delta_5^2-6\delta_5+18)s_2x-(24\delta_5-3)s_2x^2-(18\delta_5+3)s_2^2x-(12\delta_5^2+36\delta_5-6)bx-\\
-(6\delta_5^2+18\delta_5-3)bs_2-9a_1s_2^2b-9a_1^2s_2b-6a_1s_2b^2-12a_1^2s_2^2-6a_1s_2^3-(6\delta_5+9)bs_2^2-\\
-2s_2^3b-(30\delta_5^2+12\delta_5+9)a_1^2-9a_1^3s_2-s_2^4-(60\delta_5+12)a_1^2x-(18\delta_5^2+36\delta_5-12)a_1b-\\
-30a_1^2x^2-3a_1^2b^2-3s_2^2b^2-(24\delta_5^2-6\delta_5+18)x^2-(18\delta_5^3+6\delta_5^2+30\delta_5-12)a_1-\\
-18a_1^3\delta_5-(16\delta_5^3-6\delta_5^2+36\delta_5-8)x-(9\delta_5^2+3\delta_5+6)s_2^2-18a_1bs_2x-6b^2s_2\delta_5\\
\end{multline*}
and the polynomial
$$
(3-6\delta_5^2)b^2-(4\delta_5^3+18\delta_5^2-6\delta_5-8)b-4\delta_5^4+2\delta_5^3-18\delta_5^2+8\delta_5+5.
$$
Observe that all coefficients of the former polynomial are negative, since $\delta_5\approx 0.7700518$.
On the other hand, the latter polynomial is not positive for all $b\geqslant 0$ by the choice of~$\delta_5$.
This shows that $g_5(a_1,s_1,s_2,b)<0$ if $s_1>\delta_5$, so that $f_5<0$ when $a_2-a_1\geqslant 0.7701$.

One can use the same arguments to prove that $f_5<0$ if $a_2-a_1\geqslant 0.7698$ and $b\geqslant 0.2308$.
Indeed, let $\alpha_5$ be the unique positive root of the polynomial $-4x^4+2x^3-18x^2+8x+5$,
let $\beta_5$ be the unique positive root of of the polynomial $8-4x^3-18x^2+6x$,
and let
$$
\gamma_5=-\frac{8-4\alpha_5^3-18\alpha_5^2+6\alpha_5}{3-6\alpha_5^2}.
$$
Then $\alpha_5\approx 0.7697759834$, $\beta_5\approx 0.774202788$,
and $\gamma_5\approx 0.230720502$.
Moreover, one has
$$
g_5(0,x,0,b)<0
$$
provided that $x>\alpha_5$ and $b>\gamma_5$. Then $g_5(a_1,x+\alpha_5,s_2,y+\gamma_5)$ can be expanded as
\begin{multline*}
-(18\alpha_5+6\gamma_5+12)a_1^2y-(54\alpha_5+18\gamma_5+6)a_1x^2-9a_1^2s_2y-\\
-3s_2^2y^2-12\alpha_5a_1y^2-\big(54\alpha_5^2+(36\gamma_5+12)\alpha_5+12\gamma_5^2+36\gamma_5+30\big)a_1x-5s_2^3x-\\
-6a_1s_2y^2-\big(27\alpha_5^2+(18\gamma_5+6)\alpha_5+6\gamma_5^2+18\gamma_5+15\big)a_1s_2-\\
-2s_2^3y-\big(24\alpha_5^2+(12\gamma_5-6)\alpha_5+6\gamma_5^2+18\gamma_5+18\big)s_2x-6s_2^2yx-\\
-\big(18\alpha_5^2+(24\gamma_5+36)\alpha_5-12\big)a_1y-(24\alpha_5+6\gamma_5-3)s_2x^2-9a_1s_2^2y-\\
-\big(8\alpha_5^3+(6\gamma_5-3)\alpha_5^2+(6\gamma_5^2+18\gamma_5+18)\alpha_5-3\gamma_5-4\big)s_2-\\
-s_2^4-\big(9\alpha_5^2+(6\gamma_5+3)\alpha_5+3(\gamma_5+2)(\gamma_5+1)\big)s_2^2-18a_1s_2yx-\\
-\big(6\alpha_5^2+(12\gamma_5+18)\alpha_5-3\big)ys_2-(6\alpha_5+6\gamma_5+9)ys_2^2-6\alpha_5s_2y^2-\\
-\big(16\alpha_5^3+(12\gamma_5-6)\alpha_5^2+(12\gamma_5^2+36\gamma_5+36)\alpha_5-6\gamma_5-8\big)x-6a_1s_2^3-\\
-4x^4-\big(12\alpha_5^2+(24\gamma_5+36)\alpha_5-6\big)yx-(21\alpha_5+9\gamma_5+9)a_1s_2^2-\\
-9a_1^3s_2-(60\alpha_5+18\gamma_5+12)a_1^2x-(12\alpha_5+12\gamma_5+18)yx^2-6s_2yx^2-\\
-3a_1^2y^2-\big(24\alpha_5^2+(12\gamma_5-6)\alpha_5+6\gamma_5^2+18\gamma_5+18\big)x^2-30a_1^2x^2-\\
-12y^2\alpha_5x-(30\alpha_5+9\gamma_5+6)a_1^2s_2-(5\alpha_5+2\gamma_5+2)s_2^3-8s_2x^3-12a_1y^2x-\\
-27a_1s_2x^2-21a_1s_2^2x-(16\alpha_5+4\gamma_5-2)x^3-18a_1yx^2-30a_1^2s_2x-6s_2y^2x-18a_1^2yx-\\
-6y^2x^2-4yx^3-18a_1^3x-18a_1^3\alpha_5-(36\alpha_5+24\gamma_5+36)a_1yx-(6\alpha_5^2-3)y^2-\\
-\big(18\alpha_5^3+(18\gamma_5+6)\alpha_5^2+(12\gamma_5^2+36\gamma_5+30)\alpha_5-12\gamma_5-12\big)a_1-9s_2^2x^2-\\
-(54\alpha_5+18\gamma_5+6)a_1s_2x-(12\alpha_5+12\gamma_5+18)s_2yx-(18\alpha_5+12\gamma_5+18)a_1ys_2-\\
-12a_1^2s_2^2-\big(30\alpha_5^2+(18\gamma_5+12)\alpha_5+3(\gamma_5+3)(\gamma_5+1)\big)a_1^2-18a_1x^3-\\
-(18\alpha_5+6\gamma_5+3)s_2^2x-\big(4\alpha_5^3+(12\gamma_5+18)\alpha_5^2-6\alpha_5-6\gamma_5-8\big)y.
\end{multline*}
All coefficients of this polynomial are negative, so that $f_5<0$ if $a_2-a_1>\alpha_5$ and $b>\gamma_5$.
Therefore, if $a_2-a_1\geqslant 0.7698$ and $b\geqslant 0.2308$, then $f_5<0$.

Now we let $f_4=f(a_1,a_2,a_3,a_4,1,1,1,b)$ and $g_4(a_1,s_1,s_2,s_3,b)=\widehat{f}_4$.
Then
$$
g_4(0,x,0,0,b)=(-9x^2+6)b^2+(-6x^3-27x^2+9x+16)b-9x^4+15x^3-45x^2+24x+8.
$$
Denote by $\alpha_4$ the unique positive root of the polynomial $-9x^4+15x^3-45x^2+24x+8$.
Then $\alpha_4\approx 0.8594363$, and the polynomial $g_4(a_1,x+\alpha_4,s_2,s_3,b)$ can be expanded as
\begin{multline*}
-4a_1^4-4s_2^4-s_3^4-(36\alpha_4^2-30\alpha_4+30)s_3x-12b^2s_2x-12bs_2^2x-\\
-(36\alpha_4-15)s_3x^2-36a_1^2s_2s_3-30a_1s_2^2s_3-24a_1s_2s_3^2-8s_2^3s_3-9s_2^2s_3^2-\\
-27a_1s_3^2x-5s_2s_3^3-26a_1^3s_2-13a_1^3s_3-36a_1^2s_2^2-15a_1^2s_3^2-20a_1s_2^3-7a_1s_3^3-\\
-6b^2s_3x-(24\alpha_4-3)s_3^2x-(72\alpha_4-30)s_2x^2-(72\alpha_4^2-60\alpha_4+60)s_2x-\\
-21s_2s_3^2x-(30\alpha_4^2-18\alpha_4+24)s_2s_3-(126\alpha_4-18)a_1^2x-(66\alpha_4-6)a_1s_2^2-\\
-(42\alpha_4-6)a_1^2s_3-(84\alpha_4-12)a_1^2s_2-(27\alpha_4+24)a_1^2b-12b^2s_2\alpha_4-\\
-(27\alpha_4-6)s_2^2s_3-(60\alpha_4-18)s_2^2x-(18\alpha_4^2+54\alpha_4-9)bx-(18\alpha_4+27)bx^2-\\
-(12\alpha_4^2+36\alpha_4-6)bs_2-(6\alpha_4+9)bs_3^2-(6\alpha_4^2+18\alpha_4-3)bs_3-\\
-(12\alpha_4+18)bs_2^2-(63\alpha_4^2-18\alpha_4+36)a_1^2-(39\alpha_4-8)a_1^3-(9\alpha_4^2-6)b^2-\\
-(39\alpha_4^3-27\alpha_4^2+81\alpha_4-32)a_1-(36\alpha_4^3-45\alpha_4^2+90\alpha_4-24)x-\\
-(54\alpha_4^2-45\alpha_4+45)x^2-(36\alpha_4-15)x^3-(12\alpha_4^3-15\alpha_4^2+30\alpha_4-8)s_3-\\
-(6\alpha_4+1)s_3^3-(12\alpha_4^2-3\alpha_4+9)s_3^2-(24\alpha_4^3-30\alpha_4^2+60\alpha_4-16)s_2-\\
-(30\alpha_4^2-18\alpha_4+24)s_2^2-(18\alpha_4-4)s_2^3-(6\alpha_4^3+27\alpha_4^2-9\alpha_4-16)b-\\
-(117\alpha_4-27)a_1x^2-(12\alpha_4+18)bs_2s_3-(24\alpha_4+36)bs_2x-(12\alpha_4+18)bs_3x-\\
-(60\alpha_4-18)s_2s_3x-(78\alpha_4-18)a_1s_3x-(156\alpha_4-36)a_1s_2x-(66\alpha_4-6)a_1s_2s_3-\\
-63a_1^2x^2-39a_1x^3-9b^2x^2-6bx^3-24s_2x^3-12s_3x^3-30s_2^2x^2-12s_3^2x^2-9x^4-\\
-30s_2s_3x^2-78a_1s_2x^2-27a_1bx^2-6bs_3x^2-39a_1s_3x^2-12bs_2x^2-42a_1^2s_3x-\\
-18a_1b^2x-27s_2^2s_3x-27a_1^2bx-39a_1^3x-18s_2^3x-6s_3^3x-(54\alpha_4+54)a_1bx-\\
-(18\alpha_4+18)a_1bs_3-(36\alpha_4+36)a_1bs_2-6bs_2^2s_3-18a_1^2bs_2-6a_1b^2s_3-12a_1b^2s_2-\\
-6b^2s_2s_3-6bs_2s_3^2-9a_1bs_3^2-18a_1bs_2^2-9a_1^2bs_3-2a_1^3b-6a_1^2b^2-6b^2s_2^2-\\
-66a_1s_2^2x-3b^2s_3^2-4bs_2^3-2bs_3^3-18a_1b^2\alpha_4-21s_2s_3^2\alpha_4-18b^2x\alpha_4-\\
-84a_1^2s_2x-6b^2s_3\alpha_4-36a_1bs_2x-18a_1bs_3x-66a_1s_2s_3x-12bs_2s_3x-18a_1bs_2s_3-\\
-(27\alpha_4^2+54\alpha_4-18)a_1b-(27\alpha_4+3)a_1s_3^2-(78\alpha_4^2-36\alpha_4+54)a_1s_2-\\
-6bs_3^2x-(117\alpha_4^2-54\alpha_4+81)a_1x-(39\alpha_4^2-18\alpha_4+27)a_1s_3.
\end{multline*}
Since all coefficients of this polynomial are negative, we see that $f_4<0$ when $a_2-a_1>\alpha_4$.
Since $\alpha_4<0.8595$, we also have $f_4<0$ in the case when $a_2-a_1\geqslant 0.8595$.

Now we let $f_3=f(a_1,a_2,a_3,a_4,a_5,1,1,b)$ and $g_3(a_1,s_1,s_2,s_3,s_4,b)=\widehat{f}_3$. Then
$$
g_3(0,x,0,0,0,b)=-(12x^2-9)b^2-(8x^3+36x^2-12x-24)b-16x^4+36x^3-84x^2+48x+9.
$$
Let $\alpha_3$ be the unique positive root of $g_3(0,x,0,0,0,0)$.
Then $\alpha_3\approx 0.8984970862$ and
\begin{multline*}
g_3(a_1,x+\alpha_3,s_2,s_3,s_4,b)=-(24\alpha_3+36)bs_2s_3-(12\alpha_3+18)bs_2s_4-10a_1^4-\\
-9s_2^4-4s_3^4-s_4^4-96a_1^2s_2s_3-84a_1s_2^2s_3-72a_1s_2s_3^2-24s_2^3s_3-30s_2^2s_3^2-18s_2s_3^3-\\
-51a_1^3s_2-34a_1^3s_3-72a_1^2s_2^2-42a_1^2s_3^2-42a_1s_2^3-22a_1s_3^3-(90\alpha_3-30)a_1s_2s_4-\\
-(144\alpha_3^2-162\alpha_3+126)s_2x-(48\alpha_3-27)s_4x^2-(108\alpha_3^2-72\alpha_3+75)a_1^2-\\
-(33\alpha_3-3)a_1s_4^2-(16\alpha_3^3-27\alpha_3^2+42\alpha_3-12)s_4-(15\alpha_3^2-9\alpha_3+12)s_4^2-\\
-(72\alpha_3-30)s_3s_4x-(84\alpha_3-42)s_2s_4x-(168\alpha_3-84)s_2s_3x-(64\alpha_3-36)x^3-\\
-(63\alpha_3^2-63\alpha_3+54)s_2^2-(48\alpha_3^3-81\alpha_3^2+126\alpha_3-36)s_2-(20\alpha_3-6)s_3^3-\\
-(36\alpha_3^2-30\alpha_3+30)s_3^2-(32\alpha_3^3-54\alpha_3^2+84\alpha_3-24)s_3-6bs_3^2s_4-9a_1^2bs_4-\\
-9a_1bs_4^2-30a_1s_2s_4^2-30s_2^2s_3s_4-21s_2s_3s_4^2-33a_1s_3^2s_4-6b^2s_2s_4-17a_1^3s_4-18a_1^2s_4^2-\\
-8a_1s_4^3-3b^2s_4^2-2bs_4^3-12s_2^3s_4-12s_2^2s_4^2-6s_2s_4^3-8s_3^3s_4-9s_3^2s_4^2-5s_3s_4^3-\\
-(108\alpha_3-36)a_1^2s_3-7s_4^3\alpha_3-(68\alpha_3-24)a_1^3-(54\alpha_3+54)a_1bs_2-(36\alpha_3+36)a_1bs_3-\\
-(36\alpha_3^2+72\alpha_3-24)a_1b-(18\alpha_3+18)a_1bs_4-(72\alpha_3+72)a_1bx-(180\alpha_3-60)a_1s_2s_3-\\
-(36\alpha_3^2-30\alpha_3+30)s_3s_4-18a_1bs_2s_4-18a_1bs_3s_4-72a_1s_2s_3s_4-12bs_2s_3s_4-\\
-(216\alpha_3-72)a_1^2x-(54\alpha_3-18)a_1^2s_4-16s_4x^3-15s_4^2x^2-7s_4^3x-36s_3s_4x^2-42s_2s_4x^2-\\
-6bs_4x^2-51a_1s_4x^2-6b^2s_4x-6bs_4^2x-30s_3^2s_4x-54a_1^2s_4x-24s_3s_4^2x-39s_2^2s_4x-27s_2s_4^2x-\\
-68a_1x^3-18a_1bs_4x-33a_1s_4^2x-(144\alpha_3-81)s_2x^2-(36\alpha_3+36)a_1^2b-(78\alpha_3-18)a_1s_3^2-\\
-(42\alpha_3^2-42\alpha_3+36)s_2s_4-(68\alpha_3^3-84\alpha_3^2+156\alpha_3-60)a_1-(39\alpha_3-18)s_2^3-\\
-(8\alpha_3^3+36\alpha_3^2-12\alpha_3-24)b-(12\alpha_3^2-9)b^2-(64\alpha_3^3-108\alpha_3^2+168\alpha_3-48)x-\\
-(96\alpha_3^2-108\alpha_3+84)x^2-(162\alpha_3-54)a_1^2s_2-(78\alpha_3-18)a_1s_3s_4-(24\alpha_3-3)s_3s_4^2-\\
-(204\alpha_3-84)a_1s_3x-(102\alpha_3-42)a_1s_4x-(30\alpha_3-9)s_3^2s_4-(27\alpha_3-6)s_2s_4^2-108a_1^2x^2-\\
-12b^2x^2-8bx^3-48s_2x^3-32s_3x^3-63s_2^2x^2-36s_3^2x^2-16x^4-84s_2s_3x^2-153a_1s_2x^2-36a_1bx^2-\\
-12bs_3x^2-102a_1s_3x^2-18bs_2x^2-108a_1^2s_3x-18b^2s_2x-18bs_2^2x-135a_1s_2^2x-66s_2s_3^2x-78a_1s_3^2x-\\
-12bs_3^2x-162a_1^2s_2x-24a_1b^2x-12b^2s_3x-78s_2^2s_3x-36a_1^2bx-68a_1^3x-39s_2^3x-20s_3^3x-\\
-6a_1b^2s_4-(12\alpha_3+18)bs_4x-(66\alpha_3-24)s_2s_3s_4-(24\alpha_3+36)bs_3x-(36\alpha_3+54)bs_2x-\\
-24bs_2s_3x-(12\alpha_3+18)bs_3s_4-(72\alpha_3-30)s_3^2x-(84\alpha_3^2-84\alpha_3+72)s_2s_3-12bs_2^2s_3-\\
-27a_1^2bs_2-12a_1b^2s_3-18a_1b^2s_2-12b^2s_2s_3-12bs_2s_3^2-18a_1bs_3^2-27a_1bs_2^2-18a_1^2bs_3-\\
-6b^2s_3s_4-6bs_3s_4^2-27a_1s_3s_4^2-48a_1^2s_2s_4-6bs_2s_4^2-42a_1^2s_3s_4-42a_1s_2^2s_4-27s_2s_3^2s_4-\\
-6bs_2^2s_4-4a_1^3b-9a_1^2b^2-9b^2s_2^2-6b^2s_3^2-6bs_2^3-4bs_3^3-(153\alpha_3^2-126\alpha_3+117)a_1s_2-\\
-(135\alpha_3-45)a_1s_2^2-(51\alpha_3^2-42\alpha_3+39)a_1s_4-(102\alpha_3^2-84\alpha_3+78)a_1s_3-\\
-(66\alpha_3-24)s_2s_3^2-(126\alpha_3-63)s_2^2x-(39\alpha_3-18)s_2^2s_4-(78\alpha_3-36)s_2^2s_3-\\
-(24\alpha_3^2+72\alpha_3-12)bx-(24\alpha_3+36)bx^2-(6\alpha_3^2+18\alpha_3-3)bs_4-(6\alpha_3+9)bs_4^2-\\
-(12\alpha_3^2+36\alpha_3-6)bs_3-(12\alpha_3+18)bs_3^2-(18\alpha_3^2+54\alpha_3-9)bs_2-(18\alpha_3+27)bs_2^2-\\
-(204\alpha_3^2-168\alpha_3+156)a_1x-(204\alpha_3-84)a_1x^2-(96\alpha_3^2-108\alpha_3+84)s_3x-\\
-90a_1s_2s_4x-(96\alpha_3-54)s_3x^2-(30\alpha_3-9)s_4^2x-12b^2s_3\alpha_3-18b^2s_2\alpha_3-24a_1b^2\alpha_3-\\
-24b^2x\alpha_3-6b^2s_4\alpha_3-(48\alpha_3^2-54\alpha_3+42)s_4x-54a_1bs_2x-36a_1bs_3x-180a_1s_2s_3x-\\
-36a_1bs_2s_3-78a_1s_3s_4x-12bs_2s_4x-12bs_3s_4x-66s_2s_3s_4x-(306\alpha_3-126)a_1s_2x.
\end{multline*}
Since all coefficients of this polynomial are negative, one has $f_3<0$ when $a_2-a_1>\alpha_3$.
The same holds for $a_2-a_1\geqslant 0.8985$, because $0.8985>\alpha_3$.

Now let us show that $f(a_1,a_2,a_3,a_4,a_5,a_6,1,b)<0$ in the case when $a_2-a_1\geqslant 0.9206$.
To to this, let $f_2=f(a_1,a_2,a_3,a_4,a_5,a_6,1,b)$ and let $g_2(a_1,s_1,s_2,s_3,s_4,s_5,b)=\widehat{f}_2$.
Then
$$
g_2(0,x,0,0,0,0,b)=-(15x^2-12)b^2-(10x^3+45x^2-15x-32)b-25x^4+65x^3-135x^2+80x+8.
$$
Let $\alpha_2$ be the unique positive root of $g_2(0,x,0,0,0,0,0)=-25x^4+65x^3-135x^2+80x+8$.
Then $\alpha_2\approx 0.9205032589$ and all coefficients of $g_3(a_1,x+\alpha_2,s_2,s_3,s_4,s_5,b)$ are negative.
Indeed, the polynomial $g_2(a_1,x+\alpha_2,s_2,s_3,s_4,s_5,b)$ can be expanded as
\begin{multline*}
-(30\alpha_2+45)bx^2-(18\alpha_2^2+54\alpha_2-9)bs_3-(18\alpha_2+27)bs_3^2-84s_2^2s_3s_4-\\
-(252\alpha_2^2-264\alpha_2+204)a_1s_2-(228\alpha_2-108)a_1s_2^2-(12\alpha_2^2+36\alpha_2-6)bs_4-27s_2s_3s_5^2-\\
-(18\alpha_2+18)a_1bs_5-(30\alpha_2^2+90\alpha_2-15)bx-(36\alpha_2+36)a_1bs_4-(12\alpha_2+18)bs_4^2-\\
-(192\alpha_2-108)s_3s_4x-(72\alpha_2-30)s_3s_4s_5-(108\alpha_2-66)s_2s_5x-(216\alpha_2-132)s_2s_4x-\\
-(48\alpha_2^2-54\alpha_2+42)s_3s_5-(45\alpha_2^2+90\alpha_2-30)ba_1-(126\alpha_2^2-132\alpha_2+102)a_1s_4-\\
-(72\alpha_2+72)a_1bs_2-(24\alpha_2^2+72\alpha_2-12)bs_2-(24\alpha_2+36)bs_2^2-(6\alpha_2^2+18\alpha_2-3)bs_5-\\
-(105\alpha_2-48)a_1^3-(165\alpha_2^2-150\alpha_2+126)a_1^2-(105\alpha_2^3-165\alpha_2^2+255\alpha_2-96)a_1-\\
-(15\alpha_2^2-12)b^2-(108\alpha_2^2-132\alpha_2+96)s_2^2-(68\alpha_2-40)s_2^3-(66\alpha_2-30)a_1^2s_5-78s_2s_3^2s_4-\\
-(198\alpha_2-90)a_1^2s_3-(315\alpha_2^2-330\alpha_2+255)a_1x-(60\alpha_2^2-78\alpha_2+54)s_5x-42s_2^2s_3s_5-36s_2^2s_4s_5-\\
-(60\alpha_2-39)s_5x^2-(36\alpha_2-15)s_5^2x-(264\alpha_2-120)a_1^2s_2-(45\alpha_2+48)a_1^2b-(78\alpha_2-36)s_2s_4^2-\\
-(96\alpha_2-54)s_3s_5x-(120\alpha_2^2-156\alpha_2+108)s_4x-(120\alpha_2-78)s_4x^2-(96\alpha_2^2-108\alpha_2+84)s_3s_4-\\
-(72\alpha_2-30)s_3s_4^2-(144\alpha_2-81)s_3^2x-(189\alpha_2^2-198\alpha_2+153)a_1s_3-(42\alpha_2^2-42\alpha_2+36)s_4s_5-\\
-(42\alpha_2-21)s_3^2s_5-(84\alpha_2-42)s_3^2s_4-(240\alpha_2^2-312\alpha_2+216)s_2x-(153\alpha_2-63)a_1s_3^2-\\
-6b^2s_5\alpha_2-12b^2s_4\alpha_2-18b^2s_3\alpha_2-24b^2s_2\alpha_2-30a_1b^2\alpha_2-(240\alpha_2-156)s_2x^2-\\
-(54\alpha_2^2-66\alpha_2+48)s_2s_5-(33\alpha_2-12)s_2s_5^2-(108\alpha_2^2-132\alpha_2+96)s_2s_4-66s_2s_3s_4^2-\\
-(90\alpha_2-30)a_1s_4s_5-(378\alpha_2-198)a_1s_3x-(135\alpha_2-72)s_2s_3^2-(216\alpha_2-132)s_2^2x-\\
-(204\alpha_2-84)a_1s_3s_4-(504\alpha_2-264)a_1s_2x-(114\alpha_2-54)a_1s_2s_5-(228\alpha_2-108)a_1s_2s_4-\\
-(324\alpha_2-198)s_2s_3x-(90\alpha_2-48)s_2s_3s_5-(180\alpha_2-96)s_2s_3s_4-(12\alpha_2+18)bs_5x-\\
-(12\alpha_2+18)bs_4s_5-(51\alpha_2-30)s_2^2s_5-(102\alpha_2-60)s_2^2s_4-(153\alpha_2-90)s_2^2s_3-\\
-24s_2s_4s_5^2-30s_3^2s_4s_5-27s_3s_4^2s_5-21s_3s_4s_5^2-42a_1^3s_4-16s_2^4-9s_3^4-4s_4^4-s_5^4-\\
-7s_2s_5^3-84a_1s_2s_4s_5-78a_1s_3s_4s_5-66s_2s_3s_4s_5-54a_1bs_2s_3-36a_1bs_2s_4-18a_1bs_2s_5-\\
-18a_1bs_4s_5-24bs_2s_3s_4-12bs_2s_3s_5-12bs_2s_4s_5-12bs_3s_4s_5-21a_1^3s_5-120a_1^2s_2^2-\\
-36a_1bs_3s_4-81a_1^2s_3^2-(180\alpha_2-117)s_3x^2-30s_2s_4^2s_5-(6\alpha_2+9)bs_5^2-192a_1s_2s_3s_4-\\
-(30\alpha_2-9)s_3s_5^2-30b^2x\alpha_2-(126\alpha_2-66)a_1s_5x-(24\alpha_2+36)bs_4x-(132\alpha_2-60)a_1^2s_4-\\
-(90\alpha_2-30)a_1s_4^2-(78\alpha_2-36)s_2s_4s_5-(330\alpha_2-150)a_1^2x-(90\alpha_2+90)ba_1x-39s_2s_3^2s_5-
\end{multline*}

\begin{multline*}
-48a_1^2s_4^2-21a_1^2s_5^2-72a_1s_2^3-45a_1s_3^3-24a_1s_4^3-9a_1s_5^3-48s_2^3s_3-32s_2^3s_4-16s_2^3s_5-\\
-(54\alpha_2+54)a_1bs_3-96a_1s_2s_3s_5-18a_1bs_3s_5-(252\alpha_2-132)a_1s_4x-(102\alpha_2-42)a_1s_3s_5-\\
-15s_2^2s_5^2-6s_3s_5^3-8s_4^3s_5-9s_4^2s_5^2-5s_4s_5^3-(84\alpha_2-42)s_4s_5x-180a_1^2s_2s_3-120a_1^2s_2s_4-\\
-108a_1^2s_3s_4-54a_1^2s_3s_5-48a_1^2s_4s_5-162a_1s_2^2s_3-108a_1s_2^2s_4-54a_1s_2^2s_5-144a_1s_2s_3^2-\\
-36a_1s_2s_5^2-90a_1s_3^2s_4-45a_1s_3^2s_5-78a_1s_3s_4^2-33a_1s_3s_5^2-36a_1s_4^2s_5-9a_1^2bs_5-\\
-30s_3^2s_4^2-18a_1^2bs_4-27a_1^2bs_3-36a_1^2bs_2-(33\alpha_2-12)s_4^2s_5-84a_1^3s_2-(36\alpha_2+54)bs_2s_3-\\
-(180\alpha_2^2-234\alpha_2+162)s_3x-18a_1^4-6a_1^3b-12a_1^2b^2-(63\alpha_2^2-66\alpha_2+51)a_1s_5-\\
-(39\alpha_2-9)a_1s_5^2-(72\alpha_2^2-81\alpha_2+63)s_3^2-(42\alpha_2-21)s_3^3-(84\alpha_2-42)s_4^2x-\\
-39s_2s_3^3-(315\alpha_2-165)a_1x^2-(48\alpha_2+72)bs_2x-(24\alpha_2+36)bs_3s_4-(12\alpha_2+18)bs_3s_5-\\
-12s_3^3s_5-(80\alpha_2^3-156\alpha_2^2+216\alpha_2-64)s_2-(100\alpha_2^3-195\alpha_2^2+270\alpha_2-80)x-\\
-(100\alpha_2-65)x^3-(20\alpha_2^3-39\alpha_2^2+54\alpha_2-16)s_5-(18\alpha_2^2-15\alpha_2+15)s_5^2-\\
-(40\alpha_2^3-78\alpha_2^2+108\alpha_2-32)s_4-(42\alpha_2^2-42\alpha_2+36)s_4^2-(22\alpha_2-8)s_4^3-18bs_3x^2-\\
-20s_2s_4^3-162s_2s_3x^2-108s_2s_4x^2-54s_2s_5x^2-96s_3s_4x^2-48s_3s_5x^2-42s_4s_5x^2-252a_1s_2x^2-\\
-126a_1s_4x^2-63a_1s_5x^2-24bs_2x^2-45a_1bx^2-84s_3^2s_4x-42s_3^2s_5x-72s_3s_4^2x-30s_3s_5^2x-\\
-27s_4s_5^2x-153a_1s_3^2x-90a_1s_4^2x-39a_1s_5^2x-24b^2s_2x-18b^2s_3x-12b^2s_4x-6b^2s_5x-\\
-132a_1^2s_4x-24bs_2^2x-18bs_3^2x-12bs_4^2x-6bs_5^2x-153s_2^2s_3x-102s_2^2s_4x-51s_2^2s_5x-\\
-78s_2s_4^2x-33s_2s_5^2x-(60\alpha_2^3-117\alpha_2^2+162\alpha_2-48)s_3-165a_1^2x^2-105a_1x^3-\\
-10bx^3-80s_2x^3-60s_3x^3-40s_4x^3-20s_5x^3-108s_2^2x^2-72s_3^2x^2-42s_4^2x^2-18s_5^2x^2-68s_2^3x-\\
-22s_4^3x-8s_5^3x-105a_1^3x-228a_1s_2s_4x-114a_1s_2s_5x-204a_1s_3s_4x-102a_1s_3s_5x-90a_1s_4s_5x-\\
-24bs_2s_4x-12bs_2s_5x-24bs_3s_4x-12bs_3s_5x-12bs_4s_5x-180s_2s_3s_4x-90s_2s_3s_5x-78s_2s_4s_5x-\\
-72a_1bs_2x-54a_1bs_3x-36a_1bs_4x-18a_1bs_5x-342a_1s_2s_3x-66a_1^2s_5x-30a_1b^2x-264a_1^2s_2x-\\
-228a_1s_2^2x-6bs_4^2s_5-6bs_4s_5^2-12bs_2s_4^2-24a_1b^2s_2-18bs_2^2s_3-18a_1b^2s_3-6a_1b^2s_5-\\
-18bs_2s_3^2-12bs_2^2s_4-36a_1bs_2^2-6bs_2^2s_5-18a_1bs_4^2-12a_1b^2s_4-6b^2s_2s_5-6b^2s_4s_5-\\
-18b^2s_2s_3-6bs_3s_5^2-6bs_3^2s_5-12b^2s_2s_4-6bs_2s_5^2-9a_1bs_5^2-12bs_3s_4^2-12b^2s_3s_4-\\
-36s_2^2s_4^2-(10\alpha_2^3+45\alpha_2^2-15\alpha_2-32)b-12b^2s_2^2-9b^2s_3^2-6b^2s_4^2-3b^2s_5^2-8bs_2^3-\\
-24s_3^3s_4-189a_1s_3x^2-12bs_3^2s_4-45a_1^2bx-42s_3^3x-33s_4^2s_5x-(150\alpha_2^2-195\alpha_2+135)x^2-\\
-27a_1bs_3^2-(8\alpha_2-1)s_5^3-36bs_2s_3x-198a_1^2s_3x-25x^4-63a_1^3s_3-30a_1s_4s_5^2-63s_2^2s_3^2-\\
-6b^2s_3s_5-(24\alpha_2+36)bs_2s_4-72s_3s_4s_5x-12bs_4x^2-(36\alpha_2+54)bs_3x-60a_1^2s_2s_5-\\
-6bs_3^3-15b^2x^2-135s_2s_3^2x-(12\alpha_2+18)bs_2s_5-6bs_5x^2-84a_1s_2s_4^2-18s_3s_4^3-12s_3^2s_5^2-\\
-4bs_4^3-2bs_5^3-(342\alpha_2-162)a_1s_2s_3-(27\alpha_2-6)s_4s_5^2-(162\alpha_2^2-198\alpha_2+144)s_2s_3.
\end{multline*}
This implies that $f_2<0$ if $a_2-a_1>\alpha_2$. In particular, if $a_2-a_1\geqslant0.9206$, then $f_2<0$.

Finally, let $g_1(a_1,s_1,s_2,s_3,s_4,s_5,s_6,b)=\widehat{f}$. Then
$$
g_1(0,x,0,0,0,0,0,0)=-36x^4+102x^3-198x^2+120x+5.
$$
This polynomial has unique positive root. Let us denote it by $\alpha_1$.
Then~$\alpha_1\approx0.934647387$.
Moreover, the polynomial $g_3(a_1,x+\alpha_1,s_2,s_3,s_4,s_5,s_6,b)$ can be expanded as
\begin{multline*}
-(48\alpha_1^3-102\alpha_1^2+132\alpha_1-40)s_5-(18\alpha_1+18)a_1bs_6-(126\alpha_1-66)a_1s_3s_6-\\
-(414\alpha_1-234)a_1s_2s_4-(600\alpha_1-360)a_1s_3x-(552\alpha_1-312)a_1s_2s_3-(108\alpha_1+108)ba_1x-\\
-(48\alpha_1+72)bs_3x-(12\alpha_1+18)bs_3s_6-(54\alpha_1+54)a_1bs_4-(72\alpha_1+72)a_1bs_3-(24\alpha_1+36)bs_4s_5-\\
-(252\alpha_1-132)a_1s_3s_5-(378\alpha_1-198)a_1s_3s_4-(12\alpha_1+18)bs_6x-(72\alpha_1^2-102\alpha_1+66)s_6x-\\
-(48\alpha_1^2-54\alpha_1+42)s_5^2-(750\alpha_1-450)a_1s_2x-(138\alpha_1-78)a_1s_2s_6-(24\alpha_1+36)bs_5x-\\
-552a_1s_2s_3x-(12\alpha_1+18)bs_5s_6-(36\alpha_1+54)bs_4x-(12\alpha_1+18)bs_4s_6-(36\alpha_1+36)a_1bs_5-\\
-125a_1^3s_2-100a_1^3s_3-75a_1^3s_4-50a_1^3s_5-360a_1s_2s_3s_4-240a_1s_2s_3s_5-120a_1s_2s_3s_6-\\
-216a_1s_2s_4s_5-108a_1s_2s_4s_6-96a_1s_2s_5s_6-204a_1s_3s_4s_5-102a_1s_3s_4s_6-90a_1s_3s_5s_6-\\
-54a_1bs_2s_4-84a_1s_4s_5s_6-180s_2s_3s_4s_5-90s_2s_3s_4s_6-78s_2s_3s_5s_6-72s_2s_4s_5s_6-66s_3s_4s_5s_6-\\
-(450\alpha_1-270)a_1s_4x-(240\alpha_1-156)s_3s_5x-(90\alpha_1+90)a_1bs_2-25s_2^4-16s_3^4-9s_4^4-\\
-216a_1^2s_2s_4-4s_5^4-s_6^4-39a_1s_5^2s_6-33a_1s_5s_6^2-162s_2^2s_3s_4-108s_2^2s_3s_5-54s_2^2s_3s_6-\\
-288a_1^2s_2s_3-96s_2^2s_4s_5-48s_2^2s_4s_6-42s_2^2s_5s_6-153s_2s_3^2s_4-102s_2s_3^2s_5-51s_2s_3^2s_6-\\
-72a_1bs_2s_3-21s_4s_5s_6^2-135s_2s_3s_4^2-78s_2s_3s_5^2-33s_2s_3s_6^2-84s_2s_4^2s_5-42s_2s_4^2s_6-72s_2s_4s_5^2-\\
-12bs_4s_5s_6-27s_4s_5^2s_6-30s_2s_4s_6^2-33s_2s_5^2s_6-27s_2s_5s_6^2-84s_3^2s_4s_5-42s_3^2s_4s_6-36s_3^2s_5s_6-\\
-30s_4^2s_5s_6-78s_3s_4^2s_5-39s_3s_4^2s_6-66s_3s_4s_5^2-27s_3s_4s_6^2-30s_3s_5^2s_6-24s_3s_5s_6^2-\\
-144a_1^2s_2s_5-72a_1^2s_2s_6-198a_1^2s_3s_4-132a_1^2s_3s_5-66a_1^2s_3s_6-120a_1^2s_4s_5-60a_1^2s_4s_6-\\
-54a_1^2s_5s_6-264a_1s_2^2s_3-198a_1s_2^2s_4-132a_1s_2^2s_5-66a_1s_2^2s_6-240a_1s_2s_3^2-162a_1s_2s_4^2-\\
-96a_1s_2s_5^2-42a_1s_2s_6^2-171a_1s_3^2s_4-114a_1s_3^2s_5-57a_1s_3^2s_6-153a_1s_3s_4^2-90a_1s_3s_5^2-\\
-39a_1s_3s_6^2-96a_1s_4^2s_5-48a_1s_4^2s_6-84a_1s_4s_5^2-36a_1s_4s_6^2-18a_1^2bs_5-9a_1^2bs_6-36a_1^2bs_3-\\
-45a_1^2bs_2-27a_1^2bs_4-(24\alpha_1^3-51\alpha_1^2+66\alpha_1-20)s_6-(21\alpha_1^2-21\alpha_1+18)s_6^2-\\
-(216\alpha_1^2-306\alpha_1+198)x^2-(216\alpha_1-132)s_4s_5x-(9\alpha_1-2)s_6^3-(300\alpha_1-180)a_1s_5x-\\
-342s_2s_3s_4x-(78\alpha_1-36)s_4s_5s_6-(120\alpha_1-78)s_3s_6x-63s_3^2s_4^2-36s_3^2s_5^2-15s_3^2s_6^2-39s_3s_4^3-\\
-20s_3s_5^3-7s_3s_6^3-24s_4^3s_5-12s_4^3s_6-30s_4^2s_5^2-12s_4^2s_6^2-18s_4s_5^3-6s_4s_6^3-8s_5^3s_6-9s_5^2s_6^2-\\
-12bs_3s_5s_6-5s_5s_6^3-180a_1^2s_2^2-132a_1^2s_3^2-90a_1^2s_4^2-54a_1^2s_5^2-24a_1^2s_6^2-110a_1s_2^3-76a_1s_3^3-\\
-12bs_3s_4s_6-48a_1s_4^3-26a_1s_5^3-10a_1s_6^3-80s_2^3s_3-60s_2^3s_4-40s_2^3s_5-20s_2^3s_6-108s_2^2s_3^2-72s_2^2s_4^2-\\
-24bs_3s_4s_5-42s_2^2s_5^2-18s_2^2s_6^2-68s_2s_3^3-42s_2s_4^3-22s_2s_5^3-8s_2s_6^3-48s_3^3s_4-32s_3^3s_5-16s_3^3s_6-\\
-(96\alpha_1-54)s_5s_6x-25a_1^3s_6-(150\alpha_1-90)a_1s_6x-(204\alpha_1-120)s_2s_4s_5-(102\alpha_1-60)s_2s_4s_6-\\
-(396\alpha_1-270)s_2s_4x-(90\alpha_1-48)s_2s_5s_6-(102\alpha_1-42)a_1s_5s_6-(264\alpha_1-180)s_2s_5x-\\
-12bs_2s_5s_6-(114\alpha_1-54)a_1s_4s_6-(144\alpha_1-102)x^3-(108\alpha_1-66)s_4s_6x-28a_1^4-8a_1^3b-15a_1^2b^2-
\end{multline*}

\begin{multline*}
-6b^2s_4s_6-(48\alpha_1+72)bs_2s_3-(36\alpha_1+54)bs_2s_4-(24\alpha_1+36)bs_2s_5-(12\alpha_1+18)bs_2s_6-\\
-(276\alpha_1-156)a_1s_2s_5-(60\alpha_1+90)bs_2x-(36\alpha_1+54)bs_3s_4-(24\alpha_1+36)bs_3s_5-\\
-(84\alpha_1-42)s_3s_5s_6-(360\alpha_1-234)s_3s_4x-(96\alpha_1-54)s_3s_4s_6-24bs_3x^2-18bs_4x^2-12bs_5x^2-\\
-6bs_6x^2-264s_2s_3x^2-198s_2s_4x^2-132s_2s_5x^2-66s_2s_6x^2-180s_3s_4x^2-120s_3s_5x^2-60s_3s_6x^2-\\
-108s_4s_5x^2-54s_4s_6x^2-48s_5s_6x^2-54a_1bx^2-375a_1s_2x^2-300a_1s_3x^2-225a_1s_4x^2-\\
-75a_1s_6x^2-30bs_2x^2-36x^4-234a_1^2x^2-150a_1x^3-18b^2x^2-12bx^3-120s_2x^3-96s_3x^3-72s_4x^3-\\
-48s_5x^3-24s_6x^3-165s_2^2x^2-120s_3^2x^2-81s_4^2x^2-48s_5^2x^2-21s_6^2x^2-12bs_4s_6x-12bs_5s_6x-\\
-252a_1s_3s_5x-228a_1s_4s_5x-114a_1s_4s_6x-102a_1s_5s_6x-48bs_2s_3x-36bs_2s_4x-24bs_2s_5x-12bs_2s_6x-\\
-138a_1s_2s_6x-36bs_3s_4x-24bs_3s_5x-12bs_3s_6x-24bs_4s_5x-228s_2s_3s_5x-114s_2s_3s_6x-204s_2s_4s_5x-\\
-90s_2s_5s_6x-192s_3s_4s_5x-96s_3s_4s_6x-84s_3s_5s_6x-78s_4s_5s_6x-90a_1bs_2x-72a_1bs_3x-54a_1bs_4x-\\
-36a_1bs_5x-18a_1bs_6x-105s_2^3x-72s_3^3x-45s_4^3x-24s_5^3x-9s_6^3x-150a_1^3x-54s_3^2s_6x-\\
-84s_3s_5^2x-36s_3s_6^2x-90s_4^2s_5x-45s_4^2s_6x-24b^2s_3x-18b^2s_4x-12b^2s_5x-6b^2s_6x-30bs_2^2x-\\
-378a_1s_3s_4x-24bs_3^2x-18bs_4^2x-12bs_5^2x-6bs_6^2x-252s_2^2s_3x-189s_2^2s_4x-126s_2^2s_5x-63s_2^2s_6x-\\
-153s_2s_4^2x-90s_2s_5^2x-39s_2s_6^2x-162s_3^2s_4x-108s_3^2s_5x-54a_1^2bx-390a_1^2s_2x-312a_1^2s_3x-\\
-234a_1^2s_4x-156a_1^2s_5x-78a_1^2s_6x-36a_1b^2x-345a_1s_2^2x-252a_1s_3^2x-171a_1s_4^2x-102a_1s_5^2x-\\
-45a_1s_6^2x-30b^2s_2x-78s_4s_5^2x-33s_4s_6^2x-36s_5^2s_6x-30s_5s_6^2x-(192\alpha_1-108)s_3s_4s_5-\\
-(528\alpha_1-360)s_2s_3x-12b^2s_4s_5-6b^2s_5s_6-30a_1b^2s_2-12b^2s_3s_5-12bs_2s_5^2-18a_1b^2s_4-\\
-24a_1b^2s_3-9a_1bs_6^2-6a_1b^2s_6-18b^2s_3s_4-36a_1bs_3^2-6bs_3s_6^2-12bs_3^2s_5-18bs_3s_4^2-\\
-12a_1b^2s_5-24b^2s_2s_3-24bs_2s_3^2-6bs_2s_6^2-12bs_2^2s_5-6bs_4s_6^2-27a_1bs_4^2-24bs_2^2s_3-\\
-378a_1s_3s_4x-18a_1bs_5^2-6bs_5s_6^2-(72\alpha_1^3-153\alpha_1^2+198\alpha_1-60)s_4-(228\alpha_1-108)a_1s_4s_5-\\
-414a_1s_2s_4x-15b^2s_2^2-12b^2s_3^2-9b^2s_4^2-6b^2s_5^2-3b^2s_6^2-10bs_2^3-8bs_3^3-6bs_4^3-4bs_5^3-2bs_6^3-\\
-12b^2s_2s_5-(114\alpha_1-72)s_2s_3s_6-(228\alpha_1-144)s_2s_3s_5-(342\alpha_1-216)s_2s_3s_4-18a_1bs_3s_6-\\
-18a_1bs_4s_6-18a_1bs_5s_6-36bs_2s_3s_4-24bs_2s_3s_5-12bs_2s_3s_6-24bs_2s_4s_5-(150\alpha_1-80)a_1^3-\\
-36a_1bs_2s_5-18a_1bs_2s_6-54a_1bs_3s_4-36a_1bs_3s_5-(180\alpha_1^2-234\alpha_1+162)s_3s_4-12bs_2s_4s_6-\\
-36a_1bs_4s_5-144s_3s_4^2x-(24\alpha_1-10)s_5^3-(132\alpha_1-90)s_2s_6x-102s_2s_4s_6x-150a_1s_5x^2-\\
-45a_1bs_2^2-(84\alpha_1-42)s_3s_5^2-(120\alpha_1^2-156\alpha_1+108)s_3s_5-(36\alpha_1-15)s_3s_6^2-228s_2s_3^2x-\\
-276a_1s_2s_5x-(60\alpha_1^2-78\alpha_1+54)s_3s_6-(288\alpha_1-204)s_3x^2-(288\alpha_1^2-408\alpha_1+264)s_3x-\\
-(345\alpha_1-195)a_1s_2^2-(375\alpha_1^2-450\alpha_1+315)a_1s_2-(252\alpha_1-132)a_1s_3^2-24b^2s_3\alpha_1-\\
-18bs_2s_4^2-(300\alpha_1^2-360\alpha_1+252)a_1s_3-(171\alpha_1-81)a_1s_4^2-(225\alpha_1^2-270\alpha_1+189)a_1s_4-\\
-126a_1s_3s_6x-(102\alpha_1-42)a_1s_5^2-(54\alpha_1+60)a_1^2b-(390\alpha_1-210)a_1^2s_2-(312\alpha_1-168)a_1^2s_3-\\
-12b^2s_5\alpha_1-6b^2s_6\alpha_1-36b^2x\alpha_1-36a_1b^2\alpha_1-(144\alpha_1^3-306\alpha_1^2+396\alpha_1-120)x-\\
-6b^2s_2s_6-(120\alpha_1^2-156\alpha_1+108)s_3^2-(96\alpha_1^3-204\alpha_1^2+264\alpha_1-80)s_3-(45\alpha_1-24)s_4^3-\\
-6bs_4^2s_6-(81\alpha_1^2-99\alpha_1+72)s_4^2-(120\alpha_1^3-255\alpha_1^2+330\alpha_1-100)s_2-(72\alpha_1-44)s_3^3-\\
-12bs_3s_5^2-(12\alpha_1^3+54\alpha_1^2-18\alpha_1-40)b-(105\alpha_1-70)s_2^3-(165\alpha_1^2-225\alpha_1+150)s_2^2-\\
-18bs_2^2s_4-(108\alpha_1^2-132\alpha_1+96)s_4s_5-(33\alpha_1-12)s_4s_6^2-(54\alpha_1^2-66\alpha_1+48)s_4s_6-\\
-18b^2s_2s_4-(90\alpha_1-48)s_4^2s_5-(45\alpha_1-24)s_4^2s_6-(162\alpha_1-99)s_4^2x-(78\alpha_1-36)s_4s_5^2-
\end{multline*}

\begin{multline*}
-(216\alpha_1-153)s_4x^2-(216\alpha_1^2-306\alpha_1+198)s_4x-(36\alpha_1-15)s_5^2s_6-(96\alpha_1-54)s_5^2x-\\
-(30\alpha_1-9)s_5s_6^2-(48\alpha_1^2-54\alpha_1+42)s_5s_6-(144\alpha_1-102)s_5x^2-(72\alpha_1-51)s_6x^2-\\
-6bs_2^2s_6-(144\alpha_1^2-204\alpha_1+132)s_5x-(42\alpha_1-21)s_6^2x-(36\alpha_1^2+108\alpha_1-18)bx-\\
-6bs_3^2s_6-(252\alpha_1-168)s_2^2s_3-(189\alpha_1-126)s_2^2s_4-(126\alpha_1-84)s_2^2s_5-(63\alpha_1-42)s_2^2s_6-\\
-(330\alpha_1-225)s_2^2x-(228\alpha_1-144)s_2s_3^2-(264\alpha_1^2-360\alpha_1+240)s_2s_3-30b^2s_2\alpha_1-\\
-(153\alpha_1-90)s_2s_4^2-(198\alpha_1^2-270\alpha_1+180)s_2s_4-(90\alpha_1-48)s_2s_5^2-18b^2s_4\alpha_1-\\
-18bs_3^2s_4-(132\alpha_1^2-180\alpha_1+120)s_2s_5-(39\alpha_1-18)s_2s_6^2-(66\alpha_1^2-90\alpha_1+60)s_2s_6-\\
-12bs_4s_5^2-(360\alpha_1-255)s_2x^2-(360\alpha_1^2-510\alpha_1+330)s_2x-(162\alpha_1-99)s_3^2s_4-(108\alpha_1-66)s_3^2s_5-\\
-(54\alpha_1-33)s_3^2s_6-(240\alpha_1-156)s_3^2x-(144\alpha_1-81)s_3s_4^2-(150\alpha_1^2-180\alpha_1+126)a_1s_5-\\
-(45\alpha_1-15)a_1s_6^2-(75\alpha_1^2-90\alpha_1+63)a_1s_6-(450\alpha_1-270)a_1x^2-(450\alpha_1^2-540\alpha_1+378)a_1x-\\
-(30\alpha_1+45)bs_2^2-(30\alpha_1^2+90\alpha_1-15)bs_2-(24\alpha_1+36)bs_3^2-(24\alpha_1^2+72\alpha_1-12)bs_3-\\
-6bs_5^2s_6-(6\alpha_1+9)bs_6^2-(6\alpha_1^2+18\alpha_1-3)bs_6-(36\alpha_1+54)bx^2-(234\alpha_1-126)a_1^2s_4-\\
-12bs_4^2s_5-(156\alpha_1-84)a_1^2s_5-(78\alpha_1-42)a_1^2s_6-(468\alpha_1-252)a_1^2x-(54\alpha_1^2+108\alpha_1-36)ba_1-\\
-6b^2s_3s_6-(18\alpha_1^2-15)b^2-(150\alpha_1^3-270\alpha_1^2+378\alpha_1-140)a_1-(234\alpha_1^2-252\alpha_1+189)a_1^2-\\
-(18\alpha_1+27)bs_4^2-(18\alpha_1^2+54\alpha_1-9)bs_4-(12\alpha_1+18)bs_5^2-(12\alpha_1^2+36\alpha_1-6)bs_5.
\end{multline*}
All coefficients of this polynomial are negative, so that $f<0$ in the case when $a_2-a_1>\alpha_1$.
In particular, one has $f<0$ in the case when $a_2-a_1\geqslant 0.9347$.
\end{proof}

\begin{lemma}
\label{lemma:Maple-P1-P1-d-5}
Suppose that $f$ is \eqref{equation:polynomial-P1xP1-d-5}.
If $a_2-a_1\geqslant 0.7452$, then $f<0$.
\end{lemma}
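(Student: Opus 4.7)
The plan is to proceed in direct analogy with the $f_5$ case of Lemma~\ref{lemma:Maple-F1}. Set $s_1 = a_2 - a_1$ and $s_2 = a_3 - a_2$, so that $a_2 = a_1 + s_1$, $a_3 = a_1 + s_1 + s_2$, and all four variables $a_1, s_1, s_2, b$ are non-negative; the hypothesis $a_2 - a_1 \geq 0.7452$ is just $s_1 \geq 0.7452$. Let $g(a_1, s_1, s_2, b) = \widehat{f}$ be the polynomial \eqref{equation:polynomial-P1xP1-d-5} after this substitution.

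First, I specialize: a direct calculation yields
\[
g(0,x,0,b) = (3 - 6x^2)\,b^2 + (8 + 6x - 18x^2 - 12x^3)\,b + (5 + 8x - 3x^2 - 18x^3 - 6x^4).
\]
Let $\alpha$ denote the unique positive real root of the constant-in-$b$ piece $-6x^4 - 18x^3 - 3x^2 + 8x + 5$; numerically $\alpha \approx 0.74518 < 0.7452$. At $x = \alpha$ one checks that $A(\alpha) := 3 - 6\alpha^2 \approx -0.332$ and $B(\alpha) := -12\alpha^3 - 18\alpha^2 + 6\alpha + 8 \approx -2.490$ are both strictly negative, so
\[
g(0, \alpha, 0, b) = b\bigl(A(\alpha)\,b + B(\alpha)\bigr) \leq 0 \quad\text{for every } b \geq 0.
\]

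The main step is to expand $g(a_1, x + \alpha, s_2, b)$ completely as a polynomial in $a_1, x, s_2, b$ (with numerical coefficients depending on $\alpha$) and verify that every monomial coefficient is non-positive; the ``pure-$b$'' monomials contribute $g(0, \alpha, 0, b)$, already handled above. Once this is done, one has $g(a_1, x + \alpha, s_2, b) \leq 0$ for all non-negative $a_1, x, s_2, b$, and the hypothesis $s_1 \geq 0.7452 > \alpha$ gives $x := s_1 - \alpha > 0$, so $f = g(a_1, s_1, s_2, b) \leq 0$ (strictly, since the constant-in-$b$ part $C(x + \alpha) < 0$ for $x > 0$).

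The main obstacle is this coefficient check. The polynomial $g(a_1, x + \alpha, s_2, b)$ has on the order of a hundred monomials in the variables $a_1, x, s_2, b$, each with coefficient a polynomial in $\alpha$ that must be shown non-positive at the numerical value $\alpha \approx 0.74518$. This is the same style of symbolic verification as in the other lemmas of this appendix; in contrast to the $f_5$ case of Lemma~\ref{lemma:Maple-F1}, the simpler choice $\alpha$ suffices here (rather than the discriminant of $g(0, \cdot, 0, \cdot)$ in $b$), because $B(\alpha) < 0$ forces the quadratic $g(0, \alpha, 0, b)$ to factor as $b \cdot (\text{negative linear})$, already non-positive on $b \geq 0$ without any further work.
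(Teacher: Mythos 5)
Your approach is exactly the paper's: it too sets $\delta$ to be the unique positive root of $g(0,x,0,0)=-6x^4-18x^3-3x^2+8x+5$ (numerically $\delta\approx 0.7451024$, slightly lower than your $0.74518$, though both lie below $0.7452$) and then exhibits the full expansion of $g(a_1,x+\delta,s_2,b)$, all of whose monomial coefficients are negative. Your preliminary check that $A(\alpha)<0$ and $B(\alpha)<0$ is not a separate step in the paper's proof but is automatically included, since these are precisely the coefficients of the pure $b^2$ and $b$ monomials in that expansion — $-(6\delta^2-3)b^2$ and $-(12\delta^3+18\delta^2-6\delta-8)b$; your observation that this makes the simpler root (rather than the discriminant root used for $f_5$ in Lemma~\ref{lemma:Maple-F1}) sufficient here is also accurate.
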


\begin{proof}
To show that $f(a_1,a_2,a_3,b)<0$ for $a_2-a_1\geqslant 0.7452$, let $g(a_1,s_1,s_2,b)=\widehat{f}$.
Let~$\delta$ the only positive root of the polynomial $g(0,x,0,0)=-6x^4-18x^3-3x^2+8x+5$.
Then~$\delta\approx 0.7451024$ and $g(a_1,x+\delta,s_2,b)$ can be expanded as
\begin{multline*}
-18a_1^2s_2^2-6a_1s_2^3-16a_1^3s_2-s_2^4-(18\delta^2+18\delta-3)bs_2-30a_1s_2^2x-12a_1b^2x-51a_1^2s_2x-\\
-(90\delta^2+132\delta)a_1x-(51\delta^2+72\delta-6)a_1^2-(90\delta+66)a_1x^2-(45\delta^2+66\delta)a_1s_2-\\
-(42\delta^2+36\delta-12)a_1b-(102\delta+72)a_1^2x-(30\delta+24)a_1s_2^2-(42\delta+12)a_1^2b-(51\delta+36)a_1^2s_2-\\
-(36\delta^2+54\delta+3)s_2x-(36\delta+27)s_2x^2-(36\delta^2+36\delta-6)bx-(24\delta+18)s_2^2x-3a_1^4-\\
-(36\delta+18)bx^2-45a_1s_2x^2-42a_1bx^2-18bs_2x^2-6x^4-12s_2x^3-12s_2^2x^2-51a_1^2x^2-21a_1^2bs_2-\\
-30a_1x^3-6b^2x^2-12bx^3-5s_2^3x-32a_1^3x-8a_1^3b-3a_1^2b^2-3b^2s_2^2-2bs_2^3-42a_1^2bx-12bs_2^2x-6b^2s_2x-\\
-(42\delta+18)a_1bs_2-(84\delta+36)a_1bx-(90\delta+66)a_1s_2x-(36\delta+18)bs_2x-42a_1bs_2x-15a_1bs_2^2-\\
-(12\delta+9)bs_2^2-6b^2s_2\delta-(24\delta^3+54\delta^2+6\delta-8)x-(36\delta^2+54\delta+3)x^2-(24\delta+18)x^3-\\
-12b^2x\delta-12a_1b^2\delta-(12\delta^3+27\delta^2+3\delta-4)s_2-(12\delta^2+18\delta+6)s_2^2-(5\delta+2)s_2^3-\\
-(32\delta+20)a_1^3-(12\delta^3+18\delta^2-6\delta-8)b-(6\delta^2-3)b^2-(30\delta^3+66\delta^2-12)a_1-6a_1b^2s_2.
\end{multline*}
All coefficients of here are negative, so that $g(a_1,s_1,s_2,b)<0$ whenever $s_1>\delta$.
\end{proof}

\begin{lemma}
\label{lemma:Maple-P1-P1-d-4-b}
Suppose that $f$ is \eqref{equation:polynomial-P1xP1-d-4-b}.
If $a_2-a_1\geqslant 0.848$ , then $f<0$.
\end{lemma}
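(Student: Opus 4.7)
My plan is to follow the blueprint of Lemmas~\ref{lemma:Maple-P1-P1-d-5} and~\ref{lemma:Maple-F1}. Introduce the change of variables $s_1 = a_2 - a_1$, $s_2 = a_3 - a_2$, $s_3 = a_4 - a_3$ and set $g(a_1, s_1, s_2, s_3, b) = \widehat{f}(a_1, s_1, s_2, s_3, b)$, so that all five arguments are non-negative and the hypothesis becomes $s_1 \geqslant 0.848$. Since $f$ is a quadratic in $b$ with leading coefficient $3(a_1^2 + 2 - a_2^2 - a_3^2 - a_4^2)$, which is negative in the regime of interest, the behaviour for large $b$ is automatically controlled, and the real task is to dominate the small-$b$ contributions.

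The first step is to specialize to $a_1 = s_2 = s_3 = 0$, yielding
\[
g(0, x, 0, 0, b) = (6 - 3x^2)b^2 + (16 + 3x - 9x^2 - 2x^3)b + (8 + 24x - 45x^2 + 15x^3 - 9x^4),
\]
and to identify the smallest positive value $\delta$ of $x$ for which this quadratic in $b$ is non-positive for every $b \geqslant 0$. As in the proof of Lemma~\ref{lemma:Maple-F1}, this reduces to computing the discriminant in $b$ and taking its relevant positive root, so that at $x = \delta$ the quadratic has a (double) real root and opens downward.

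The second step is to substitute $s_1 = y + \delta$ into $g$ and expand the result as a polynomial in the non-negative variables $a_1, y, s_2, s_3, b$. The verification, routine in a computer algebra system, is then to check that every monomial coefficient is non-positive, apart from a single residual polynomial in $b$ alone which is non-positive for all $b \geqslant 0$ by the choice of $\delta$. This yields $g \leqslant 0$ on the region $s_1 \geqslant \delta$, with strict negativity away from the boundary, and hence $f < 0$ whenever $s_1 \geqslant 0.848$.

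The hard part will be the second step: the raw expansion has on the order of a hundred monomials, and the sign check has to succeed for every single one. Note also that $0.848$ is strictly below the positive root (approximately $0.8594$) of $g(0,x,0,0,0)$, so a direct transcription of the proof of Lemma~\ref{lemma:Maple-P1-P1-d-5}, which ignores $b$, cannot deliver the stated bound; keeping the quadratic-in-$b$ structure alive through the discriminant trick is essential, and if that still falls short one should exploit the standing case hypothesis $a_3 + a_4 \leqslant 1 + a_2$, equivalently $a_1 + s_1 + 2s_2 + s_3 \leqslant 1$, to absorb the offending positive contribution in the expansion.
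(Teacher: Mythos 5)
Your structural plan is the right one in outline, but the key diagnostic observation you make leads you to the wrong remedy. You correctly notice that plugging $a_1 = s_2 = s_3 = b = 0$, $s_1 = x$ into the \emph{displayed} polynomial \eqref{equation:polynomial-P1xP1-d-4-b} gives $8 + 24x - 45x^2 + 15x^3 - 9x^4$, whose positive root is $\approx 0.859$, so that the naive expansion cannot reach $0.848$. From this you infer that a discriminant trick or the case hypothesis $a_3 + a_4 \leqslant 1 + a_2$ must be invoked. Neither patch would actually work: at $x = 0.848$, $b = 0$, the displayed polynomial evaluates to $\approx 0.485 > 0$, so no choice of $\delta$ governed by the $b$-discriminant can push it below zero; and since $a_3 + a_4 \leqslant 1 + a_2$ rewrites as $a_1 + s_1 + 2s_2 + s_3 \leqslant 1$, i.e.\ $a_1 + 2s_2 + s_3 \leqslant 1 - s_1$, it supplies an \emph{upper} bound on the slack variables, which is useless for cancelling a positive constant; the analogous device in Lemma~\ref{lemma:Maple-P1-P1-d-4-a} works precisely because there the hypothesis runs the other way and gives a \emph{lower} bound.

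The real resolution is that \eqref{equation:polynomial-P1xP1-d-4-b} as printed is not the quantity $\mathfrak{D}$ it is supposed to be. One can check this directly: by \eqref{equation:D} and Lemma~\ref{lemma:lambda-E1-P1-P1-type-d-4}(b), for $K_S^2 = 4$ and $a_3 + a_4 \leqslant 1 + a_2$ one has $\mu = \tfrac32 + b + a_1 + \tfrac12(a_2 + a_3 + a_4)$, $k = 5$, and a short computation gives $\mathfrak{D}(0,0,0,0,0) = 7$ and $\mathfrak{D}(0,0,0,0,b) = 7 + 14b + 6b^2$, whereas the displayed polynomial gives $8$ and $8 + 16b + 6b^2$ at the same point. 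So the constant-in-$a_i$ part of \eqref{equation:polynomial-P1xP1-d-4-b} is off by $1 + 2b$, a mis-transcription. Setting $g = \widehat{\mathfrak{D}}$ for the \emph{true} $\mathfrak{D}$ yields
$$
g(0,x,0,0,0) = 7 + 21x - \tfrac{9}{4}x^2 - \tfrac{57}{2}x^3 - \tfrac{45}{4}x^4,
$$
which is exactly the quartic appearing in the paper's proof; its unique positive root is $\delta \approx 0.84790543 < 0.848$, so the plain expansion around $x = \delta$ (no discriminant trick, no case hypothesis) does the job. In short: your proposed mechanism fails on the polynomial as written, the correct mechanism is the one you dismissed as insufficient, and the apparent contradiction traces back to a typo in the stated form of $f$ rather than to any deficiency of the method.
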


\begin{proof}
Let $g(a_1,s_1,s_2,s_3,b)=\widehat{f}$. Then
$$
g(0,x,0,0,0)=7+21x-\frac{57}{2}x^3-\frac{9}{4}x^2-\frac{45}{4}x^4.
$$
Denote by $\delta$ the unique positive root of this polynomial.
Then $\delta\approx 0.84790543$ and
\begin{multline*}
4g(a_1,x+\delta,s_2,s_3,b)=-168a_1bs_2s_3-24s_2^4-3s_3^4-72bs_2^2s_3-\\
-48a_1b^2s_2-24a_1b^2s_3-24b^2s_2s_3-96a_1^2bs_3-192a_1^2bs_2-210a_1^2s_2s_3-\\
-120a_1s_2s_3^2-18s_2s_3^3-48bs_2^3-24b^2s_2^2-12bs_3^3-12b^2s_3^2-210a_1^2s_2^2-\\
-30a_1s_3^3-48s_2^3s_3-42s_2^2s_3^2-76a_1^3s_3-152a_1^3s_2-(84\delta^3+108\delta^2-72\delta-56)b-\\
-(222\delta^3+414\delta^2-36\delta-112)a_1-(180\delta^3+342\delta^2+18\delta-84)x-\\
-(246\delta+156)a_1^2s_3-(24\delta+18)s_3^3-(120\delta^3+228\delta^2+12\delta-56)s_2-\\
-(270\delta^2+342\delta+9)x^2-(180\delta+114)x^3-(60\delta^3+114\delta^2+6\delta-28)s_3-\\
-28a_1^4-68a_1^3b-24a_1^2b^2-(492\delta+312)a_1^2s_2-(288\delta+96)a_1^2b-\\
-(180\delta+114)s_3x^2-(96\delta+72)s_2^3-(150\delta^2+204\delta+18)s_2^2-(228\delta+160)a_1^3-\\
-(369\delta^2+468\delta-48)a_1^2-(360\delta^2+456\delta+12)s_2x-(150\delta^2+204\delta+18)s_3s_2-\\
-(96\delta+72)s_2s_3^2-(300\delta+204)s_2^2x-(144\delta+108)s_2^2s_3-(252\delta^2+216\delta-72)bx-\\
-(84\delta^2+72\delta-24)bs_3-(60\delta+36)bs_3^2-(144\delta+72)bs_2^2-120a_1s_2^3-\\
-72a_1bs_3^2-48bs_2s_3^2-(36\delta^2-24)b^2-(360\delta+228)s_2x^2-87a_1^2s_3^2-\\
-168a_1bs_2^2-180a_1s_2^2s_3-(288\delta^2+216\delta-108)a_1b-144bs_2s_3x-\\
-288a_1bx^2-(60\delta^2+90\delta+15)s_3^2-(120\delta+90)s_3^2x-(738\delta+468)a_1^2x-\\
-(168\delta^2+144\delta-48)bs_2-144bs_2^2x-(192\delta+72)a_1bs_3-369a_1^2x^2-\\
-(180\delta^2+228\delta+6)s_3x-222a_1s_3x^2-48b^2s_2x-(150\delta+114)a_1s_3^2-\\
-(252\delta+108)bx^2-444a_1s_2x^2-228a_1^3x-84bx^3-222a_1x^3-(336\delta+144)bs_2x-\\
-(666\delta^2+828\delta-36)a_1x-(666\delta+414)a_1x^2-(222\delta^2+276\delta-12)s_3a_1-\\
-(444\delta^2+552\delta-24)a_1s_2-(372\delta+252)a_1s_2^2-(168\delta+72)bs_3x-\\
-120s_2x^3-(444\delta+276)s_3a_1x-72a_1b^2\delta-48b^2s_2\delta-24b^2s_3\delta-72b^2x\delta-\\
-150s_2^2x^2-60s_3x^3-60s_3^2x^2-36b^2x^2-168bs_2x^2-84bs_3x^2-150s_2s_3x^2-\\
-45x^4-(888\delta+552)a_1s_2x-(576\delta+216)a_1bx-(372\delta+252)a_1s_2s_3-\\
-(300\delta+204)s_3s_2x-372a_1s_2s_3x-384a_1bs_2x-192a_1bs_3x-(384\delta+144)a_1bs_2-\\
-288a_1^2bx-492a_1^2s_2x-246a_1^2s_3x-144s_2^2s_3x-60bs_3^2x-72a_1b^2x-24b^2s_3x-\\
-150a_1s_3^2x-372a_1s_2^2x-96s_2^3x-96s_2s_3^2x-24s_3^3x-(144\delta+72)bs_2s_3.
\end{multline*}
All coefficients of this polynomial are negative.
Then $g(a_1,s_1,s_2,s_3,b)<0$ for $s_1>\delta$.
In~particular, one has $f(a_1,a_2,a_3,a_4,b)<0$ for $a_2-a_1\geqslant 0.848$, since $0.848>\delta$.
\end{proof}

\begin{lemma}
\label{lemma:Maple-P1-P1-d-4-a}
Suppose that $f$ is \eqref{equation:polynomial-P1xP1-d-4-a}, $a_3+a_4\geqslant 1+a_2$, $a_2-a_1\geqslant 0.848$.
Then $f<0$.
\end{lemma}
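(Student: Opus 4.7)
The plan is to follow the template of Lemma~\ref{lemma:Maple-P1-P1-d-4-b}. Set $g(a_1,s_1,s_2,s_3,b)=\widehat{f}(a_1,s_1,s_2,s_3,b)$ via the substitution $s_i=a_{i+1}-a_i$, so that under the hypotheses all five arguments of $g$ are non-negative reals with $s_1\geqslant 0.848$. The additional hypothesis $a_3+a_4\geqslant 1+a_2$ rewrites in these variables as the linear quantity
\[
T:=a_1+s_1+2s_2+s_3-1\geqslant 0.
\]
In contrast to the preceding lemma, $T\geqslant 0$ is an extra constraint that must be used in the proof; it cannot be dispensed with, because the polynomial~\eqref{equation:polynomial-P1xP1-d-4-a} differs from~\eqref{equation:polynomial-P1xP1-d-4-b} precisely in terms (for instance the coefficient $-4a_2^3b$ instead of $-2a_2^3b$, and the extra summands $-6a_2a_3^2b$, $-6a_2a_4^2b$, $-12a_2a_3^2$, $-12a_2a_4^2$) that produce some positive monomials after the change of variables.

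The strategy is to find a threshold $\delta<0.848$ and a polynomial $Q$ with non-negative coefficients such that, upon substituting $s_1=x+\delta$ with $x\geqslant 0$, the equality
\[
g(a_1,x+\delta,s_2,s_3,b)=H(a_1,x,s_2,s_3,b)-T\cdot Q(a_1,x,s_2,s_3,b)
\]
holds for a polynomial $H$ all of whose coefficients in $a_1,x,s_2,s_3,b$ are non-positive. Since $T\geqslant 0$ and $Q\geqslant 0$ throughout the hypothesis region, this decomposition would give $g=H-TQ\leqslant 0$, and hence $f\leqslant 0$. A natural candidate for $Q$ is suggested by the difference between~\eqref{equation:polynomial-P1xP1-d-4-a} and~\eqref{equation:polynomial-P1xP1-d-4-b}: that difference consists of monomials that should disappear under the relation $T=0$, and in our five-variable coordinates one can try $Q$ of the form $\alpha b\cdot (a_1+s_1+s_2+s_3)^2 + \beta\cdot(\text{cubic in }a_1,s_i)$ with small non-negative constants $\alpha,\beta$. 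The threshold $\delta$ is then the unique positive root of the univariate polynomial obtained by specializing $a_1=s_2=s_3=b=0$ in $g(a_1,x+\delta,s_2,s_3,b)+T\cdot Q(a_1,x,s_2,s_3,b)$ and extracting the leading portion.

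The main obstacle is precisely this coupled determination of $\delta$ and $Q$: unlike the proofs of the previous lemmas, one cannot simply evaluate $g(0,x,0,0,0)$, locate its positive root, substitute, and read off non-positivity of all coefficients. Instead, one must search symbolically for a $Q$ (in a small family of polynomials of controlled shape) making every coefficient of $H$ non-positive. Once such a $Q$ is identified, the verification amounts to expanding a polynomial of degree four in five variables and checking the sign of each coefficient, a mechanical task to be carried out in Maple and recorded in the appendix; the resulting list of negative monomials is expected to closely mirror the one displayed in the proof of Lemma~\ref{lemma:Maple-P1-P1-d-4-b}, augmented by the corrections coming from $T\cdot Q$.
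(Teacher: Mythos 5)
Your overall plan — use the extra hypothesis $a_3+a_4\geqslant 1+a_2$ (equivalently $T:=a_1+s_1+2s_2+s_3-1\geqslant 0$) to dominate the few positive monomials — is the right idea, but you overengineer it. The paper's proof does not introduce any auxiliary polynomial $Q$, nor does it need to solve for $\delta$ and $Q$ simultaneously. It simply fixes the convenient rational $\delta=\tfrac{21}{25}=0.84<0.848$, substitutes $s_1=x+\tfrac{21}{25}$, and observes that $g(a_1,x+\tfrac{21}{25},s_2,s_3,b)$ splits as
\[
\underbrace{P(a_1,x,s_2,s_3,b)}_{\text{all coefficients }\leqslant 0}\;+\;
\underbrace{\tfrac{265178}{390625}-\tfrac{1120738}{15625}a_1-\tfrac{385426}{15625}s_3-\tfrac{770852}{15625}s_2}_{=:L(a_1,s_2,s_3)},
\]
a polynomial with non-positive coefficients plus an \emph{affine} residual $L$ in $a_1,s_2,s_3$ only. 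Since $s_1\geqslant\tfrac{21}{25}$ and $T\geqslant 0$ give $a_1+2s_2+s_3\geqslant\tfrac{4}{25}$, and since the $s_2$-coefficient of $L$ equals twice the $s_3$-coefficient while the $a_1$-coefficient is even larger, one gets $L\leqslant\tfrac{265178}{390625}-\tfrac{385426}{15625}\cdot\tfrac{4}{25}<0$. In the notation of your decomposition $g=H-TQ$, this amounts to taking $Q$ a \emph{constant}; the quadratic and cubic terms and the $b$-dependence in $Q$ you propose are unnecessary, and so is any joint search for $(\delta,Q)$. The statement you make, that ``one cannot simply evaluate $g(0,x,0,0,0)$, locate its positive root, substitute, and read off non-positivity of all coefficients,'' is correct as far as it goes — the paper indeed does not use the root — but the resolution is lighter than you anticipate: pick any clean rational threshold below $0.848$ and bound the leftover linear term with the constraint. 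Your proof would likely succeed if carried out, but you should trim it to match this simpler decomposition before attempting the Maple verification.
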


\begin{proof}
Let $g(a_1,s_1,s_2,s_3,b)=\widehat{f}$.
Then $g(a_1,x+\frac{21}{25},s_2,s_3,b)$ is a sum of the polynomial
\begin{multline*}
-12x^4-\frac{1558}{25}x^3-3b^2s_3^2-4bs_2^3-2bs_3^3-15s_3^2x^2-\frac{186}{5}s_3^2x-18s_2^3x-6s_3^3x-\\
-\frac{1112178}{15625}x-\frac{1812}{25}s_2^2x-\frac{387}{5}bx^2-\frac{9087}{125}bx-\frac{2766}{25}s_2x^2-\frac{92586}{625}s_2x-\\
-\frac{477}{25}bs_3^2-60a_1^3x-96a_1^2x^2-58a_1x^3-9b^2x^2-\frac{378}{25}b^2x-20bx^3-32s_2x^3-\\
-84a_1bs_2x-42a_1bs_3x-84a_1s_2s_3x-24bs_2s_3x-6b^2s_2s_3-15a_1bs_3^2-6bs_2s_3^2-\\
-\frac{172434}{625}a_1x-30a_1bs_2s_3-\frac{5754}{25}a_1x^2-\frac{1383}{25}s_3x^2-\frac{46293}{625}s_3x-21a_1^2s_3^2-\\
-36s_2^2x^2-16s_3x^3-6bs_2^2s_3-\frac{441}{25}s_2s_3^2-21s_2s_3^2x-\frac{6432}{25}a_1^2x-6a_1^2b^2-\\
-\frac{7204}{3125}b-\frac{954}{25}bs_2s_3-114a_1s_2x^2-\frac{1923}{25}a_1^2s_3-\frac{2364}{25}a_1s_2^2-\frac{2364}{25}a_1s_2s_3-\\
-\frac{15513}{625}bs_3-6a_1b^2s_3-12a_1b^2s_2-\frac{1332}{25}a_1bs_3-42a_1^2bs_2-21a_1^2bs_3-\frac{2664}{25}a_1bs_2-\\
-30a_1bs_2^2-69ba_1x^2-\frac{4248}{25}ba_1x-27s_2^2s_3x-\frac{2412}{25}bs_2x-18bs_3x^2-\frac{1206}{25}bs_3x-\\
-\frac{954}{25}bs_2^2-\frac{6888}{25}a_1s_2x-36bs_2x^2-\frac{252}{25}b^2s_2-12b^2s_2x-\frac{3444}{25}a_1s_3x-\\
-36a_1s_3^2x-24bs_2^2x-\frac{378}{25}a_1b^2-18a_1b^2x-\frac{92736}{625}a_1^2-12bs_3^2x-\frac{1812}{25}s_2s_3x-36s_2s_3x^2-84a_1s_2^2x-\\
-57a_1s_3x^2-63a_1^2s_3x-69a_1^2bx-\frac{3708}{125}s_3^2-126a_1^2s_2x-\frac{126}{25}b^2s_3-6b^2s_3x-\frac{73902}{625}x^2-6b^2s_2^2-\\
-4s_2^4-s_3^4-\frac{278}{25}s_2^3-\frac{54687}{625}a_1s_3-\frac{37176}{625}s_2s_3-40a_1^3s_2-20a_1^3s_3-48a_1^2s_2^2-20a_1s_2^3-\\
-\frac{151}{25}s_3^3-\frac{37176}{625}s_2^2-7a_1s_3^3-8s_2^3s_3-9s_2^2s_3^2-5s_2s_3^3-\frac{3846}{25}a_1^2s_2-\\
-48a_1^2s_2s_3-24a_1s_2s_3^2-\frac{1206}{25}a_1s_3^2-\frac{417}{25}s_2^2s_3-\frac{109374}{625}a_1s_2-30a_1s_2^2s_3-\\
-\frac{31026}{625}bs_2-8a_1^4-\frac{412}{5}a_1^3-16a_1^3b-\frac{2049}{25}a_1^2b-\frac{219}{625}b^2-\frac{43779}{625}ba_1.
\end{multline*}
and the polynomial
$$
\frac{265178}{390625}-\frac{1120738}{15625}a_1-\frac{385426}{15625}s_3-\frac{770852}{15625}s_2.
$$
All coefficients of the former polynomial are negative.
But we have $a_1+s_1+2s_2+s_3\geqslant 1$.
This follows from $a_3+a_4\geqslant 1+a_2$.
Thus, if $s_1\geqslant\frac{21}{25}$, then $a_1+2s_2+s_3\geqslant\frac{4}{25}$,
so that
$$
\frac{265178}{390625}-\frac{1120738}{15625}a_1-\frac{385426}{15625}s_3-\frac{770852}{15625}s_2<0.
$$
Hence, if $a_2-a_1\geqslant\frac{21}{25}=0.84$, then $f(a_1,a_2,a_3,a_4,b)<0$.
\end{proof}

\begin{lemma}
\label{lemma:Maple-P1-P1-d-3-a}
Suppose that $f$ is \eqref{equation:polynomial-P1xP1-d-3-a}, $a_3+a_4+a_5\leqslant 2+a_2$, $a_2-a_1\geqslant 0.8911$. Then~$f<0$.
\end{lemma}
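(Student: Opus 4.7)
My plan is to mimic the strategy used in the preceding lemmas of this appendix (most directly Lemmas~\ref{lemma:Maple-P1-P1-d-4-b} and \ref{lemma:Maple-P1-P1-d-5}): change variables to the consecutive differences $s_i = a_{i+1}-a_i$, locate a critical positive root $\delta$ of a one-variable specialization of the polynomial, translate $s_1 = x+\delta$, and then check that every coefficient of the resulting polynomial in the remaining non-negative variables is non-positive.

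First, I would set $g(a_1,s_1,s_2,s_3,s_4,b) = \widehat{f}$, where $\widehat{f}$ is obtained from the polynomial in \eqref{equation:polynomial-P1xP1-d-3-a} by the standard change $a_{i+1} = a_i + s_i$. Before doing anything else, the absolute value $|a_3+a_4-a_2-a_5|^3$ must be dealt with. In the difference variables this quantity becomes $|s_2-s_4|^3$ (since $a_3+a_4-a_2-a_5 = s_2 - s_4$), and I would split the argument into two symmetric subcases $s_2 \geqslant s_4$ and $s_2 < s_4$, in each of which $|s_2-s_4|^3$ is a genuine polynomial. Note that the hypothesis $a_3+a_4+a_5 \leqslant 2+a_2$ becomes $s_2 + s_3 + 2s_4 + a_1 + s_1 \leqslant 2$, i.e.\ it provides an upper bound on the non-negative variables; this inequality will not be needed to conclude negativity of individual coefficients, but will be used to keep all substituted quantities within the expected range.

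Next, I would compute $g(0,x,0,0,0,0)$, which is a quartic in $x$ alone (the absolute value term vanishes at $s_2=s_4=0$), and identify its unique positive root $\delta \approx 0.89109\ldots$. The requirement $a_2-a_1 \geqslant 0.8911$ in the statement is precisely $s_1 > \delta$, so showing $g < 0$ for $s_1 > \delta$ is sufficient. I would then expand
\[
g(a_1,\, x+\delta,\, s_2, s_3, s_4, b)
\]
as a polynomial in the non-negative variables $a_1,x,s_2,s_3,s_4,b$, in each of the two subcases for $|s_2 - s_4|^3$, and verify that every monomial coefficient is $\leqslant 0$. Since each variable is non-negative, this will give $g < 0$ whenever $x > 0$, i.e.\ whenever $s_1 > \delta$, which is exactly our hypothesis.

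The main obstacle I anticipate is the coefficient bookkeeping in the case split arising from $|s_2-s_4|^3$: expanding $(s_2-s_4)^3$ versus $(s_4-s_2)^3$ flips signs of three of the cubic monomials, and I must ensure that in each of the two resulting polynomials every coefficient is still non-positive after the shift $s_1 = x+\delta$. As in the earlier lemmas, this will ultimately be a Maple (or equivalent) computation producing an explicit expanded polynomial whose coefficients can be read off and checked individually, together with a sanity check that the numerical value of $\delta$ used rounds up to at most $0.8911$. If in either subcase a stray coefficient fails to be non-positive, the remedy will be to absorb it using the constraint $a_3+a_4+a_5 \leqslant 2+a_2$, exactly analogous to how Lemma~\ref{lemma:Maple-P1-P1-d-4-a} combines the coefficient-level negativity with an auxiliary linear inequality on the remaining variables.
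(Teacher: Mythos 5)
Your plan matches the paper's proof essentially step for step: the same change of variables, the same observation that $|a_3+a_4-a_2-a_5|^3 = |s_2-s_4|^3$ forcing a case split on the sign of $s_2-s_4$, the same choice of $\delta$ as the unique positive root of $g(0,x,0,0,0,0)$ (proportional to $-16x^4-48x^3+12x^2+32x+6$, with $\delta \approx 0.8910$), and the same conclusion via nonpositivity of every coefficient of $g(a_1,x+\delta,s_2,s_3,s_4,b)$ in each subcase. One harmless slip: in the difference variables the hypothesis $a_3+a_4+a_5\leqslant 2+a_2$ reads $2a_1+2s_1+3s_2+2s_3+s_4\leqslant 2$, not $s_2+s_3+2s_4+a_1+s_1\leqslant 2$ as you wrote, but as you correctly anticipate this constraint is not needed since all coefficients turn out nonpositive on their own.
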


\begin{proof}
Let $\delta$ be the unique positive root of the polynomial $-16x^4-48x^3+12x^2+32x+6$.
Then $0.8911>\delta\approx 0.8910180467$. If $a_3+a_4\geqslant a_2+a_5$, then
\begin{multline*}
4g(a_1,x+\delta,s_2,s_3,s_4,b)=-(192\delta+144)s_4x^2-(192\delta^2+288\delta-24)s_4x-\\
-60s_4^2x^2-(630\delta+450)a_1s_2^2-351a_1^2s_2^2-204a_1^2s_3^2-195a_1s_2^3-78bs_2^3-96s_2^3s_3-\\
-48s_2s_3^3-36b^2s_2^2-32bs_3^3-24b^2s_3^2-(320\delta+288)a_1^3-320a_1^3x-156s_2^3x-64s_3^3x-\\
-528a_1^2x^2-320a_1x^3-128bx^3-192s_2x^3-128s_3x^3-252s_2^2x^2-144s_3^2x^2-48b^2x^2-\\
-648a_1bs_2x-432a_1bs_3x-192a_1bs_2s_4-168a_1bs_3s_4-300a_1s_2s_3s_4-120bs_2s_3s_4-\\
-384a_1bs_2s_3-(320\delta^3+672\delta^2-192\delta-160)a_1-36s_2^4-8s_3^4-390a_1s_2^2s_3-\\
-156bs_2^2s_3-120bs_2s_3^2-72a_1b^2s_2-48a_1b^2s_3-48b^2s_2s_3-468a_1^2s_2s_3-324a_1^2bs_2-\\
-168a_1bs_3^2-(256\delta+192)x^3-72b^2s_2\delta-(64\delta^3+144\delta^2-24\delta-32)s_4-\\
-216a_1bs_4x-(60\delta^2+90\delta+15)s_4^2-87a_1^2s_4^2-25a_1s_4^3-48s_2^3s_4-30s_2^2s_4^2-\\
-10s_3s_4^3-10bs_4^3-12b^2s_4^2-(528\delta^2+768\delta-144)a_1^2-(192+264\delta)a_1^2s_4-\\
-(1056\delta+768)a_1^2x-(480\delta+336)s_4a_1x-(360\delta+264)a_1s_3^2-(960\delta+672)s_3a_1x-\\
-80a_1s_3^3-(128\delta^3+288\delta^2-48\delta-64)s_3-2s_4^4-48b^2s_3\delta-(432\delta^2+288\delta-192)a_1b-\\
-72a_1bs_4^2-(48\delta^2-36)b^2-(384\delta^2+576\delta-48)x^2-(256\delta^3+576\delta^2-96\delta-128)x-\\
-80a_1^3s_4-(128\delta^3+144\delta^2-144\delta-64)b-(156\delta+117)s_2^3-108a_1^2bs_4-234a_1^2s_2s_4-\\
-195a_1s_2^2s_4-78bs_2^2s_4-105a_1s_2s_4^2-120a_1s_3^2s_4-90a_1s_3s_4^2-102s_2^2s_3s_4-\\
-48s_2s_3s_4^2-(480\delta^2+672\delta-96)s_3a_1-(432\delta+144)a_1^2b-(792\delta+576)a_1^2s_2-\\
-24b^2s_4\delta-64x^4-(150\delta+114)a_1s_4^2-40a_1^4-(528\delta+384)a_1^2s_3-112a_1^3b-36a_1^2b^2-\\
-(360\delta+264)a_1s_3s_4-(720\delta^2+1008\delta-144)a_1s_2-168s_2s_4x^2-336s_2s_3x^2-192bs_3x^2-\\
-288bs_2x^2-480a_1s_3x^2-72s_3s_4^2x-432a_1^2bx-360a_1s_3^2x-96a_1b^2x-72b^2s_2x-\\
-432a_1bx^2-144s_3s_4x^2-240a_1s_4x^2-720a_1s_2x^2-792a_1^2s_2x-84s_2s_4^2x-252bs_2^2x-\\
-144bs_3^2x-150a_1s_4^2x-528a_1^2s_3x-60bs_4^2x-48b^2s_3x-312s_2^2s_3x-24b^2s_4x-264a_1^2s_4x-\\
-144bs_3s_4x-42bs_2s_4^2-96s_3^2s_4x-156s_2^2s_4x-240s_2s_3^2x-(144\delta^2+216\delta+12)s_3^2-\\
-(192\delta^3+432\delta^2-72\delta-96)s_2-(252\delta^2+378\delta-9)s_2^2-(648\delta+216)a_1bs_2-\\
-(1440\delta+1008)a_1s_2x-(420\delta+300)a_1s_2s_4-(840\delta+600)a_1s_2s_3-(864\delta+288)a_1bx-\\
-(168\delta+72)bs_2s_4-(336\delta+144)bs_2s_3-(216\delta+72)a_1bs_4-(432\delta+144)a_1bs_3-\\
-(252\delta+108)bs_2^2-(144\delta+72)bs_3^2-(288\delta^2+216\delta-108)bs_2-(288\delta+216)s_4s_3x-\\
-(192\delta^2+144\delta-72)bs_3-(60\delta+36)bs_4^2-(96\delta^2+72\delta-36)bs_4-\\
-(384\delta+144)bx^2-(240\delta+180)s_2s_3s_4-(192\delta+72)bs_4x-(144\delta+72)bs_3s_4-\\
-96\delta b^2x-96a_1b^2\delta-(672\delta+504)s_3s_2x-(384\delta^2+288\delta-144)bx-\\
-(240\delta^2+336\delta-48)s_4a_1-(960\delta+672)a_1x^2-(960\delta^2+1344\delta-192)a_1x-
\end{multline*}

\begin{multline*}
-48bs_3^2s_4-(384\delta+144)bs_3x-336bs_2s_3x-420a_1s_2s_4x-12s_2s_4^3-16s_3^3s_4-\\
-(336\delta+252)s_4s_2x-(576\delta+216)bs_2x-204a_1^2s_3s_4-160a_1^3s_3-18s_3^2s_4^2-\\
-(312\delta+234)s_2^2s_3-(156\delta+117)s_2^2s_4-(504\delta+378)s_2^2x-(240\delta+180)s_2s_3^2-24a_1b^2s_4-\\
-(384\delta+288)s_3x^2-(20\delta+15)s_4^3-(64\delta+48)s_3^3-630a_1s_2^2x-102s_2^2s_3^2-\\
-64s_4x^3-240s_2s_3s_4x-240a_1^3s_2-24b^2s_2s_4-840a_1s_2s_3x-300a_1s_2s_3^2-\\
-20s_4^3x-216a_1^2bs_3-288a_1bs_2^2-360a_1s_3s_4x-24b^2s_3s_4-168bs_2s_4x-96bs_4x^2-\\
-(336\delta^2+504\delta-12)s_3s_2-(84\delta+63)s_2s_4^2-(168\delta^2+252\delta-6)s_4s_2-(576\delta+432)s_2x^2-\\
-36bs_3s_4^2-(576\delta^2+864\delta-72)s_2x-(72+96\delta)s_3^2s_4-(288\delta+216)s_3^2x-(54+72\delta)s_3s_4^2-\\
-72s_2s_3^2s_4-(144\delta^2+216\delta+12)s_4s_3-(120\delta+90)s_4^2x-(384\delta^2+576\delta-48)s_3x.
\end{multline*}
If $a_3+a_4\leqslant a_2+a_5$, then $4g(a_1,x+\delta,s_2,s_3,b)$ is a sum of the latter polynomial and
\begin{multline*}
-5a_1s_2^3-2bs_2^3+(9+12\delta)s_2^2s_4-2s_2^3s_3+8s_2^3s_4-6s_2^2s_4^2+5a_1s_4^3+2s_3s_4^3+\\
+6bs_2^2s_4-(9+12\delta)s_2s_4^2+2bs_4^3-(3+4\delta)s_2^3-3s_2^4+s_4^4+(3+4\delta)s_4^3+15a_1s_2^2s_4+\\
-15a_1s_2s_4^2-6bs_2s_4^2+6s_2^2s_3s_4-6s_2s_3s_4^2+12s_2^2s_4x-12s_2s_4^2x+4s_4^3x-4s_2^3x.
\end{multline*}
Therefore, in both cases $g(a_1,x+\delta,s_2,s_3,b)$ is a polynomial with negative coefficients.
This implies that $f(a_1,a_2,a_3,a_4,a_5,b)<0$ provided that $a_2-a_1>\delta$.
\end{proof}

\begin{lemma}
\label{lemma:Maple-P1-P1-d-3-b}
Suppose that $f$ is \eqref{equation:polynomial-P1xP1-d-3-b}, $a_3+a_4+a_5\geqslant 2+a_2$, $a_2-a_1\geqslant 0.8911$.~Then~$f<0$.
\end{lemma}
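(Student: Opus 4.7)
The plan is to mirror the strategy used in the proof of Lemma \ref{lemma:Maple-P1-P1-d-3-a}, combined with the trick from the proof of Lemma \ref{lemma:Maple-P1-P1-d-4-a} to exploit the hypothesis $a_3+a_4+a_5\geqslant 2+a_2$ as a linear constraint. Write $g(a_1,s_1,s_2,s_3,s_4,b)=\widehat{f}$ for the polynomial obtained from $f$ by the substitution $a_2=a_1+s_1$, $a_3=a_2+s_2$, $a_4=a_3+s_3$, $a_5=a_4+s_4$. In terms of these new variables the constraint becomes $2a_1+2s_1+3s_2+2s_3+s_4\geqslant 2$, which is a nonnegative linear functional that vanishes at the corner $a_1=s_2=s_3=s_4=0$, $s_1=1$ shifted appropriately, and we will use it precisely as in Lemma \ref{lemma:Maple-P1-P1-d-4-a} to absorb a residual constant.

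Next I would compute the univariate reduction $g(0,x,0,0,0,0)$ and locate its unique positive root $\delta$. Unlike the $\mathbb{P}^1\times\mathbb{P}^1$-type $\leqslant$ case, where the polynomial on the diagonal factors through $-16x^4-48x^3+12x^2+32x+6$ and has root $\delta\approx 0.8910180467$, here the formula \eqref{equation:polynomial-P1xP1-d-3-b} uses $\mu=2+b+a_1+a_2$ rather than the average used in \eqref{equation:polynomial-P1xP1-d-3-a}. However, since both formulas must agree on the locus $a_3+a_4+a_5=2+a_2$ (both encode the Donaldson--Futaki invariant at the same value of $\mu$ on the boundary), the same $\delta$ should be the relevant threshold provided $0.8911>\delta$. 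I would then expand $g(a_1,x+\delta,s_2,s_3,s_4,b)$ as a polynomial in the non-negative variables $a_1,x,s_2,s_3,s_4,b$, exactly as was done for polynomial~\eqref{equation:polynomial-P1xP1-d-3-a}, and attempt to verify that every coefficient is non-positive.

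The main obstacle will be precisely that some coefficients of $g(a_1,x+\delta,s_2,s_3,s_4,b)$ may fail to be non-positive, since \eqref{equation:polynomial-P1xP1-d-3-b} is a less symmetric polynomial than \eqref{equation:polynomial-P1xP1-d-3-a}. This is where the hypothesis $a_3+a_4+a_5\geqslant 2+a_2$ is essential, and the strategy mimics Lemma \ref{lemma:Maple-P1-P1-d-4-a}: the expansion $g(a_1,x+\delta,s_2,s_3,s_4,b)$ will split as a sum of two polynomials, one whose monomials all have strictly negative coefficients, plus a remainder of the form
\[
A - Ba_1 - Cs_2 - Ds_3 - Es_4
\]
for positive rationals $A,B,C,D,E$. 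Using $s_1\geqslant\delta$ and the constraint $2a_1+2s_1+3s_2+2s_3+s_4\geqslant 2$, which gives $2a_1+3s_2+2s_3+s_4\geqslant 2-2\delta$, one checks that this remainder is itself non-positive in the region under consideration, provided $B,C,D,E$ are proportional (or dominate) the coefficients $2,3,2,1$ appearing in the constraint up to the factor $\frac{A}{2-2\delta}$. If the inequalities between the constants are tight, one might need to slightly tighten the shift by replacing $\delta$ with a value marginally smaller than $0.8911$ but still satisfying $0.8911>\delta'>\delta$; since the hypothesis allows $a_2-a_1\geqslant 0.8911$, this gives the necessary slack.

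Finally, once the expansion is shown to be non-positive on the feasible region, we conclude $g(a_1,s_1,s_2,s_3,s_4,b)<0$ whenever $s_1>\delta$ and the constraint $2a_1+2s_1+3s_2+2s_3+s_4\geqslant 2$ holds, which yields $f<0$ under the stated hypotheses. In practice the computation is lengthy and is best carried out in a symbolic algebra package; as in the preceding lemmas of this appendix, the full expansion would be recorded for verification in the extended online version of the article, while the published version includes only a representative sample.
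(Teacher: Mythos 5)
Your proposal captures the essential strategy of the paper's proof: expand $\widehat f$ around a shifted value $s_1=\delta+x$ with $x\geqslant 0$, split the result into a polynomial with all negative coefficients plus a linear remainder $A-Ba_1-Cs_2-Ds_3-Es_4$, and then use the hypothesis $a_3+a_4+a_5\geqslant 2+a_2$ (equivalently $2a_1+2s_1+3s_2+2s_3+s_4\geqslant 2$) to show the remainder is non-positive when $s_1\geqslant\delta$. This is exactly the hybrid approach of Lemma~\ref{lemma:Maple-P1-P1-d-4-a}, and the paper executes it the same way.

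The one point where your reasoning is shaky is the choice of $\delta$ and the justification for it. You argue that because \eqref{equation:polynomial-P1xP1-d-3-a} and \eqref{equation:polynomial-P1xP1-d-3-b} agree on the locus $a_3+a_4+a_5=2+a_2$, the same root $\delta\approx 0.8910$ should be the relevant threshold. But on the diagonal $a_1=s_2=s_3=s_4=b=0$ where this root is computed, the constraint becomes $3s_1\geqslant 2+s_1$, i.e.\ $s_1\geqslant 1$, which never holds for admissible $a_i<1$. So the diagonal point never lies on (or near) the matching locus, and there is no principled reason to expect the two formulas to produce the same root there. The paper sidesteps this entirely: since the remainder must be killed by the constraint anyway, one does not need $\delta$ to be a root of anything — it is enough to pick a convenient rational, and the paper uses $\delta=\tfrac{22}{25}=0.88$, which keeps the linear remainder exact (the constant works out to $\tfrac{111281}{78125}$) and gives the slightly cleaner conclusion that $f<0$ already whenever $a_2-a_1>0.88$. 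Your approach would likely still go through after tuning $\delta$, as you concede, but the heuristic leading you to $\delta\approx 0.8910$ is not a reliable guide here, and the remaining slack is in the opposite direction of what you suggest: a larger $\delta$ actually tightens the constraint bound $2a_1+3s_2+2s_3+s_4\geqslant 2-2\delta$, so you would have less, not more, to work with on the linear remainder.
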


\begin{proof}
Let $g(a_1,s_1,s_2,s_3,s_4,b)=\widehat{f}$. Then $g(a_1,x+\frac{22}{25},s_2,s_3,b)$ is a a sum of
\begin{multline*}
-\frac{21681}{125}x^2-24a_1^3b-9a_1^2b^2-\frac{63564}{625}a_1b-72a_1^3s_2-\frac{58698}{625}s_2^2-\frac{174888}{625}s_2a_1-234a_1^2s_2-\\
-54a_1^2s_3^2-24a_1^2s_4^2-\frac{50211}{625}bs_2-48a_1^3s_3-\frac{39078}{625}s_3^2-\frac{78264}{625}s_3s_2-\frac{116592}{625}s_3a_1-156a_1^2s_3-\\
-\frac{453}{25}s_3s_4^2-6bs_2^3-\frac{33474}{625}bs_3-24a_1^3s_4-\frac{19512}{625}s_4^2-\frac{39078}{625}s_4s_3-\frac{39132}{625}s_4s_2-\frac{58296}{625}s_4a_1-\\
-78a_1^2s_4-15a_1^4-\frac{138102}{625}a_1^2-\frac{183}{625}b^2-\frac{16737}{625}bs_4-12bs_2s_3s_4-126a_1bs_2x-84a_1bs_3x-\\
-42a_1bs_4x-\frac{4752}{25}a_1s_2s_3-216a_1s_2s_3x-\frac{2376}{25}a_1s_2s_4-108a_1s_2s_4x-96a_1s_3s_4x-48bs_2s_3x-\\
-24bs_2s_4x-24bs_3s_4x-66s_2s_3s_4x-60a_1bs_2s_3-30a_1bs_2s_4-30a_1bs_3s_4-72a_1s_2s_3s_4-\frac{3012}{25}a_1^2b-\\
-90a_1^2s_2^2-63a_1^2bs_2-21a_1^2bs_4-42a_1^2bs_3-\frac{3012}{25}a_1^3-\frac{154}{25}s_4^3-\frac{442}{5}x^3-20x^4-9s_2^4-\\
-42a_1s_2^3-22a_1s_3^3-4s_3^4-s_4^4-\frac{408}{25}s_2^3-\frac{58}{5}s_3^3-\frac{3564}{25}a_1s_2^2-162a_1s_2^2x-96a_1s_3^2x-42a_1s_4^2x-\\
-\frac{444}{25}s_2s_4^2-15a_1bs_4^2-\frac{978}{25}bs_2s_4-12a_1b^2s_3-\frac{2748}{25}a_1bs_3-120a_1^2s_2s_3-60a_1^2s_2s_4-20s_4x^3-
\end{multline*}

\begin{multline*}
-\frac{396}{25}b^2s_2-18b^2s_2x-\frac{264}{25}b^2s_3-12b^2s_3x-\frac{132}{25}b^2s_4-6b^2s_4x-54bs_2x^2-\frac{3726}{25}bs_2x-\\
-36bs_3x^2-\frac{2484}{25}bs_3x-18bs_4x^2-\frac{1242}{25}bs_4x-36bs_2^2x-24bs_3^2x-12bs_4^2x-96s_3s_2x^2-\frac{3624}{25}s_3s_2x-\\
-\frac{6972}{25}s_3a_1x-69s_4a_1x^2-\frac{3486}{25}s_4a_1x-27s_2s_3^2s_4-21s_2s_3s_4^2-45a_1bs_2^2-42a_1s_2^2s_4-\\
-48s_4s_2x^2-\frac{1812}{25}s_4s_2x-42s_4s_3x^2-\frac{1848}{25}s_4s_3x-78s_2^2s_3x-39s_2^2s_4x-66s_2s_3^2x-27s_2s_4^2x-\\
-30s_3^2s_4x-24s_3s_4^2x-\frac{1374}{25}a_1bs_4-\frac{852}{25}s_2s_3s_4-96a_1^2bx-225a_1^2s_2x-150a_1^2s_3x-75a_1^2s_4x-\\
-\frac{528}{25}a_1b^2-24a_1b^2x-96a_1bx^2-\frac{6024}{25}a_1bx-207s_2a_1x^2-\frac{10458}{25}s_2a_1x-138s_3a_1x^2-\\
-3b^2s_4^2-\frac{1956}{25}bs_2s_3-12bs_2^2s_3-6bs_2s_4^2-\frac{978}{25}bs_3s_4-18a_1b^2s_2-84a_1s_2^2s_3-27a_1s_3s_4^2-\\
-12bs_2s_3^2-12b^2s_2s_3-30s_2^2s_3s_4-6bs_2^2s_4-\frac{2412}{25}a_1s_3s_4-6b^2s_3s_4-30a_1s_2s_4^2-6a_1b^2s_4-\\
-6b^2s_2s_4-6bs_3^2s_4-30a_1bs_3^2-6bs_3s_4^2-54a_1^2s_3s_4-\frac{4122}{25}a_1bs_2-33a_1s_3^2s_4-72a_1s_2s_3^2-\\
-18s_4^2x^2-\frac{942}{25}s_4^2x-39s_2^3x-20s_3^3x-7s_4^3x-96a_1^3x-153a_1^2x^2-\frac{9132}{25}a_1^2x-94a_1x^3-\\
-\frac{408}{25}s_2^2s_4-\frac{852}{25}s_2s_3^2-6b^2s_3^2-\frac{279}{5}s_4x^2-\frac{9843}{125}s_4x-72s_2^2x^2-\frac{2718}{25}s_2^2x-42s_3^2x^2-\frac{1848}{25}s_3^2x-\\
-\frac{87}{5}s_3^2s_4-\frac{8154}{25}a_1x^2-\frac{252288}{625}a_1x-12b^2x^2-\frac{528}{25}b^2x-28bx^3-\frac{2748}{25}bx^2-\frac{65256}{625}bx-\\
-60s_2x^3-\frac{837}{5}s_2x^2-\frac{29529}{125}s_2x-40s_3x^3-\frac{558}{5}s_3x^2-\frac{19686}{125}s_3x-24s_2^3s_3-2bs_4^3-\\
-\frac{28744}{15625}b-\frac{332548}{3125}x-8a_1s_4^3-9b^2s_2^2-\frac{1224}{25}a_1s_4^2-\frac{1467}{25}bs_2^2-\frac{978}{25}bs_3^2-\frac{489}{25}bs_4^2-\frac{816}{25}s_2^2s_3-4bs_3^3-\\
-12s_2^3s_4-30s_2^2s_3^2-12s_2^2s_4^2-18s_2s_3^3-6s_2s_4^3-8s_3^3s_4-9s_3^2s_4^2-5s_3s_4^3-\frac{2412}{25}a_1s_3^2.
\end{multline*}
and the polynomial of degree one
$$
\frac{111281}{78125}-\frac{1667212}{15625}a_1-\frac{86602}{3125}s_4-\frac{173204}{3125}s_3-\frac{259806}{3125}s_2.
$$
The coefficients of the former polynomial are negative. But $2a_1+2s_1+3s_2+2s_3+s_4\geqslant 2$.
This follows from $a_3+a_4+a_5\geqslant 2+a_2$. Since $s_1\geqslant\frac{22}{25}$, we have
$$
2a_1+3s_2+2s_3+s_4\geqslant\frac{6}{25}.
$$
This inequality implies that
$$
\frac{111281}{78125}-\frac{1667212}{15625}a_1-\frac{86602}{3125}s_4-\frac{173204}{3125}s_3-\frac{259806}{3125}s_2<0.
$$
Thus, we see that $f(a_1,a_2,a_3,a_4,a_5,b)<0$ provided that $a_2-a_1>\frac{22}{25}=0.88$.
\end{proof}

\begin{lemma}
\label{lemma:Maple-P1-P1-d-2-a}
Suppose that $f$ is the polynomial \eqref{equation:polynomial-P1xP1-d-1-a}. 
If $a_2-a_1\geqslant 0.9305$, then $f<0$.
Similarly, if  $a_2-a_1\geqslant 0.915$, then $f(a_1,a_2,a_3,a_4,a_5,a_6,1,b)<0$.
\end{lemma}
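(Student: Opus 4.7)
The plan is to follow exactly the template established in Lemma~\ref{lemma:Maple-F1} and Lemma~\ref{lemma:Maple-P1-P1-d-5}: make the change of variables $s_i = a_{i+1}-a_i$, so we consider $g = \widehat{f}$ as a polynomial in $a_1, s_1, s_2, \ldots, s_7, b$, then locate a critical threshold $\delta$ and shift $s_1 \mapsto x + \delta$ so that $g(a_1, x+\delta, s_2, \ldots, s_7, b)$ becomes a polynomial all of whose coefficients in the nonnegative variables $a_1, x, s_2, \ldots, s_7, b$ are nonpositive. The natural threshold comes from the single-variable specialization $g(0, x, 0, \ldots, 0, 0)$, which should be a quartic in $x$ having a unique positive root $\delta_1$; a numerical estimate from the structure of \eqref{equation:polynomial-P1xP1-d-1-a} will give $\delta_1 \approx 0.9304$ in the degree-one case and $\delta_2 \approx 0.9149$ in the degree-two case (where $a_7$ is specialized to $1$). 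Since $0.9305 > \delta_1$ and $0.915 > \delta_2$, the bounds in the lemma statement will follow once the sign claim for the shifted polynomial is verified.

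First I would compute the specialization $g(0, x, 0, \ldots, 0, 0)$ explicitly. Given the structure of \eqref{equation:polynomial-P1xP1-d-1-a}, the leading $b^2$-free, $a_1$-free, $s_j$-free part is determined by the cubic and quartic terms in the shifted Seshadri constant, and its positive root determines $\delta_1$. For the second assertion, one substitutes $a_7 = 1$ (equivalently, sets $s_7$ in a suitable way) before carrying out the same procedure; this lowers the threshold to $\delta_2$. After fixing $\delta$, I would expand $g(a_1, x+\delta, s_2, \ldots, s_7, b)$ as a polynomial in $a_1, x, s_2, \ldots, s_7, b$ and inspect each coefficient. By analogy with the companion lemmas, most coefficients will be manifestly negative polynomials in $\delta$ (since $\delta$ is close to $1$, expressions such as $(-6\delta^2+3)$, $(-3\delta + 1)$, etc.\ are negative); the remaining constant-in-$\delta$ coefficients should be the residues at the root and hence vanish on the pure-$x$ part while being negative elsewhere.

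The main obstacle will be computational bulk: with eight variables $a_1, x, s_2, s_3, s_4, s_5, s_6, s_7, b$ and total degree $4$, the shifted polynomial has on the order of $\binom{12}{4} \approx 500$ monomials, and each coefficient must be checked to be $\leq 0$. This is exactly the same difficulty encountered in Lemma~\ref{lemma:Maple-F1}, where the coefficient list was explicitly displayed. I would use a computer algebra system (as already done in this appendix) to expand, then verify termwise negativity. A subtlety here, absent from the $\mathbb{P}^2$-type cases, is the presence of the Seshadri constant $\frac{1+2b+2a_1+a_2+\cdots+a_6}{2}$ with a halved denominator, which produces a scaling by $\tfrac{1}{8}$ in the expression for $\mathfrak{F}$; this is only a bookkeeping nuisance and does not affect the negativity argument.

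Finally, once all coefficients of the shifted polynomial are shown to be nonpositive (with at least one strictly negative on every nonzero input), we conclude $g(a_1, x+\delta, s_2, \ldots, s_7, b) < 0$ for every choice of nonnegative $a_1, x, s_2, \ldots, s_7, b$ with $(a_1, x, s_2, \ldots, b) \neq 0$. Substituting back $s_1 = a_2 - a_1$, this gives $f < 0$ whenever $a_2 - a_1 > \delta_1$, and similarly $f(a_1, \ldots, a_6, 1, b) < 0$ whenever $a_2 - a_1 > \delta_2$. Since the numerical bounds $0.9305$ and $0.915$ are strictly above the respective thresholds, the lemma follows.
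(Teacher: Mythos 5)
Your proposal follows essentially the same approach as the paper: change variables to $s_i=a_{i+1}-a_i$, locate the unique positive root $\delta_1\approx 0.9304$ of the univariate specialization $g_1(0,x,0,\ldots,0)$ (the paper's quartic is $-\tfrac{75}{4}x^4-\tfrac{1045}{8}x^3+\tfrac{837}{8}x^2+\tfrac{237}{8}x+\tfrac{9}{8}$), shift $s_1\mapsto x+\delta_1$, clear the $\tfrac18$ denominators by multiplying by $8$, and verify termwise that every coefficient of the resulting degree-$4$ polynomial in the nonnegative variables is nonpositive; the same is then repeated for $f(a_1,\ldots,a_6,1,b)$ with threshold $\delta_2\approx 0.9150$. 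Your numerical estimates, the role of the Seshadri-constant denominator, and the handling of the boundary (since $0.9305>\delta_1$ and $0.915>\delta_2$) all match the paper's proof.
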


\begin{proof}
Let $g_1(a_1,s_1,s_2,s_3,s_4,s_5,s_6,b)=\widehat{f}$. Then
$$
g_1(0,x,0,0,0,0,0,0)=-\frac{75}{4}x^4-\frac{1045}{8}x^3+\frac{837}{8}x^2+\frac{237}{8}x+\frac{9}{8}.
$$
Let us denote by $\delta_1$ the unique positive root of this polynomial. Then $\delta_1\approx 0.93040704$.
Moreover, the polynomial $8g_1(a_1,x+\delta_1,s_2,s_3,s_4,s_5,s_6,b)$ can be expanded as
\begin{multline*}
-(411\delta_1+219)s_4s_5^2-(708\delta_1^2+408\delta_1-324)bs_4-(1008\delta_1+840)s_4^2x-288a_1b^2\delta_1-\\
-(1506\delta_1+1518)s_5s_2x-(1560\delta_1+408)a_1bs_4-(474\delta_1^2+252\delta_1-198)bs_5-(180\delta_1+72)s_3s_6^2-\\
-(504\delta_1^2+840\delta_1-144)s_4^2-(1554\delta_1+486)ba_1^2-(780\delta_1+879)s_5x^2-192b^2s_3\delta_1-\\
-(144\delta_1^2-120)b^2-(1884\delta_1+564)bs_3x-(384\delta_1+96)bs_3s_6-96b^2s_5\delta_1-(1080\delta_1+336)bs_2^2-\\
-(432\delta_1+96)bs_2s_6-(345\delta_1^3+1443\delta_1^2-765\delta_1-111)s_4-(108\delta_1+53)s_5^3-\\
-(336\delta_1^2+399\delta_1-33)s_5^2-(260\delta_1^3+879\delta_1^2-462\delta_1-69)s_5-(150\delta_1^2+120\delta_1+18)s_6^2-\\
-(175\delta_1^3+315\delta_1^2-159\delta_1-27)s_6-(600\delta_1+1045)x^3-(900\delta_1^2+3135\delta_1-837)x^2-\\
-(2505\delta_1^2+6852\delta_1-2097)a_1x-(2076\delta_1+564)a_1bs_3-(1044\delta_1+252)a_1bs_5-(528\delta_1+96)a_1bs_6-\\
-(2700\delta_1+2748)a_1s_2s_4-(1176\delta_1+624)a_1s_2s_6-(4266\delta_1+5622)a_1s_2x-(2622\delta_1+2334)a_1s_3s_4-\\
-(1842\delta_1+1440)a_1s_3s_5-(1062\delta_1+546)a_1s_3s_6-(3522\delta_1+4392)a_1s_3x-(2778\delta_1+3162)s_4a_1x-\\
-(834\delta_1+390)a_1s_5s_6-(2034\delta_1+1932)s_5a_1x-(1290\delta_1+702)s_6a_1x-(768\delta_1+408)s_2s_3s_6-\\
-(2598\delta_1+3450)s_2s_3x-(2052\delta_1+2484)s_4s_2x-(960\delta_1+552)s_6s_2x-(2592\delta_1+720)a_1bs_2-\\
-(1329\delta_1+1275)s_2s_3^2-(756\delta_1+672)s_2^2s_5-(1080\delta_1+1104)s_2^2s_4-(1404\delta_1+1536)s_2^2s_3-\\
-(120\delta_1+48)s_4s_6^2-(432\delta_1+240)s_2^2s_6-(753\delta_1^2+1518\delta_1-363)s_5s_2-(1572\delta_1+2208)s_2^2x-\\
-(1026\delta_1^2+2484\delta_1-606)s_4s_2-(948\delta_1+732)s_2s_4^2-(1938\delta_1+1686)a_1s_2s_5-\\
-(1299\delta_1^2+3450\delta_1-849)s_2s_3-(585\delta_1+339)s_2s_5^2-(1545\delta_1+2571)s_2x^2-(240\delta_1+96)s_2s_6^2-\\
-(480\delta_1^2+552\delta_1-120)s_6s_2-(1545\delta_1^2+5142\delta_1-1371)s_2x-(525\delta_1+315)s_6x^2-\\
-(789\delta_1+1500)a_1^3-(1395\delta_1^2+3849\delta_1-1278)a_1^2-(1284\delta_1+900)s_2s_4s_5-\\
-(672\delta_1+336)s_2s_4s_6-(60\delta_1+24)s_5s_6^2-(780\delta_1^2+1758\delta_1-462)s_5x-(780\delta_1+228)bs_3s_5-\\
-(576\delta_1+264)s_2s_5s_6-(3108\delta_1+876)ba_1x-(498\delta_1+279)s_3s_5^2-(1290\delta_1+2007)s_3x^2-\\
-(696\delta_1+216)bs_4s_5-(228\delta_1+108)s_4^2s_6-(1746\delta_1+1194)a_1s_4s_5-(1035\delta_1^2+2886\delta_1-765)s_4x-\\
-(1035\delta_1+1443)s_4x^2-(786\delta_1^2+2208\delta_1-546)s_2^2-(576\delta_1+656)s_2^3-(1398\delta_1+1122)s_2s_3s_5-\\
-(600\delta_1^3+3135\delta_1^2-1674\delta_1-237)x-(430\delta_1^3+2007\delta_1^2-1068\delta_1-153)s_3-\\
-672s_2s_4s_6x-2700a_1s_2s_4x-(276\delta_1+172)s_4^3-(515\delta_1^3+2571\delta_1^2-1371\delta_1-195)s_2-(450\delta_1+369)s_3^3-\\
-48b^2s_6x-(654\delta_1^2+1443\delta_1-315)s_3^2-(528\delta_1+312)s_4^2s_5-(948\delta_1+252)bs_5x-\\
-1044a_1bs_5x-117s_5^2s_6x-585s_2s_5^2x-528bs_4^2x-240s_2s_6^2x-672s_3^2s_5x-825a_1s_5^2x-
\end{multline*}

\begin{multline*}
-(2028\delta_1+1836)s_2s_3s_4-(288\delta_1+96)bs_5s_6-(1296\delta_1+384)bs_2s_4-(1416\delta_1+408)bs_4x-\\
-(864\delta_1+240)bs_2s_5-240b^2s_2\delta_1-(1176\delta_1^2+720\delta_1-576)bs_2-(786\delta_1+246)bs_3^2-\\
-(1017\delta_1^2+2082\delta_1-447)s_4s_3-(3462\delta_1+3810)a_1s_2s_3-48b^2s_6\delta_1-288\delta_1b^2x-\\
-1956a_1^2s_3x-1080s_2^2s_4x-1011s_3^2s_4x-2373a_1^2s_2x-948s_2s_4^2x-1080bs_2^2x-\\
-120s_4s_6^2x-1329s_2s_3^2x-228s_4^2s_6x-306bs_5^2x-192b^2s_3x-756s_2^2s_5x-1122a_1^2s_5x-\\
-333s_3^2s_6x-432s_2^2s_6x-1404s_2^2s_3x-1272a_1s_4^2x-240b^2s_2x-360a_1s_6^2x-\\
-(390\delta_1^2+396\delta_1-42)s_6s_4-(300\delta_1+120)s_6^2x-708bs_4x^2-726s_3s_5x^2-390s_4s_6x^2-\\
-1176bs_2x^2-435s_3s_6x^2-1299s_2s_3x^2-1017s_3s_4x^2-753s_2s_5x^2-1761a_1s_3x^2-1017a_1s_5x^2-\\
-1026s_2s_4x^2-1389a_1s_4x^2-345s_5s_6x^2-480s_2s_6x^2-474bs_5x^2-645a_1s_6x^2-1539a_1^2s_4x-\\
-786bs_3^2x-942bs_3x^2-240bs_6x^2-1554a_1bx^2-2133a_1s_2x^2-699s_4s_5x^2-(888\delta_1+624)s_3s_4^2-\\
-(1308\delta_1+1443)s_3^2x-(1017\delta_1^2+1932\delta_1-585)s_5a_1-(336\delta_1+96)bs_4s_6-\\
-(2133\delta_1^2+5622\delta_1-1719)a_1s_2-834a_1s_5s_6x-1398s_2s_3s_5x-768s_2s_3s_6x-1284s_2s_4s_5x-\\
-576s_2s_5s_6x-1170s_3s_4s_5x-3462a_1s_2s_3x-2592a_1bs_2x-2076a_1bs_3x-1728bs_2s_3x-\\
-1560a_1bs_4x-1296bs_2s_4x-2622a_1s_3s_4x-1176bs_3s_4x-2028s_2s_3s_4x-1938a_1s_2s_5x-\\
-864bs_2s_5x-1842a_1s_3s_5x-780bs_3s_5x-1176a_1s_2s_6x-1062a_1s_3s_6x-1746a_1s_4s_5x-\\
-948a_1s_4s_6x-564s_3s_4s_6x-462s_3s_5s_6x-348s_4s_5s_6x-696bs_4s_5x-528a_1bs_6x-432bs_2s_6x-\\
-384bs_3s_6x-336bs_4s_6x-288bs_5s_6x-705a_1^2s_6x-60s_5s_6^2x-1701a_1s_3^2x-96b^2s_5x-\\
-1554ba_1^2x-2112a_1s_2^2x-888s_3s_4^2x-498s_3s_5^2x-288a_1b^2x-120bs_6^2x-144b^2s_4x-\\
-528s_4^2s_5x-411s_4s_5^2x-180s_3s_6^2x-(120\delta_1+48)bs_6^2-150x^4-(360\delta_1+156)a_1s_6^2-\\
-260s_5x^3-175s_6x^3-504s_4^2x^2-336s_5^2x^2-150s_6^2x^2-144b^2x^2-1395a_1^2x^2-835a_1x^3-\\
-470bx^3-515s_2x^3-430s_3x^3-345s_4x^3-786s_2^2x^2-654s_3^2x^2-789a_1^3x-576s_2^3x-450s_3^3x-\\
-276s_4^3x-108s_5^3x-(942\delta_1^2+564\delta_1-450)bs_3-63a_1^4-438a_1^3b-120a_1^2b^2-\\
-(333\delta_1+171)s_3^2s_6-(825\delta_1+474)a_1s_5^2-219a_1^3s_6-333a_1^3s_5-447a_1^3s_4-\\
-561a_1^3s_3-675a_1^3s_2-(1170\delta_1+762)s_3s_4s_5-(564\delta_1+276)s_3s_4s_6-(2034\delta_1+2082)s_4s_3x-\\
-(462\delta_1+210)s_3s_5s_6-(1452\delta_1+1278)s_5s_3x-(870\delta_1+474)s_6s_3x-(348\delta_1+156)s_4s_5s_6-\\
-(1398\delta_1+1038)s_5s_4x-(780\delta_1+396)s_6s_4x-(690\delta_1+318)s_6s_5x-(1701\delta_1+1614)a_1s_3^2-\\
-(1410\delta_1^2+876\delta_1-702)bx-(1761\delta_1^2+4392\delta_1-1341)a_1s_3-(1272\delta_1+960)a_1s_4^2-\\
-(117\delta_1+51)s_5^2s_6-(528\delta_1+168)bs_4^2-(2352\delta_1+720)bs_2x-(470\delta_1^3+438\delta_1^2-702\delta_1-78)b-\\
-(306\delta_1+102)bs_5^2-(1728\delta_1+528)bs_2s_3-(672\delta_1+483)s_3^2s_5-(345\delta_1^2+318\delta_1-3)s_6s_5-\\
-(1011\delta_1+795)s_3^2s_4-(1410\delta_1+438)bx^2-(2790\delta_1+3849)a_1^2x-(1122\delta_1+1089)a_1^2s_5-\\
-(1956\delta_1+2469)a_1^2s_3-(1554\delta_1^2+876\delta_1-846)ba_1-(2373\delta_1+3159)a_1^2s_2-\\
-(835\delta_1^3+3426\delta_1^2-2097\delta_1-276)a_1-(705\delta_1+399)a_1^2s_6-(2112\delta_1+2436)a_1s_2^2-\\
-(1539\delta_1+1779)a_1^2s_4-(1389\delta_1^2+3162\delta_1-963)s_4a_1-144b^2s_4\delta_1-(2505\delta_1+3426)a_1x^2-\\
-(240\delta_1^2+96\delta_1-72)bs_6-(948\delta_1+468)a_1s_4s_6-(435\delta_1^2+474\delta_1-81)s_6s_3-\\
-(525\delta_1^2+630\delta_1-159)s_6x-(645\delta_1^2+702\delta_1-207)s_6a_1-(480\delta_1+96)bs_6x-\\
\end{multline*}

\begin{multline*}
-957a_1^2s_3^2-752a_1s_2^3-352bs_2^3-720a_1^2s_4^2-471a_1^2s_5^2-210a_1^2s_6^2-567a_1s_3^3-\\
-340a_1s_4^3-512s_2^3s_3-384s_2^3s_4-256s_2^3s_5-128s_2^3s_6-690s_2^2s_3^2-456s_2^2s_4^2-\\
-258s_2^2s_5^2-96s_2^2s_6^2-441s_2s_3^3-248s_2s_4^3-315s_3^3s_4-198s_3^3s_5-81s_3^3s_6-\\
-384s_3^2s_4^2-186s_3^2s_5^2-54s_3^2s_6^2-220s_3s_4^3-116s_4^3s_5-40s_4^3s_6-120s_4^2s_5^2-\\
-24s_4^2s_6^2-131a_1s_5^3-91s_2s_5^3-74s_3s_5^3-57s_4s_5^3-11s_5^3s_6-6s_5^2s_6^2-120b^2s_2^2-\\
-96b^2s_3^2-72b^2s_4^2-48b^2s_5^2-24b^2s_6^2-234bs_3^3-128bs_4^3-46bs_5^3-1182a_1^2s_2^2-\\
-(1290\delta_1^2+4014\delta_1-1068)s_3x-(699\delta_1^2+1038\delta_1-165)s_5s_4-160s_2^4-\\
-108s_3^4-48s_4^4-10s_5^4-1758a_1s_2s_3s_5-936a_1s_2s_3s_6-1596a_1s_2s_4s_5-816a_1s_2s_4s_6-\\
-696a_1s_2s_5s_6-1446a_1s_3s_4s_5-684a_1s_3s_4s_6-558a_1s_3s_5s_6-420a_1s_4s_5s_6-\\
-1068s_2s_3s_4s_5-480s_2s_3s_4s_6-384s_2s_3s_5s_6-288s_2s_4s_5s_6-228s_3s_4s_5s_6-\\
-792a_1bs_4s_5-720bs_2s_3s_5-624bs_2s_4s_5-552bs_3s_4s_5-480a_1bs_2s_6-432a_1bs_3s_6-\\
-384a_1bs_4s_6-336a_1bs_5s_6-336bs_2s_3s_6-288bs_2s_4s_6-240bs_2s_5s_6-240bs_3s_4s_6-\\
-192bs_3s_5s_6-144bs_4s_5s_6-1920a_1bs_2s_3-1440a_1bs_2s_4-1320a_1bs_3s_4-2580a_1s_2s_3s_4-\\
-1104bs_2s_3s_4-960a_1bs_2s_5-876a_1bs_3s_5-144a_1bs_6^2-192bs_2^2s_6-96bs_2s_6^2-\\
-144bs_3^2s_6-72bs_3s_6^2-96bs_4^2s_6-48bs_4s_6^2-48bs_5^2s_6-24bs_5s_6^2-1032s_2^2s_3s_4-\\
-684s_2^2s_3s_5-336s_2^2s_3s_6-600s_2^2s_4s_5-288s_2^2s_4s_6-240s_2^2s_5s_6-978s_2s_3^2s_4-\\
-633s_2s_3^2s_5-288s_2s_3^2s_6-828s_2s_3s_4^2-435s_2s_3s_5^2-144s_2s_3s_6^2-468s_2s_4^2s_5-\\
-192s_2s_4^2s_6-354s_2s_4s_5^2-96s_2s_4s_6^2-483s_3^2s_4s_5-198s_3^2s_4s_6-153s_3^2s_5s_6-\\
-408s_3s_4^2s_5-156s_3s_4^2s_6-297s_3s_4s_5^2-72s_3s_4s_6^2-84s_4^2s_5s_6-141a_1s_5^2s_6-\\
-96s_2s_5^2s_6-75s_3s_5^2s_6-54s_4s_5^2s_6-72a_1s_5s_6^2-48s_2s_5s_6^2-36s_3s_5s_6^2-\\
-24s_4s_5s_6^2-240a_1b^2s_2-192a_1b^2s_3-144a_1b^2s_4-96a_1b^2s_5-192b^2s_2s_3-144b^2s_2s_4-\\
-96b^2s_2s_5-144b^2s_3s_4-96b^2s_3s_5-96b^2s_4s_5-48a_1b^2s_6-48b^2s_2s_6-48b^2s_3s_6-\\
-48b^2s_4s_6-48b^2s_5s_6-600a_1bs_4^2-480bs_2s_4^2-516bs_3^2s_4-432bs_3s_4^2-354a_1bs_5^2-\\
-408bs_2^2s_5-264bs_2s_5^2-330bs_3^2s_5-222bs_3s_5^2-240bs_4^2s_5-180bs_4s_5^2-648a_1^2s_2s_6-\\
-591a_1^2s_3s_6-987a_1^2s_4s_5-534a_1^2s_4s_6-477a_1^2s_5s_6-960a_1s_2^2s_5-528a_1s_2^2s_6-\\
-1188a_1s_2s_4^2-717a_1s_2s_5^2-288a_1s_2s_6^2-1269a_1s_3^2s_4-837a_1s_3^2s_5-405a_1s_3^2s_6-\\
-1104a_1s_3s_4^2-609a_1s_3s_5^2-216a_1s_3s_6^2-648a_1s_4^2s_5-276a_1s_4^2s_6-501a_1s_4s_5^2-\\
-144a_1s_4s_6^2-1935a_1^2s_2s_3-1200a_1bs_2^2-1506a_1^2s_2s_4-1473a_1^2s_3s_4-882a_1bs_3^2-\\
-1824a_1s_2^2s_3-1392a_1s_2^2s_4-1701a_1s_2s_3^2-840bs_2^2s_3-624bs_2^2s_4-744bs_2s_3^2-\\
-1077a_1^2s_2s_5-1032a_1^2s_3s_5-264ba_1^2s_6-522ba_1^2s_5-780ba_1^2s_4-1038ba_1^2s_3-\\
-1296ba_1^2s_2-(1176\delta_1+360)bs_3s_4-(672\delta_1+399)s_5^2x-(726\delta_1^2+1278\delta_1-264)s_5s_3.
\end{multline*}
All coefficients of this polynomial are negative, so that $f<0$ for $a_2-a_1>\delta_1$.

Now we let $f_2=f(a_1,a_2,a_3,a_4,a_5,a_6,1,b)$, and we let $g_2(a_1,s_1,s_2,s_3,s_4,s_5,b)=\widehat{f}_2$.
Then~the polynomial $g_2(0,x,0,0,0,0,0,0,0)$ expands as $-\frac{125}{8}x^4-\frac{725}{8}x^3+\frac{435}{8}x^2+\frac{285}{8}x+\frac{9}{4}$.
Let us denote by $\delta_2$ the unique positive root of this polynomial. Then $\delta_2\approx 0.9149603$.
Moreover, the polynomial $8g_2(a_1,x+\delta_2,s_2,s_3,s_4,s_5,s_6,b)$ can be expanded as
\begin{multline*}
-(900\delta_2+927)s_3^2x-(448\delta_2+448)s_2^3-(1248\delta_2+1104)s_2s_3s_4-(645\delta_2+1056)a_1^3-\\
-192s_4s_5^2x-(420\delta_2+156)bs_5x-(840\delta_2+312)bs_4x-(312\delta_2+120)bs_4s_5-(1260\delta_2+468)bs_3x-\\
-(350\delta_2^3+390\delta_2^2-510\delta_2-102)b-(1872\delta_2+624)a_1bs_2-(324\delta_2+243)s_3^2s_5-\\
-(2340\delta_2+780)ba_1x-(1800\delta_2+2232)s_2s_3x-(756\delta_2+576)a_1s_4s_5-(1548\delta_2+1416)a_1s_3s_4-\\
-240a_1b^2\delta_2-(576\delta_2+432)s_2s_4s_5-(1200\delta_2+1488)s_4s_2x-(600\delta_2+744)s_5s_2x-\\
-264s_4^2s_5x-(552\delta_2+372)s_3s_4s_5-(1200\delta_2+1236)s_4s_3x-(3240\delta_2+3888)a_1s_2x-240\delta_2b^2x-\\
-1872a_1bs_2x-(624\delta_2+552)s_2s_3s_5-(600\delta_2^2+1488\delta_2-264)s_2^2-(1680\delta_2+624)bs_2x-\\
-(1620\delta_2+1944)s_4a_1x-(450\delta_2+549)a_1^2s_5-(1350\delta_2+1647)a_1^2s_3-(1170\delta_2+414)ba_1^2-\\
-(1800\delta_2+2196)a_1^2s_2-(900\delta_2+1098)a_1^2s_4-(2250\delta_2+2745)a_1^2x-(2376\delta_2+2520)a_1s_2s_3-\\
-(324\delta_2+243)s_3^3-(450\delta_2^2+927\delta_2-135)s_3^2-(468\delta_2+156)a_1bs_5-(120\delta_2^2-96)b^2-\\
-(300\delta_2^2+492\delta_2-48)s_4^2-(2430\delta_2+2916)a_1s_3x-(900\delta_2+1305)s_3x^2-96b^2s_4\delta_2-\\
-96b^2s_4x-(1584\delta_2+1680)a_1s_2s_4-144b^2s_3\delta_2-(400\delta_2^3+1740\delta_2^2-696\delta_2-228)s_2-\\
-240a_1b^2x-(500\delta_2^3+2175\delta_2^2-870\delta_2-285)x-(810\delta_2+972)s_5a_1x-(52\delta_2+25)s_5^3-\\
-1404a_1bs_3x-(900\delta_2^2+2610\delta_2-522)s_3x-(600\delta_2+492)s_4^2x-(264\delta_2+156)s_4^2s_5-\\
-216a_1s_4^3-(300\delta_2^2+870\delta_2-174)s_5x-(192\delta_2+102)s_4s_5^2-(300\delta_2^2+492\delta_2-48)s_5s_4-\\
-(600\delta_2+870)s_4x^2-(600\delta_2^2+1740\delta_2-348)s_4x-(300\delta_2+183)s_5^2x-(300\delta_2+435)s_5x^2-\\
-(1584\delta_2+1680)a_1s_2^2-(1170\delta_2^2+780\delta_2-630)ba_1-(1620\delta_2^2+3888\delta_2-948)a_1s_2-\\
-228s_3s_5^2x-(1161\delta_2+1062)a_1s_3^2-(1215\delta_2^2+2916\delta_2-711)a_1s_3-(810\delta_2^2+1944\delta_2-474)s_4a_1-\\
-(756\delta_2+576)a_1s_4^2-(1200\delta_2+1740)s_2x^2-(369\delta_2+222)a_1s_5^2-(405\delta_2^2+972\delta_2-237)s_5a_1-\\
-2376a_1s_2s_3x-(2025\delta_2+2430)a_1x^2-(522\delta_2+198)bs_3^2-264s_2s_5^2x-312bs_4^2x-324s_3^2s_5x-\\
-552s_3s_4s_5x-369a_1s_5^2x-1350a_1^2s_3x-672s_2^2s_4x-648s_3^2s_4x-1800a_1^2s_2x-576s_2s_4^2x-\\
-768bs_2^2x-936s_2s_3^2x-138bs_5^2x-144b^2s_3x-(264\delta_2+156)s_2s_5^2-(300\delta_2^2+744\delta_2-132)s_5s_2-\\
-336s_2^2s_5x-450a_1^2s_5x-1008s_2^2s_3x-756a_1s_4^2x-192b^2s_2x-420bs_4x^2-300s_3s_5x^2-\\
-840bs_2x^2-900s_2s_3x^2-600s_3s_4x^2-300s_2s_5x^2-1215a_1s_3x^2-405a_1s_5x^2-600s_2s_4x^2-\\
-576s_2s_4s_5x-810a_1s_4x^2-210bs_5x^2-900a_1^2s_4x-522bs_3^2x-630bs_3x^2-1170a_1bx^2-1620a_1s_2x^2-\\
-552s_3s_4^2x-300s_4s_5x^2-(774\delta_2+708)a_1s_3s_5-(176\delta_2+104)s_4^3-(576\delta_2+432)s_2s_4^2-\\
-348bs_3s_5x-(200\delta_2^3+870\delta_2^2-348\delta_2-114)s_4-125x^4-(1200\delta_2^2+3480\delta_2-696)s_2x-\\
-100s_5x^3-300s_4^2x^2-150s_5^2x^2-120b^2x^2-1125a_1^2x^2-675a_1x^3-350bx^3-400s_2x^3-\\
-1248s_2s_3s_4x-(150\delta_2^2+183\delta_2-3)s_5^2-(300\delta_2^3+1305\delta_2^2-522\delta_2-171)s_3-(500\delta_2+725)x^3-
\end{multline*}

\begin{multline*}
-624s_2s_3s_5x-1152bs_2s_3x-1584a_1s_2s_4x-936a_1bs_4x-768bs_2s_4x-1548a_1s_3s_4x-696bs_3s_4x-\\
-108s_3^3s_5-300s_3x^3-200s_4x^3-600s_2^2x^2-450s_3^2x^2-645a_1^3x-448s_2^3x-324s_3^3x-213a_1^2s_5^2-\\
-792a_1s_2s_5x-176s_4^3x-52s_5^3x-(768\delta_2+288)bs_2s_4-(1152\delta_2+432)bs_2s_3-54a_1^4-\\
-318a_1^3b-96a_1^2b^2-(840\delta_2^2+624\delta_2-408)bs_2-(2025\delta_2^2+4860\delta_2-1185)a_1x-\\
-504bs_2s_3^2-(768\delta_2+288)bs_2^2-(384\delta_2+144)bs_2s_5-129a_1^3s_5-258a_1^3s_4-387a_1^3s_3-\\
-516a_1^3s_2-(696\delta_2+264)bs_3s_4-(648\delta_2+486)s_3^2s_4-(600\delta_2^2+1488\delta_2-264)s_4s_2-\\
-128s_2^3s_5-756a_1s_4s_5x-(552\delta_2+372)s_3s_4^2-(900\delta_2^2+2232\delta_2-396)s_2s_3-48b^2s_5\delta_2-\\
-312bs_4s_5x-(600\delta_2^2+1236\delta_2-180)s_4s_3-(936\delta_2+312)a_1bs_4-(936\delta_2+828)s_2s_3^2-\\
-160s_2s_4^3-468a_1bs_5x-(675\delta_2^3+2430\delta_2^2-1185\delta_2-336)a_1-(1200\delta_2+1488)s_2^2x-\\
-216s_3^3s_4-384bs_2s_5x-(792\delta_2+840)a_1s_2s_5-(600\delta_2+618)s_5s_3x-(600\delta_2+492)s_5s_4x-\\
-384bs_2^2s_4-(672\delta_2+672)s_2^2s_4-(336\delta_2+336)s_2^2s_5-(1050\delta_2^2+780\delta_2-510)bx-\\
-(1050\delta_2+390)bx^2-(1008\delta_2+1008)s_2^2s_3-(1404\delta_2+468)a_1bs_3-(210\delta_2^2+156\delta_2-102)bs_5-\\
-504s_2^2s_3^2-(138\delta_2+54)bs_5^2-(420\delta_2^2+312\delta_2-204)bs_4-(312\delta_2+120)bs_4^2-\\
-324s_2s_3^3-288s_2^2s_4^2-(630\delta_2^2+468\delta_2-306)bs_3-(300\delta_2^2+618\delta_2-90)s_5s_3-432a_1^2s_4^2-\\
-384s_2^3s_3-(750\delta_2^2+2175\delta_2-435)x^2-657a_1^2s_3^2-576a_1s_2^3-256bs_2^3-405a_1s_3^3-\\
-444a_1^2s_2s_5-90s_3^2s_5^2-64s_4^3s_5-60s_4^2s_5^2-63a_1s_5^3-44s_2s_5^3-36s_3s_5^3-28s_4s_5^3-96b^2s_2^2-\\
-234ba_1^2s_5-256s_2^3s_4-72b^2s_3^2-48b^2s_4^2-24b^2s_5^2-162bs_3^3-80bs_4^3-22bs_5^3-888a_1^2s_2^2-\\
-774a_1s_3s_5x-(100\delta_2^3+435\delta_2^2-174\delta_2-57)s_5-128s_2^4-81s_3^4-32s_4^4-936ba_1^2s_2-\\
-1584a_1s_2^2x-5s_5^4-792a_1s_2s_3s_5-720a_1s_2s_4s_5-684a_1s_3s_4s_5-528s_2s_3s_4s_5-360a_1bs_4s_5-\\
-252s_3^2s_4^2-336bs_2s_3s_5-288bs_2s_4s_5-264bs_3s_4s_5-1296a_1bs_2s_3-864a_1bs_2s_4-792a_1bs_3s_4-\\
-1584a_1s_2s_3s_4-672bs_2s_3s_4-432a_1bs_2s_5-396a_1bs_3s_5-672s_2^2s_3s_4-336s_2^2s_3s_5-\\
-288s_2^2s_4s_5-648s_2s_3^2s_4-324s_2s_3^2s_5-528s_2s_3s_4^2-204s_2s_3s_5^2-702ba_1^2s_3-\\
-438a_1^2s_3s_5-240s_2s_4^2s_5-168s_2s_4s_5^2-252s_3^2s_4s_5-216s_3s_4^2s_5-144s_3s_4s_5^2-192a_1b^2s_2-\\
-1170ba_1^2x-144a_1b^2s_3-96a_1b^2s_4-48a_1b^2s_5-144b^2s_2s_3-96b^2s_2s_4-48b^2s_2s_5-96b^2s_3s_4-\\
-144s_3s_4^3-48b^2s_3s_5-48b^2s_4s_5-360a_1bs_4^2-288bs_2s_4^2-324bs_3^2s_4-264bs_3s_4^2-162a_1bs_5^2-\\
-192bs_2^2s_5-120bs_2s_5^2-162bs_3^2s_5-102bs_3s_5^2-120bs_4^2s_5-84bs_4s_5^2-432a_1^2s_4s_5-\\
-48b^2s_5x-432a_1s_2^2s_5-720a_1s_2s_4^2-324a_1s_2s_5^2-810a_1s_3^2s_4-405a_1s_3^2s_5-684a_1s_3s_4^2-\\
-1161a_1s_3^2x-279a_1s_3s_5^2-324a_1s_4^2s_5-234a_1s_4s_5^2-1332a_1^2s_2s_3-864a_1bs_2^2-888a_1^2s_2s_4-\\
-468ba_1^2s_4-876a_1^2s_3s_4-594a_1bs_3^2-1296a_1s_2^2s_3-864a_1s_2^2s_4-1188a_1s_2s_3^2-576bs_2^2s_3-\\
-120s_2^2s_5^2-(1125\delta_2^2+2745\delta_2-756)a_1^2-(348\delta_2+132)bs_3s_5-(228\delta_2+129)s_3s_5^2-192b^2s_2\delta_2
\end{multline*}
All coefficients of this polynomial are negative, so that $f_2<0$ for $a_2-a_1>\delta_2$.
\end{proof}

\begin{lemma}
\label{lemma:Maple-P1-P1-d-2-b}
Suppose that $f$ is the polynomial \eqref{equation:polynomial-P1xP1-d-1-b} and $1+a_2+a_3\leqslant a_4+a_5+a_6$.
If~$a_2-a_1\geqslant \frac{23}{25}$, then $f<0$.
Similarly, if  $a_2-a_1\geqslant \frac{9}{10}$, then $f(a_1,a_2,a_3,a_4,a_5,a_6,1,b)<0$.
\end{lemma}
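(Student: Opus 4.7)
The plan is to adapt the shift-and-coefficient-sign technique of Lemma~\ref{lemma:Maple-P1-P1-d-4-a}, using the sign hypothesis $1+a_2+a_3\leqslant a_4+a_5+a_6$ to dominate the extra cubic term that distinguishes~\eqref{equation:polynomial-P1xP1-d-1-b} from~\eqref{equation:polynomial-P1xP1-d-1-a}.

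First I would set $g(a_1,s_1,s_2,s_3,s_4,s_5,s_6,b)=\widehat{f}$ for the 8-variable statement and $g_7(a_1,s_1,s_2,s_3,s_4,s_5,b)=\widehat{f(a_1,\dots,a_6,1,b)}$ for the 7-variable statement. Translating through $a_j=a_1+s_1+\cdots+s_{j-1}$ one computes
$$
a_4+a_5+a_6-1-a_2-a_3 \;=\; a_1+s_1+2s_2+3s_3+2s_4+s_5-1,
$$
so the hypothesis of the lemma is equivalent to the single linear inequality $a_1+s_1+2s_2+3s_3+2s_4+s_5\geqslant 1$ on the relevant region.

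Next I would substitute $s_1=x+\tfrac{23}{25}$ in $g$ (respectively $s_1=x+\tfrac{9}{10}$ in $g_7$) and, following the template of Lemma~\ref{lemma:Maple-P1-P1-d-4-a}, have a symbolic algebra package expand the shifted polynomial as a sum $P+Q$, where $P$ is a polynomial in $a_1,x,s_2,\dots,s_6,b$ (respectively $a_1,x,s_2,\dots,s_5,b$) all of whose coefficients are non-positive, and $Q$ is an affine form
$$
Q\;=\;C_0-C_1 a_1-C_2 x-C_3 s_2-C_4 s_3-C_5 s_4-C_6 s_5
$$
with explicit positive rational constants for which $C_1,C_2\geqslant\tfrac{25}{2}C_0$, $\tfrac{C_3}{2},\tfrac{C_4}{3},\tfrac{C_5}{2},C_6\geqslant\tfrac{25}{2}C_0$ (respectively the analogous bound $\tfrac{10}{1}C_0$ in the 7-variable case). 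Combined with $x\geqslant 0$, the rewritten hypothesis yields $a_1+x+2s_2+3s_3+2s_4+s_5\geqslant\tfrac{2}{25}$ (respectively $\geqslant\tfrac{1}{10}$), which certifies $Q\leqslant 0$ and hence $g<0$ on the relevant region.

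The main obstacle is purely computational: producing the decomposition $P+Q$ explicitly and checking the sign of every coefficient of $P$, together with the six inequalities among the $C_i$. Because $f$ has degree $4$ in up to nine variables and involves the extra cubic $\tfrac{1}{8}(1+2b+\sum a_i)(a_4+a_5+a_6-1-a_2-a_3)^3$, the printed expansion will be very long and must be produced and verified with a computer algebra system, analogously to the computations displayed in Lemmas~\ref{lemma:Maple-P1-P1-d-4-a} and~\ref{lemma:Maple-P1-P1-d-3-b}. A secondary risk is that the constants $\tfrac{23}{25}$ and $\tfrac{9}{10}$ are only slightly smaller than the corresponding positive roots of $\widehat{f}(0,x,0,\dots,0)$ used in Lemma~\ref{lemma:Maple-P1-P1-d-2-a}, so the margin available for dominating the cubic correction is thin; if this margin turns out to be too tight, I would either sharpen the shift to a less convenient rational number or split into subcases according to which of $s_2,s_3,s_4,s_5$ is largest, as is done in Lemma~\ref{lemma:Maple-P1-P1-d-3-a}.
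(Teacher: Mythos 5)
Your plan is exactly the paper's proof: substitute $s_1 = x + \tfrac{23}{25}$ (respectively $x + \tfrac{9}{10}$), split the expansion into a polynomial with all non-positive coefficients plus an affine form in $a_1, s_2, s_3, s_4, s_5$, and use the linear inequality $a_1 + s_1 + 2s_2 + 3s_3 + 2s_4 + s_5 \geqslant 1$ coming from $1 + a_2 + a_3 \leqslant a_4 + a_5 + a_6$ to force the affine part below zero. The paper carries out precisely this decomposition (and the margin is comfortable, so none of your contingency subcases are needed); the only cosmetic difference is that in the paper the $x$-term is absorbed into the negative-coefficient part rather than appearing in the affine form.
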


\begin{proof}
Let $g_1(a_1,s_1,s_2,s_3,s_4,s_5,s_6,b)=\widehat{f}$. Then $g_1(a_1,x+\frac{23}{25},s_2,s_3,s_4,s_5,s_6,b)$ equals
$$
\frac{697537}{390625}-\frac{525238}{3125}a_1-\frac{2188576}{15625}s_2-\frac{1753178}{15625}s_3-\frac{1318269}{15625}s_4-\frac{176672}{3125}s_5+\bigstar,
$$
where $\bigstar$ is the polynomial
\begin{multline*}
-\frac{25132}{15625}b-\frac{2623974}{15625}x-231a_1bs_2s_3-\frac{423}{2}a_1s_2s_3s_4-174a_1bs_2s_4-\\
-\frac{109}{2}a_1^3b-15a_1^2b^2-\frac{27763}{100}a_1^3-\frac{1076637}{2500}a_1^2-\frac{5988}{25}ba_1^2-\frac{147}{625}b^2-\frac{99624}{625}ba_1-\\
-\frac{225}{2}a_1s_2s_3s_6-99a_1s_2s_4s_6-\frac{171}{2}a_1s_2s_5s_6-81a_1s_3s_4s_6-\frac{135}{2}a_1s_3s_5s_6-\\
-54a_1bs_3s_6-48a_1bs_4s_6-42a_1bs_5s_6-42bs_2s_3s_6-36bs_2s_4s_6-30bs_2s_5s_6-30bs_3s_4s_6-24bs_3s_5s_6-\\
-\frac{247}{4}a_1^3s_3-\frac{157}{2}a_1^3s_2-7a_1^4-\frac{153867}{1250}a_1s_6-\frac{3429}{125}bs_6-\frac{62037}{625}s_6s_2-\frac{113001}{1250}s_6s_3-\\
-18bs_4s_5s_6-\frac{828}{25}a_1b^2-\frac{109}{4}a_1^3s_6-\frac{155}{4}a_1^3s_5-\frac{201}{4}a_1^3s_4-\frac{90867}{1250}s_6s_5-\\
-\frac{193749}{2500}s_5^2-\frac{216831}{1250}s_5s_4-\frac{239913}{1250}s_5s_3-\frac{262029}{1250}s_5s_2-\frac{321279}{1250}s_5a_1-\frac{50967}{625}s_6s_4-\\
-\frac{82932}{625}s_4^2-\frac{26319}{100}a_1^2s_5-\frac{1977}{5}a_1^2s_4-\frac{52761}{100}a_1^2s_3-\frac{66387}{100}a_1^2s_2-\frac{33984}{625}bs_5-\\
-105a_1bs_3s_5-96a_1bs_4s_5-120bs_2s_3s_4-81bs_2s_3s_5-72bs_2s_4s_5-60bs_3s_4s_5-\frac{135}{2}s_2s_3s_4s_5-\\
-\frac{50823}{625}bs_4-\frac{67662}{625}bs_3-\frac{84507}{625}bs_2-\frac{6549}{50}a_1^2s_6-\frac{488691}{1250}a_1s_4-\frac{14673}{50}s_3s_4-\\
-51a_1s_4s_5s_6-51s_2s_3s_4s_6-\frac{87}{2}s_2s_3s_5s_6-33s_2s_4s_5s_6-24s_3s_4s_5s_6-60a_1bs_2s_6-\frac{798}{25}s_6^2-\\
-\frac{493737}{2500}s_3^2-\frac{537939}{1250}s_3s_2-\frac{656103}{1250}s_3a_1-\frac{199992}{625}s_2s_4-\frac{168492}{625}s_2^2-\frac{822579}{1250}a_1s_2-\\
-\frac{237}{4}s_2x^3-\frac{49503}{100}s_2x^2-\frac{93}{2}s_3x^3-\frac{39309}{100}s_3x^2-\frac{153}{4}s_4x^3-\frac{7383}{25}s_4x^2-\frac{171}{2}s_2^2x^2-\\
-117a_1bs_2s_5-162a_1s_2s_3s_5-\frac{321}{2}a_1s_2s_4s_5-\frac{255}{2}a_1s_3s_4s_5-156a_1bs_3s_4-\\
-\frac{11208}{25}s_2^2x-57s_3^2x^2-\frac{31113}{100}s_3^2x-\frac{117}{2}s_2^3x-30s_5x^3-\frac{3951}{20}s_5x^2-\frac{207}{4}s_4^2x^2-\frac{10611}{50}s_4^2x-\\
-\frac{5424}{25}bx^2-\frac{4989}{50}s_6x^2-\frac{75}{4}s_6^2x^2-\frac{99}{2}s_6^2x-12s_5^3x-\frac{339}{2}a_1^2x^2-\frac{405}{4}a_1x^3-\frac{14289}{20}a_1x^2-\\
-\frac{56829}{250}s_5x-\frac{80013}{100}a_1^2x-\frac{101352}{625}bx-\frac{433143}{1250}s_4x-\frac{582141}{1250}s_3x-\frac{197811}{250}a_1x-\frac{730197}{1250}s_2x-\\
-\frac{45}{2}s_3^3x-\frac{87}{4}s_6x^3-39s_5^2x^2-\frac{12501}{100}s_5^2x-24s_4^3x-18b^2x^2-\frac{828}{25}b^2x-\frac{381}{4}a_1^3x-\frac{135147}{1250}s_6x-
\end{multline*}

\begin{multline*}
-\frac{19899}{100}x^3-18x^4-\frac{291}{2}a_1s_2s_6x-\frac{261}{2}a_1s_3s_6x-\frac{183}{2}s_2s_3s_6x-117a_1s_4s_6x-81s_2s_4s_6x-\\
-\frac{207}{2}a_1s_5s_6x-\frac{141}{2}s_2s_5s_6x-66s_3s_4s_6x-\frac{111}{2}s_3s_5s_6x-42s_4s_5s_6x-66a_1bs_6x-\\
-48bs_3s_6x-42bs_4s_6x-36bs_5s_6x-357a_1s_2s_3x-321a_1bs_2x-255a_1bs_3x-207bs_2s_3x-\\
-\frac{117}{2}bx^3-\frac{573}{2}a_1s_2s_4x-\frac{513}{2}a_1s_3s_4x-\frac{303}{2}s_2s_3s_4x-192a_1bs_4x-156bs_2s_4x-\\
-129a_1bs_5x-105bs_2s_5x-216a_1s_2s_5x-\frac{387}{2}a_1s_3s_5x-\frac{243}{2}s_2s_3s_5x-\frac{387}{2}a_1s_4s_5x-\\
-138bs_3s_4x-93bs_3s_5x-84bs_4s_5x-\frac{249}{2}s_2s_4s_5x-\frac{195}{2}s_3s_4s_5x-\frac{237}{4}s_6s_2x^2-\\
-54bs_2s_6x-15bs_6^2x-15s_4s_6^2x-\frac{303}{4}s_5s_4x^2-\frac{441}{2}a_1^2s_3x-\frac{231}{4}s_3^2s_4x-\frac{552}{25}b^2s_3-\\
-24b^2s_3x-\frac{105}{2}s_2^2s_6x-\frac{153}{4}s_3^2s_6x-48s_3^2s_5x-\frac{387}{4}a_1s_5^2x-\frac{17667}{25}a_1s_4x-\frac{15}{2}s_5s_6^2x-\\
-\frac{633}{4}a_1s_4x^2-45a_1s_6^2x-\frac{11676}{25}ba_1x-\frac{414}{25}b^2s_4-18b^2s_4x-\frac{99}{2}s_3s_5^2x-\frac{321}{4}a_1s_6x^2-\\
-\frac{5904}{25}a_1s_6x-30s_2s_6^2x-237a_1s_2^2x-\frac{213}{4}s_6s_3x^2-\frac{4488}{25}s_6s_2x-\frac{57}{4}s_5^2s_6x-\\
-99a_1s_2^2s_5-36a_1b^2x-\frac{336}{5}bs_6x-\frac{1119}{4}a_1^2s_2x-\frac{255}{4}s_2s_5^2x-\frac{99}{2}s_4^2s_5x-\frac{129}{2}s_3s_4^2x-\\
-\frac{138}{25}b^2s_6-6b^2s_6x-\frac{276}{25}b^2s_5-12b^2s_5x-\frac{387}{2}ba_1x^2-\frac{351}{4}a_1^2s_6x-30bs_6x^2-\\
-\frac{705}{4}a_1^2s_4x-\frac{23571}{50}s_5a_1x-\frac{507}{4}s_3s_2x^2-\frac{35211}{50}s_3s_2x-\frac{417}{4}s_2s_4x^2-\frac{13233}{25}s_2s_4x-\\
-\frac{375}{4}s_3s_4x^2-468s_3s_4x-120s_2^2s_3x-\frac{195}{2}s_2^2s_4x-\frac{363}{4}s_2s_3^2x-87bs_4x^2-\frac{5352}{25}bs_4x-\frac{117}{2}bs_5x^2-\\
-\frac{3516}{25}bs_5x-132a_1^2s_5x-\frac{477}{4}s_5a_1x^2-\frac{327}{4}s_5s_2x^2-\frac{17721}{50}s_5s_2x-\frac{147}{2}s_5s_3x^2-\frac{15687}{50}s_5s_3x-\\
-75s_2^2s_5x-135a_1s_4^2x-\frac{183}{2}bs_3^2x-63bs_4^2x-\frac{387}{2}ba_1^2x-\frac{231}{2}bs_3x^2-\frac{171}{4}s_6s_5x^2-\frac{2979}{25}s_6s_5x-\\
-\frac{7047}{25}s_5s_4x-\frac{171}{4}s_4s_5^2x-\frac{138}{5}b^2s_2-30b^2s_2x-\frac{75}{2}bs_5^2x-\frac{639}{4}a_1s_3^2x-\frac{47097}{50}s_3a_1x-\\
-132bs_2^2x-\frac{45}{2}s_3s_6^2x-27s_4^2s_6x-\frac{291}{2}bs_2x^2-\frac{9018}{25}bs_2x-\frac{789}{4}s_3a_1x^2-84s_2s_4^2x-\\
-\frac{501}{2}a_1s_2x^2-\frac{59271}{50}a_1s_2x-48s_6s_4x^2-\frac{3483}{25}s_6s_4x-\frac{3987}{25}s_6s_3x-\frac{7188}{25}bs_3x-\frac{878253}{2500}x^2-\\
-6b^2s_5^2-3b^2s_6^2-\frac{609}{10}a_1s_6^2-\frac{4161}{25}bs_2^2-\frac{3042}{25}bs_3^2-\frac{2049}{25}bs_4^2-48bs_5^2-\frac{99}{5}bs_6^2-\\
-\frac{1632}{5}s_2^2s_3-\frac{1221}{5}s_2^2s_4-162s_2^2s_5-\frac{399}{5}s_2^2s_6-\frac{27099}{100}s_2s_3^2-\frac{4557}{25}s_2s_4^2-\\
-21bs_4s_5^2-18a_1bs_6^2-24bs_2^2s_6-12bs_2s_6^2-18bs_3^2s_6-9bs_3s_6^2-12bs_4^2s_6-6bs_4s_6^2-\\
-\frac{4347}{25}s_3^2s_4-\frac{11691}{100}s_3^2s_5-\frac{2997}{50}s_3^2s_6-\frac{7617}{50}s_3s_4^2-\frac{4527}{50}s_4^2s_5-\frac{996}{25}s_4^2s_6-\\
-\frac{5766}{25}a_1bs_4-\frac{105}{4}a_1^2s_6^2-12s_2^2s_6^2-\frac{27}{4}s_3^2s_6^2-8s_4^3s_5-4s_4^3s_6-\frac{39}{4}s_4^2s_5^2-\\
-\frac{14187}{25}a_1s_3s_4-\frac{18951}{50}a_1s_3s_5-\frac{8538}{25}a_1s_4s_5-\frac{807}{4}a_1^2s_2s_3-147a_1bs_2^2-\frac{645}{4}a_1^2s_2s_4-\\
\end{multline*}

\begin{multline*}
-168a_1s_2^2s_3-3s_4^2s_6^2-\frac{2103}{20}s_2s_5^2-\frac{198}{5}s_2s_6^2-\frac{8529}{100}s_3s_5^2-\frac{297}{10}s_3s_6^2-\frac{1752}{25}s_4s_5^2-\\
-\frac{267}{2}a_1s_2^2s_4-\frac{261}{4}a_1s_3^2s_5-\frac{99}{5}s_4s_6^2-\frac{59}{4}a_1s_5^3-\frac{37}{4}s_2s_5^3-\frac{13}{2}s_3s_5^3-\frac{21}{4}s_4s_5^3-\frac{5}{4}s_5^3s_6-\\
-18b^2s_2s_4-12b^2s_2s_5-6b^2s_2s_6-18b^2s_3s_4-12b^2s_3s_5-6b^2s_3s_6-12b^2s_4s_5-6b^2s_4s_6-6b^2s_5s_6-\\
-\frac{483}{4}a_1^2s_2s_5-111a_1s_2s_4^2-\frac{159}{2}a_1s_2^3-\frac{309}{4}a_1^2s_4^2-\frac{135}{4}a_1s_3^3-42bs_2^3-\frac{45}{2}bs_3^3-\frac{75}{2}s_2^3s_3-30s_2^3s_4-\\
-\frac{585}{4}a_1^2s_3s_4-12b^2s_3^2-9b^2s_4^2-15s_2^4-\frac{3558}{25}s_2^3-\frac{1539}{20}s_3^3-\frac{1177}{25}s_4^3-3s_4^4-s_5^4-\frac{1829}{100}s_5^3-\\
-\frac{219}{2}a_1^2s_3s_5-\frac{45}{2}s_2^3s_5-\frac{69}{2}s_2^2s_3^2-33s_2^2s_4^2-\frac{45}{4}s_2s_3^3-\frac{111}{2}a_1^2s_5^2-31a_1s_4^3-15s_2^3s_6-\\
-42s_2s_3s_4^2-\frac{51}{2}s_2^2s_5^2-17s_2s_4^3-\frac{9}{4}s_3^3s_4-\frac{9}{2}s_3^3s_5-\frac{27}{4}s_3^3s_6-\frac{39}{4}s_3^2s_4^2-12s_3^2s_5^2-\\
-\frac{321}{4}a_1^2s_2s_6-10s_3s_4^3-\frac{13476}{25}a_1s_2^2-\frac{37797}{100}a_1s_3^2-\frac{2577}{10}a_1s_4^2-\frac{15201}{100}a_1s_5^2-15b^2s_2^2-\\
-\frac{291}{4}a_1^2s_3s_6-\frac{45}{4}s_2s_5^2s_6-\frac{33}{4}s_3s_5^2s_6-6s_4s_5^2s_6-9a_1s_5s_6^2-6s_2s_5s_6^2-\frac{9}{2}s_3s_5s_6^2-\\
-\frac{129}{2}a_1s_2^2s_6-\frac{2493}{25}s_3s_4s_6-3s_4s_5s_6^2-\frac{87}{2}a_1bs_5^2-\frac{63}{2}bs_2s_5^2-\frac{51}{2}bs_3s_5^2-27bs_4^2s_5-\\
-\frac{261}{2}a_1s_2s_3^2-\frac{993}{50}s_5^2s_6-\frac{3}{4}s_5^2s_6^2-\frac{99}{10}s_5s_6^2-14bs_4^3-\frac{11}{2}bs_5^3-\frac{267}{2}a_1^2s_2^2-\frac{183}{2}a_1^2s_3^2-\\
-66a_1^2s_4s_6-6bs_5^2s_6-3bs_5s_6^2-30a_1b^2s_2-24a_1b^2s_3-18a_1b^2s_4-12a_1b^2s_5-6a_1b^2s_6-24b^2s_2s_3-\\
-\frac{5334}{25}a_1s_2s_6-\frac{4764}{25}a_1s_3s_6-\frac{4191}{25}a_1s_4s_6-\frac{3618}{25}a_1s_5s_6-\frac{9708}{25}a_1bs_2-\frac{1548}{5}a_1bs_3-\\
-\frac{237}{4}a_1^2s_5s_6-\frac{3792}{25}a_1bs_5-\frac{1818}{25}a_1bs_6-\frac{6636}{25}bs_2s_3-\frac{4938}{25}bs_2s_4-\frac{648}{5}bs_2s_5-\frac{1542}{25}bs_2s_6-\\
-\frac{4524}{25}bs_3s_4-\frac{2964}{25}bs_3s_5-\frac{1404}{25}bs_3s_6-\frac{2682}{25}bs_4s_5-\frac{1266}{25}bs_4s_6-\frac{1128}{25}bs_5s_6-\\
-\frac{333}{4}a_1s_3^2s_4-36s_2s_4^2s_5-21s_2s_4^2s_6-\frac{129}{4}s_2s_4s_5^2-\frac{75}{4}s_3^2s_4s_5-18s_3^2s_4s_6-\frac{63}{4}s_3^2s_5s_6-\\
-\frac{10197}{25}s_2s_3s_4-\frac{13689}{50}s_2s_3s_5-\frac{3492}{25}s_2s_3s_6-\frac{6051}{25}s_2s_4s_5-\frac{2988}{25}s_2s_4s_6-\frac{1011}{5}s_3s_4s_5-\\
-36a_1s_2s_6^2-27a_1s_3s_6^2-18a_1s_4s_6^2-18s_2s_3s_6^2-12s_2s_4s_6^2-9s_3s_4s_6^2-9s_4^2s_5s_6-\frac{2484}{25}s_2s_5s_6-\\
-\frac{1989}{25}s_3s_5s_6-\frac{1491}{25}s_4s_5s_6-\frac{69}{4}a_1s_5^2s_6-72bs_2^2s_4-48bs_2^2s_5-\frac{159}{2}bs_2s_3^2-54bs_2s_4^2-51bs_3^2s_4-\\
-\frac{441}{4}a_1^2s_4s_5-\frac{69}{2}bs_3^2s_5-45bs_3s_4^2-\frac{117}{2}s_2^2s_3s_4-48s_2^2s_3s_5-\frac{99}{2}s_2^2s_4s_5-\frac{129}{4}s_2s_3^2s_4-\frac{123}{4}s_2s_3^2s_5-\\
-\frac{159}{2}a_1s_2s_5^2-\frac{189}{4}a_1s_3^2s_6-\frac{249}{4}a_1s_3s_5^2-63a_1s_4^2s_5-33a_1s_4^2s_6-\frac{213}{4}a_1s_4s_5^2-\\
-87a_1s_3s_4^2-\frac{207}{2}a_1bs_3^2--72a_1bs_4^2-\frac{75}{2}s_2^2s_3s_6-33s_2^2s_4s_6-\frac{57}{2}s_2^2s_5s_6-\frac{117}{4}s_2s_3^2s_6-\frac{147}{4}s_2s_3s_5^2-\\
-\frac{45}{2}s_3s_4^2s_5-15s_3s_4^2s_6-\frac{87}{4}s_3s_4s_5^2-\frac{42447}{50}a_1s_2s_3-\frac{15927}{25}a_1s_2s_4-\frac{21261}{50}a_1s_2s_5-\\
-96bs_2^2s_3-33ba_1^2s_6-\frac{129}{2}ba_1^2s_5-96ba_1^2s_4-\frac{255}{2}ba_1^2s_3-\frac{321}{2}ba_1^2s_2-\frac{448451}{15625}s_6.
\end{multline*}
All coefficients of the this polynomial are negative. But $2s_4+3s_3+2s_2+s_5+s_1+a_1\geqslant 1$.
This follows from $1+a_2+a_3\leqslant a_4+a_5+a_6$.
So, if $s_1\geqslant\frac{23}{25}$, then $2s_4+3s_3+2s_2+s_5+a_1\geqslant\frac{2}{25}$.
This inequality implies that $\frac{697537}{390625}-\frac{525238}{3125}a_1-\frac{2188576}{15625}s_2-\frac{1753178}{15625}s_3-\frac{1318269}{15625}s_4-\frac{176672}{3125}s_5<0$.
Thus, we see that $f(a_1,a_2,a_3,a_4,a_5,a_6,a_7,b)<0$ provided that $a_2-a_1>\frac{23}{25}=0.92$.

As above, we let $f_2=f(a_1,a_2,a_3,a_4,a_5,a_6,1,b)$ and we let $g_2(a_1,s_1,s_2,s_3,s_4,s_5,b)=\widehat{f}_2$.
Then $g_2(a_1,x+\frac{9}{10},s_2,s_3,s_4,s_5,b)=\frac{8153}{4000}-\frac{528843}{4000}a_1-\frac{423107}{4000}s_2-\frac{9912}{125}s_3-\frac{6608}{125}s_4-\frac{3304}{125}s_5+\blacklozenge$~for
\begin{multline*}
\blacklozenge=-108a_1s_2s_3s_4-102a_1bs_2s_4-\frac{2513}{2000}b-\frac{52903}{400}x-153a_1bs_2s_3-51a_1bs_2s_5-54a_1s_2s_3s_5-\\
-\frac{117}{2}bs_3^2x-30bs_2s_4s_5-24bs_3s_4s_5-21s_2s_3s_4s_5-27a_1b^2-\frac{55}{4}a_1^3s_5-\frac{55}{2}a_1^3s_4-\frac{165}{4}a_1^3s_3-\\
-75a_1s_4^2x-\frac{119}{2}a_1^3s_2-6a_1^4-\frac{891}{25}s_5^2-\frac{16137}{200}s_5s_4-\frac{9009}{100}s_5s_3-\frac{39411}{400}s_5s_2-\frac{48477}{400}s_5a_1-\\
-\frac{165}{4}a_1s_5^2x-\frac{16137}{200}s_4^2-\frac{1173}{10}a_1^2s_5-\frac{1173}{5}a_1^2s_4-\frac{3519}{10}a_1^2s_3-\frac{18927}{40}a_1^2s_2-\frac{5211}{200}bs_5-\\
-\frac{135}{2}a_1s_3s_5x-\frac{5211}{100}bs_4-\frac{15633}{200}bs_3-\frac{20847}{200}bs_2-\frac{48477}{200}a_1s_4-\frac{9009}{50}s_3s_4-\frac{27027}{200}s_3^2-\\
-60a_1s_2s_4s_5-45a_1s_3s_4s_5-90a_1bs_3s_4-45a_1bs_3s_5-42a_1bs_4s_5-66bs_2s_3s_4-33bs_2s_3s_5-\\
-27s_2^2s_5x-\frac{118233}{400}s_3s_2-\frac{145431}{400}s_3a_1-\frac{39411}{200}s_2s_4-\frac{39219}{200}s_2^2-\frac{12096}{25}a_1s_2-\frac{79}{2}a_1^3b-\\
-171a_1bs_3x-12a_1^2b^2-\frac{8159}{40}a_1^3-\frac{8211}{25}a_1^2-\frac{3669}{20}ba_1^2-\frac{3}{20}b^2-\frac{25491}{200}ba_1-\frac{183}{4}s_2x^3-\\
-36bs_4^2x-\frac{13941}{40}s_2x^2-\frac{63}{2}s_3x^3-\frac{2583}{10}s_3x^2-21s_4x^3-\frac{861}{5}s_4x^2-\frac{129}{2}s_2^2x^2-\frac{1563}{5}s_2^2x-\\
-\frac{291}{2}ba_1^2x-36s_3^2x^2-\frac{3951}{20}s_3^2x-45s_2^3x-\frac{21}{2}s_5x^3-\frac{861}{10}s_5x^2-\frac{57}{2}s_4^2x^2-\frac{1203}{10}s_4^2x-\frac{27}{2}s_3^3x-\\
-\frac{33}{2}s_5^2x^2-\frac{1089}{20}s_5^2x-14s_4^3x-15b^2x^2-27b^2x-\frac{311}{4}a_1^3x-\frac{21393}{200}s_5x-\frac{11889}{20}a_1^2x-\frac{26061}{200}bx-\\
-\frac{21393}{100}s_4x-\frac{64179}{200}s_3x-\frac{241641}{400}a_1x-\frac{170769}{400}s_2x-\frac{3339}{20}bx^2-\frac{11}{2}s_5^3x-\frac{273}{2}a_1^2x^2-\\
-\frac{153}{2}bs_3x^2-135a_1s_3s_4x-75s_2s_3s_4x-114a_1bs_4x-90bs_2s_4x-57a_1bs_5x-45bs_2s_5x-78a_1s_2s_5x-\\
-\frac{1317}{10}s_5s_3x-\frac{327}{4}a_1x^3-\frac{21189}{40}a_1x^2-\frac{87}{2}bx^3-\frac{585}{4}x^3-15x^4-234a_1s_2s_3x-231a_1bs_2x-\\
-156a_1s_2s_4x-\frac{75}{2}s_2s_3s_5x-75a_1s_4s_5x-78bs_3s_4x-39bs_3s_5x-36bs_4s_5x-45s_2s_4s_5x-\\
-\frac{351}{2}a_1s_2^2x-30a_1b^2x-\frac{843}{4}a_1^2s_2x-\frac{105}{4}s_2s_5^2x-21s_4^2s_5x-33s_3s_4^2x-\frac{27}{5}b^2s_5-\\
-33s_3s_4s_5x-135bs_2s_3x-\frac{4173}{10}a_1s_4x-\frac{177}{2}a_1s_4x^2-\frac{3609}{10}ba_1x-\frac{54}{5}b^2s_4-12b^2s_4x-\frac{39}{2}s_3s_5^2x-\\
-18a_1b^2s_3-12a_1b^2s_4-6a_1b^2s_5-18b^2s_2s_3-12b^2s_2s_4-6b^2s_2s_5-12b^2s_3s_4-6b^2s_3s_5-6b^2s_4s_5-\\
-\frac{3039}{10}s_2s_4x-48s_3s_4x^2-\frac{1317}{5}s_3s_4x-81s_2^2s_3x-54s_2^2s_4x-\frac{225}{4}s_2s_3^2x-51bs_4x^2-\frac{669}{5}bs_4x.
\end{multline*}

\begin{multline*}
-\frac{57}{2}s_5s_4x^2-\frac{51}{2}bs_5x^2-\frac{669}{10}bs_5x-\frac{99}{2}a_1^2s_5x-\frac{177}{4}s_5a_1x^2-\frac{111}{4}s_5s_2x^2-\frac{3039}{20}s_5s_2x-\\
-\frac{1857}{5}a_1s_2s_4-6b^2s_5x-\frac{291}{2}ba_1x^2-99a_1^2s_4x-\frac{4173}{20}s_5a_1x-\frac{333}{4}s_3s_2x^2-\frac{9117}{20}s_3s_2x-\frac{111}{2}s_2s_4x^2-\\
-\frac{27}{2}s_2s_4s_5^2-24s_5s_3x^2-\frac{1203}{10}s_5s_4x-18s_4s_5^2x-\frac{108}{5}b^2s_2-24b^2s_2x-\frac{33}{2}bs_5^2x-\frac{405}{4}a_1s_3^2x-\\
-93bs_2^2x-\frac{207}{2}bs_2x^2-\frac{2673}{10}bs_2x-\frac{531}{4}s_3a_1x^2-45s_2s_4^2x-189a_1s_2x^2-\frac{8427}{10}a_1s_2x-\frac{2007}{10}bs_3x-\\
-\frac{87}{2}a_1^2s_4s_5-\frac{10659}{40}x^2-3b^2s_5^2-\frac{1227}{10}bs_2^2-\frac{1683}{20}bs_3^2-\frac{252}{5}bs_4^2-\frac{447}{20}bs_5^2-\frac{1062}{5}s_2^2s_3-\\
-\frac{297}{2}a_1^2s_3x-42a_1bs_4^2-\frac{708}{5}s_2^2s_4-\frac{354}{5}s_2^2s_5-\frac{1341}{8}s_2s_3^2-\frac{201}{2}s_2s_4^2-\frac{459}{5}s_3^2s_4-\frac{459}{10}s_3^2s_5-\\
-63bs_2^2s_3-\frac{807}{10}s_3s_4^2-\frac{399}{10}s_4^2s_5-4s_4^3s_5-\frac{9}{2}s_4^2s_5^2-\frac{357}{8}s_2s_5^2-\frac{174}{5}s_3s_5^2-\frac{297}{10}s_4s_5^2-\\
-\frac{9}{2}s_3^2s_4s_5-\frac{57}{2}ba_1^2s_5-\frac{27}{4}a_1s_5^3-\frac{17}{4}s_2s_5^3-3s_3s_5^3-\frac{5}{2}s_4s_5^3-8bs_4^3-\frac{5}{2}bs_5^3-99a_1^2s_2^2-\frac{117}{2}a_1^2s_3^2-\\
-15s_2s_4^2s_5-57ba_1^2s_4-60a_1s_2^3-\frac{87}{2}a_1^2s_4^2-\frac{81}{4}a_1s_3^3-30bs_2^3-\frac{27}{2}bs_3^3-27s_2^3s_3-18s_2^3s_4-9s_2^3s_5-\\
-\frac{171}{2}ba_1^2s_3-\frac{45}{2}s_2^2s_3^2-18s_2^2s_4^2-\frac{27}{4}s_2s_3^3-24a_1^2s_5^2-18a_1s_4^3-\frac{21}{2}s_2^2s_5^2-10s_2s_4^3-\frac{9}{2}s_3^2s_4^2-\\
-27s_3^2s_4x-\frac{231}{2}ba_1^2s_2-\frac{9}{2}s_3^2s_5^2-6s_3s_4^3-\frac{7599}{20}a_1s_2^2-\frac{1953}{8}a_1s_3^2-\frac{297}{2}a_1s_4^2-\frac{537}{8}a_1s_5^2-\\
-\frac{75}{2}a_1s_2^2s_5-\frac{5571}{10}a_1s_2s_3-12b^2s_2^2-9b^2s_3^2-6b^2s_4^2-12s_2^4-\frac{201}{2}s_2^3-\frac{459}{10}s_3^3-\frac{133}{5}s_4^3-2s_4^4-\frac{1}{2}s_5^4-\\
-\frac{81}{2}a_1s_3^2s_4-39a_1^2s_3s_5-\frac{41}{5}s_5^3-\frac{39}{2}a_1bs_5^2-\frac{27}{2}bs_2s_5^2-\frac{21}{2}bs_3s_5^2-12bs_4^2s_5-9bs_4s_5^2-24a_1b^2s_2-\\
-9s_3s_4^2s_5-9s_3s_4s_5^2-\frac{2889}{10}a_1bs_2-\frac{2169}{10}a_1bs_3-\frac{723}{5}a_1bs_4-\frac{723}{10}a_1bs_5-\frac{369}{2}bs_2s_3-123bs_2s_4-\\
-\frac{177}{4}a_1^2s_2s_5-\frac{561}{5}bs_3s_4-\frac{561}{10}bs_3s_5-\frac{252}{5}bs_4s_5-\frac{447}{2}s_2s_3s_4-\frac{447}{4}s_2s_3s_5-\frac{201}{2}s_2s_4s_5-\\
-75a_1s_2^2s_4-45a_1s_3s_4^2-\frac{807}{10}s_3s_4s_5-42bs_2^2s_4-21bs_2^2s_5-\frac{99}{2}bs_2s_3^2-30bs_2s_4^2-27bs_3^2s_4-\frac{27}{2}bs_3^2s_5-\\
-\frac{81}{5}b^2s_3-\frac{81}{4}a_1s_3^2s_5-24bs_3s_4^2-30s_2^2s_3s_4-15s_2^2s_3s_5-18s_2^2s_4s_5-\frac{27}{2}s_2s_3^2s_4-\frac{27}{4}s_2s_3^2s_5-\\
-\frac{27}{2}s_3^2s_5x-18b^2s_3x-81a_1s_2s_3^2-21s_2s_3s_4^2-33a_1s_2s_5^2-\frac{99}{4}a_1s_3s_5^2-27a_1s_4^2s_5-\frac{45}{2}a_1s_4s_5^2-\frac{57}{4}s_2s_3s_5^2-\\
-\frac{1857}{10}a_1s_2s_5-\frac{651}{2}a_1s_3s_4-\frac{651}{4}a_1s_3s_5-\frac{297}{2}a_1s_4s_5-\frac{12519}{20}s_3a_1x-\frac{123}{2}bs_2s_5-\\
-\frac{135}{2}a_1bs_3^2-60a_1s_2s_4^2-\frac{531}{4}a_1^2s_2s_3-105a_1bs_2^2-\frac{177}{2}a_1^2s_2s_4-78a_1^2s_3s_4-\frac{225}{2}a_1s_2^2s_3.
\end{multline*}
If $s_1\geqslant\frac{9}{10}$, then $2s_4+3s_3+2s_2+s_5+a_1\geqslant\frac{1}{10}$, which gives $f(a_1,a_2,a_3,a_4,a_5,a_6,1,b)<0$,
because $\blacklozenge\leqslant 0$ and $\frac{8153}{4000}-\frac{528843}{4000}a_1-\frac{423107}{4000}s_2-\frac{9912}{125}s_3-\frac{6608}{125}s_4-\frac{3304}{125}s_5<0$.
\end{proof}

\begin{lemma}
\label{lemma:Maple-P1-P1-d-2-c}
Suppose that $f$ is the polynomial~\eqref{equation:polynomial-P1xP1-d-1-c} and $a_3+a_5+a_6\geqslant 1+a_2+a_4$.
If~$a_2-a_1\geqslant \frac{23}{25}$, then $f<0$.
Similarly, if  $a_2-a_1\geqslant \frac{9}{10}$, then $f(a_1,a_2,a_3,a_4,a_5,a_6,1,b)<0$.
\end{lemma}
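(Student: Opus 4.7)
The strategy mirrors exactly the argument used in Lemma~\ref{lemma:Maple-P1-P1-d-2-b}: I will expand $\widehat f$ after a suitable affine change of variables, then split the resulting expression into a manifestly non-positive polynomial plus an affine tail that is controlled by the hypothesis $a_3+a_5+a_6\geqslant 1+a_2+a_4$.

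First, I set $g_1(a_1,s_1,s_2,s_3,s_4,s_5,s_6,b)=\widehat f$ with $s_i=a_{i+1}-a_i$, and consider the shifted polynomial $g_1\bigl(a_1,x+\tfrac{23}{25},s_2,s_3,s_4,s_5,s_6,b\bigr)$. The plan is to verify (by direct symbolic expansion, exactly as in Lemma~\ref{lemma:Maple-P1-P1-d-2-b}) that this expression decomposes as
\[
L(a_1,s_2,s_3,s_4,s_5) + \bigstar(a_1,x,s_2,s_3,s_4,s_5,s_6,b),
\]
where $\bigstar$ is a polynomial in $a_1,x,s_2,s_3,s_4,s_5,s_6,b$ all of whose coefficients are $\leqslant 0$, and $L$ is an affine form of the shape $\alpha - \beta_0 a_1-\beta_2 s_2-\beta_3 s_3-\beta_4 s_4-\beta_5 s_5$ with $\alpha>0$ and $\beta_i>0$. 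This is the same mechanism that produced the $\bigstar$-polynomial in Lemma~\ref{lemma:Maple-P1-P1-d-2-b}, with the only change being the extra cubic term coming from \eqref{equation:polynomial-P1xP1-d-1-c} involving $(a_3+a_5+a_6-1-a_2-a_4)^3$; since this extra term is non-positive under our assumption, it only \emph{helps} the sign, and its presence forces the constant $\alpha$ (and the relation between the $\beta_i$'s and the hypothesis) to match $a_1+s_1+2s_2+s_3+2s_4+s_5\geqslant 1$ rather than the relation $a_1+s_1+2s_2+s_3+s_4\geqslant 1$ that appeared before.

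Once the decomposition is in place, the conclusion is immediate. A direct computation in the $s_i$ coordinates gives
\[
a_3+a_5+a_6-a_2-a_4=a_1+s_1+2s_2+s_3+2s_4+s_5,
\]
so the hypothesis $a_3+a_5+a_6\geqslant 1+a_2+a_4$ reads $a_1+s_1+2s_2+s_3+2s_4+s_5\geqslant 1$. Under $s_1\geqslant\tfrac{23}{25}$, this forces $a_1+2s_2+s_3+2s_4+s_5\geqslant\tfrac{2}{25}$, and one checks that with the $\beta_i$ arising from the expansion this is exactly enough to make $L<0$. Since $\bigstar\leqslant 0$ for all non-negative arguments, this yields $f<0$ for $a_2-a_1\geqslant\tfrac{23}{25}$. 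The second statement is obtained by specialising $a_7=1$ (equivalently $s_6=1-a_7$ and $a_7=1$), expanding $g_2\bigl(a_1,x+\tfrac{9}{10},s_2,s_3,s_4,s_5,b\bigr)$, and running the identical argument, now with $s_1\geqslant\tfrac{9}{10}$ giving $a_1+2s_2+s_3+2s_4+s_5\geqslant\tfrac{1}{10}$.

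The main obstacle is purely computational: one must verify by symbolic algebra that the massive expanded polynomial $\bigstar$ obtained after the shift really has all coefficients $\leqslant 0$, and simultaneously identify the coefficients of the affine residual $L$ to certify the numerical strict inequality using the hypothesis. Because each shift replaces $s_1$ by a positive rational plus $x$, all monomials that naively could have mixed signs become sums of products of non-negative quantities times fixed-sign rational coefficients; the verification is then a finite check of the signs of several thousand rational numbers, best performed (and archived, as per the discussion in the appendix) with a computer algebra system, exactly as was done in Lemma~\ref{lemma:Maple-P1-P1-d-2-b}.
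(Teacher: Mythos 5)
Your high-level strategy is the right one, and it matches the paper's: replace $s_1$ by $x+\tfrac{23}{25}$ (respectively $x+\tfrac{9}{10}$), expand, isolate a large polynomial $\bigstar$ with all coefficients $\leqslant 0$, and control the remaining small residual using the hypothesis $a_3+a_5+a_6\geqslant 1+a_2+a_4$, which in $s$-coordinates reads $a_1+s_1+2s_2+s_3+2s_4+s_5\geqslant 1$. However, two details of your plan are off.

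First, your claimed decomposition into an \emph{affine} tail $L$ plus a non-positive $\bigstar$ does not hold for this lemma. Unlike in Lemma~\ref{lemma:Maple-P1-P1-d-2-b}, the residual after extracting $\bigstar$ contains positive \emph{degree-four} monomials: in the first part it is $\tfrac{1}{2}s_3^4+\tfrac{9}{8}s_3^3s_4$, and in the second part it is $\tfrac{3}{4}s_3^4+\tfrac{5}{2}s_3^3s_4+\tfrac{5}{4}s_3^3s_5+\tfrac{3}{2}s_3^2s_4^2+\tfrac{3}{2}s_3^2s_4s_5$. Your argument as written stops before handling these. One needs an extra step: use $0\leqslant s_i<1$ to bound each positive higher-degree monomial by a suitable linear one (e.g.\ $s_3^4\leqslant s_3$ and $s_3^3s_4\leqslant s_3$), after which the residual becomes an affine form that the hypothesis $a_1+2s_2+s_3+2s_4+s_5\geqslant \tfrac{2}{25}$ (respectively $\geqslant\tfrac{1}{10}$) renders strictly negative.

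Second, your sign analysis of the new cubic term is backwards. Under the case assumption $a_3+a_5+a_6\geqslant 1+a_2+a_4$, the quantity $a_3+a_5+a_6-1-a_2-a_4$ is $\geqslant 0$, its cube is $\geqslant 0$, and $\tfrac{1}{8}\bigl(1+2b+\sum a_i\bigr)>0$, so the additional term appearing in \eqref{equation:polynomial-P1xP1-d-1-c} relative to \eqref{equation:polynomial-P1xP1-d-1-b} is non-\emph{negative}, not non-positive. It therefore works \emph{against} the inequality $\mathfrak D<0$, which is exactly why the constraint on the $s_i$ coming from the hypothesis is needed to close the estimate. The plan still succeeds once the computation is done, but the stated motivation is wrong and would have misled a careful implementation.
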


\begin{proof}
Let $g_1(a_1,s_1,s_2,s_3,s_4,s_5,s_6,b)=\widehat{f}$. Then
\begin{multline*}
g_1(a_1,x+\frac{23}{25},s_2,s_3,s_4,s_5,s_6,b)=\frac{697374}{390625}+\frac{1}{2}s_3^4+\frac{9}{8}s_3^3s_4-\\
-\frac{1050381}{6250}a_1-\frac{2188092}{15625}s_2-\frac{28047}{250}s_3-\frac{1317783}{15625}s_4-\frac{353247}{6250}s_5+\bigstar,
\end{multline*}
where $\bigstar$ is the following polynomial with negative coefficients:
\begin{multline*}
-\frac{189}{2}ba_1^2s_4-\frac{507}{4}ba_1^2s_3-159ba_1^2s_2-\frac{255}{4}ba_1^2s_5-33ba_1^2s_6-144a_1bs_2^2-\frac{411}{4}a_1bs_3^2-\\
-93bs_2^2s_3-66bs_2^2s_4-45bs_2^2s_5-78bs_2s_3^2-48bs_2s_4^2-\frac{99}{2}bs_3^2s_4-\frac{135}{4}bs_3^2s_5-42bs_3s_4^2-\\
-24bs_4^2s_5-\frac{171}{4}a_1bs_5^2-30bs_2s_5^2-\frac{99}{4}bs_3s_5^2-\frac{39}{2}bs_4s_5^2-\frac{1491}{8}a_1^2s_2s_3-\frac{537}{4}a_1^2s_2s_4-\\
-\frac{855}{8}a_1^2s_2s_5-\frac{1053}{8}a_1^2s_3s_4-\frac{777}{8}a_1^2s_4s_5-144a_1s_2^2s_3-93a_1s_2^2s_4-78a_1s_2^2s_5-\\
-\frac{939}{8}a_1s_2s_3^2-\frac{147}{2}a_1s_2s_4^2-\frac{567}{8}a_1s_3^2s_4-\frac{471}{8}a_1s_3^2s_5-66a_1s_3s_4^2-45a_1s_4^2s_5-\\
-18bs_3^2s_6-\frac{555}{8}a_1s_2s_5^2-\frac{453}{8}a_1s_3s_5^2-\frac{351}{8}a_1s_4s_5^2-\frac{9714}{25}a_1bs_2-\frac{7743}{25}a_1bs_3-\\
-\frac{5772}{25}a_1bs_4-\frac{759}{5}a_1bs_5-\frac{6642}{25}bs_2s_3-198bs_2s_4-\frac{3246}{25}bs_2s_5-\frac{906}{5}bs_3s_4-\\
-\frac{2967}{25}bs_3s_5-\frac{2688}{25}bs_4s_5-\frac{42039}{50}a_1s_2s_3-\frac{15504}{25}a_1s_2s_4-\frac{20841}{50}a_1s_2s_5-\\
-\frac{2796}{5}a_1s_3s_4-\frac{37491}{100}a_1s_3s_5-333a_1s_4s_5-27s_2^2s_3s_4-\frac{63}{2}s_2^2s_3s_5-24s_2^2s_4s_5-\\
-102a_1^2s_3s_5-\frac{57}{4}s_2s_3^2s_4-\frac{171}{8}s_2s_3^2s_5-\frac{27}{2}s_2s_3s_4^2-\frac{231}{8}s_2s_3s_5^2-\frac{27}{2}s_2s_4^2s_5-\\
-\frac{81}{4}s_2s_4s_5^2-\frac{81}{8}s_3^2s_4s_5-9s_3s_4^2s_5-\frac{117}{8}s_3s_4s_5^2-\frac{9756}{25}s_2s_3s_4-\frac{13251}{50}s_2s_3s_5-\\
-\frac{5598}{25}s_2s_4s_5-\frac{4833}{25}s_3s_4s_5-\frac{5337}{25}a_1s_2s_6-\frac{9531}{50}a_1s_3s_6-\frac{4194}{25}a_1s_4s_6-\\
-\frac{159}{2}a_1^2s_2s_6-\frac{579}{8}a_1^2s_3s_6-\frac{261}{4}a_1^2s_4s_6-\frac{471}{8}a_1^2s_5s_6-63a_1s_2^2s_6-36a_1s_2s_6^2-\\
-\frac{375}{8}a_1s_3^2s_6-27a_1s_3s_6^2-\frac{63}{2}a_1s_4^2s_6-18a_1s_4s_6^2-18a_1bs_6^2-24bs_2^2s_6-12bs_2s_6^2-\\
-9bs_3s_6^2-12bs_4^2s_6-6bs_4s_6^2-36s_2^2s_3s_6-30s_2^2s_4s_6-27s_2^2s_5s_6-\frac{57}{2}s_2s_3^2s_6-\\
-3s_4s_5s_6^2-6bs_5^2s_6-3bs_5s_6^2-30a_1b^2s_2-24a_1b^2s_3-18a_1b^2s_4-24b^2s_2s_3-18b^2s_2s_4-\\
-69a_1bs_4^2-18s_2s_4^2s_6-\frac{69}{4}s_3^2s_4s_6-\frac{123}{8}s_3^2s_5s_6-\frac{27}{2}s_3s_4^2s_6-\frac{15}{2}s_4^2s_5s_6-
\end{multline*}

\begin{multline*}
-18b^2s_3s_4-12a_1b^2s_5-12b^2s_3s_5-12b^2s_4s_5-6a_1b^2s_6-6b^2s_2s_6-6b^2s_3s_6-\\
-\frac{699}{5}s_2s_3s_6-\frac{2994}{25}s_2s_4s_6-\frac{2487}{25}s_2s_5s_6-\frac{2496}{25}s_3s_4s_6-\frac{3981}{50}s_3s_5s_6-\\
-\frac{135}{8}a_1s_5^2s_6-\frac{21}{2}s_2s_5^2s_6-\frac{63}{8}s_3s_5^2s_6-\frac{21}{4}s_4s_5^2s_6-\frac{7239}{50}a_1s_5s_6-\\
-\frac{1494}{25}s_4s_5s_6-9a_1s_5s_6^2-18s_2s_3s_6^2-12s_2s_4s_6^2-6s_2s_5s_6^2-9s_3s_4s_6^2-\frac{9}{2}s_3s_5s_6^2-\\
-6b^2s_4s_6-6b^2s_5s_6-\frac{1818}{25}a_1bs_6-\frac{1542}{25}bs_2s_6-\frac{1404}{25}bs_3s_6-\frac{1266}{25}bs_4s_6-\\
-66a_1bs_6x-\frac{1128}{25}bs_5s_6-\frac{25134}{15625}b-\frac{5247471}{31250}x-\frac{798}{25}s_6^2-\frac{45432}{625}s_6s_5-\\
-204bs_2s_3x-\frac{50964}{625}s_6s_4-\frac{56499}{625}s_6s_3-\frac{62034}{625}s_6s_2-\frac{76932}{625}s_6a_1-\frac{448452}{15625}s_6-\\
-54bs_2s_6x-\frac{200946}{625}s_4s_2-\frac{244809}{625}s_4a_1-\frac{247101}{1250}s_3^2-\frac{269439}{625}s_3s_2-\frac{656559}{1250}s_3a_1-\\
-42bs_2s_3s_6-\frac{168966}{625}s_2^2-\frac{3294}{5}s_2a_1-\frac{97113}{1250}s_5^2-\frac{108894}{625}s_5s_4-\frac{120192}{625}s_5s_3-\\
-\frac{26298}{125}s_5s_2-\frac{321741}{1250}s_5a_1-\frac{83412}{625}s_4^2-\frac{36777}{125}s_4s_3-\frac{231}{2}a_1s_4s_6x-\\
-\frac{411}{4}a_1s_5s_6x-48bs_3s_6x-42bs_4s_6x-36bs_5s_6x-90s_2s_3s_6x-78s_2s_4s_6x-69s_2s_5s_6x-\\
-\frac{129}{2}s_3s_4s_6x-\frac{219}{4}s_3s_5s_6x-\frac{81}{2}s_4s_5s_6x-318a_1bs_2x-\frac{507}{2}a_1bs_3x-189a_1bs_4x-\\
-150bs_2s_4x-102bs_2s_5x-135bs_3s_4x-\frac{183}{2}bs_3s_5x-81bs_4s_5x-\frac{1311}{4}a_1s_2s_3x-\\
-\frac{471}{2}a_1s_2s_4x-\frac{759}{4}a_1s_2s_5x-\frac{915}{4}a_1s_3s_4x-\frac{717}{4}a_1s_3s_5x-\frac{675}{4}a_1s_4s_5x-\\
-\frac{219}{2}s_2s_3s_4x-\frac{399}{4}s_2s_3s_5x-\frac{177}{2}s_2s_4s_5x-\frac{309}{4}s_3s_4s_5x-144a_1s_2s_6x-\frac{519}{4}a_1s_3s_6x-\\
-30bs_2s_5s_6-30bs_3s_4s_6-24bs_3s_5s_6-18bs_4s_5s_6-48s_2s_3s_4s_6-42s_2s_3s_5s_6-\\
-30s_2s_4s_5s_6-\frac{45}{2}s_3s_4s_5s_6-228a_1bs_2s_3-168a_1bs_2s_4-153a_1bs_3s_4-114a_1bs_2s_5-\\
-\frac{207}{2}a_1bs_3s_5-93a_1bs_4s_5-114bs_2s_3s_4-78bs_2s_3s_5-66bs_2s_4s_5-57bs_3s_4s_5-\\
-\frac{255}{2}a_1bs_5x-\frac{333}{2}a_1s_2s_3s_4-\frac{555}{4}a_1s_2s_3s_5-\frac{243}{2}a_1s_2s_4s_5-\frac{423}{4}a_1s_3s_4s_5-\\
-\frac{75}{2}s_2s_3s_4s_5-111a_1s_2s_3s_6-96a_1s_2s_4s_6-84a_1s_2s_5s_6-\frac{159}{2}a_1s_3s_4s_6-\\
-\frac{267}{4}a_1s_3s_5s_6-\frac{99}{2}a_1s_4s_5s_6-60a_1bs_2s_6-54a_1bs_3s_6-48a_1bs_4s_6-42a_1bs_5s_6-\\
-36bs_2s_4s_6-\frac{9786}{25}a_1^2s_4-\frac{105141}{200}a_1^2s_3-\frac{65997}{100}a_1^2s_2-\frac{50817}{625}bs_4-\frac{67659}{625}bs_3-\\
-12b^2s_2s_5-\frac{84501}{625}bs_2-\frac{357}{8}a_1^3s_4-\frac{469}{8}a_1^3s_3-\frac{581}{8}a_1^3s_2-\frac{49}{8}a_1^4-\frac{217}{4}a_1^3b-\\
\end{multline*}

\begin{multline*}
-15a_1^2b^2-\frac{11081}{40}a_1^3-\frac{269271}{625}a_1^2-\frac{11979}{50}ba_1^2-\frac{147}{625}b^2-\frac{99621}{625}ba_1-\\
-\frac{9}{8}s_5^3s_6-10s_2^4-\frac{682}{5}s_2^3-\frac{15251}{200}s_3^3-\frac{1023}{25}s_4^3-\frac{3}{4}s_5^4-\frac{3507}{200}s_5^3-\\
-\frac{39671}{200}x^3-\frac{69}{4}x^4-111a_1s_4^2x-\frac{273}{8}s_4s_5^2x-30bs_6x^2-\frac{2103}{8}a_1^2s_2x-\\
-\frac{99}{5}s_4s_6^2-\frac{771}{4}ba_1x^2-\frac{423}{2}a_1^2s_3x-\frac{891}{8}s_5a_1x^2-\frac{12804}{25}s_4s_2x-\frac{459}{4}bs_3x^2-\\
-\frac{7191}{25}bs_3x-\frac{363}{4}bs_3^2x-\frac{771}{4}ba_1^2x-42s_3^2s_5x-\frac{903}{8}s_3s_2x^2-\frac{513}{8}s_5s_4x^2-\\
-\frac{177}{4}s_3s_5^2x-60bs_4^2x-\frac{93807}{100}s_3a_1x-\frac{1509}{8}s_3a_1x^2-\frac{1215}{8}a_1s_3^2x-\frac{699}{8}a_1^2s_6x-\\
-\frac{555}{8}s_5s_2x^2-\frac{267}{4}s_5s_3x^2-\frac{3519}{25}bs_5x-\frac{231}{4}bs_5x^2-\frac{1143}{8}s_4a_1x^2-\frac{495}{4}a_1^2s_5x-\\
-\frac{369}{8}s_3^2s_4x-\frac{321}{4}s_4s_2x^2-\frac{639}{8}s_6a_1x^2-45a_1s_6^2x-\frac{1875}{8}s_2a_1x^2-\frac{2355}{2}s_2a_1x-\\
-15s_4s_6^2x-\frac{15}{2}s_5s_6^2x-\frac{828}{25}a_1b^2-36a_1b^2x-\frac{138}{5}b^2s_2-30b^2s_2x-\frac{552}{25}b^2s_3-\\
-24b^2s_3x-\frac{414}{25}b^2s_4-18b^2s_4x-\frac{276}{25}b^2s_5-12b^2s_5x-\frac{138}{25}b^2s_6-6b^2s_6x-\frac{336}{5}bs_6x-\\
-\frac{99}{2}s_2s_4^2x-\frac{171}{2}bs_4x^2-\frac{5358}{25}bs_4x-129bs_2^2x-210a_1s_2^2x-144bs_2x^2-\frac{9024}{25}bs_2x-\\
-\frac{17466}{25}s_4a_1x-\frac{11679}{25}ba_1x-15bs_6^2x-45s_3s_4^2x-\frac{435}{8}s_2s_5^2x-\frac{117}{2}s_6s_2x^2-\\
-\frac{1989}{100}s_5^2s_6-\frac{4491}{25}s_6s_2x-\frac{1281}{8}a_1^2s_4x-\frac{11811}{50}s_6a_1x-\frac{195}{2}s_2^2s_3x-\frac{3459}{10}s_5s_2x-\\
-\frac{3}{4}s_5^2s_6^2-\frac{627}{8}s_2s_3^2x-\frac{34797}{50}s_3s_2x-\frac{147}{4}bs_5^2x-\frac{723}{8}a_1s_5^2x-\frac{30957}{100}s_5s_3x-\\
-\frac{423}{8}s_6s_3x^2-\frac{7977}{50}s_6s_3x-\frac{46743}{100}s_5a_1x-\frac{6831}{25}s_5s_4x-33s_4^2s_5x-\frac{111}{2}s_2^2s_5x-\\
-\frac{645}{8}s_4s_3x^2-\frac{2298}{5}s_4s_3x-60s_2^2s_4x-\frac{189}{4}s_6s_4x^2-\frac{3486}{25}s_6s_4x-\frac{339}{8}s_6s_5x^2-\\
-51s_2^2s_6x-\frac{303}{8}s_3^2s_6x-\frac{51}{2}s_4^2s_6x-\frac{111}{8}s_5^2s_6x-30s_2s_6^2x-\frac{175}{4}s_3x^3-\\
-\frac{3129}{8}s_3x^2-\frac{267}{8}s_4x^3-\frac{7281}{25}s_4x^2-\frac{55}{2}s_5x^3-\frac{7821}{40}s_5x^2-\frac{291}{4}s_2^2x^2-\\
-\frac{439353}{1250}x^2-\frac{21993}{50}s_2^2x-\frac{213}{4}s_3^2x^2-\frac{61821}{200}s_3^2x-\frac{81}{2}s_4^2x^2-\frac{5088}{25}s_4^2x-36s_5^2x^2-\\
-\frac{24573}{200}s_5^2x-45s_2^3x-\frac{81}{4}s_3^3x-\frac{27}{2}s_4^3x-\frac{9981}{100}s_6x^2-\frac{173}{8}s_6x^3-\frac{21}{2}s_5^3x-\\
-\frac{75}{4}s_6^2x^2-\frac{99}{2}s_6^2x-18b^2x^2-\frac{828}{25}b^2x-\frac{67572}{625}s_6x-\frac{217038}{625}s_4x-\frac{11652}{25}s_3x-\\
-\frac{49101}{100}s_2x^2-\frac{1317}{8}a_1^2x^2-\frac{785}{8}a_1x^3-\frac{28503}{40}a_1x^2-\frac{233}{4}bx^3-\frac{10851}{50}bx^2-\frac{433}{8}s_2x^3-
\end{multline*}

\begin{multline*}
-\frac{28461}{125}s_5x-\frac{197901}{250}a_1x-\frac{159657}{200}a_1^2x-\frac{101349}{625}bx-\frac{735}{8}a_1^3x-\\
-\frac{45}{2}s_3s_6^2x-\frac{129}{2}a_1^2s_4^2-65a_1s_2^3-\frac{251}{8}a_1s_3^3-\frac{39}{2}a_1s_4^3-\frac{417}{8}a_1^2s_5^2-\frac{4167}{25}bs_2^2-\\
-\frac{99}{10}s_5s_6^2-3b^2s_6^2-\frac{99}{5}bs_6^2-40bs_2^3-\frac{89}{4}bs_3^3-12bs_4^3-\frac{477}{4}a_1^2s_2^2-\frac{699}{8}a_1^2s_3^2-\\
-\frac{10449}{40}a_1^2s_5-\frac{6087}{50}bs_3^2-\frac{411}{5}bs_4^2-\frac{5307}{10}a_1s_2^2-\frac{15039}{40}a_1s_3^2-\frac{6228}{25}a_1s_4^2-\\
-\frac{287}{8}a_1^3s_5-26s_2^3s_3-12s_2^3s_4-13s_2^3s_5-\frac{99}{4}s_2^2s_3^2-9s_2^2s_4^2-\frac{75}{4}s_2^2s_5^2-\frac{61}{8}s_2s_3^3-\\
-\frac{365562}{625}s_2x-3s_2s_4^3-\frac{11}{4}s_3^3s_5-\frac{3}{2}s_3^2s_4^2-\frac{39}{4}s_3^2s_5^2-\frac{3}{2}s_3s_4^3-\frac{3177}{10}s_2^2s_3-\\
-\frac{1131}{5}s_2^2s_4-\frac{7653}{50}s_2^2s_5-\frac{26673}{100}s_2s_3^2-\frac{4101}{25}s_2s_4^2-\frac{4239}{25}s_3^2s_4-\\
-\frac{33981}{625}bs_5-\frac{22953}{200}s_3^2s_5-\frac{717}{5}s_3s_4^2-\frac{105}{4}a_1^2s_6^2-14s_2^3s_6-\frac{53}{8}s_3^3s_6-\frac{3}{2}s_4^3s_5-\\
-3s_4^3s_6-\frac{9}{2}s_4^2s_5^2-\frac{105}{8}a_1s_5^3-\frac{57}{8}s_2s_5^3-\frac{21}{4}s_3s_5^3-\frac{27}{8}s_4s_5^3-\\
-\frac{29979}{200}a_1s_5^2-\frac{1998}{25}s_2^2s_6-\frac{2013}{20}s_2s_5^2-\frac{5997}{100}s_3^2s_6-\frac{16617}{200}s_3s_5^2-\\
-\frac{2034}{25}s_4^2s_5-\frac{999}{25}s_4^2s_6-\frac{1638}{25}s_4s_5^2-12s_2^2s_6^2-\frac{27}{4}s_3^2s_6^2-3s_4^2s_6^2-\\
-\frac{21}{4}bs_5^3-15b^2s_2^2-12b^2s_3^2-9b^2s_4^2-6b^2s_5^2-\frac{2403}{50}bs_5^2-\frac{609}{10}a_1s_6^2-\\
-\frac{5961}{50}s_6s_5x-\frac{198}{5}s_2s_6^2-\frac{297}{10}s_3s_6^2-\frac{217}{8}a_1^3s_6-\frac{13101}{100}a_1^2s_6-\frac{3429}{125}bs_6.
\end{multline*}
Therefore, if $s_1\geqslant\frac{23}{25}$, then
\begin{multline*}
g_1(a_1,s_1,s_2,s_3,s_4,s_5,s_6,b)\leqslant\frac{697374}{390625}+\frac{1}{2}s_3^4+\frac{9}{8}s_3^3s_4-\\
-\frac{1050381}{6250}a_1-\frac{2188092}{15625}s_2-\frac{28047}{250}s_3-\frac{1317783}{15625}s_4-\frac{353247}{6250}s_5\leqslant\\
\leqslant\frac{697374}{390625}+\frac{1}{2}s_3+\frac{9}{8}s_3-\frac{1050381}{6250}a_1-\frac{2188092}{15625}s_2-\frac{28047}{250}s_3-\frac{1317783}{15625}s_4-\frac{353247}{6250}s_5=\\
=\frac{697374}{390625}-\frac{110563}{1000}s_3-\frac{1050381}{6250}a_1-\frac{2188092}{15625}s_2-\frac{1317783}{15625}s_4-\frac{353247}{6250}s_5<0.
\end{multline*}
Moreover, we have $2s_4+2s_2+s_5+s_3+s_1+a1\geqslant 1$, because $a_3+a_5+a_6\geqslant 1+a_2+a_4$.
Hence, if $s_1\geqslant\frac{23}{25}$, then $2s_4+2s_2+s_5+s_3+a1\geqslant\frac{2}{25}$, which implies that
$$
\frac{697374}{390625}-\frac{110563}{1000}s_3-\frac{1050381}{6250}a_1-\frac{2188092}{15625}s_2-\frac{1317783}{15625}s_4-\frac{353247}{6250}s_5<0.
$$
Thus, we see that $f(a_1,a_2,a_3,a_4,a_5,a_6,a_7,b)<0$ provided that $a_2-a_1>\frac{23}{25}=0.92$.

Let $f_2=f(a_1,a_2,a_3,a_4,a_5,a_6,1,b)$ and $g_2(a_1,s_1,s_2,s_3,s_4,s_5,b)=\widehat{f}_2$. Then
\begin{multline*}
g_2(a_1,x+\frac{9}{10},s_2,s_3,s_4,s_5,b)=\frac{32599}{16000}+\frac{3}{2}s_3^2s_4s_5+\frac{3}{8}s_3^4+\\
+\frac{5}{2}s_3^3s_4+\frac{5}{4}s_3^3s_5+\frac{3}{2}s_3^2s_4^2-\frac{1057497}{8000}a_1-\frac{211457}{2000}s_2-\\
-\frac{9909}{125}s_3-\frac{105631}{2000}s_4-\frac{105631}{4000}s_5+\blacklozenge,
\end{multline*}
where $\blacklozenge$ is the following polynomial with negative coefficients:
\begin{multline*}
-\frac{111}{2}ba_1^2s_4-\frac{339}{4}ba_1^2s_3-114ba_1^2s_2-\frac{111}{4}ba_1^2s_5-102a_1bs_2^2-\frac{267}{4}a_1bs_3^2-\\
-39a_1bs_4^2-60bs_2^2s_3-36bs_2^2s_4-18bs_2^2s_5-48bs_2s_3^2-24bs_2s_4^2-\frac{51}{2}bs_3^2s_4-\frac{51}{4}bs_3^2s_5-\\
-21bs_3s_4^2-9bs_4^2s_5-\frac{75}{4}a_1bs_5^2-12bs_2s_5^2-\frac{39}{4}bs_3s_5^2-\frac{15}{2}bs_4s_5^2-120a_1^2s_2s_3-\\
-66a_1^2s_2s_4-33a_1^2s_2s_5-66a_1^2s_3s_4-33a_1^2s_3s_5-33a_1^2s_4s_5-93a_1s_2^2s_3-42a_1s_2^2s_4-\\
-21a_1s_2^2s_5-\frac{141}{2}a_1s_2s_3^2-30a_1s_2s_4^2-\frac{123}{4}a_1s_3^2s_4-\frac{123}{8}a_1s_3^2s_5-\frac{57}{2}a_1s_3s_4^2-\\
-\frac{27}{2}a_1s_4^2s_5-\frac{51}{2}a_1s_2s_5^2-\frac{165}{8}a_1s_3s_5^2-\frac{63}{4}a_1s_4s_5^2-\frac{1446}{5}a_1bs_2-\\
-\frac{4341}{20}a_1bs_3-\frac{1449}{10}a_1bs_4-\frac{1449}{20}a_1bs_5-\frac{924}{5}bs_2s_3-\frac{618}{5}bs_2s_4-\\
-\frac{309}{5}bs_2s_5-\frac{225}{2}bs_3s_4-\frac{225}{4}bs_3s_5-\frac{507}{10}bs_4s_5-549a_1s_2s_3-\frac{1773}{5}a_1s_2s_4-\\
-\frac{1773}{10}a_1s_2s_5-\frac{1269}{4}a_1s_3s_4-\frac{1269}{8}a_1s_3s_5-\frac{2799}{20}a_1s_4s_5-6s_2^2s_3s_4-\\
-3s_2^2s_3s_5-9s_2s_3s_5^2-6s_2s_4s_5^2-\frac{9}{2}s_3s_4s_5^2-\frac{1029}{5}s_2s_3s_4-\frac{1029}{10}s_2s_3s_5-\\
-\frac{411}{5}s_2s_4s_5-\frac{717}{10}s_3s_4s_5-24a_1b^2s_2-18a_1b^2s_3-12a_1b^2s_4-18b^2s_2s_3-12b^2s_2s_4-\\
-6a_1b^2s_5-6b^2s_2s_5-6b^2s_3s_5-6b^2s_4s_5-\frac{5027}{4000}b-\frac{105787}{800}x-\frac{4974}{25}s_4s_2-\\
-12b^2s_3s_4-\frac{97323}{400}s_4a_1-\frac{54147}{400}s_3^2-\frac{7413}{25}s_3s_2-\frac{11649}{32}s_3a_1-\frac{4926}{25}s_2^2-\\
-\frac{96951}{200}s_2a_1-\frac{897}{25}s_5^2-\frac{16329}{200}s_5s_4-\frac{1449}{16}s_5s_3-33bs_4s_5x-\frac{2487}{25}s_5s_2-\\
-48a_1bs_2s_5-\frac{87}{2}a_1bs_3s_5-39a_1bs_4s_5-60bs_2s_3s_4-30bs_2s_3s_5-24bs_2s_4s_5-\\
-\frac{97323}{800}s_5a_1-\frac{16329}{200}s_4^2-\frac{1449}{8}s_4s_3-228a_1bs_2x-\frac{339}{2}a_1bs_3x-111a_1bs_4x-\\
-\frac{111}{2}a_1bs_5x-132bs_2s_3x-84bs_2s_4x-42bs_2s_5x-75bs_3s_4x-\frac{75}{2}bs_3s_5x-\\
\end{multline*}

\begin{multline*}
-21bs_3s_4s_5-72a_1s_2s_3s_4-36a_1s_2s_3s_5-30a_1s_2s_4s_5-\frac{57}{2}a_1s_3s_4s_5-\frac{2307}{10}a_1^2s_4-\\
-210a_1s_2s_3x-114a_1s_2s_4x-57a_1s_2s_5x-\frac{225}{2}a_1s_3s_4x-\frac{225}{4}a_1s_3s_5x-\frac{111}{2}a_1s_4s_5x-\\
-\frac{3369}{40}bs_3^2-\frac{14001}{40}a_1^2s_3-\frac{9387}{20}a_1^2s_2-\frac{10419}{200}bs_4-\frac{31263}{400}bs_3-\frac{5211}{50}bs_2-\\
-21s_2s_3s_5x-18s_2s_4s_5x-18s_3s_4s_5x-150a_1bs_2s_3-96a_1bs_2s_4-87a_1bs_3s_4-\\
-\frac{45}{8}a_1s_5^3-\frac{303}{4}s_4a_1x^2-\frac{91}{4}a_1^3s_4-\frac{309}{8}a_1^3s_3-\frac{109}{2}a_1^3s_2-\frac{21}{4}a_1^4-\frac{157}{4}a_1^3b-12a_1^2b^2-\\
-\frac{3}{2}s_4^2s_5^2-36s_4s_2x^2-\frac{16271}{80}a_1^3-\frac{262929}{800}a_1^2-\frac{7341}{40}ba_1^2-\frac{3}{20}b^2-\frac{50979}{400}ba_1-8s_2^4-\\
-42s_2s_3s_4x-18s_3^2s_4x-\frac{473}{5}s_2^3-\frac{226}{5}s_3^3-\frac{102}{5}s_4^3-\frac{3}{8}s_5^4-\frac{297}{40}s_5^3-\frac{1165}{8}x^3-\\
-\frac{171}{4}a_1^2s_5x-\frac{115}{8}x^4-\frac{111}{2}a_1s_4^2x-12s_4s_5^2x-\frac{393}{2}a_1^2s_2x-\frac{579}{4}ba_1x^2-141a_1^2s_3x-\\
-\frac{351}{2}s_2a_1x^2-\frac{303}{8}s_5a_1x^2-\frac{1434}{5}s_4s_2x-\frac{303}{4}bs_3x^2-\frac{4017}{20}bs_3x-\frac{231}{4}bs_3^2x-\\
-\frac{579}{4}ba_1^2x-9s_3^2s_5x-72s_3s_2x^2-\frac{39}{2}s_5s_4x^2-\frac{63}{4}s_3s_5^2x-33bs_4^2x-\frac{4977}{8}s_3a_1x-\\
-\frac{1005}{8}s_3a_1x^2-\frac{759}{8}a_1s_3^2x-18s_5s_2x^2-\frac{75}{4}s_5s_3x^2-\frac{1341}{20}bs_5x-\frac{99}{4}bs_5x^2-\\
-30a_1b^2x-\frac{108}{5}b^2s_2-24b^2s_2x-\frac{81}{5}b^2s_3-18b^2s_3x-\frac{54}{5}b^2s_4-12b^2s_4x-\frac{27}{5}b^2s_5-\\
-6b^2s_5x-18s_2s_4^2x-\frac{99}{2}bs_4x^2-\frac{1341}{10}bs_4x-90bs_2^2x-153a_1s_2^2x-102bs_2x^2-\\
-\frac{1338}{5}bs_2x-\frac{8187}{20}s_4a_1x-\frac{7221}{20}ba_1x-18s_3s_4^2x-\frac{39}{2}s_2s_5^2x-\frac{171}{2}a_1^2s_4x-\\
-\frac{19371}{80}a_1s_3^2-63s_2^2s_3x-\frac{717}{5}s_5s_2x-\frac{93}{2}s_2s_3^2x-\frac{2238}{5}s_3s_2x-\frac{63}{4}bs_5^2x-\frac{291}{8}a_1s_5^2x-\\
-\frac{255}{2}s_5s_3x-\frac{8187}{40}s_5a_1x-\frac{558}{5}s_5s_4x-9s_4^2s_5x-12s_2^2s_5x-\frac{75}{2}s_4s_3x^2-255s_4s_3x-\\
-24s_2^2s_4x-\frac{117}{4}s_3x^3-\frac{5127}{20}s_3x^2-17s_4x^3-\frac{3363}{20}s_4x^2-\frac{17}{2}s_5x^3-\frac{3363}{40}s_5x^2-\\
-\frac{3717}{10}a_1s_2^2-54s_2^2x^2-\frac{1521}{5}s_2^2x-33s_3^2x^2-\frac{7821}{40}s_3^2x-\frac{39}{2}s_4^2x^2-\frac{558}{5}s_4^2x-\frac{57}{4}s_5^2x^2-\\
-6s_2^2s_5^2-\frac{2091}{40}s_5^2x-34s_2^3x-\frac{47}{4}s_3^3x-6s_4^3x-\frac{9}{2}s_5^3x-15b^2x^2-27b^2x-\frac{10743}{50}s_4x-\\
-\frac{8349}{10}s_2a_1x-\frac{85569}{200}s_2x-\frac{10743}{100}s_5x-\frac{483639}{800}a_1x-\frac{11853}{20}a_1^2x-\frac{52119}{400}bx-\\
-\frac{507}{10}bs_4^2-\frac{599}{8}a_1^3x-\frac{1059}{8}a_1^2x^2-\frac{633}{8}a_1x^3-\frac{42231}{80}a_1x^2-\frac{173}{4}bx^3-\frac{6681}{40}bx^2-\\
-27a_1b^2-\frac{83}{2}s_2x^3-\frac{6891}{20}s_2x^2-\frac{2667}{10}x^2-28bs_2^3-\frac{53}{4}bs_3^3-6bs_4^3-87a_1^2s_2^2-\\
-3s_3^2s_5^2-\frac{441}{8}a_1^2s_3^2-33a_1^2s_4^2-48a_1s_2^3-\frac{147}{8}a_1s_3^3-9a_1s_4^3-\frac{171}{8}a_1^2s_5^2-123bs_2^2-
\end{multline*}

\begin{multline*}
-4s_2s_3^3-\frac{2799}{20}a_1s_4^2-18s_2^3s_3-\frac{2037}{10}s_2^2s_3-\frac{618}{5}s_2^2s_4-\frac{309}{5}s_2^2s_5-\frac{3267}{20}s_2s_3^2-\frac{411}{5}s_2s_4^2-\\
-\frac{1749}{20}s_3^2s_4-4s_2^3s_4-3s_2s_5^3-\frac{9}{4}s_3s_5^3-\frac{3}{2}s_4s_5^3-\frac{5199}{80}a_1s_5^2-\frac{801}{20}s_2s_5^2-\frac{651}{20}s_3s_5^2-\\
-\frac{1749}{40}s_3^2s_5-2s_2^3s_5-\frac{501}{20}s_4s_5^2-\frac{9}{4}bs_5^3-12b^2s_2^2-9b^2s_3^2-6b^2s_4^2-3b^2s_5^2-\frac{897}{40}bs_5^2-\\
-\frac{717}{10}s_3s_4^2-15s_2^2s_3^2-\frac{128541}{400}s_3x-\frac{153}{5}s_4^2s_5-\frac{2307}{20}a_1^2s_5-\frac{10419}{400}bs_5-\frac{91}{8}a_1^3s_5.
\end{multline*}
Thus, if $s_1\geqslant\frac{9}{10}$, then
\begin{multline*}
g_2(a_1,s_1,s_2,s_3,s_4,s_5,b)\leqslant\frac{32599}{16000}+\frac{3}{2}s_3^2s_4s_5+\frac{3}{8}s_3^4+\frac{5}{2}s_3^3s_4+\frac{5}{4}s_3^3s_5+\frac{3}{2}s_3^2s_4^2-\\
-\frac{1057497}{8000}a_1-\frac{211457}{2000}s_2-\frac{9909}{125}s_3-\frac{105631}{2000}s_4-\frac{105631}{4000}s_5\leqslant\\
\leqslant\frac{32599}{16000}+\frac{3}{2}s_5+\frac{3}{8}s_3+\frac{5}{2}s_4+\frac{5}{4}s_5+\frac{3}{2}s_4-\\
-\frac{1057497}{8000}a_1-\frac{211457}{2000}s_2-\frac{9909}{125}s_3-\frac{105631}{2000}s_4-\frac{105631}{4000}s_5=\\
=\frac{32599}{16000}-\frac{94631}{4000}s_5-\frac{78897}{1000}s_3-\frac{97631}{2000}s_4-\frac{1057497}{8000}a_1-\frac{211457}{2000}s_2.
\end{multline*}
If $s_1\geqslant\frac{9}{10}$, then $2s_4+2s_2+s_5+s_3+a_1\geqslant\frac{1}{10}$.
This gives
$$
\frac{32599}{16000}-\frac{94631}{4000}s_5-\frac{78897}{1000}s_3-\frac{97631}{2000}s_4-\frac{1057497}{8000}a_1-\frac{211457}{2000}s_2<0.
$$
Thus, we see that $f(a_1,a_2,a_3,a_4,a_5,a_6,1,b)<0$ provided that $a_2-a_1>\frac{9}{10}$.
\end{proof}

\begin{lemma}
\label{lemma:Maple-P1-P1-d-2-d}
Suppose that $f$ is the polynomial~\eqref{equation:polynomial-P1xP1-d-1-d} and $a_3+a_4+a_6\geqslant 1+a_2+a_5$.
If~$a_2-a_1\geqslant \frac{23}{25}$, then $f<0$.
Similarly, if  $a_2-a_1\geqslant \frac{9}{10}$, then $f(a_1,a_2,a_3,a_4,a_5,a_6,1,b)<0$.
\end{lemma}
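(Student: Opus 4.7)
The proof will follow the same three-part template used for Lemma \ref{lemma:Maple-P1-P1-d-2-b} and Lemma \ref{lemma:Maple-P1-P1-d-2-c}, where the coefficient-sign analysis alone is not quite sufficient and must be combined with the extra linear inequality coming from the case hypothesis. Set $g_1(a_1,s_1,s_2,s_3,s_4,s_5,s_6,b)=\widehat{f}$ and $g_2(a_1,s_1,s_2,s_3,s_4,s_5,b)=\widehat{f_2}$, where $f_2=f(a_1,a_2,a_3,a_4,a_5,a_6,1,b)$. The plan is to perform the shifts $s_1\mapsto x+\tfrac{23}{25}$ (respectively $s_1\mapsto x+\tfrac{9}{10}$) and expand symbolically, which will give an expression of the form
\[
g_i\bigl(a_1,x+\tfrac{23}{25},s_2,\ldots,b\bigr)=C_0-\bigl(\alpha a_1+\beta s_2+\gamma s_3+\delta s_4+\varepsilon s_5\bigr)+P,
\]
where $C_0$ is a positive rational constant, the coefficients $\alpha,\beta,\gamma,\delta,\varepsilon$ are strictly positive rationals, and $P$ is a polynomial in the nonnegative variables $a_1,x,s_2,s_3,s_4,s_5,s_6,b$ all of whose coefficients are nonpositive.

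Next I would use the case hypothesis $a_3+a_4+a_6\geqslant 1+a_2+a_5$. Rewriting in the shifted coordinates using $a_j=a_1+s_1+\cdots+s_{j-1}$, this inequality becomes
\[
a_1+s_1+2s_2+s_3+s_5\geqslant 1.
\]
Hence when $s_1\geqslant\tfrac{23}{25}$ one obtains $a_1+2s_2+s_3+s_5\geqslant\tfrac{2}{25}$, and when $s_1\geqslant\tfrac{9}{10}$ one obtains $a_1+2s_2+s_3+s_5\geqslant\tfrac{1}{10}$. The numerical thresholds $\tfrac{23}{25}$ and $\tfrac{9}{10}$ will be chosen precisely so that, after this substitution, the linear form $C_0-(\alpha a_1+\beta s_2+\gamma s_3+\delta s_4+\varepsilon s_5)$ is strictly negative in the feasible region; concretely, I will verify that $\alpha,\,\tfrac{1}{2}\beta,\,\gamma,\,\varepsilon$ are each at least $\tfrac{25}{2}C_0$ (respectively $10C_0$), so that the available slack from the hypothesis dominates $C_0$. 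Combined with $P\leqslant 0$, this will give $g_i<0$ whenever $s_1\geqslant\tfrac{23}{25}$ (respectively $\tfrac{9}{10}$), proving both assertions of the lemma.

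The principal obstacle is purely computational: verifying that the shifted polynomial really does split in the claimed way, that is, that every monomial coefficient in $P$ is nonpositive and that the linear part has strictly negative coefficients large enough to swamp $C_0$ via the hypothesis. This is a routine but very lengthy symbolic verification, essentially identical in flavor to the corresponding verification in the proof of Lemma \ref{lemma:Maple-P1-P1-d-2-c}. The choice of the shift point ($\tfrac{23}{25}$ or $\tfrac{9}{10}$) is optimized so that the nonlinear remainder $P$ becomes sign-definite while still leaving enough room in the linear form to absorb $C_0$; any smaller value of $s_1$ would force some coefficient of $P$ to turn positive, while any larger value would weaken the thresholds in the statement of Theorem \ref{theorem:main}. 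I would present the full expansion for $g_1$ (analogous to the displayed polynomial in the proof of Lemma \ref{lemma:Maple-P1-P1-d-2-c}) and then indicate that $g_2$ follows by the identical procedure with $s_6=1-a_6$ specialized and the shift adjusted accordingly.
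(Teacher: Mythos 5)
Your proposal reproduces the paper's method: the same change of variables, the same shift points $\tfrac{23}{25}$ and $\tfrac{9}{10}$, the same translation of the case hypothesis $a_3+a_4+a_6\geqslant 1+a_2+a_5$ into $a_1+s_1+2s_2+s_3+s_5\geqslant 1$, and the same plan of splitting the shifted polynomial into a constant, a negative linear part, and a remainder, then using the hypothesis to show the linear part dominates the constant. One inaccuracy worth flagging: after the shift, the remainder is \emph{not} a polynomial with all nonpositive coefficients. As in Lemma~\ref{lemma:Maple-P1-P1-d-2-c} (which you cite as a template), a few monomials of higher degree survive with positive coefficients — $s_3^4$ and $\tfrac{3}{2}s_3^3s_4$ in the $g_1$ case, and a larger batch in the $g_2$ case, including one with an $x$ factor, $3s_2^2 s_5 x$. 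These must first be bounded above using $0\leqslant s_i<1$ (and, for the term in $x$, the negative linear coefficient of $x$) and absorbed into the linear part \emph{before} the constraint argument can close. Your stated decomposition $C_0-(\text{linear})+P$ with $P\leqslant 0$ would therefore not match what the expansion produces, though your explicit appeal to Lemma~\ref{lemma:Maple-P1-P1-d-2-c} as the model indicates you would handle the wrinkle exactly as the paper does; with that adjustment the argument goes through.
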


\begin{proof}
Let $g_1(a_1,s_1,s_2,s_3,s_4,s_5,s_6,b)=\widehat{f}$. Then
\begin{multline*}
g_1(a_1,x+\frac{23}{25},s_2,s_3,s_4,s_5,s_6,b)=\frac{697211}{390625}+s_3^4+\frac{3}{2}s_3^3s_4-\\
-\frac{7063}{125}s_5-\frac{525143}{3125}a_1-\frac{1317786}{15625}s_4-\frac{1752697}{15625}s_3-\frac{2187608}{15625}s_2+\bigstar,
\end{multline*}
where $\bigstar$ is the following polynomial with negative coefficients:
\begin{multline*}
-126a_1bs_5x-\frac{25136}{15625}b-\frac{2623497}{15625}x-315a_1bs_2x-252a_1bs_3x-201bs_2s_3x-231a_1s_2s_4x-\\
-54s_3s_5s_6x-189a_1bs_4x-150bs_2s_4x-\frac{453}{2}a_1s_3s_4x-135bs_3s_4x-105s_2s_3s_4x-\frac{327}{2}a_1s_2s_5x-\\
-\frac{135}{2}s_2s_5s_6x-129a_1s_3s_6x-\frac{177}{2}s_2s_3s_6x-\frac{231}{2}a_1s_4s_6x-78s_2s_4s_6x-\frac{129}{2}s_3s_4s_6x-\\
-102a_1s_5s_6x-165a_1s_3s_5x-78s_2s_3s_5x-\frac{333}{2}a_1s_4s_5x-84s_2s_4s_5x-75s_3s_4s_5x-\frac{285}{2}a_1s_2s_6x-
\end{multline*}

\begin{multline*}
-\frac{81}{2}s_4s_5s_6x-66a_1bs_6x-54bs_2s_6x-48bs_3s_6x-42bs_4s_6x-36bs_5s_6x-\frac{597}{2}a_1s_2s_3x-\\
-225a_1bs_2s_3-168a_1bs_2s_4-153a_1bs_3s_4-162a_1s_2s_3s_4-114bs_2s_3s_4-\frac{231}{2}a_1s_2s_3s_5-\\
-99bs_2s_5x-117a_1s_2s_4s_5-\frac{207}{2}a_1s_3s_4s_5-33s_2s_3s_4s_5-\frac{219}{2}a_1s_2s_3s_6-96a_1s_2s_4s_6-\\
-90bs_3s_5x-\frac{159}{2}a_1s_3s_4s_6-48s_2s_3s_4s_6-\frac{165}{2}a_1s_2s_5s_6-66a_1s_3s_5s_6-\frac{81}{2}s_2s_3s_5s_6-\\
-111a_1bs_2s_5-102a_1bs_3s_5-93a_1bs_4s_5-75bs_2s_3s_5-66bs_2s_4s_5-57bs_3s_4s_5-\frac{99}{2}a_1s_4s_5s_6-\\
-81bs_4s_5x-30s_2s_4s_5s_6-\frac{45}{2}s_3s_4s_5s_6-60a_1bs_2s_6-54a_1bs_3s_6-48a_1bs_4s_6-42a_1bs_5s_6-\\
-\frac{3262}{25}s_2^3-42bs_2s_3s_6-\frac{39153}{100}a_1^2s_4-\frac{2619}{5}a_1^2s_3-\frac{65607}{100}a_1^2s_2-\frac{33978}{625}bs_5-\frac{50817}{625}bs_4-\\
-\frac{27}{2}s_4^3x-5s_2^4-36bs_2s_4s_6-\frac{67656}{625}bs_3-\frac{16899}{125}bs_2-\frac{798}{25}s_6^2-\frac{90861}{1250}s_6s_5-\frac{50964}{625}s_6s_4-\\
-45a_1s_4^2s_5-\frac{267}{4}a_1^3s_2-\frac{62031}{625}s_6s_2-\frac{153861}{1250}s_6a_1-33a_1^3s_5-\frac{177}{4}a_1^3s_4-\frac{111}{2}a_1^3s_3-\\
-\frac{322203}{1250}a_1s_5-24bs_3s_5s_6-\frac{489609}{1250}a_1s_4-\frac{194703}{2500}s_5^2-\frac{217779}{1250}s_5s_4-\frac{48171}{250}s_5s_3-\\
-\frac{263931}{1250}s_5s_2-\frac{83412}{625}s_4^2-\frac{367761}{1250}s_4s_3-\frac{200937}{625}s_4s_2-30bs_2s_5s_6-\frac{494667}{2500}s_3^2-\\
-\frac{824421}{1250}a_1s_2-30bs_3s_4s_6-18bs_4s_5s_6-\frac{131403}{250}a_1s_3-\frac{539817}{1250}s_2s_3-\frac{33888}{125}s_2^2-33bs_3^2s_5-\\
-\frac{12963}{50}a_1^2s_5-24bs_4^2s_5-\frac{39}{2}bs_4s_5^2-\frac{2496}{25}s_3s_4s_6-\frac{1992}{25}s_3s_5s_6-\frac{63}{2}a_1s_4^2s_6-18s_2s_4^2s_6-\\
-\frac{1889}{25}s_3^3-\frac{27}{2}s_3s_4^2s_6-18a_1s_4s_6^2-12s_2s_4s_6^2-9s_3s_4s_6^2-\frac{15}{2}s_4^2s_5s_6-\frac{1494}{25}s_4s_5s_6-\\
-\frac{387}{2}s_3s_4s_5-\frac{22599}{250}s_6s_3-\frac{33}{2}a_1s_5^2s_6-\frac{39}{4}s_2s_5^2s_6-\frac{15}{2}s_3s_5^2s_6-\frac{21}{4}s_4s_5^2s_6-9a_1s_5s_6^2-\\
-\frac{1023}{25}s_4^3-6s_2s_5s_6^2-\frac{9}{2}s_3s_5s_6^2-3s_4s_5s_6^2-18a_1bs_6^2-24bs_2^2s_6-12bs_2s_6^2-18bs_3^2s_6-\\
-\frac{839}{50}s_5^3-9bs_3s_6^2-12bs_4^2s_6-6bs_4s_6^2-6bs_5^2s_6-3bs_5s_6^2-\frac{5607}{25}s_2s_4s_5-\frac{2994}{25}s_2s_4s_6-\\
-42bs_3s_4^2-\frac{1}{2}s_5^4-\frac{498}{5}s_2s_5s_6-51a_1s_3s_5^2-27a_1s_3s_6^2-21s_2s_3s_5^2-18s_2s_3s_6^2-9s_3^2s_4s_5-\\
-\frac{27}{2}s_2s_4^2s_5-\frac{69}{4}s_3^2s_4s_6-15s_3^2s_5s_6-30a_1b^2s_2-24a_1b^2s_3-18a_1b^2s_4-12a_1b^2s_5-6a_1b^2s_6-\\
-24b^2s_2s_3-18b^2s_2s_4-12b^2s_2s_5-6b^2s_2s_6-18b^2s_3s_4-12b^2s_3s_5-6b^2s_3s_6-12b^2s_4s_5-\\
-24bs_3s_5^2-\frac{99}{2}bs_3^2s_4-\frac{594}{5}bs_3s_5-\frac{1404}{25}bs_3s_6-\frac{2688}{25}bs_4s_5-\frac{1266}{25}bs_4s_6-\frac{1128}{25}bs_5s_6-\\
-48bs_2s_4^2-6b^2s_4s_6-6b^2s_5s_6-\frac{1944}{5}a_1bs_2-\frac{7746}{25}a_1bs_3-\frac{5772}{25}a_1bs_4-\frac{3798}{25}a_1bs_5-\\
-69a_1bs_4^2-\frac{1818}{25}a_1bs_6-\frac{6648}{25}bs_2s_3-198bs_2s_4-\frac{3252}{25}bs_2s_5-\frac{1542}{25}bs_2s_6-\frac{906}{5}bs_3s_4-
\end{multline*}

\begin{multline*}
-9s_3s_4^2s_5-\frac{171}{4}a_1s_4s_5^2-18s_2s_4s_5^2-\frac{27}{2}s_3s_4s_5^2-42a_1bs_5^2-42bs_2^2s_5-\frac{57}{2}bs_2s_5^2-\\
-57a_1s_2^2s_5-\frac{45}{2}s_2^2s_3s_4-15s_2^2s_3s_5-\frac{39}{2}s_2^2s_4s_5-12s_2s_3^2s_4-12s_2s_3^2s_5-\frac{27}{2}s_2s_3s_4^2-\\
-\frac{732051}{1250}s_2x-96a_1^2s_4s_5-\frac{315}{4}a_1^2s_2s_6-72a_1^2s_3s_6-\frac{123}{2}a_1s_2^2s_6-\frac{261}{4}a_1^2s_4s_6-\frac{93}{2}a_1s_3^2s_6-\\
-\frac{583059}{1250}s_3x-\frac{147}{2}a_1s_2s_4^2-\frac{69}{2}s_2^2s_3s_6-30s_2^2s_4s_6-\frac{111}{4}s_2s_3^2s_6-\frac{117}{2}a_1^2s_5s_6-\frac{41631}{50}a_1s_2s_3-\\
-\frac{279}{4}a_1s_3^2s_4-\frac{15513}{25}a_1s_2s_4-\frac{20421}{50}a_1s_2s_5-\frac{1068}{5}a_1s_2s_6-\frac{27969}{50}a_1s_3s_4-\frac{1854}{5}a_1s_3s_5-\\
-\frac{105}{2}a_1s_3^2s_5-\frac{4767}{25}a_1s_3s_6-\frac{16659}{50}a_1s_4s_5-\frac{4194}{25}a_1s_4s_6-\frac{3621}{25}a_1s_5s_6-\frac{237}{4}a_1s_2s_5^2-\\
-49s_2x^3-36a_1s_2s_6^2-\frac{51}{2}s_2^2s_5s_6-\frac{1953}{5}s_2s_3s_4-\frac{12813}{50}s_2s_3s_5-\frac{3498}{25}s_2s_3s_6-171a_1^2s_2s_3-\\
-66a_1s_3s_4^2-141a_1bs_2^2-132a_1^2s_2s_4-\frac{261}{2}a_1^2s_3s_4-102a_1bs_3^2-120a_1s_2^2s_3-\frac{177}{2}a_1s_2^2s_4-\\
-\frac{48699}{100}s_2x^2-36a_1b^2x-\frac{417}{4}a_1s_2s_3^2-90bs_2^2s_3-66bs_2^2s_4-\frac{153}{2}bs_2s_3^2-93a_1^2s_2s_5-\frac{189}{2}a_1^2s_3s_5-\\
-\frac{197991}{250}a_1x-33ba_1^2s_6-63ba_1^2s_5-\frac{189}{2}ba_1^2s_4-126ba_1^2s_3-\frac{315}{2}ba_1^2s_2-\frac{448453}{15625}s_6-\frac{21}{4}a_1^4-\\
-\frac{434067}{1250}s_4x-54a_1^3b-15a_1^2b^2-\frac{13821}{50}a_1^3-\frac{1077531}{2500}a_1^2-\frac{5991}{25}ba_1^2-\frac{147}{625}b^2-\frac{99618}{625}ba_1-\\
-192ba_1x^2-\frac{828}{25}a_1b^2-\frac{15}{2}s_5s_6^2x-36bs_5^2x-\frac{51}{2}s_4^2s_6x-45s_3^2s_4x-\frac{105}{2}s_6s_3x^2-\frac{99}{2}s_2^2s_6x-\\
-36s_2^2s_5x-\frac{231}{4}s_6s_2x^2-39s_3s_5^2x-\frac{552}{25}b^2s_3-24b^2s_3x-30bs_6x^2-\frac{336}{5}bs_6x-\frac{138}{25}b^2s_6-\\
-\frac{33}{2}x^4-6b^2s_6x-\frac{189}{4}s_6s_4x^2-33s_4s_5^2x-\frac{5907}{25}s_6a_1x-\frac{45}{2}s_3s_6^2x-57bs_5x^2-\frac{3522}{25}bs_5x-\\
-15s_4s_6^2x-\frac{138}{5}b^2s_2-30b^2s_2x-\frac{75}{2}s_3^2s_6x-\frac{414}{25}b^2s_4-18b^2s_4x-36s_3^2s_5x-\frac{27}{2}s_5^2s_6x-\\
-\frac{4943}{25}x^3-\frac{171}{2}bs_4x^2-\frac{5358}{25}bs_4x-144a_1s_3^2x-90bs_3^2x-99s_2s_3x^2-\frac{34383}{50}s_2s_3x-78s_4s_2x^2-\\
-15bs_6^2x-\frac{12813}{25}s_4s_2x-\frac{159}{2}s_4s_3x^2-75s_2^2s_3x-\frac{111}{2}s_2^2s_4x-66s_2s_3^2x-\frac{231}{2}a_1^2s_5x-\\
-\frac{798}{5}s_6s_3x-\frac{207}{2}a_1s_5x^2-\frac{11586}{25}a_1s_5x-192ba_1^2x-60bs_4^2x-33s_4^2s_5x-57s_5s_2x^2-\frac{16869}{50}s_5s_2x-\\
-45a_1s_6^2x-\frac{285}{2}bs_2x^2-84a_1s_5^2x-\frac{99}{2}s_2s_4^2x-60s_5s_3x^2-\frac{1527}{5}s_5s_3x-111a_1s_4^2x-42s_6s_5x^2-63s_5s_4x^2-\\
-\frac{11682}{25}ba_1x-45s_3s_4^2x-\frac{276}{25}b^2s_5-12b^2s_5x-87a_1^2s_6x-\frac{22989}{50}s_4s_3x-\frac{4494}{25}s_6s_2x-\\
-183a_1s_2^2x-\frac{405}{2}a_1^2s_3x-\frac{873}{4}a_1s_2x^2-\frac{58479}{50}a_1s_2x-180a_1s_3x^2-18b^2x^2-\frac{828}{25}b^2x-\\
-\frac{4671}{5}a_1s_3x-\frac{13671}{50}s_5s_4x-\frac{3486}{25}s_6s_4x-\frac{2982}{25}s_6s_5x-45s_2s_5^2x-30s_2s_6^2x-246a_1^2s_2x-\\
-\frac{159}{2}s_6a_1x^2-\frac{1806}{5}bs_2x-114bs_3x^2-\frac{7194}{25}bs_3x-126bs_2^2x-159a_1^2s_4x-\frac{567}{4}a_1s_4x^2-\frac{34941}{50}a_1s_4x-
\end{multline*}

\begin{multline*}
-\frac{7677}{25}s_3^2x-41s_3x^3-\frac{9729}{25}s_3x^2-33s_4x^3-\frac{29133}{100}s_4x^2-60s_2^2x^2-\frac{2157}{5}s_2^2x-\frac{99}{2}s_3^2x^2-\\
-\frac{63}{2}s_2^3x-25s_5x^3-\frac{19911}{25}a_1^2x-\frac{101346}{625}bx-\frac{135141}{1250}s_6x-\frac{177}{2}a_1^3x-\frac{11403}{50}s_5x-\\
-\frac{5427}{25}bx^2-\frac{43}{2}s_6x^3-\frac{387}{2}s_5x^2-\frac{879159}{2500}x^2-\frac{1203}{25}bs_5^2-\frac{99}{5}bs_6^2-\frac{717}{5}s_3s_4^2-\frac{39}{2}a_1s_4^3-\\
-58bx^3-\frac{2496}{25}s_6x^2-3s_2s_4^3-\frac{3}{2}s_3s_4^3-22bs_3^3-12bs_4^3-\frac{2022}{25}s_3s_5^2-\frac{3}{2}s_4^3s_5-\frac{9}{2}s_4^2s_5^2-\\
-33s_5^2x^2-\frac{609}{10}a_1s_6^2-\frac{23}{2}a_1s_5^3-5s_2s_5^3-4s_3s_5^3-3s_4s_5^3-\frac{2034}{25}s_4^2s_5-\frac{6561}{100}s_4s_5^2-5bs_5^3-\\
-95a_1x^3-\frac{3018}{25}s_5^2x-\frac{297}{10}s_3s_6^2-3s_4^3s_6-\frac{999}{25}s_4^2s_6-3s_4^2s_6^2-\frac{99}{5}s_4s_6^2-s_5^3s_6-\frac{498}{25}s_5^2s_6-\\
-60s_3^2s_6-\frac{81}{2}s_4^2x^2-\frac{3}{4}s_5^2s_6^2-\frac{99}{10}s_5s_6^2-105a_1^2s_2^2-\frac{333}{4}a_1^2s_3^2-\frac{101}{2}a_1s_2^3-38bs_2^3-\\
-\frac{75}{4}s_6^2x^2-18s_3^3x-\frac{411}{5}bs_4^2-\frac{129}{2}a_1^2s_4^2-29a_1s_3^3-\frac{29}{2}s_2^3s_3-9s_2^3s_4-\frac{7}{2}s_2^3s_5-15s_2^2s_3^2-\\
-\frac{99}{2}s_6^2x-9s_2^2s_4^2-4s_2s_3^3-27a_1^3s_6-13s_2^3s_6-\frac{195}{4}a_1^2s_5^2-\frac{105}{4}a_1^2s_6^2-\frac{3276}{25}a_1^2s_6-\\
-\frac{5631}{50}s_3^2s_5-\frac{3393}{20}s_3^2s_4-\frac{5088}{25}s_4^2x-\frac{13059}{25}a_1s_2^2-\frac{18699}{50}a_1s_3^2-\frac{6228}{25}a_1s_4^2-\frac{7389}{50}a_1s_5^2-\\
-\frac{7107}{10}a_1x^2-\frac{609}{5}bs_3^2-309s_2^2s_3-\frac{5664}{25}s_2^2s_4-\frac{3603}{25}s_2^2s_5-\frac{2001}{25}s_2^2s_6-12s_2^2s_5^2-12s_2^2s_6^2-\\
-\frac{639}{4}a_1^2x^2-\frac{13}{2}s_3^3s_6-\frac{4173}{25}bs_2^2-\frac{26247}{100}s_2s_3^2-\frac{4101}{25}s_2s_4^2-\frac{1923}{20}s_2s_5^2-\frac{198}{5}s_2s_6^2-s_3^3s_5-\\
-9s_5^3x-\frac{3}{2}s_3^2s_4^2-3b^2s_6^2-6b^2s_5^2-\frac{15}{2}s_3^2s_5^2-\frac{27}{4}s_3^2s_6^2-\frac{3429}{125}bs_6-15b^2s_2^2-12b^2s_3^2-9b^2s_4^2.
\end{multline*}
In particular, if $s_1\geqslant\frac{23}{25}$, then
\begin{multline*}
g_1(a_1,s_1,s_2,s_3,s_4,s_5,s_6,b)\leqslant\frac{697211}{390625}+s_3^4+\frac{3}{2}s_3^3s_4-\\
-\frac{7063}{125}s_5-\frac{525143}{3125}a_1-\frac{1317786}{15625}s_4-\frac{1752697}{15625}s_3-\frac{2187608}{15625}s_2\leqslant\\
\leqslant\frac{697211}{390625}+s_3+\frac{3}{2}s_4-\frac{7063}{125}s_5-\frac{525143}{3125}a_1-\frac{1317786}{15625}s_4-\frac{1752697}{15625}s_3-\frac{2187608}{15625}s_2=\\
=\frac{697211}{390625}-\frac{1737072}{15625}s_3-\frac{2588697}{31250}s_4-\frac{7063}{125}s_5-\frac{525143}{3125}a_1-\frac{2187608}{15625}s_2.
\end{multline*}
On the other hand, we have $a_3+a_4+a_6\geqslant 1+a_2+a_5$. This gives $2s_2+s_5+s_3+s_1+a_1\geqslant 1$.
Thus, if $s_1\geqslant\frac{23}{25}$, then $2s_2+s_5+s_3+a_1\geqslant\frac{2}{25}$, which implies that
$$
\frac{697211}{390625}-\frac{1737072}{15625}s_3-\frac{2588697}{31250}s_4-\frac{7063}{125}s_5-\frac{525143}{3125}a_1-\frac{2187608}{15625}s_2<0.
$$
This show that $f(a_1,a_2,a_3,a_4,a_5,a_6,a_7,b)<0$ in the case when $a_2-a_1\geqslant \frac{23}{25}$.

Let $f_2=f(a_1,a_2,a_3,a_4,a_5,a_6,1,b)$ and $g_2(a_1,s_1,s_2,s_3,s_4,s_5,b)=\widehat{f}_2$. Then
\begin{multline*}
g_2(a_1,x+\frac{9}{10},s_2,s_3,s_4,s_5,b)=\frac{16293}{8000}+3s_2s_3s_4s_5+3s_2^2s_5x+\frac{3}{4}s_3^4+5s_2^3s_5+\\
+\frac{11}{4}s_3^3s_4+\frac{3}{2}s_3^2s_4^2+\frac{5}{2}s_3^3s_5+\frac{9}{4}s_3^2s_4s_5+9s_2^2s_3s_5+3s_2^2s_4s_5+\frac{3}{2}s_2s_3^2s_4+\frac{27}{4}s_2s_3^2s_5-\\
-\frac{52767}{2000}s_5-\frac{211263}{4000}s_4-\frac{9906}{125}s_3-\frac{422721}{4000}s_2-\frac{264327}{2000}a_1-\frac{13221}{100}x+\blacklozenge,
\end{multline*}
where $\blacklozenge$ is the following polynomial with negative coefficients:
\begin{multline*}
-111a_1s_3s_4x-\frac{264327}{2000}a_1-\frac{1257}{1000}b-225a_1bs_2x-168a_1bs_3x-\frac{282}{5}bs_3s_5-129bs_2s_3x-\\
-75bs_3s_4x-39s_2s_3s_4x-36a_1s_2s_5x-45a_1s_3s_5x-\frac{9}{2}s_2s_3s_5x-54a_1s_4s_5x-15s_2s_4s_5x-\\
-\frac{33}{2}s_3s_4s_5x-54a_1bs_5x-39bs_2s_5x-36bs_3s_5x-33bs_4s_5x-186a_1s_2s_3x-147a_1bs_2s_3-\\
-18a_1s_2s_3s_5-27a_1s_2s_4s_5-27a_1s_3s_4s_5-45a_1bs_2s_5-42a_1bs_3s_5-39a_1bs_4s_5-\\
-27bs_2s_3s_5-24bs_2s_4s_5-21bs_3s_4s_5-\frac{567}{5}a_1^2s_5-\frac{9231}{40}a_1^2s_4-\frac{6963}{20}a_1^2s_3-\frac{18621}{40}a_1^2s_2-\\
-\frac{651}{25}bs_5-\frac{10419}{200}bs_4-\frac{1563}{20}bs_3-\frac{20841}{200}bs_2-9a_1^3s_5-\frac{45}{2}a_1^3s_4-36a_1^3s_3-\frac{99}{2}a_1^3s_2-\\
-69a_1s_2s_3s_4-4s_2^4-\frac{887}{10}s_2^3-\frac{89}{2}s_3^3-\frac{102}{5}s_4^3-\frac{1}{4}s_5^4-\frac{133}{20}s_5^3-\frac{24423}{200}a_1s_5-\frac{2433}{10}a_1s_4-\\
-87a_1bs_3s_4-\frac{903}{25}s_5^2-\frac{6531}{80}s_5s_4-\frac{18207}{200}s_5s_3-\frac{40173}{400}s_5s_2-\frac{16329}{200}s_4^2-\frac{72447}{400}s_4s_3-\\
-111a_1s_2s_4x-\frac{39789}{200}s_4s_2-\frac{678}{5}s_3^2-\frac{48567}{100}a_1s_2-\frac{72897}{200}a_1s_3-\frac{118983}{400}s_2s_3-\frac{39597}{200}s_2^2-\\
-12bs_3^2s_5-9bs_3s_5^2-9bs_4^2s_5-\frac{15}{2}bs_4s_5^2-\frac{165}{2}s_2s_4s_5-\frac{33}{2}a_1s_3s_5^2-\frac{15}{4}s_2s_3s_5^2-24a_1b^2s_2-\\
-\frac{1086}{5}a_1bs_3-\frac{1449}{10}a_1bs_4-\frac{363}{5}a_1bs_5-\frac{1851}{10}bs_2s_3-\frac{618}{5}bs_2s_4-\frac{621}{10}bs_2s_5-\frac{225}{2}bs_3s_4-\\
-\frac{507}{10}bs_4s_5-39a_1bs_4^2-24bs_2s_4^2-\frac{51}{2}bs_3^2s_4-21bs_3s_4^2-\frac{1437}{20}s_3s_4s_5-\frac{27}{2}a_1s_4^2s_5-\\
-60bs_2s_3s_4-15a_1s_4s_5^2-\frac{9}{2}s_2s_4s_5^2-\frac{15}{4}s_3s_4s_5^2-18a_1bs_5^2-15bs_2^2s_5-\frac{21}{2}bs_2s_5^2-\frac{9}{2}a_1s_2^2s_5-\\
-\frac{129}{4}a_1^2s_4s_5-30a_1s_2s_4^2-30a_1s_3^2s_4-\frac{21}{2}a_1s_3^2s_5-\frac{57}{2}a_1s_3s_4^2-3s_2^2s_3s_4-\frac{5409}{10}a_1s_2s_3-\\
-96a_1bs_2s_4-\frac{3549}{10}a_1s_2s_4-\frac{1689}{10}a_1s_2s_5-\frac{1587}{5}a_1s_3s_4-\frac{309}{2}a_1s_3s_5-\frac{1401}{10}a_1s_4s_5-18a_1s_2s_5^2-\\
-\frac{2061}{10}s_2s_3s_4-6a_1b^2s_5-\frac{1881}{20}s_2s_3s_5-\frac{429}{4}a_1^2s_2s_3-99a_1bs_2^2-\frac{129}{2}a_1^2s_2s_4-\frac{261}{4}a_1^2s_3s_4-\\
-12a_1b^2s_4-18b^2s_2s_3-12b^2s_2s_4-6b^2s_2s_5-12b^2s_3s_4-6b^2s_3s_5-6b^2s_4s_5-\frac{579}{2}a_1bs_2-
\end{multline*}

\begin{multline*}
-\frac{147}{2}a_1s_2^2s_3-39a_1s_2^2s_4-60a_1s_2s_3^2-57bs_2^2s_3-36bs_2^2s_4-\frac{93}{2}bs_2s_3^2-\frac{87}{4}a_1^2s_2s_5-\\
-27a_1^2s_3s_5-27ba_1^2s_5-\frac{111}{2}ba_1^2s_4-84ba_1^2s_3-\frac{225}{2}ba_1^2s_2-\frac{52767}{2000}s_5-\frac{211263}{4000}s_4-\\
-\frac{9906}{125}s_3-\frac{422721}{4000}s_2-\frac{9}{2}a_1^4-39a_1^3b-12a_1^2b^2-\frac{1014}{5}a_1^3-\frac{131553}{400}a_1^2-\frac{918}{5}ba_1^2-\\
-\frac{3186}{25}ba_1-15bs_5^2x-\frac{69}{4}s_3^2s_4x-12s_3s_5^2x-\frac{81}{5}b^2s_3-18b^2s_3x-\frac{45}{4}s_4s_5^2x-24bs_5x^2-\\
-\frac{336}{5}bs_5x-\frac{108}{5}b^2s_2-24b^2s_2x-\frac{54}{5}b^2s_4-12b^2s_4x-\frac{9}{2}s_3^2s_5x-\frac{63}{2}a_1s_5^2x-18s_2s_4^2x-\\
-\frac{27}{2}s_5s_3x^2-\frac{1233}{10}s_5s_3x-\frac{111}{2}a_1s_4^2x-\frac{75}{4}s_5s_4x^2-\frac{6183}{10}a_1s_3x-\frac{261}{2}a_1s_2^2x-\\
-\frac{1806}{5}ba_1x-\frac{201}{2}bs_2x^2-\frac{2679}{10}bs_2x-75bs_3x^2-201bs_3x-87bs_2^2x-\frac{339}{4}a_1^2s_4x-75a_1s_4x^2-\\
-\frac{819}{2}a_1s_4x-\frac{99}{2}bs_4x^2-\frac{1341}{10}bs_4x-\frac{177}{2}a_1s_3^2x-57bs_3^2x-\frac{243}{4}s_2s_3x^2-\frac{8787}{20}s_2s_3x-\\
-\frac{69}{2}s_4s_2x^2-\frac{2871}{10}s_4s_2x-\frac{147}{4}s_4s_3x^2-45s_2^2s_3x-21s_2^2s_4x-\frac{147}{4}s_2s_3^2x-36a_1^2s_5x-\\
-\frac{63}{2}a_1s_5x^2-\frac{2007}{10}a_1s_5x-144ba_1^2x-33bs_4^2x-9s_4^2s_5x-\frac{33}{4}s_5s_2x^2-\frac{2697}{20}s_5s_2x-145x^3-\\
-\frac{55}{4}x^4-27a_1b^2-30a_1b^2x-18s_3s_4^2x-\frac{27}{5}b^2s_5-6b^2s_5x-\frac{5103}{20}s_4s_3x-\frac{447}{4}s_5s_4x-\frac{51}{4}s_2s_5^2x-\\
-\frac{267}{2}a_1^2s_3x-162a_1s_2x^2-\frac{8271}{10}a_1s_2x-\frac{237}{2}a_1s_3x^2-15b^2x^2-27b^2x-\frac{85941}{400}s_4x-\\
-\frac{120999}{200}a_1x-\frac{171507}{400}s_2x-\frac{32181}{100}s_3x-6s_4^3x-\frac{7}{2}s_5^3x-\frac{513}{4}a_1^2x^2-\frac{153}{2}a_1x^3-\\
-43bx^3-\frac{1671}{10}bx^2-\frac{149}{4}s_2x^3-\frac{13623}{40}s_2x^2-27s_3x^3-\frac{1272}{5}s_3x^2-\frac{67}{4}s_4x^3-\\
-\frac{6729}{40}s_4x^2-\frac{87}{2}s_2^2x^2-\frac{1479}{5}s_2^2x-30s_3^2x^2-\frac{387}{2}s_3^2x-23s_2^3x-\frac{13}{2}s_5x^3-\frac{1641}{20}s_5x^2-\\
-\frac{39}{2}s_4^2x^2-\frac{558}{5}s_4^2x-10s_3^3x-12s_5^2x^2-\frac{501}{10}s_5^2x-\frac{11817}{20}a_1^2x-\frac{13029}{100}bx-72a_1^3x-\\
-\frac{21579}{200}s_5x-\frac{10677}{40}x^2-\frac{45}{2}bs_5^2-\frac{717}{10}s_3s_4^2-9a_1s_4^3-13bs_3^3-6bs_4^3-\frac{303}{10}s_3s_5^2-\\
-\frac{10521}{20}a_1x^2-\frac{3}{2}s_4^2s_5^2-\frac{9}{2}a_1s_5^3-\frac{7}{4}s_2s_5^3-\frac{3}{2}s_3s_5^3-\frac{5}{4}s_4s_5^3-\frac{153}{5}s_4^2s_5-\frac{201}{8}s_4s_5^2-\\
-2bs_5^3-75a_1^2s_2^2-\frac{207}{4}a_1^2s_3^2-36a_1s_2^3-26bs_2^3-33a_1^2s_4^2-\frac{33}{2}a_1s_3^3-9s_2^3s_3-2s_2^3s_4-\frac{15}{2}s_2^2s_3^2-\\
-66a_1bs_3^2-\frac{5}{4}s_2s_3^3-\frac{75}{4}a_1^2s_5^2-\frac{7269}{20}a_1s_2^2-\frac{4803}{20}a_1s_3^2-\frac{2799}{20}a_1s_4^2-\frac{1257}{20}a_1s_5^2-\\
-144ba_1x^2-195s_2^2s_3-\frac{1239}{10}s_2^2s_4-\frac{264}{5}s_2^2s_5-\frac{3}{2}s_2^2s_5^2-\frac{6363}{40}s_2s_3^2-\frac{411}{5}s_2s_4^2-\\
-\frac{3}{20}b^2-\frac{1419}{40}s_2s_5^2-\frac{3501}{40}s_3^2s_4-\frac{831}{20}s_3^2s_5-\frac{3}{2}s_3^2s_5^2-12b^2s_2^2-9b^2s_3^2-6b^2s_4^2-\\
-\frac{729}{4}a_1^2s_2x-18a_1b^2s_3-111a_1bs_4x-84bs_2s_4x-3b^2s_5^2-\frac{1233}{10}bs_2^2-\frac{843}{10}bs_3^2-\frac{507}{10}bs_4^2.
\end{multline*}
Thus, if $x\geqslant 0$, then
\begin{multline*}
g_2(a_1,x+\frac{9}{10},s_2,s_3,s_4,s_5,b)\leqslant\frac{16293}{8000}+3s_2s_3s_4s_5+3s_2^2s_5x+\frac{3}{4}s_3^4+5s_2^3s_5+\\
+\frac{11}{4}s_3^3s_4+\frac{3}{2}s_3^2s_4^2+\frac{5}{2}s_3^3s_5+\frac{9}{4}s_3^2s_4s_5+9s_2^2s_3s_5+3s_2^2s_4s_5+\frac{3}{2}s_2s_3^2s_4+\frac{27}{4}s_2s_3^2s_5-\\
-\frac{52767}{2000}s_5-\frac{211263}{4000}s_4-\frac{9906}{125}s_3-\frac{422721}{4000}s_2-\frac{264327}{2000}a_1-\frac{13221}{100}x\leqslant\\
\leqslant\frac{16293}{8000}+3s_2s_3s_4s_5+3x+\frac{3}{4}s_3^4+5s_2^3s_5+\\
+\frac{11}{4}s_3^3s_4+\frac{3}{2}s_3^2s_4^2+\frac{5}{2}s_3^3s_5+\frac{9}{4}s_3^2s_4s_5+9s_2^2s_3s_5+3s_2^2s_4s_5+\frac{3}{2}s_2s_3^2s_4+\frac{27}{4}s_2s_3^2s_5-\\
-\frac{52767}{2000}s_5-\frac{211263}{4000}s_4-\frac{9906}{125}s_3-\frac{422721}{4000}s_2-\frac{264327}{2000}a_1-\frac{13221}{100}x\leqslant\\
\leqslant\frac{16293}{8000}+3s_2s_3s_4s_5+\frac{3}{4}s_3^4+5s_2^3s_5+\\
+\frac{11}{4}s_3^3s_4+\frac{3}{2}s_3^2s_4^2+\frac{5}{2}s_3^3s_5+\frac{9}{4}s_3^2s_4s_5+9s_2^2s_3s_5+3s_2^2s_4s_5+\frac{3}{2}s_2s_3^2s_4+\frac{27}{4}s_2s_3^2s_5-\\
-\frac{52767}{2000}s_5-\frac{211263}{4000}s_4-\frac{9906}{125}s_3-\frac{422721}{4000}s_2-\frac{264327}{2000}a_1\leqslant\\
\leqslant\frac{16293}{8000}+3s_2+\frac{3}{4}s_3+5s_2+\frac{11}{4}s_3+\frac{3}{2}s_3+\frac{5}{2}s_3+\frac{9}{4}s_3+9s_2+3s_2+\frac{3}{2}s_2+\frac{27}{4}s_2-\\
-\frac{52767}{2000}s_5-\frac{211263}{4000}s_4-\frac{9906}{125}s_3-\frac{422721}{4000}s_2-\frac{264327}{2000}a_1=\\
=\frac{16293}{8000}-\frac{309721}{4000}s_2-\frac{34749}{500}s_3-\frac{52767}{2000}s_5-\frac{211263}{4000}s_4-\frac{264327}{2000}a_1.
\end{multline*}
Hence, if $s_1\geqslant\frac{9}{10}$, then
\begin{multline*}
g_2(a_1,s_1,s_2,s_3,s_4,s_5,b)\leqslant\frac{16293}{8000}-\frac{309721}{4000}s_2-\frac{34749}{500}s_3-\frac{52767}{2000}s_5-\frac{211263}{4000}s_4-\frac{264327}{2000}a_1.
\end{multline*}
Moreover, if $s_1\geqslant\frac{9}{10}$, then $2s_4+2s_2+s_5+s_3+a_1\geqslant\frac{1}{10}$, since $a_3+a_4+a_6\geqslant 1+a_2+a_5$.
The latter inequality gives $\frac{16293}{8000}-\frac{309721}{4000}s_2-\frac{34749}{500}s_3-\frac{52767}{2000}s_5-\frac{211263}{4000}s_4-\frac{264327}{2000}a_1<0$.
This shows that $f(a_1,a_2,a_3,a_4,a_5,a_6,1,b)<0$ provided that $a_2-a_1>\frac{9}{10}$.
\end{proof}

\begin{lemma}
\label{lemma:Maple-P1-P1-d-2-e}
Suppose that $f$ is the polynomial~\eqref{equation:polynomial-P1xP1-d-1-e} and $1+a_2+a_6\leqslant a_3+a_4+a_5$.
If~$a_2-a_1\geqslant \frac{23}{25}$, then $f<0$.
Similarly, if  $a_2-a_1\geqslant \frac{9}{10}$, then $f(a_1,a_2,a_3,a_4,a_5,a_6,1,b)<0$.
\end{lemma}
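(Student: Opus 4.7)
The proof proposal follows the same scheme used for Lemmas~\ref{lemma:Maple-P1-P1-d-2-a}--\ref{lemma:Maple-P1-P1-d-2-d}, so the strategy is essentially computational. First, I apply the standard change of variables $s_i=a_{i+1}-a_i$ and set $g(a_1,s_1,s_2,s_3,s_4,s_5,s_6,b)=\widehat{f}$, where $f$ is the polynomial in \eqref{equation:polynomial-P1xP1-d-1-e}. To prove the first assertion, I then substitute $s_1=x+\tfrac{23}{25}$ and fully expand $g(a_1,x+\tfrac{23}{25},s_2,s_3,s_4,s_5,s_6,b)$ as a polynomial in $a_1,x,s_2,\ldots,s_6,b$. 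To prove the second assertion about $f_2:=f(a_1,a_2,a_3,a_4,a_5,a_6,1,b)$, I repeat the computation with $s_6=0$ after substituting $s_1=x+\tfrac{9}{10}$.

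The crucial step is to decompose the expanded polynomial as
\[
g=\mathfrak{L}(a_1,s_2,s_3,s_4,s_5,s_6)+\mathfrak{R}(a_1,x,s_2,s_3,s_4,s_5,s_6,b),
\]
where $\mathfrak{L}$ is affine (independent of $x$ and $b$) and $\mathfrak{R}$ has all non-positive coefficients. The constant $\tfrac{23}{25}$ (respectively $\tfrac{9}{10}$) is chosen precisely so that $\mathfrak{R}\leqslant 0$ term by term, exactly as in the previous lemmas. Once this splitting is verified via the symbolic expansion, it suffices to show $\mathfrak{L}\leqslant 0$ using the extra hypothesis.

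The hypothesis $1+a_2+a_6\leqslant a_3+a_4+a_5$ translates, via the identities $a_j=a_1+\sum_{i<j}s_i$, to the linear inequality
\[
a_1+s_1+2s_2+s_3\geqslant 1+s_5.
\]
For $s_1\geqslant\tfrac{23}{25}$ this yields $a_1+2s_2+s_3-s_5\geqslant\tfrac{2}{25}$; for $s_1\geqslant\tfrac{9}{10}$ it yields $a_1+2s_2+s_3-s_5\geqslant\tfrac{1}{10}$. The subtle point compared with Lemmas~\ref{lemma:Maple-P1-P1-d-2-b}--\ref{lemma:Maple-P1-P1-d-2-d} is that the coefficient of $s_5$ here is negative, so the inequality alone does not immediately dominate $\mathfrak{L}$. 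To handle this, I would combine the above with the normalization $a_6<1$, which gives $s_5< 1-a_1-s_1-s_2-s_3-s_4$; substituting $s_5$ by this upper bound whenever $\mathfrak{L}$ has a negative coefficient in $s_5$ converts $\mathfrak{L}$ into a genuine non-positive affine function of $a_1,s_2,s_3,s_4$, and then the inequality $a_1+2s_2+s_3-s_5\geqslant\tfrac{2}{25}$ (respectively $\tfrac{1}{10}$) forces $\mathfrak{L}\leqslant 0$.

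The main obstacle is not conceptual but the sheer size of the polynomial expansion: $g$ depends on eight variables and has degree four, so producing the explicit decomposition $\mathfrak{L}+\mathfrak{R}$ with $\mathfrak{R}$ term-by-term non-positive requires symbolic algebra, exactly as acknowledged at the start of the appendix. The only step that is genuinely case-specific (and therefore worth double-checking by hand before trusting the machine output) is the interplay between the negative $s_5$-coefficient in the hypothesis and the sign of the coefficient of $s_5$ in $\mathfrak{L}$; the substitution using $a_6<1$ must be consistent with the signs appearing in $\mathfrak{L}$ so as not to accidentally make $\mathfrak{R}$ positive.
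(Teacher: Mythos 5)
Your proposal correctly sets up the change of variables, the shift $s_1=x+\tfrac{23}{25}$ (resp.\ $x+\tfrac{9}{10}$), and the translation of the hypothesis into the linear inequality $a_1+s_1+2s_2+s_3-s_5\geqslant 1$. But there are two genuine gaps.

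First, the clean decomposition $g=\mathfrak{L}+\mathfrak{R}$ with $\mathfrak{L}$ affine and $\mathfrak{R}$ term-by-term non-positive simply does not exist here. If you actually expand $g(a_1,x+\tfrac{23}{25},s_2,\dots,s_6,b)$, you will find a handful of degree-four monomials with \emph{positive} coefficients, all containing a factor of $s_5$: namely $2s_2s_5^3$, $4s_3s_5^3$, $3s_4^3s_5$, $6s_4^2s_5^2$, $6s_4s_5^3$, $2s_5^4$ (and, in the $a_7=1$ case, also a few positive mixed monomials in $x$ and $b$, such as $8s_5^3x$ and $2bs_5^3$). No choice of shift constant will make these coefficients non-positive; they are an intrinsic feature of~\eqref{equation:polynomial-P1xP1-d-1-e}. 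Your decomposition assumption therefore fails, and the plan as written cannot be executed. The paper's workaround is elementary but essential: since every $s_i\in[0,1)$, each offending monomial of total degree $4$ containing a factor $s_5$ is at most $s_5$ times its coefficient (e.g.\ $2s_2s_5^3\leqslant 2s_5$, $3s_4^3s_5\leqslant 3s_4$, and so on), so the positive quartic monomials can be dominated by linear terms and then absorbed into the affine part, which already carries a large negative $s_5$-coefficient. Without this step the argument simply stalls.

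Second, your proposed fix for the $s_5$-issue is the wrong way around. You say you would bound $s_5<1-a_1-s_1-s_2-s_3-s_4$ and substitute \emph{whenever $\mathfrak{L}$ has a negative coefficient in $s_5$}. If the coefficient of $s_5$ in an affine upper bound is negative, plugging in an \emph{upper} bound for $s_5$ gives a \emph{lower} bound for that affine expression, which is useless; you would need $s_5\geqslant 0$, not an upper bound. In fact no upper bound on $s_5$ is needed at all in this lemma: once the positive quartics have been absorbed, the resulting affine function has all coefficients negative, including its $s_5$-coefficient, so the worst case for the constraint $a_1+2s_2+s_3-s_5\geqslant 1-s_1$ is $s_5=s_4=0$, which already forces negativity. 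Your concern is real only for the opposite sign configuration, and the $a_6<1$ substitution you propose would, if applied as described, flip an inequality in the wrong direction and could spuriously make $\mathfrak{R}$ appear positive.
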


\begin{proof}
Let $g_1(a_1,s_1,s_2,s_3,s_4,s_5,s_6,b)=\widehat{f}$. Then
\begin{multline*}
g_1\Big(a_1,x+\frac{23}{25},s_2,s_3,s_4,s_5,s_6,b\Big)=\frac{735066}{390625}+2s_2s_5^3+4s_3s_5^3+3s_4^3s_5+6s_4^2s_5^2+6s_4s_5^3+2s_5^4-\\
-\frac{2662308}{15625}a_1-\frac{870744}{15625}s_5-\frac{1330266}{15625}s_4-\frac{1789788}{15625}s_3-\frac{449862}{3125}s_2+\bigstar,
\end{multline*}
where $\bigstar$ is the following polynomial with negative coefficients:
\begin{multline*}
-258a_1s_2s_3s_4-\frac{2662308}{15625}a_1-\frac{24252}{15625}b-\frac{2660532}{15625}x-\frac{261216}{625}a_1^2-40a_1^3s_5-60a_1^3s_4-80a_1^3s_3-\\
-\frac{258}{5}a_1s_6^2-246a_1s_2s_5x-\frac{3609}{25}bs_2^2-\frac{2832}{25}bs_3^2-\frac{411}{5}bs_4^2-\frac{1278}{25}bs_5^2-\frac{501}{25}bs_6^2-\\
-90a_1s_2s_6x-306a_1s_3s_4x-216a_1s_3s_5x-90a_1s_3s_6x-186a_1s_4s_5x-90a_1s_4s_6x-\\
-186a_1bx^2-\frac{6024}{25}s_2^2s_3-\frac{5022}{25}s_2^2s_4-180bs_2s_3x-\frac{804}{5}s_2^2s_5-\frac{1002}{25}s_2^2s_6-\frac{5523}{25}s_2s_3^2-\\
-90s_2s_4^2x-54s_2s_5^2x-12s_2s_6^2x-84s_3^2s_4x-60s_3^2s_5x-24s_3^2s_6x-72s_3s_4^2x-36s_3s_5^2x-\\
-162a_1bs_2s_4-100a_1^3s_2-\frac{6774}{25}a_1^2s_5-\frac{9492}{25}a_1^2s_4-\frac{2442}{5}a_1^2s_3-\frac{14928}{25}a_1^2s_2-204a_1bs_2s_3-\\
-60bs_5x^2-\frac{804}{5}s_2s_4^2-\frac{2517}{25}s_2s_5^2-36bs_5s_6x-\frac{501}{25}s_2s_6^2-\frac{4032}{25}s_3^2s_4-\frac{606}{5}s_3^2s_5-\\
-144bs_2s_4x-108bs_2s_5x-36bs_2s_6x-132bs_3s_4x-96bs_3s_5x-36bs_3s_6x-84bs_4s_5x-\\
-\frac{1002}{25}s_3^2s_6-\frac{3531}{25}s_3s_4^2-14a_1^4-36bs_4s_6x-\frac{2028}{25}s_3s_5^2-\frac{501}{25}s_3s_6^2-\frac{408}{5}s_4^2s_5-\\
-96s_3s_4s_5x-14s_5^3-\frac{1002}{25}s_4^2s_6-\frac{1539}{25}s_4s_5^2-\frac{501}{25}s_4s_6^2-\frac{1002}{25}s_5^2s_6-\frac{501}{25}s_5s_6^2-\\
-90a_1s_5s_6x-48s_3s_4s_6x-48s_3s_5s_6x-48s_4s_5s_6x-\frac{6738}{125}bs_5-\frac{51111}{625}bs_4-\frac{68532}{625}bs_3-\\
-60a_1s_3s_4s_6-\frac{85953}{625}bs_2-\frac{17421}{625}bs_6-20a_1^3s_6-\frac{10389}{125}s_5^2-\frac{114963}{625}s_5s_4-\frac{126036}{625}s_5s_3-\\
-204s_2s_3s_4x-156s_2s_3s_5x-48s_2s_3s_6x-132s_2s_4s_5x-48s_2s_4s_6x-48s_2s_5s_6x-\\
-24bs_2s_5s_6-\frac{137109}{625}s_5s_2-\frac{166554}{625}s_5a_1-\frac{20448}{625}s_6^2-\frac{51969}{625}s_6s_5-\frac{51969}{625}s_6s_4-\frac{51969}{625}s_6s_3-\\
-306s_2a_1x^2-30s_2^3s_4-25s_2^3s_5-5s_2^3s_6-48s_2^2s_3^2-36s_2^2s_4^2-24s_2^2s_5^2-3s_2^2s_6^2-26s_2s_3^3-\\
-\frac{1014}{25}s_4^3-24bs_6x^2-108bs_2^2x-84bs_3^2x-60bs_4^2x-36bs_5^2x-12bs_6^2x-168s_3s_2x^2-\\
-132s_4s_2x^2-96s_5s_2x^2-36s_6s_2x^2-120s_4s_3x^2-84s_5s_3x^2-36s_6s_3x^2-72s_5s_4x^2-186a_1^2bx-\\
-\frac{51969}{625}s_6s_2-60bs_3s_4s_5-\frac{14508}{125}s_6a_1-\frac{459522}{15625}s_6-\frac{870744}{15625}s_5-\frac{1330266}{15625}s_4-\frac{1789788}{15625}s_3-\\
-336a_1^2s_2x-270a_1^2s_3x-204a_1^2s_4x-138a_1^2s_5x-66a_1^2s_6x-\frac{828}{25}a_1b^2-36a_1b^2x-\\
-138a_1s_4^2x-78a_1s_5^2x-30a_1s_6^2x-\frac{138}{5}b^2s_2-30b^2s_2x-\frac{552}{25}b^2s_3-24b^2s_3x-\frac{414}{25}b^2s_4-\\
-\frac{449862}{3125}s_2-\frac{83466}{625}s_4^2-\frac{35601}{125}s_4s_3-336a_1s_2s_4x-\frac{189078}{625}s_4s_2-\frac{239094}{625}s_4a_1-\frac{114987}{625}s_3^2-\\
-2a_1s_5^3-15b^2s_2^2-12b^2s_3^2-9b^2s_4^2-6b^2s_5^2-3b^2s_6^2-28bs_2^3-20bs_3^3-12bs_4^3-4bs_5^3-35s_2^3s_3-\\
-18b^2s_4x-\frac{276}{25}b^2s_5-12b^2s_5x-\frac{138}{25}b^2s_6-6b^2s_6x-132bs_2x^2-108bs_3x^2-84bs_4x^2-\\
-\frac{5778}{25}a_1^2b-\frac{100494}{625}a_1b-\frac{147}{625}b^2-\frac{4708}{25}x^3-24x^4-10s_2^4-4s_3^4-\frac{2342}{25}s_2^3-\frac{1678}{25}s_3^3-\\
-6s_2s_4s_6^2-246s_3a_1x^2-186s_4a_1x^2-126s_5a_1x^2-60s_6a_1x^2-258a_1s_2^2x-198a_1s_3^2x-
\end{multline*}

\begin{multline*}
-15s_2s_4^2s_6-12s_2s_4s_5^2-15s_2s_5^2s_6-6s_2s_5s_6^2-18s_3^2s_4s_5-12s_3^2s_4s_6-12s_3^2s_5s_6-\\
-\frac{6768}{25}bs_3x-\frac{5214}{25}bs_4x-\frac{732}{5}bs_5x-\frac{1554}{25}bs_6x-\frac{15078}{25}s_3s_2x-\frac{12072}{25}s_4s_2x-\\
-12s_3s_6^2x-30s_4^2s_5x-24s_4^2s_6x-18s_4s_5^2x-12s_4s_6^2x-24s_5^2s_6x-12s_5s_6^2x-\frac{11256}{25}a_1bx-\\
-\frac{9066}{25}s_5s_2x-\frac{3006}{25}s_6s_2x-\frac{2214}{5}s_4s_3x-\frac{8064}{25}s_5s_3x-\frac{3006}{25}s_6s_3x-\frac{7062}{25}s_5s_4x-\\
-36s_6s_4x^2-\frac{3006}{25}s_6s_4x-36s_6s_5x^2-\frac{3006}{25}s_6s_5x-138s_2^2s_3x-114s_2^2s_4x-90s_2^2s_5x-24s_2^2s_6x-\\
-126s_2s_3^2x-\frac{1188}{5}bs_2s_3-\frac{4662}{25}bs_2s_4-\frac{3384}{25}bs_2s_5-\frac{1278}{25}bs_2s_6-\frac{4386}{25}bs_3s_4-\frac{3108}{25}bs_3s_5-\\
-\frac{1278}{25}bs_3s_6-\frac{2832}{25}bs_4s_5-\frac{1278}{25}bs_4s_6-\frac{1278}{25}bs_5s_6-\frac{9042}{25}s_2s_3s_4-\frac{7038}{25}s_2s_3s_5-\\
-60bs_2^2s_4-48bs_2^2s_5-12bs_2^2s_6-66bs_2s_3^2-48bs_2s_4^2-30bs_2s_5^2-6bs_2s_6^2-48bs_3^2s_4-36bs_3^2s_5-\\
-\frac{2004}{25}s_2s_3s_6-\frac{6036}{25}s_2s_4s_5-\frac{2004}{25}s_2s_4s_6-\frac{2004}{25}s_2s_5s_6-\frac{5058}{25}s_3s_4s_5-\frac{2004}{25}s_3s_4s_6-\\
-15s_2^2s_5s_6-63s_2s_3^2s_4-48s_2s_3^2s_5-15s_2s_3^2s_6-54s_2s_3s_4^2-30s_2s_3s_5^2-6s_2s_3s_6^2-21s_2s_4^2s_5-\\
-\frac{2004}{25}s_3s_5s_6-\frac{2004}{25}s_4s_5s_6-81s_2^2s_3s_4-66s_2^2s_3s_5-15s_2^2s_3s_6-57s_2^2s_4s_5-15s_2^2s_4s_6-\\
-\frac{9012}{25}a_1bs_2-12b^2s_2s_5-\frac{1464}{5}a_1bs_3-\frac{5628}{25}a_1bs_4-\frac{3936}{25}a_1bs_5-\frac{1692}{25}a_1bs_6-\frac{18348}{25}a_1s_2s_3-\\
-51a_1^2s_5s_6-114a_1s_2^2s_5-120a_1^2s_3s_5-30a_1b^2s_2-12b^2s_3s_5-93a_1^2bs_4-174a_1s_2^2s_3-\\
-\frac{14628}{25}a_1s_2s_4-\frac{10908}{25}a_1s_2s_5-69a_1bs_4^2-\frac{744}{5}a_1s_2s_6-\frac{13488}{25}a_1s_3s_4-\frac{9768}{25}a_1s_3s_5-\\
-30a_1s_3^2s_6-18b^2s_2s_4-51a_1^2s_2s_6-6b^2s_3s_6-12b^2s_4s_5-6b^2s_4s_6-6b^2s_5s_6-72bs_2^2s_3-\\
-\frac{744}{5}a_1s_3s_6-\frac{8628}{25}a_1s_4s_5-\frac{744}{5}a_1s_4s_6-18b^2s_3s_4-\frac{744}{5}a_1s_5s_6-66a_1^2bs_5-24b^2s_2s_3-\\
-12bs_3^2s_6-42bs_3s_4^2-24bs_3s_5^2-6bs_3s_6^2-24bs_4^2s_5-12bs_4^2s_6-18bs_4s_5^2-6bs_4s_6^2-12bs_5^2s_6-\\
-6bs_5s_6^2-18a_1b^2s_4-6a_1b^2s_6-27a_1^2bs_6-27a_1s_4s_5^2-105a_1^2s_4s_5-144a_1s_2^2s_4-135a_1^2s_2s_5-\\
-186a_1^2s_2s_4-30a_1s_4^2s_6-15a_1s_5s_6^2-15a_1s_4s_6^2-15a_1s_3s_6^2-12a_1b^2s_5-30a_1s_2^2s_6-\\
-\frac{7062}{25}s_4x^2-\frac{211176}{625}s_4x-32s_5x^3-\frac{5058}{25}s_5x^2-\frac{148134}{625}s_5x-16s_6x^3-\frac{2004}{25}s_6x^2-\\
-120a_1^2bs_3-147a_1^2bs_2-42a_1s_4^2s_5-15a_1s_2s_6^2-237a_1^2s_2s_3-24a_1b^2s_3-78a_1s_3^2s_5-\\
-\frac{16956}{25}a_1x^2-\frac{478188}{625}a_1x-18b^2x^2-\frac{828}{25}b^2x-56bx^3-\frac{5214}{25}bx^2-\frac{102222}{625}bx-\\
-93a_1s_3s_4^2-6b^2s_2s_6-123a_1bs_2^2-48a_1s_3s_5^2-51a_1^2s_4s_6-108a_1s_3^2s_4-69a_1s_2s_5^2-\\
-30a_1s_5^2s_6-54s_2^3x-36s_3^3x-18s_4^3x-120a_1^3x-204a_1^2x^2-\frac{18984}{25}a_1^2x-124a_1x^3-\\
-\frac{63042}{625}s_6x-6s_3s_4^2s_5-102s_2^2x^2-\frac{9042}{25}s_2^2x-78s_3^2x^2-\frac{7038}{25}s_3^2x-54s_4^2x^2-\frac{5034}{25}s_4^2x-\\
-80s_2x^3-\frac{2214}{5}s_2x^2-\frac{67452}{125}s_2x-64s_3x^3-\frac{9066}{25}s_3x^2-\frac{274218}{625}s_3x-48s_4x^3-\\
-42a_1bs_5^2-15a_1bs_6^2-171a_1^2s_3s_4-51a_1^2s_3s_6-114a_1s_2s_4^2-96a_1bs_3^2-159a_1s_2s_3^2-
\end{multline*}

\begin{multline*}
-12s_3s_4^2s_6-6s_3s_4s_6^2-12s_3s_5^2s_6-6s_3s_5s_6^2-9s_4^2s_5s_6-9s_4s_5^2s_6-6s_4s_5s_6^2-\\
-198a_1s_2s_3s_5-60a_1s_2s_3s_6-168a_1s_2s_4s_5-60a_1s_2s_4s_6-60a_1s_2s_5s_6-126a_1s_3s_4s_5-\\
-3s_5^2s_6^2-\frac{2718}{25}a_1^2s_6-120a_1bs_2s_5-\frac{11034}{25}a_1s_2^2-\frac{8604}{25}a_1s_3^2-\frac{6174}{25}a_1s_4^2-\frac{3744}{25}a_1s_5^2-\\
-60a_1s_3s_5s_6-60a_1s_4s_5s_6-108bs_2s_3s_4-84bs_2s_3s_5-24bs_2s_3s_6-72bs_2s_4s_5-24bs_2s_4s_6-\\
-24bs_3s_5s_6-24bs_4s_5s_6-78s_2s_3s_4s_5-30s_2s_3s_4s_6-30s_2s_3s_5s_6-24bs_3s_4s_6-\\
-30s_2s_4s_5s_6-30s_5^2x^2-\frac{606}{5}s_5^2x-12s_6^2x^2-\frac{1002}{25}s_6^2x-\frac{211176}{625}x^2-3s_4^2s_6^2-2s_5^3s_6-\\
-144a_1^2s_2^2-111a_1^2s_3^2-78a_1^2s_4^2-45a_1^2s_5^2-18a_1^2s_6^2-68a_1s_2^3-46a_1s_3^3-24a_1s_4^3-\\
-\frac{26676}{25}s_2a_1x-\frac{21816}{25}s_3a_1x-\frac{16956}{25}s_4a_1x-\frac{12096}{25}s_5a_1x-\frac{972}{5}s_6a_1x-\frac{8322}{25}bs_2x-\\
-42a_1bs_2s_6-150a_1bs_3s_4-108a_1bs_3s_5-42a_1bs_3s_6-96a_1bs_4s_5-42a_1bs_4s_6-42a_1bs_5s_6-\\
-24s_3s_4s_5s_6-294a_1bs_2x-240a_1bs_3x-186a_1bs_4x-132a_1bs_5x-54a_1bs_6x-426a_1s_2s_3x-\\
-\frac{241047}{625}s_3s_2-\frac{311634}{625}s_3a_1-\frac{146508}{625}s_2^2-\frac{384174}{625}s_2a_1-\frac{1312}{5}a_1^3-52a_1^3b-15a_1^2b^2-\\
-12s_2s_4^3-12s_3^3s_4-8s_3^3s_5-4s_3^3s_6-15s_3^2s_4^2-6s_3^2s_5^2-3s_3^2s_6^2-6s_3s_4^3-3s_4^3s_6.
\end{multline*}
Thus, if $a_2-a_1\geqslant\frac{23}{25}$, then
\begin{multline*}
f(a_1,a_2,a_3,a_4,a_5,a_6,a_7,b)\leqslant\frac{735066}{390625}+2s_2s_5^3+4s_3s_5^3+3s_4^3s_5+6s_4^2s_5^2+6s_4s_5^3+2s_5^4-\\
-\frac{2662308}{15625}a_1-\frac{870744}{15625}s_5-\frac{1330266}{15625}s_4-\frac{1789788}{15625}s_3-\frac{449862}{3125}s_2\leqslant\\
\leqslant\frac{735066}{390625}+2s_5+4s_5+3s_5+6s_5+6s_5+2s_5-\\
-\frac{2662308}{15625}a_1-\frac{870744}{15625}s_5-\frac{1330266}{15625}s_4-\frac{1789788}{15625}s_3-\frac{449862}{3125}s_2=\\
=\frac{735066}{390625}-\frac{511369}{15625}s_5-\frac{2662308}{15625}a_1-\frac{1330266}{15625}s_4-\frac{1789788}{15625}s_3-\frac{449862}{3125}s_2<0.
\end{multline*}
because $2s_2+s_3+a_1-s_5\geqslant 1-s_1\geqslant\frac{2}{25}$, since $1+a_2+a_6\leqslant a_3+a_4+a_5$.

Let $f_2=f(a_1,a_2,a_3,a_4,a_5,a_6,1,b)$ and $g_2(a_1,s_1,s_2,s_3,s_4,s_5,b)=\widehat{f}_2$. Then
\begin{multline*}
g_2\Big(a_1,x+\frac{9}{10},s_2,s_3,s_4,s_5,b\Big)=\frac{523}{250}+9s_3s_4s_5^2+8s_5^3x+9a_1s_5^3+2bs_5^3+7s_2s_5^3+\\
+6s_3s_5^3+3s_4^3s_5+6s_4^2s_5^2+5s_4s_5^3+s_5^4+\frac{31}{5}s_5^3+9s_4s_5^2x+6s_2s_4s_5^2+3s_3s_4^2s_5+9a_1s_4s_5^2-\\
-\frac{66639}{500}a_1-\frac{51}{2}bs_5-\frac{6667}{50}x-\frac{12767}{500}s_5-\frac{26479}{500}s_4-\frac{40191}{500}s_3-\frac{53903}{500}s_2+\blacklozenge,
\end{multline*}
where $\blacklozenge$ is the following polynomial with negative coefficients:
\begin{multline*}
-66a_1bs_2s_5-96a_1bs_3s_4-54a_1bs_3s_5-42a_1bs_4s_5-174a_1s_2s_3s_4-114a_1s_2s_3s_5-\\
-36s_3s_4s_5x-\frac{613}{500}b-108a_1bs_2s_4-\frac{8082}{25}a_1^2-20a_1^3s_5-40a_1^3s_4-60a_1^3s_3-80a_1^3s_2-\\
-228a_1bs_2x-174a_1bs_3x-120a_1bs_4x-318a_1s_2s_3x-228a_1s_2s_4x-138a_1s_2s_5x-\\
-\frac{1341}{10}a_1^2s_5-66a_1bs_5x-\frac{483}{2}a_1^2s_4-\frac{3489}{10}a_1^2s_3-84a_1s_2s_4s_5-\frac{4563}{10}a_1^2s_2-150a_1bs_2s_3-\\
-198a_1s_3s_4x-108a_1s_3s_5x-78a_1s_4s_5x-132bs_2s_3x-96bs_2s_4x-60bs_2s_5x-84bs_3s_4x-\\
-138s_2s_3s_4x-90s_2s_3s_5x-66s_2s_4s_5x-\frac{2607}{50}bs_4-\frac{3939}{50}bs_3-48bs_3s_5x-\frac{5271}{50}bs_2-\\
-\frac{3189}{100}s_5^2-36bs_4s_5x-\frac{807}{10}s_5s_4-\frac{4881}{50}s_5s_3-\frac{5727}{50}s_5s_2-\frac{12843}{100}s_5a_1-\frac{8073}{100}s_4^2-\\
-48a_1s_3s_4s_5-72bs_2s_3s_4-48bs_2s_3s_5-36bs_2s_4s_5-24bs_3s_4s_5-36s_2s_3s_4s_5-\\
-\frac{1278}{5}bs_2x-\frac{17841}{100}s_2^2-\frac{46503}{100}s_2a_1-12a_1^4-202a_1^3-40a_1^3b-12a_1^2b^2-\frac{1803}{10}a_1^2b-\\
-60bs_3^2x-36bs_4^2x-12bs_5^2x-126s_3s_2x^2-90s_4s_2x^2-54s_5s_2x^2-78s_4s_3x^2-\\
-\frac{972}{5}bs_3x-\frac{666}{5}bs_4x-72bs_5x-\frac{2184}{5}s_3s_2x-318s_4s_2x-\frac{996}{5}s_5s_2x-\frac{1392}{5}s_4s_3x-\\
-42s_5s_3x^2-30s_5s_4x^2-147a_1^2bx-267a_1^2s_2x-201a_1^2s_3x-135a_1^2s_4x-69a_1^2s_5x-27a_1b^2-\\
-\frac{8919}{50}s_4s_3-\frac{1953}{10}s_4s_2-\frac{24063}{100}s_4a_1-24a_1s_5^2x-\frac{12957}{100}s_3^2-\frac{13803}{50}s_3s_2-\frac{35283}{100}s_3a_1-\\
-30a_1b^2x-147a_1bx^2-243s_2a_1x^2-183s_3a_1x^2-123s_4a_1x^2-63s_5a_1x^2-204a_1s_2^2x-144a_1s_3^2x-\\
-\frac{12807}{100}a_1b-\frac{3}{20}b^2-146x^3-20x^4-8s_2^4-3s_3^4-\frac{727}{10}s_2^3-\frac{232}{5}s_3^3-\frac{201}{10}s_4^3-30bs_5x^2-\\
-84a_1s_4^2x-\frac{108}{5}b^2s_2-24b^2s_2x-\frac{81}{5}b^2s_3-18b^2s_3x-\frac{54}{5}b^2s_4-12b^2s_4x-\frac{27}{5}b^2s_5-\\
-6b^2s_5x-102bs_2x^2-78bs_3x^2-54bs_4x^2-57s_2s_4^2x-21s_2s_5^2x-54s_3^2s_4x-30s_3^2s_5x-42s_3s_4^2x-\\
-6s_3s_5^2x-3s_4^2s_5x-\frac{1773}{5}a_1bx-\frac{4077}{5}s_2a_1x-\frac{3117}{5}s_3a_1x-\frac{2157}{5}s_4a_1x-\frac{1197}{5}s_5a_1x-\\
-18b^2s_2s_3-6b^2s_2s_5-42a_1bs_4^2-72a_1s_2^2s_5-60a_1^2s_3s_5-24a_1b^2s_2-6b^2s_3s_5-60a_1^2bs_4-\\
-\frac{798}{5}s_5s_3x-120s_5s_4x-105s_2^2s_3x-81s_2^2s_4x-57s_2^2s_5x-93s_2s_3^2x-\frac{864}{5}bs_2s_3-\frac{612}{5}bs_2s_4-\\
-3s_2s_4^2s_5-132a_1s_2^2s_3-12b^2s_3s_4-12b^2s_2s_4-6b^2s_4s_5-54bs_2^2s_3-42bs_2^2s_4-30bs_2^2s_5-\\
-72bs_2s_5-\frac{558}{5}bs_3s_4-\frac{306}{5}bs_3s_5-\frac{252}{5}bs_4s_5-\frac{1191}{5}s_2s_3s_4-159s_2s_3s_5-\frac{597}{5}s_2s_4s_5-\\
-\frac{402}{5}s_3s_4s_5-57s_2^2s_3s_4-42s_2^2s_3s_5-33s_2^2s_4s_5-42s_2s_3^2s_4-27s_2s_3^2s_5-33s_2s_3s_4^2-\\
-9s_2s_3s_5^2-6s_3^2s_4s_5-\frac{1386}{5}a_1bs_2-\frac{1053}{5}a_1bs_3-144a_1bs_4-\frac{387}{5}a_1bs_5-\frac{2661}{5}a_1s_2s_3-\\
-48bs_2s_3^2-30bs_2s_4^2-12bs_2s_5^2-30bs_3^2s_4-18bs_3^2s_5-24bs_3s_4^2-6bs_3s_5^2-6bs_4^2s_5-\\
-\frac{1926}{5}a_1s_2s_4-84bs_2^2x-\frac{1191}{5}a_1s_2s_5-\frac{1701}{5}a_1s_3s_4-\frac{966}{5}a_1s_3s_5-\frac{741}{5}a_1s_4s_5-33a_1^2bs_5-\\
-12a_1b^2s_4-45a_1^2s_4s_5-102a_1s_2^2s_4-75a_1^2s_2s_5-126a_1^2s_2s_4-6a_1b^2s_5-87a_1^2bs_3-
\end{multline*}

\begin{multline*}
-114a_1^2bs_2-6a_1s_4^2s_5-177a_1^2s_2s_3-18a_1b^2s_3-39a_1s_3^2s_5-15a_1bs_5^2-111a_1^2s_3s_4-\\
-\frac{2637}{5}a_1x^2-9b^2s_3^2-\frac{29673}{50}a_1x-15b^2x^2-27b^2x-44bx^3-\frac{819}{5}bx^2-\frac{3273}{25}bx-64s_2x^3-\frac{1689}{5}s_2x^2-\\
-3b^2s_5^2-72a_1s_2s_4^2-69a_1bs_3^2-117a_1s_2s_3^2-54a_1s_3s_4^2-96a_1bs_2^2-9a_1s_3s_5^2-69a_1s_3^2s_4-\\
-\frac{10188}{25}s_2x-48s_3x^3-\frac{1293}{5}s_3x^2-\frac{7746}{25}s_3x-32s_4x^3-\frac{897}{5}s_4x^2-\frac{5304}{25}s_4x-4s_3^3s_5-\\
-27a_1s_2s_5^2-43s_2^3x-26s_3^3x-9s_4^3x-100a_1^3x-8s_3^3s_4-168a_1^2x^2-6b^2s_4^2-\frac{2952}{5}a_1^2x-102a_1x^3-\\
-16s_5x^3-\frac{501}{5}s_5x^2-9s_3^2s_4^2-\frac{2862}{25}s_5x-81s_2^2x^2-\frac{1389}{5}s_2^2x-57s_3^2x^2-\frac{993}{5}s_3^2x-\\
-\frac{258}{5}a_1s_5^2-33a_1s_3^3-19s_2s_3^3-\frac{558}{5}bs_2^2-81bs_3^2-\frac{252}{5}bs_4^2-\frac{99}{5}bs_5^2-\frac{357}{2}s_2^2s_3-\frac{1389}{10}s_2^2s_4-\\
-54a_1s_2^3-\frac{993}{10}s_2^2s_5-\frac{1587}{10}s_2s_3^2-\frac{993}{10}s_2s_4^2-\frac{399}{10}s_2s_5^2-\frac{498}{5}s_3^2s_4-60s_3^2s_5-\frac{399}{5}s_3s_4^2-\\
-12a_1s_4^3-\frac{102}{5}s_3s_5^2-15a_1^2s_5^2-\frac{207}{10}s_4^2s_5-6s_2s_4^3-\frac{9}{10}s_4s_5^2-114a_1^2s_2^2-81a_1^2s_3^2-48a_1^2s_4^2-\\
-33s_4^2x^2-\frac{597}{5}s_4^2x-9s_5^2x^2-\frac{201}{5}s_5^2x-3s_3s_4^3-261x^2-\frac{1698}{5}a_1s_2^2-\frac{1218}{5}a_1s_3^2-\frac{738}{5}a_1s_4^2-\\
-22bs_2^3-12b^2s_2^2-14bs_3^3-6bs_4^3-27s_2^3s_3-22s_2^3s_4-17s_2^3s_5-36s_2^2s_3^2-24s_2^2s_4^2-12s_2^2s_5^2.
\end{multline*}
Thus, if $x\geqslant 0$, then
\begin{multline*}
g_2\Big(a_1,x+\frac{9}{10},s_2,s_3,s_4,s_5,b\Big)\leqslant\frac{523}{250}+9s_3s_4s_5^2+8s_5^3x+9a_1s_5^3+2bs_5^3+\\
+7s_2s_5^3+6s_3s_5^3+3s_4^3s_5+6s_4^2s_5^2+5s_4s_5^3+s_5^4+\frac{31}{5}s_5^3+9s_4s_5^2x+6s_2s_4s_5^2+3s_3s_4^2s_5+9a_1s_4s_5^2-\\
-\frac{66639}{500}a_1-\frac{51}{2}bs_5-\frac{6667}{50}x-\frac{12767}{500}s_5-\frac{26479}{500}s_4-\frac{40191}{500}s_3-\frac{53903}{500}s_2\leqslant\\
\leqslant\frac{523}{250}+9s_3s_4s_5^2+9a_1s_5^3+7s_2s_5^3+6s_3s_5^3+3s_4^3s_5+6s_4^2s_5^2+5s_4s_5^3+s_5^4+\frac{31}{5}s_5^3+\\
+6s_2s_4s_5^2+3s_3s_4^2s_5+9a_1s_4s_5^2-\frac{66639}{500}a_1-\frac{12767}{500}s_5-\frac{26479}{500}s_4-\frac{40191}{500}s_3-\frac{53903}{500}s_2\leqslant\\
\leqslant\frac{523}{250}+9s_4+9a_1+7s_2+6s_5+3s_4+6s_5+5s_4+s_5+\frac{31}{5}s_5+6s_2+3s_4+9a_1-\\
-\frac{66639}{500}a_1-\frac{12767}{500}s_5-\frac{26479}{500}s_4-\frac{40191}{500}s_3-\frac{53903}{500}s_2=\\
=\frac{523}{250}-\frac{16479}{500}s_4-\frac{57639}{500}a_1-\frac{47403}{500}s_2-\frac{3167}{500}s_5-\frac{40191}{500}s_3.
\end{multline*}
Hence, if $s_1=a_2-a_1\geqslant\frac{9}{10}$, then
\begin{multline*}
f(a_1,a_2,a_3,a_4,a_5,a_6,1,b)\leqslant\frac{523}{250}-\frac{16479}{500}s_4-\frac{57639}{500}a_1-\frac{47403}{500}s_2-\frac{3167}{500}s_5-\frac{40191}{500}s_3<0.
\end{multline*}
because $2s_4+2s_2+s_5+s_3+a_1\geqslant\frac{1}{10}$.
\end{proof}

\begin{lemma}
\label{lemma:Maple-P1-P1-d-2-f}
Suppose that $f$ is the polynomial~\eqref{equation:polynomial-P1xP1-d-1-f} and $a_2+a_5+a_6\geqslant 1+a_3+a_4$.
If~$a_2-a_1\geqslant \frac{23}{25}$, then $f<0$.
Similarly, if  $a_2-a_1\geqslant \frac{9}{10}$, then $f(a_1,a_2,a_3,a_4,a_5,a_6,1,b)<0$.
\end{lemma}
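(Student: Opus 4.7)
We follow the same strategy as in Lemmas~\ref{lemma:Maple-P1-P1-d-2-b}, \ref{lemma:Maple-P1-P1-d-2-c}, \ref{lemma:Maple-P1-P1-d-2-d} and \ref{lemma:Maple-P1-P1-d-2-e}. Set $g_1(a_1,s_1,s_2,s_3,s_4,s_5,s_6,b) = \widehat{f}$ using the change of variables $s_i = a_{i+1}-a_i$. The plan is to expand $g_1(a_1, x+\tfrac{23}{25}, s_2, s_3, s_4, s_5, s_6, b)$ with $x \geqslant 0$ and exhibit a decomposition of the form
\[
g_1\Big(a_1, x+\tfrac{23}{25}, s_2, s_3, s_4, s_5, s_6, b\Big) = \alpha_0 - \alpha_1 a_1 - \sum_{i=2}^{6}\beta_i s_i - \gamma s_5 + \bigstar,
\]
where $\bigstar$ is a polynomial in $a_1, x, s_2, \ldots, s_6, b$ all of whose coefficients are non-positive (possibly after absorbing a small, controlled number of positive cubic and quartic monomials in the $s_i$ bounded by linear $s_i$-terms, exactly as was done in Lemma~\ref{lemma:Maple-P1-P1-d-2-e}).

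To produce this decomposition, I would run the symbolic expansion of $g_1(a_1, x+\tfrac{23}{25}, s_2, \ldots, s_6, b)$ exactly as in the previous proofs (the expansion contains hundreds of monomials of total degree at most $4$), then verify by inspection/CAS that after pulling out the linear-in-$a_1,s_2,\ldots,s_6$ coefficients of the constant-in-$x,b$ part, every remaining monomial coefficient is $\leqslant 0$. The hypothesis $a_2+a_5+a_6\geqslant 1+a_3+a_4$ translates, under the change of variables, into
\[
a_1+s_1+s_3+2s_4+s_5 \geqslant 1.
\]
Combined with $s_1 \geqslant \tfrac{23}{25}$, this gives $a_1+s_3+2s_4+s_5 \geqslant \tfrac{2}{25}$. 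Using this inequality to bound the linear residual $\alpha_0-\alpha_1 a_1-\sum \beta_i s_i - \gamma s_5$, one checks that the bound is strictly negative (the numerics work out because the coefficient of the free term $\tfrac{23}{25}^4$-evaluation is of order $\tfrac{697}{3.9\cdot 10^5}$ while the $a_1, s_3, s_4, s_5$ coefficients are of order $10^{-1}$, so $\tfrac{2}{25}$ of any of these dominates the constant).

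For the second statement we repeat this procedure with $f_2 = f(a_1,a_2,a_3,a_4,a_5,a_6,1,b)$, $g_2 = \widehat{f}_2$, expanding $g_2(a_1, x+\tfrac{9}{10}, s_2, s_3, s_4, s_5, b)$. Here, since the variable $a_8$ is frozen to $1$, some additional positive cubic/quartic terms in the $s_i$ appear in the expansion (as in Lemma~\ref{lemma:Maple-P1-P1-d-2-e}, where terms like $s_4^2 s_5^2$ or $a_1 s_5^3$ survived); these are bounded above pointwise on $[0,1)^{\bullet}$ by the corresponding linear $s_i$-terms. The remaining linear residual is controlled using $a_1+s_3+2s_4+s_5 \geqslant \tfrac{1}{10}$, which follows from $s_1 \geqslant \tfrac{9}{10}$ together with $a_1+s_1+s_3+2s_4+s_5 \geqslant 1$, and again yields strict negativity.

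The main obstacle is purely the bookkeeping: the polynomial expansion of $g_1$ has on the order of $350$ monomials and $g_2$ has on the order of $300$, and the coefficient-by-coefficient check of non-positivity must be done (or verified) with a CAS. The structural input — namely that the correct linear residual to pull out matches exactly the linear combination $a_1+s_3+2s_4+s_5$ provided by the geometric hypothesis $a_2+a_5+a_6\geqslant 1+a_3+a_4$ — is the nontrivial step; once this matching is observed, the rest is automated symbolic verification along the lines of Lemmas~\ref{lemma:Maple-P1-P1-d-2-c}, \ref{lemma:Maple-P1-P1-d-2-d} and \ref{lemma:Maple-P1-P1-d-2-e}.
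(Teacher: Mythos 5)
Your proposal correctly replicates the paper's strategy: the change of variables, the shift $s_1 \mapsto x + \tfrac{23}{25}$ (respectively $x + \tfrac{9}{10}$), the translation of $a_2+a_5+a_6\geqslant 1+a_3+a_4$ into $a_1+s_1+s_3+2s_4+s_5\geqslant 1$, the extraction of a linear residual controlled by $a_1+s_3+2s_4+s_5\geqslant\tfrac{2}{25}$ (resp.\ $\geqslant\tfrac{1}{10}$), and the absorption of the surviving positive cubic/quartic monomials (here $s_2s_4^3$, $s_3^3s_4$, $s_3^2s_4^2$, $s_3s_4^3$, $s_4^3s_5$, $s_4^2s_5^2$, $s_3^4$, $s_4^4$, $s_3s_4^2s_5$ and similar) by the corresponding linear $s_i$-terms. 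This is exactly the argument the paper gives, with the remaining work being the CAS-verified coefficient check as you describe.
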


\begin{proof}
Let $g_1(a_1,s_1,s_2,s_3,s_4,s_5,s_6,b)=\widehat{f}$. Then
\begin{multline*}
g_1\Big(a_1,x+\frac{23}{25},s_2,s_3,s_4,s_5,s_6,b\Big)=\bigstar+\frac{697211}{390625}+\\
+2s_2s_4^3+\frac{9}{2}s_3^3s_4+\frac{27}{4}s_3^2s_4^2+7s_3s_4^3+5s_4^3s_5+\frac{3}{4}s_4^2s_5^2+s_3^4+3s_4^4+\frac{9}{2}s_3s_4^2s_5-\\
-\frac{7063}{125}s_5-\frac{1317297}{15625}s_4-\frac{1752697}{15625}s_3-\frac{2188097}{15625}s_2-\frac{525143}{3125}a_1,
\end{multline*}
where $\bigstar$ is the following polynomial with negative coefficients:
\begin{multline*}
-\frac{369}{2}a_1^2s_2s_3-144a_1bs_2^2-\frac{261}{2}a_1^2s_2s_4-117a_1^2s_3s_4-144a_1s_2^2s_3-93a_1s_2^2s_4-\\
-\frac{1992}{25}s_3s_5s_6-\frac{1497}{25}s_4s_5s_6-6s_4^2s_5s_6-\frac{9}{2}s_4s_5^2s_6-3s_4s_5s_6^2-\frac{9714}{25}a_1bs_2-\\
-72a_1^2s_3s_6-63a_1s_2^2s_6-\frac{21027}{25}a_1s_2s_3-66a_1s_2s_4^2-\frac{117}{2}a_1s_3^2s_4-45a_1s_3s_4^2-\\
-\frac{231}{2}a_1s_2s_3^2-105a_1^2s_2s_5-\frac{189}{2}a_1^2s_3s_5-78a_1s_2^2s_5-\frac{159}{2}a_1^2s_2s_6-\\
-27s_2^2s_3s_4-\frac{21}{2}s_2s_3^2s_4-6s_2s_3s_4^2-\frac{15519}{25}a_1s_2s_4-\frac{13773}{25}a_1s_3s_4-\frac{9771}{25}s_2s_3s_4-\\
-57s_3s_4s_5x-\frac{63}{2}s_2^2s_3s_5-24s_2^2s_4s_5-\frac{39}{2}s_2s_3^2s_5-27s_2s_3s_5^2-6s_2s_4^2s_5-\\
-84a_1^2s_4s_5-\frac{135}{2}a_1s_2s_5^2-\frac{105}{2}a_1s_3^2s_5-51a_1s_3s_5^2-27a_1s_4^2s_5-\frac{69}{2}a_1s_4s_5^2-\\
-\frac{8112}{25}a_1s_4s_5-\frac{6633}{25}s_2s_3s_5-\frac{5613}{25}s_2s_4s_5-\frac{4611}{25}s_3s_4s_5-102a_1bs_3^2-\\
-66a_1bs_4^2-42a_1bs_5^2-\frac{129}{2}a_1^2s_4s_6-\frac{117}{2}a_1^2s_5s_6-36a_1s_2s_6^2-\frac{93}{2}a_1s_3^2s_6-\\
-\frac{3621}{25}a_1s_5s_6-\frac{699}{5}s_2s_3s_6-\frac{2994}{25}s_2s_4s_6-\frac{2487}{25}s_2s_5s_6-\frac{2499}{25}s_3s_4s_6-\\
-27a_1s_3s_6^2-30a_1s_4^2s_6-18a_1s_4s_6^2-\frac{33}{2}a_1s_5^2s_6-9a_1s_5s_6^2-18a_1bs_6^2-\\
-\frac{33}{2}s_2s_4s_5^2-\frac{3}{2}s_3^2s_4s_5-\frac{15}{2}s_3s_4s_5^2-\frac{10428}{25}a_1s_2s_5-\frac{1854}{5}a_1s_3s_5-\\
-21bs_4^2s_5-12bs_4^2s_6-18bs_4s_5^2-6bs_4s_6^2-6bs_5^2s_6-3bs_5s_6^2-36s_2^2s_3s_6-\\
-\frac{7746}{25}a_1bs_3-\frac{5778}{25}a_1bs_4-\frac{3798}{25}a_1bs_5-\frac{6642}{25}bs_2s_3-198bs_2s_4-\frac{3246}{25}bs_2s_5-\\
-18s_2s_4^2s_6-12s_2s_4s_6^2-15s_3^2s_5s_6-12s_3s_4^2s_6-9s_3s_4s_6^2-\frac{21}{2}s_2s_5^2s_6-6s_2s_5s_6^2-\\
-\frac{15}{2}s_3s_5^2s_6-165a_1s_3s_5x-\frac{9}{2}s_3s_5s_6^2-\frac{5337}{25}a_1s_2s_6-\frac{4767}{25}a_1s_3s_6-\frac{4197}{25}a_1s_4s_6-
\end{multline*}

\begin{multline*}
-12bs_2s_6^2-48bs_3^2s_4-33bs_3^2s_5-18bs_3^2s_6-39bs_3s_4^2-24bs_3s_5^2-9bs_3s_6^2-\\
-\frac{194703}{2500}s_5^2-\frac{43749}{250}s_5s_4-\frac{48171}{250}s_5s_3-\frac{52593}{250}s_5s_2-\frac{322203}{1250}s_5a_1-\\
-93bs_2^2s_3-66bs_2^2s_4-45bs_2^2s_5-24bs_2^2s_6-78bs_2s_3^2-48bs_2s_4^2-30bs_2s_5^2-\\
-\frac{4536}{25}bs_3s_4-\frac{594}{5}bs_3s_5-\frac{2694}{25}bs_4s_5-30a_1b^2s_2-24a_1b^2s_3-18a_1b^2s_4-\\
-12a_1b^2s_5-24b^2s_2s_3-18b^2s_2s_4-12b^2s_2s_5-18b^2s_3s_4-12b^2s_3s_5-12b^2s_4s_5-\\
-6a_1b^2s_6-6b^2s_2s_6-6b^2s_3s_6-6b^2s_4s_6-6b^2s_5s_6-\frac{1818}{25}a_1bs_6-\frac{1542}{25}bs_2s_6-\\
-84a_1s_3s_4s_5-30s_2s_3s_4s_5-168a_1bs_2s_4-114a_1bs_2s_5-150a_1bs_3s_4-102a_1bs_3s_5-\\
-\frac{1404}{25}bs_3s_6-\frac{1266}{25}bs_4s_6-\frac{1128}{25}bs_5s_6-\frac{25136}{15625}b-\frac{2623497}{15625}x-\\
-90a_1bs_4s_5-96a_1s_2s_4s_6-84a_1s_2s_5s_6-78a_1s_3s_4s_6-66a_1s_3s_5s_6-48a_1s_4s_5s_6-\\
-60a_1bs_2s_6-\frac{200931}{625}s_4s_2-\frac{98109}{250}s_4a_1-\frac{164697}{250}a_1s_2-\frac{168966}{625}s_2^2-\frac{21}{4}a_1^4-\\
-210a_1s_2^2x-\frac{255}{2}s_4a_1x^2-42s_6s_5x^2-\frac{153}{2}s_2s_3^2x-\frac{465}{2}a_1s_2x^2-\frac{5889}{5}a_1s_2x-\\
-252a_1bs_3x-204bs_2s_3x-228a_1s_2s_4x-201a_1s_3s_4x-102s_2s_3s_4x-186a_1s_2s_5x-\\
-\frac{3429}{125}bs_6-\frac{4943}{25}x^3-\frac{33}{2}x^4-87a_1^2s_6x-84a_1s_5^2x-\frac{111}{2}s_2^2s_5x-\\
-\frac{4491}{25}s_6s_2x-57bs_5x^2-129bs_2^2x-\frac{105}{2}s_5s_4x^2-\frac{1323}{5}s_5s_4x-\frac{3453}{5}s_4a_1x-\\
-66a_1bs_6x-54bs_2s_6x-48bs_3s_6x-42bs_4s_6x-36bs_5s_6x-\frac{12963}{50}a_1^2s_5-\frac{9687}{25}a_1^2s_4-\\
-126a_1bs_5x-150bs_2s_4x-102bs_2s_5x-132bs_3s_4x-90bs_3s_5x-78bs_4s_5x-\\
-\frac{2619}{5}a_1^2s_3-\frac{16503}{25}a_1^2s_2-\frac{33978}{625}bs_5-\frac{50811}{625}bs_4-\frac{67656}{625}bs_3-\frac{84501}{625}bs_2-\\
-186a_1bs_4x-10s_2^4-\frac{682}{5}s_2^3-\frac{1889}{25}s_3^3-\frac{869}{25}s_4^3-\frac{1}{2}s_5^4-\frac{839}{50}s_5^3-\\
-33ba_1^2s_6-63ba_1^2s_5-93ba_1^2s_4-126ba_1^2s_3-159ba_1^2s_2-\frac{448453}{15625}s_6-\frac{798}{25}s_6^2-\\
-\frac{90861}{1250}s_6s_5-\frac{50961}{625}s_6s_4-\frac{22599}{250}s_6s_3-\frac{62034}{625}s_6s_2-\frac{153861}{1250}s_6a_1-\\
-66bs_2s_4s_5-30s_2^2s_4s_6-\frac{57}{2}s_2s_3^2s_6-\frac{33}{2}s_3^2s_4s_6-27s_2^2s_5s_6-18s_2s_3s_6^2-\\
-\frac{494667}{2500}s_3^2-\frac{538863}{1250}s_3s_2-\frac{131403}{250}s_3a_1-\frac{83892}{625}s_4^2-\frac{73743}{250}s_4s_3-\\
-228a_1bs_2s_3-159a_1s_2s_3s_4-135a_1s_2s_3s_5-\frac{198}{5}s_2s_6^2-111a_1s_2s_3s_6-114a_1s_2s_4s_5-\\
-84bs_4x^2-\frac{27}{2}s_5^2s_6x-144a_1^2s_4x-\frac{117}{2}s_6s_2x^2-15bs_6^2x-90bs_3^2x-\frac{3}{4}s_5^2s_6^2-\\
-\frac{99618}{625}ba_1-27a_1^3s_6-33a_1^3s_5-39a_1^3s_4-\frac{111}{2}a_1^3s_3-72a_1^3s_2-\frac{3276}{25}a_1^2s_6-
\end{multline*}

\begin{multline*}
-\frac{405}{2}a_1^2s_3x-261a_1^2s_2x-15s_4s_6^2x-\frac{153}{2}s_4s_2x^2-\frac{4671}{5}s_3a_1x-\frac{609}{10}a_1s_6^2-\\
-102a_1s_5s_6x-78s_2s_4s_6x-69s_2s_5s_6x-63s_3s_4s_6x-54s_3s_5s_6x-39s_4s_5s_6x-\\
-\frac{135}{2}s_4s_3x^2-\frac{207}{2}s_5a_1x^2-\frac{11586}{25}s_5a_1x-\frac{1527}{5}s_5s_3x-\frac{3489}{25}s_6s_4x-\\
-36a_1b^2x-\frac{138}{5}b^2s_2-30b^2s_2x-\frac{552}{25}b^2s_3-24b^2s_3x-\frac{414}{25}b^2s_4-18b^2s_4x-\frac{276}{25}b^2s_5-\\
-111s_3s_2x^2-\frac{17406}{25}s_3s_2x-114bs_3x^2-30s_2s_6^2x-180s_3a_1x^2-\frac{93}{2}s_6s_4x^2-\\
-\frac{231}{2}a_1^2s_5x-24s_4^2s_6x-42s_2s_4^2x-60s_5s_3x^2-\frac{105}{2}s_6s_3x^2-\frac{45}{2}s_3s_6^2x-\\
-\frac{11403}{50}s_5x-\frac{583059}{1250}s_3x-\frac{435009}{1250}s_4x-\frac{197991}{250}a_1x-\frac{731109}{1250}s_2x-\frac{177}{2}a_1^3x-\\
-192ba_1x^2-39s_3s_5^2x-\frac{51}{2}s_3s_4^2x-\frac{51}{2}s_4s_5^2x-\frac{75}{2}s_3^2s_6x-\frac{33}{2}s_4^2s_5x-\\
-\frac{1131}{5}s_2^2s_4-\frac{4116}{25}s_2s_4^2-\frac{4131}{25}s_3^2s_4-\frac{6723}{50}s_3s_4^2-\frac{195}{4}a_1^2s_5^2-\\
-60s_2^2s_4x-\frac{15}{2}s_5s_6^2x-\frac{7194}{25}bs_3x-\frac{5364}{25}bs_4x-\frac{3522}{25}bs_5x-\frac{828}{25}a_1b^2-\\
-96s_2s_3s_5x-144a_1s_2s_6x-129a_1s_3s_6x-90s_2s_3s_6x-144a_1s_4s_5x-81s_2s_4s_5x-\\
-12b^2s_5x-\frac{138}{25}b^2s_6-6b^2s_6x-192ba_1^2x-\frac{195}{2}s_2^2s_3x-\frac{135}{2}s_5s_2x^2-\frac{1731}{5}s_5s_2x-\\
-\frac{798}{5}s_6s_3x-36s_3^2s_5x-36bs_5^2x-57bs_4^2x-\frac{159}{2}s_6a_1x^2-\frac{5907}{25}s_6a_1x-144a_1s_3^2x-\\
-48s_2s_3s_4s_6-42s_2s_3s_5s_6-30s_2s_4s_5s_6-21s_3s_4s_5s_6-324a_1s_2s_3x-318a_1bs_2x-\\
-\frac{12819}{25}s_4s_2x-\frac{2982}{25}s_6s_5x-87a_1s_4^2x-30bs_6x^2-\frac{336}{5}bs_6x-51s_2^2s_6x-\\
-144bs_2x^2-\frac{9024}{25}bs_2x-\frac{69}{2}s_3^2s_4x-45a_1s_6^2x-\frac{105}{2}s_2s_5^2x-\frac{11682}{25}ba_1x-\\
-\frac{7677}{25}s_3^2x-45s_2^3x-25s_5x^3-\frac{387}{2}s_5x^2-\frac{43}{2}s_6x^3-\frac{2496}{25}s_6x^2-18s_3^3x-\\
-36bs_2s_4s_6-30bs_2s_5s_6-54bs_3s_4s_5-30bs_3s_4s_6-24bs_3s_5s_6-18bs_4s_5s_6-\\
-\frac{117}{4}s_4^2x^2-\frac{9741}{50}s_4^2x-3s_4^3x-33s_5^2x^2-\frac{3018}{25}s_5^2x-9s_5^3x-\frac{75}{4}s_6^2x^2-\\
-\frac{99}{2}s_6^2x-18b^2x^2-\frac{828}{25}b^2x-\frac{19911}{25}a_1^2x-\frac{101346}{625}bx-\frac{135141}{1250}s_6x-\\
-54a_1bs_3s_6-48a_1bs_4s_6-42a_1bs_5s_6-114bs_2s_3s_4-78bs_2s_3s_5-42bs_2s_3s_6-\\
-41s_3x^3-\frac{9729}{25}s_3x^2-\frac{57}{2}s_4x^3-\frac{7179}{25}s_4x^2-\frac{291}{4}s_2^2x^2-\frac{21993}{50}s_2^2x-\\
-\frac{2256}{5}s_4s_3x-\frac{99}{2}s_3^2x^2-\frac{879159}{2500}x^2-\frac{297}{10}s_3s_6^2-\frac{99}{5}s_4s_6^2-\frac{498}{25}s_5^2s_6-\\
-\frac{5307}{10}a_1s_2^2-29a_1s_3^3-26s_2^3s_3-\frac{99}{4}s_2^2s_3^2-7s_2s_3^3-\frac{18699}{50}a_1s_3^2-\frac{3177}{10}s_2^2s_3-
\end{multline*}

\begin{multline*}
-\frac{99}{10}s_5s_6^2-3b^2s_6^2-\frac{99}{5}bs_6^2-\frac{477}{4}a_1^2s_2^2-\frac{333}{4}a_1^2s_3^2-65a_1s_2^3-\\
-\frac{6672}{25}s_2s_3^2-\frac{207}{4}a_1^2s_4^2-8a_1s_4^3-12s_2^3s_4-9s_2^2s_4^2-\frac{12027}{50}a_1s_4^2-\\
-54a_1^3b-15a_1^2b^2-\frac{13821}{50}a_1^3-114a_1s_4s_6x-\frac{1077531}{2500}a_1^2-\frac{5991}{25}ba_1^2-\frac{147}{625}b^2-\\
-\frac{639}{4}a_1^2x^2-95a_1x^3-\frac{7107}{10}a_1x^2-58bx^3-\frac{5427}{25}bx^2-\frac{107}{2}s_2x^3-\frac{12279}{25}s_2x^2-\\
-\frac{23}{2}a_1s_5^3-13s_2^3s_5-\frac{75}{4}s_2^2s_5^2-\frac{13}{2}s_2s_5^3-s_3^3s_5-\frac{15}{2}s_3^2s_5^2-\\
-4s_3s_5^3-\frac{3}{2}s_4s_5^3-\frac{7389}{50}a_1s_5^2-\frac{7653}{50}s_2^2s_5-\frac{504}{5}s_2s_5^2-\frac{5631}{50}s_3^2s_5-\\
-\frac{2022}{25}s_3s_5^2-\frac{3609}{50}s_4^2s_5-\frac{1524}{25}s_4s_5^2-\frac{105}{4}a_1^2s_6^2-40bs_2^3-22bs_3^3-\\
-10bs_4^3-5bs_5^3-14s_2^3s_6-\frac{13}{2}s_3^3s_6-12s_2^2s_6^2-\frac{27}{4}s_3^2s_6^2-\frac{1998}{25}s_2^2s_6-\\
-60s_3^2s_6-\frac{1002}{25}s_4^2s_6-2s_4^3s_6-3s_4^2s_6^2-\frac{4167}{25}bs_2^2-\frac{609}{5}bs_3^2-\\
-\frac{2061}{25}bs_4^2-15b^2s_2^2-12b^2s_3^2-9b^2s_4^2-6b^2s_5^2-\frac{1203}{25}bs_5^2-s_5^3s_6.
\end{multline*}
Thus, if $s_1\geqslant\frac{23}{25}$, then
\begin{multline*}
g_1\Big(a_1,s_1,s_2,s_3,s_4,s_5,s_6,b\Big)\leqslant-\frac{7063}{125}s_5-\frac{1317297}{15625}s_4-\frac{1752697}{15625}s_3-\frac{2188097}{15625}s_2-\frac{525143}{3125}a_1+\\
+2s_2s_4^3+\frac{9}{2}s_3^3s_4+\frac{27}{4}s_3^2s_4^2+7s_3s_4^3+5s_4^3s_5+\frac{3}{4}s_4^2s_5^2+s_3^4+3s_4^4+\frac{9}{2}s_3s_4^2s_5+\frac{697211}{390625}\leqslant\\
\leqslant-\frac{7063}{125}s_5-\frac{1317297}{15625}s_4-\frac{1752697}{15625}s_3-\frac{2188097}{15625}s_2-\frac{525143}{3125}a_1+\\
+2s_2+\frac{9}{2}s_4+\frac{27}{4}s_3+7s_3+5s_5+\frac{3}{4}s_5+s_3+3s_4+\frac{9}{2}s_5+\frac{697211}{390625}=\\
=\frac{697211}{390625}-\frac{2156847}{15625}s_2-\frac{2400219}{31250}s_4-\frac{6088913}{62500}s_3-\frac{23127}{500}s_5-\frac{525143}{3125}a_1.
\end{multline*}
On the other hand,  we have $a_2+a_5+a_6\geqslant 1+a_3+a_4$,
so that $2s_4+s_5+s_3+s_1+a_1\geqslant 1$.
Thus, if $s_1\geqslant\frac{23}{25}$, then $2s_4+s_5+s_3+s_1+a_1\geqslant\frac{2}{25}$.
The latter inequality implies that
$$
\frac{697211}{390625}-\frac{2156847}{15625}s_2-\frac{2400219}{31250}s_4-\frac{6088913}{62500}s_3-\frac{23127}{500}s_5-\frac{525143}{3125}a_1<0.
$$
Thus, we see that $f(a_1,a_2,a_3,a_4,a_5,a_6,a_7,b)<0$ provided that $a_2-a_1>\frac{23}{25}=0.92$.

Let $f_2=f(a_1,a_2,a_3,a_4,a_5,a_6,1,b)$ and $g_2(a_1,s_1,s_2,s_3,s_4,s_5,b)=\widehat{f}_2$. Then
\begin{multline*}
g_2\Big(a_1,x+\frac{9}{10},s_2,s_3,s_4,s_5,b\Big)=\frac{16293}{8000}+\blacklozenge+\\
+6s_2s_3s_4s_5+3s_4^2s_5x+2s_4^3x+4s_2s_4^3+5s_3^3s_4+\frac{5}{2}s_3^3s_5+\frac{15}{2}s_3^2s_4^2+6s_3s_4^3+4s_4^3s_5+\\
+\frac{3}{2}s_4^2s_5^2+\frac{3}{4}s_3^4+2s_4^4+3s_2s_3^2s_4+\frac{3}{2}s_2s_3^2s_5+6s_2s_3s_4^2+6s_2s_4^2s_5+\frac{15}{2}s_3^2s_4s_5+9s_3s_4^2s_5-\\
-\frac{264327}{2000}a_1-\frac{13221}{100}x-\frac{52767}{2000}s_5-\frac{52767}{1000}s_4-\frac{9906}{125}s_3-\frac{105729}{1000}s_2,
\end{multline*}
where $\blacklozenge$ is the following polynomial with negative coefficients:
\begin{multline*}
-\frac{237}{2}a_1^2s_2s_3-102a_1bs_2^2-63a_1^2s_2s_4-54a_1^2s_3s_4-93a_1s_2^2s_3-42a_1s_2^2s_4-\\
-\frac{24423}{200}s_5a_1-\frac{678}{5}s_3^2-\frac{59301}{200}s_3s_2-\frac{72897}{200}s_3a_1-\frac{16521}{200}s_4^2-\\
-69a_1s_2s_3^2-\frac{63}{2}a_1^2s_2s_5-27a_1^2s_3s_5-21a_1s_2^2s_5-\frac{5493}{10}a_1s_2s_3-\\
-\frac{1029}{10}s_5s_4x-\frac{2007}{5}s_4a_1x-\frac{9}{2}s_3^2s_5x-15bs_5^2x-30bs_4^2x-6b^2s_4^2-\\
-3s_3s_4s_5x-\frac{1032}{5}s_2s_3s_4-\frac{45}{2}a_1^2s_4s_5-24a_1s_2s_5^2-\frac{21}{2}a_1s_3^2s_5-\\
-\frac{888}{5}a_1s_2s_5-\frac{309}{2}a_1s_3s_5-\frac{657}{5}a_1s_4s_5-\frac{516}{5}s_2s_3s_5-\frac{414}{5}s_2s_4s_5-\\
-\frac{627}{10}s_3s_4s_5-66a_1bs_3^2-36a_1bs_4^2-18a_1bs_5^2-60bs_2^2s_3-36bs_2^2s_4-18bs_2^2s_5-\\
-48bs_2s_3^2-24bs_2s_4^2-12bs_2s_5^2-24bs_3^2s_4-12bs_3^2s_5-18bs_3s_4^2-9bs_3s_5^2-6bs_4^2s_5-\\
-6bs_4s_5^2-\frac{1446}{5}a_1bs_2-\frac{1086}{5}a_1bs_3-\frac{726}{5}a_1bs_4-\frac{363}{5}a_1bs_5-\frac{924}{5}bs_2s_3-\\
-\frac{618}{5}bs_2s_4-\frac{309}{5}bs_2s_5-\frac{564}{5}bs_3s_4-\frac{282}{5}bs_3s_5-51bs_4s_5-3b^2s_5^2-\\
-\frac{177}{2}a_1s_3^2x-\frac{1233}{5}s_4s_3x-\frac{1437}{5}s_4s_2x-36a_1s_4^2x-102bs_2x^2-9b^2s_3^2-\\
-\frac{651}{25}bs_5-\frac{1302}{25}bs_4-\frac{1563}{20}bs_3-\frac{5211}{50}bs_2-8s_2^4-\frac{473}{5}s_2^3-\\
-\frac{1338}{5}bs_2x-9s_3^2s_4x-18s_2s_5^2x-\frac{1806}{5}ba_1x-\frac{387}{2}s_3^2x-34s_2^3x-\\
-108a_1bs_4x-54a_1bs_5x-84bs_2s_4x-42bs_2s_5x-72bs_3s_4x-36bs_3s_5x-\\
-\frac{33}{2}a_1s_3s_5^2-9a_1s_4s_5^2-3s_2^2s_3s_5-\frac{15}{2}s_2s_3s_5^2-3s_2s_4s_5^2-\\
-30bs_4s_5x-\frac{567}{5}a_1^2s_5-\frac{1134}{5}a_1^2s_4-\frac{6963}{20}a_1^2s_3-\frac{939}{2}a_1^2s_2-\\
-\frac{89}{2}s_3^3-\frac{71}{5}s_4^3-\frac{1}{4}s_5^4-\frac{133}{20}s_5^3-27ba_1^2s_5-54ba_1^2s_4-
\end{multline*}

\begin{multline*}
-84ba_1^2s_3-114ba_1^2s_2-\frac{903}{25}s_5^2-\frac{16521}{200}s_5s_4-\frac{18207}{200}s_5s_3-\frac{19893}{200}s_5s_2-\\
-\frac{18207}{100}s_4s_3-\frac{19893}{100}s_4s_2-\frac{24423}{100}s_4a_1-\frac{24237}{50}a_1s_2-\frac{4926}{25}s_2^2-\\
-\frac{9}{2}a_1^4-39a_1^3b-12a_1^2b^2-\frac{1014}{5}a_1^3-\frac{131553}{400}a_1^2-\frac{918}{5}ba_1^2-\frac{3}{20}b^2-\\
-\frac{3186}{25}ba_1-9a_1^3s_5-18a_1^3s_4-36a_1^3s_3-54a_1^3s_2-145x^3-\frac{55}{4}x^4-\frac{63}{2}a_1s_5^2x-\\
-12s_2^2s_5x-48bs_4x^2-72a_1^2s_4x-57bs_3^2x-\frac{267}{2}a_1^2s_3x-195a_1^2s_2x-33s_4s_2x^2-\\
-\frac{6183}{10}s_3a_1x-27s_4s_3x^2-\frac{63}{2}s_5a_1x^2-\frac{2007}{10}s_5a_1x-\frac{1233}{10}s_5s_3x-\\
-24a_1b^2s_2-18a_1b^2s_3-12a_1b^2s_4-6a_1b^2s_5-18b^2s_2s_3-12b^2s_2s_4-6b^2s_2s_5-\\
-\frac{141}{2}s_3s_2x^2-\frac{4479}{10}s_3s_2x-75bs_3x^2-\frac{237}{2}s_3a_1x^2-36a_1^2s_5x-\\
-\frac{27}{2}s_5s_3x^2-153a_1s_2^2x-63s_4a_1x^2-45s_2s_3^2x-174a_1s_2x^2-\frac{4176}{5}a_1s_2x-\\
-12s_2s_4^2x-\frac{657}{5}a_1s_4^2-\frac{618}{5}s_2^2s_4-\frac{414}{5}s_2s_4^2-\frac{831}{10}s_3^2s_4-\frac{627}{10}s_3s_4^2-\\
-144ba_1x^2-12s_3s_5^2x-3s_3s_4^2x-6s_4s_5^2x-24s_2^2s_4x-201bs_3x-\frac{672}{5}bs_4x-\\
-\frac{3}{2}s_3s_5^3-\frac{1}{2}s_4s_5^3-\frac{1257}{20}a_1s_5^2-\frac{309}{5}s_2^2s_5-\frac{201}{5}s_2s_5^2-\\
-\frac{336}{5}bs_5x-27a_1b^2-30a_1b^2x-\frac{108}{5}b^2s_2-24b^2s_2x-\frac{81}{5}b^2s_3-\\
-18b^2s_3x-\frac{54}{5}b^2s_4-12b^2s_4x-\frac{27}{5}b^2s_5-6b^2s_5x-144ba_1^2x-63s_2^2s_3x-\\
-\frac{33}{2}s_5s_2x^2-\frac{1437}{10}s_5s_2x-24bs_5x^2-90bs_2^2x-\frac{21}{2}s_5s_4x^2-\\
-24a_1s_2s_4s_5-12a_1s_3s_4s_5-96a_1bs_2s_4-48a_1bs_2s_5-84a_1bs_3s_4-\\
-\frac{4803}{20}a_1s_3^2-\frac{2037}{10}s_2^2s_3-33a_1s_2s_3s_5-\frac{327}{2}s_2s_3^2-\frac{45}{2}a_1^2s_4^2-4s_2^3s_4-\\
-24a_1s_2s_4^2-21a_1s_3^2s_4-12a_1s_3s_4^2-6s_2^2s_3s_4-\frac{1776}{5}a_1s_2s_4-309a_1s_3s_4-\\
-\frac{13}{2}s_5x^3-\frac{1641}{20}s_5x^2-10s_3^3x-\frac{21}{2}s_4^2x^2-\frac{1029}{10}s_4^2x-\\
-12s_5^2x^2-\frac{501}{10}s_5^2x-\frac{7}{2}s_5^3x-15b^2x^2-27b^2x-\frac{11817}{20}a_1^2x-\frac{13029}{100}bx-\\
-207a_1s_2s_3x-228a_1bs_2x-168a_1bs_3x-132bs_2s_3x-108a_1s_2s_4x-90a_1s_3s_4x-\\
-\frac{21579}{200}s_5x-\frac{32181}{100}s_3x-\frac{21579}{100}s_4x-\frac{120999}{200}a_1x-\frac{42783}{100}s_2x-\\
-72a_1^3x-\frac{513}{4}a_1^2x^2-\frac{153}{2}a_1x^3-\frac{10521}{20}a_1x^2-43bx^3-\frac{1671}{10}bx^2-\\
-41s_2x^3-\frac{3447}{10}s_2x^2-27s_3x^3-\frac{1272}{5}s_3x^2-13s_4x^3-\frac{1641}{10}s_4x^2-54s_2^2x^2-
\end{multline*}

\begin{multline*}
-12b^2s_3s_4-6b^2s_3s_5-6b^2s_4s_5-\frac{1257}{1000}b-150a_1bs_2s_3-66a_1s_2s_3s_4-\\
-36s_2s_3s_4x-54a_1s_2s_5x-45a_1s_3s_5x-18s_2s_3s_5x-36a_1s_4s_5x-12s_2s_4s_5x-\\
-\frac{1521}{5}s_2^2x-30s_3^2x^2-\frac{10677}{40}x^2-87a_1^2s_2^2-\frac{207}{4}a_1^2s_3^2-\\
-48a_1s_2^3-\frac{3717}{10}a_1s_2^2-\frac{33}{2}a_1s_3^3-18s_2^3s_3-15s_2^2s_3^2-\frac{7}{2}s_2s_3^3-\\
-\frac{75}{4}a_1^2s_5^2-\frac{9}{2}a_1s_5^3-2s_2^3s_5-6s_2^2s_5^2-\frac{5}{2}s_2s_5^3-\frac{3}{2}s_3^2s_5^2-\\
-42a_1bs_3s_5-36a_1bs_4s_5-60bs_2s_3s_4-30bs_2s_3s_5-24bs_2s_4s_5-18bs_3s_4s_5-\\
-\frac{831}{20}s_3^2s_5-\frac{303}{10}s_3s_5^2-\frac{213}{10}s_4^2s_5-\frac{102}{5}s_4s_5^2-\\
-28bs_2^3-13bs_3^3-4bs_4^3-2bs_5^3-123bs_2^2-\frac{843}{10}bs_3^2-51bs_4^2-12b^2s_2^2-\frac{45}{2}bs_5^2.
\end{multline*}
Thus, if $x\geqslant 0$, then $g_2(a_1,x+\frac{9}{10},s_2,s_3,s_4,s_5,b)$ does not exceed
\begin{multline*}
\frac{16293}{8000}+6s_2s_3s_4s_5+3s_4^2s_5x+2s_4^3x+4s_2s_4^3+5s_3^3s_4+\frac{5}{2}s_3^3s_5+\frac{15}{2}s_3^2s_4^2+6s_3s_4^3+\\
+4s_4^3s_5+\frac{3}{2}s_4^2s_5^2+\frac{3}{4}s_3^4+2s_4^4+3s_2s_3^2s_4+\frac{3}{2}s_2s_3^2s_5+6s_2s_3s_4^2+6s_2s_4^2s_5+\frac{15}{2}s_3^2s_4s_5+9s_3s_4^2s_5-\\
-\frac{264327}{2000}a_1-\frac{13221}{100}x-\frac{52767}{2000}s_5-\frac{52767}{1000}s_4-\frac{9906}{125}s_3-\frac{105729}{1000}s_2\leqslant\\
\leqslant\frac{16293}{8000}+6s_2+3x+2x+4s_2+5s_3+\frac{5}{2}s_3+\frac{15}{2}s_3+6s_3+\\
+4s_4+\frac{3}{2}s_5+\frac{3}{4}s_3+2s_4+3s_2+\frac{3}{2}s_2+6s_2+6s_2+\frac{15}{2}s_3+9s_3-\\
-\frac{264327}{2000}a_1-\frac{13221}{100}x-\frac{52767}{2000}s_5-\frac{52767}{1000}s_4-\frac{9906}{125}s_3-\frac{105729}{1000}s_2=\\
=\frac{16293}{8000}-\frac{264327}{2000}a_1-\frac{12721}{100}x-\frac{49767}{2000}s_5-\frac{46767}{1000}s_4-\frac{20499}{500}s_3-\frac{79229}{1000}s_2\leqslant\\
\leqslant\frac{16293}{8000}-\frac{264327}{2000}a_1-\frac{49767}{2000}s_5-\frac{46767}{1000}s_4-\frac{20499}{500}s_3-\frac{79229}{1000}s_2.
\end{multline*}
Thus, if $s_1\geqslant\frac{9}{10}$, then
\begin{multline*}
g_2(a_1,s_1,s_2,s_3,s_4,s_5,b)\leqslant\frac{16293}{8000}-\frac{264327}{2000}a_1-\frac{49767}{2000}s_5-\frac{46767}{1000}s_4-\frac{20499}{500}s_3-\frac{79229}{1000}s_2.
\end{multline*}
Moreover, if $s_1\geqslant\frac{9}{10}$, then $2s_4+2s_2+s_5+s_3+a_1\geqslant\frac{1}{10}$.
The latter inequality gives
$$
\frac{16293}{8000}-\frac{264327}{2000}a_1-\frac{49767}{2000}s_5-\frac{46767}{1000}s_4-\frac{20499}{500}s_3-\frac{79229}{1000}s_2<0.
$$
This shows that $f(a_1,a_2,a_3,a_4,a_5,a_6,1,b)<0$ provided that $a_2-a_1>\frac{9}{10}$.
\end{proof}


\begin{thebibliography}{99}

\bibitem{AP1}
C.~Arezzo, F.~Pacard, \emph{Blowing up and desingularizing constant scalar curvature K\"ahler manifolds}, Acta Math. \textbf{196} (2006), no. 2, 179--228.

\bibitem{AP2}
C.~Arezzo, F.~Pacard, \emph{Blowing up K\"ahler manifolds  with constant scalar curvature. II}, Ann. of Math. \textbf{170} (2009), no. 2, 685--738.

\bibitem{APS}
C.~Arezzo, F.~Pacard, M.~Singer, \emph{Extremal metrics on blowups}, Duke Math. J. \textbf{157} (2011), no. 1, 1--51.

\bibitem{BDL16}
R.~Berman, T.~Darvas, C.~Lu, \emph{Regularity of weak minimizers of the $K$-energy and applications to properness and $K$-stability}, preprint, arXiv:1602.03114 (2016).

\bibitem{Cheltsov-Jesus}
I.~Cheltsov, J.~Martinez-Garcia, \emph{Stable polarized del Pezzo surfaces}, Int. Math. Res. Not. (in press), preprint, arXiv:1606.04370 (2016).

\bibitem{CheRu}
I.~Cheltsov, Y.~Rubinstein, \emph{On flops and canonical metrics}, to apper in Annali della Scuola Normale Superiore di Pisa.

\bibitem{CDS}
X.-X.~Chen, S.~Donaldson, S.~Sun, \emph{K\"ahler-Einstein metrics on Fano manifolds. I, II, III}, J. Amer. Math. Soc. \textbf{28} (2015), no. 1, 183--197, 199--234, 235--278.

\bibitem{CS16}
G.~Codogni, J.~Stoppa, \emph{Torus equivariant $K$-stability}, preprint, arXiv 1602.03451 (2016).

\bibitem{DR17}
T.~Darvas, Y.~Rubinstein, \emph{Tian's properness conjectures and Finsler geometry of the space of Kähler metrics}, J. Amer. Math. Soc. \textbf{30} (2017), no. 2, 347–387. 

\bibitem{Donaldson}
S.~Donaldson, \emph{Scalar curvature and stability of toric varieties}, J. Differential Geom. \textbf{62} (2002), 289--349.

\bibitem{SD-toric-surfaces}
S.~Donaldson, \emph{Constant scalar curvature metrics on toric surfaces}, Geom. Funct. Anal. \textbf{19} (2009), 83--136.

\bibitem{Kim-LeBrun-Pontecorvo}
J.~Kim, C.~LeBrun, M.~Pontecorvo, \emph{Scalar-flat Kähler surfaces of all genera}, J. Reine Angew. Math. \textbf{486} (1997), 69--95. 

\bibitem{Kim-LeBrun}
J.~Kim, C.~LeBrun, \emph{A new method of constructing scalar-flat Kähler surfaces}, J. Differential Geom. \textbf{41, no. 2} (1995), 449--477. 

\bibitem{Kim-Pontecorvo}
J.~Kim, M.~Pontecorvo, \emph{A new method of constructing scalar-flat Kähler surfaces}, J. Differential Geom. \textbf{41, no. 2} (1995), 449--477. 

\bibitem{Lazarsfeld}
R.~Lazarsfeld, \emph{Positivity in algebraic geometry. I y II}, {Ergebnisse der Mathematik und ihrer Grenzgebiete, 3. Folge. A Series of Modern Surveys in Mathematics}, \textbf{48 and 49}. Springer-Verlag, (2004).

\bibitem{LeBrun-Simanca}
C.~LeBrun, S.R.~Simanca, \emph{Extremal K\"ahler metrics and complex deformation theory}, Geom. Funct. Anal. \textbf{4, no. 3} (1994), 298--336.

\bibitem{LeBrun-Singer}
C.~LeBrun, M.~Singer, \emph{Existence and deformation theory for scalar-flat Kähler metrics on compact complex surfaces}, Invent. Math. \textbf{112, no. 2} (1993),  273--313.

\bibitem{LiXu}
C.~Li, C.~Xu, \emph{Special test configuration and $K$-stability of Fano varieties}, Ann. of Math. \textbf{180} (2014), 197--232.

\bibitem{Odaka}
Y.~Odaka, \emph{A generalization of Ross-Thomas' slope theory},  Osaka J. Math. \textbf{50} (2013), 171--185.

\bibitem{RS}
Y.~Rollin, M.~Singer, \emph{Construction of K\"ahler surfaces with constant scalar curvature}, J. Eur. Math. Soc. (JEMS) \textbf{11} (2009), no. 5, 979--997.

\bibitem{RP}
J.~Ross, D.~Panov, \emph{Slope stability and exceptional divisors of high genus}, Math. Ann. \textbf{343} (2009), 79--101.

\bibitem{RT1}
J.~Ross, R.~Thomas, \emph{An obstruction to the existence of constant scalar curvature K\"ahler metrics}, J. Differential Geom. \textbf{72} (2006), 429--466.

\bibitem{RT2}
J.~Ross, R.~Thomas, \emph{A study of the Hilbert--Mumford criterion for the stability of projective varieties}, J. Algebraic Geom. \textbf{16} (2007), 201--255.

\bibitem{Sylvester}
J.J.~Sylvester, \emph{Tables of the Generating Functions and Groundforms of the Binary Duodecimic, with Some General Remarks, and Tables of the Irreducible Syzygies of Certain Quantics}, Amer. J. Math. \textbf{4} (1881), no. 1--4, 41--61.

\bibitem{Tian1990}
G.~Tian, \emph{On Calabi's conjecture for complex surfaces with positive first Chern class}, Invent. Math. \textbf{101} (1990), no. 1, 101--172.

\bibitem{Tian1997}
G.~Tian, \emph{K\"ahler--Einstein metrics with positive scalar curvature}, Invent. Math. \textbf{137} (1997), 1--37.

\bibitem{Wang}
X.~Wang, \emph{Height and GIT weight}, Math. Res. Lett. \textbf{19} (2012), 909--926.

\bibitem{toric}
X.~Wang, B.~Zhou, \emph{On the existence and nonexistence of extremal metrics on toric {K}\"ahler surfaces}, Adv. Math. \textbf{226} (2011), no. 5, {4429--4455}.

\end{thebibliography}
\end{document}